\documentclass[11pt]{article}
\usepackage{latexsym,amsfonts,amssymb,amsmath,amsthm}
\usepackage{graphicx}
\usepackage{subcaption}

\usepackage[usenames,dvipsnames]{color}
\usepackage{ulem}
\usepackage{caption}
\usepackage{float}
\usepackage{color}
\usepackage{bbm}
\usepackage{comment}
\parindent 0.5cm
\evensidemargin 0cm \oddsidemargin 0cm \topmargin 0cm \textheight 22cm \textwidth 16.2cm \footskip 2cm \headsep
0cm

\begin{document}

\newtheorem{tm}{Theorem}[section]
\newtheorem{prop}[tm]{Proposition}
\newtheorem{defin}{Definition}[section]
\newtheorem{coro}[tm]{Corollary}
\newtheorem{lem}[tm]{Lemma}
\newtheorem{assumption}{Assumption}[section]
\newtheorem{rk}{Remark}[section]
\newtheorem{nota}{Notation}[section]
\numberwithin{equation}{section}

    \newcommand{\lb}{\label}
  \newcommand{\beq}{\begin{equation}}
    \newcommand{\eeq}{\end{equation}}

\newcommand{\stk}[2]{\stackrel{#1}{#2}}
\newcommand{\dwn}[1]{{\scriptstyle #1}\downarrow}
\newcommand{\upa}[1]{{\scriptstyle #1}\uparrow}
\newcommand{\nea}[1]{{\scriptstyle #1}\nearrow}
\newcommand{\sea}[1]{\searrow {\scriptstyle #1}}
\newcommand{\csti}[3]{(#1+1) (#2)^{1/ (#1+1)} (#1)^{- #1
 / (#1+1)} (#3)^{ #1 / (#1 +1)}}
\newcommand{\RR}[1]{\mathbb{#1}}

\newcommand{\rd}{{\mathbb R^d}}
\newcommand{\ep}{\varepsilon}
\newcommand{\rr}{{\mathbb R}}
\newcommand{\alert}[1]{\fbox{#1}}
\newcommand{\eqd}{\sim}

\def\un{\underline}
\def\ba{\overline}

\def\p{\partial}
\def\R{{\mathbb R}}
\def\N{{\mathbb N}}
\def\Q{{\mathbb Q}}
\def\C{{\mathbb C}}
\def\l{{\langle}}
\def\r{\rangle}
\def\t{\tau}
\def\k{\kappa}
\def\a{\alpha}
\def\la{\lambda}
\def\De{\Delta}
\def\de{\delta}
\def\ga{\gamma}
\def\Ga{\Gamma}
\def\ep{\varepsilon}
\def\eps{\varepsilon}
\def\si{\sigma}
\def\Re {{\rm Re}\,}
\def\Im {{\rm Im}\,}
\def\E{{\mathbb E}}
\def\P{{\mathbb P}}
\def\Z{{\mathbb Z}}
\def\D{{\mathbb D}}
\def\calS{\mathcal{S}}

\newcommand{\ceil}[1]{\lceil{#1}\rceil}

\newcommand{\bbR}{{\mathbb R}}

\newcommand{\calB}{{\mathcal B}}

\title{The spreading of global solutions of chemotaxis systems with logistic source and consumption on $\mathbb{R}^{N}$}

\author{
Zulaihat Hassan,  Wenxian Shen, and Yuming Paul Zhang  \\
Department of Mathematics and Statistics\\
Auburn University, Auburn, AL 36849\\
U.S.A. }

\date{}
\maketitle

\begin{abstract}

This paper investigates the spreading properties of globally defined bounded positive solutions of a chemotaxis system featuring a logistic source and consumption:
\[
\left\{
\begin{aligned}
&\partial_tu=\Delta u - \chi\nabla\cdot(u\nabla v)+ u(a-bu),\quad  &(t,x)\in {(0},\infty)\times\mathbb{R}^N, \\
&{\tau \partial_tv}=\Delta v-uv,\quad & (t,x)\in {(0},\infty)\times\mathbb{R}^N,
\end{aligned}
\right.
\]
where $u$ represents the population density of a biological species, and $v$ denotes the density of a chemical substance. We show that the presence of the chemical does not hinder the spreading of the species in general, and it does not accelerate the spreading speed under conditions that $v(0,\cdot)$ decays spatially or the parameters satisfy $0<-\chi\ll 1$ and $\tau=1$. In the proof, we establish a novel connection between $u$ and $v$ using the sup- and inf-convolution techniques for viscosity solutions. Additionally, our numerical simulations reveal a noteworthy phase transition in $\chi$: for $v(0, \cdot)$ uniformly distributed across space, the spreading speed accelerates only when $\chi$ surpasses a critical positive value.

\end{abstract}

\medskip

\noindent {\bf Keywords:} {Chemotaxis systems, logistic source, consumption, spreading speed}

\medskip

\noindent{\bf AMS Subject Classification (2020):}  35B40, 35K57, 35Q92, 92C17

\tableofcontents
\section{Introduction}

The current paper is devoted to the study of the asymptotic behavior of globally defined bounded solutions of  the following parabolic-parabolic chemotaxis system with logistic source  and consumption 
\begin{equation}\label{main-Eq}
\begin{cases}
u_{t}=\Delta u - \chi\nabla\cdot(u\nabla v)+ u(a-bu),\quad  & (t,x)\in {(0},\infty)\times\R^N, \\
{\tau v_t}=\Delta v-uv,\quad & (t,x)\in {(0},\infty)\times\R^N,
\end{cases}
\end{equation}
where $\chi$ is a constant that can be either positive or negative, and $a$, $b$, and $\tau$ are positive constants. In \eqref{main-Eq}, $u(t,x)$ denotes the population density of some biological species and $v(t,x)$ denotes the population density of  some  chemical substance at  time $t$ and space location $x$; $\chi$ is referred to as the chemotaxis sensitivity coefficient; the reaction term $u(a-bu)$ is referred to as a logistic  source; and $\tau$ is linked to the diffusion rate of the chemical substance.
The chemotaxis sensitivity coefficient  $\chi>0$ indicates that the  biological species is being attracted by  the chemical signal,  and $\chi<0$ indicates that the biological species is repelled by the chemical signal.

Chemotaxis is a phenomenon in which the movement of living organisms or cells is partially oriented along the gradient of certain chemicals. This phenomenon plays an important role in many biological processes, such as embryo formation and tumor development. The development of mathematical models for chemotaxis dates back to the pioneering works of Keller and Segel in the 1970s (\cite{KeSe0, KeSe1, KeSe2}).
Since these pioneering works, a large amount of research has been carried out on various chemotaxis models. See, for example, {\cite{HeVe1, HeVe2, JaLu, Nag, Tao, ZhLi}, etc. for the study of global existence and finite-time blow-up in the following minimal chemotaxis model on a bounded domain,
\begin{equation}
\label{eq1}
\begin{cases}
u_t=\Delta u-\chi \nabla \cdot (u\nabla v),\quad &x\in\Omega,
\cr
\tau v_t=\Delta u +g(u,v)-h(u,v)v,\quad & x\in\Omega,
\cr
\frac{\partial u}{\partial n}=\frac{\partial v}{\partial n}=0,\quad & x\in\partial \Omega,
\end{cases}
\end{equation}
where $\Omega\subset\R^N$ is a smooth bounded domain.
\medskip

Before diving into a more detailed discussion of this problem and the history of past developments, let us briefly describe the main theoretical results of the paper for the benefit of  the readers. Firstly, we show a lower bound for spreading speeds of solutions to  \eqref{main-Eq} with general non-negative initial data $u_0$ in Theorem \ref{speed-lower-bound-thm}. This lower bound equals the spreading speed when $\chi= 0$, that is, when chemotaxis is absent. Theorem \ref{speed-upper-bound-thm} states that the chemical substance does not induce infinitely fast spreading of $u$ and $v$. Finally, we prove that the upper bound of the spreading speed is the same as the lower bound, showing no acceleration due to chemotaxis, under certain conditions: when $v(0,\cdot)$ decays in space (see Theorem \ref{speed-thm}(1)), and when $0<-\chi\ll 1$ and $\tau=1$ (see Theorem \ref{speed-thm}(2)).

Let us highlight the proof of Theorem \ref{speed-thm}(2), where we uncover a novel connection between the population densities of the species $u$ and the chemical substance $v$. Specifically, we introduce the variables $\zeta:=1-v$ and
\[
w:=\zeta^p/u\qquad\text{ for some }p>1,
\]
and we show that, under the conditions of the theorem, $w$ is uniformly finite in both space and time after a short initial period.
For more details, we refer the readers to the discussion in Remark \ref{speed-rk} (4) and Lemma \ref{L.8.3}.

\medskip

Now, we discuss the existing literature on \eqref{eq1}.
In \cite{Tao, ZhLi},  \eqref{eq1} was considered with $\tau = 1$, $g(u, v) = 0$,  $h(u,v) = u$ and this model is referred to as chemotaxis model with consumption. It is proved in \cite[Theorem 1.1]{Tao} that this equation has a unique globally defined bounded classical solution with non-negative initial functions
$(u_0,v_0)\in (W^{1,p}(\Omega))^2$ for some $p>N$  provided
that
\begin{equation}
\label{bounded-domain-cond1}
0<\|v_0\|_{L^\infty(\Omega)}\cdot  \chi<\frac{1}{6(N+1)}.
\end{equation}
The authors in \cite{ZhLi} proved  that   any globally defined bounded positive classical solutions of  \eqref{eq1} converge to $(\frac{1}{|\Omega|}\int_\Omega u_0,0)$ as $t\to\infty$.
In  \cite{HeVe1,  HeVe2, JaLu,  Nag},  \eqref{eq1} is studied with $g(u, v) = \mu u,$ and $h(u,v) = \lambda$ ($\mu, \lambda >0
).$ It is known that finite-time blow-up does not occur when $N=1$ and may occur when $N\ge 2$ (see \cite{HeVe2}, \cite{Nag}, etc.).

We refer the reader to \cite{IsSh, LaWa, OsTsYa, OsYa1, WaKhKh, Win, ZhLiBaZo}, etc. for the study of the following chemotaxis model with a logistic source on a bounded domain,
\begin{equation}
\label{eq2}
\begin{cases}
u_t=\Delta u-\chi \nabla \cdot (u\nabla v)+u(a-bu),\quad &x\in\Omega,
\cr
\tau v_t=\Delta u +g(u,v)-h(u,v)v,\quad &x\in\Omega,
\cr
\frac{\partial u}{\partial n}=\frac{\partial v}{\partial n}=0,\quad &x\in\partial \Omega.
\end{cases}
\end{equation}
Assume $\tau = 1,$ $g(u, v) = 0,$ and $h(u,v) = u$. It is proved in \cite[Theorem 3.3]{WaKhKh} that
 \eqref{eq2} has a unique globally defined bounded classical solution with non-negative initial functions
$(u_0,v_0)\in (W^{1,p}(\Omega))^2$ for some $p>N$,  provided
that
 \eqref{bounded-domain-cond1}  holds.  It is also proved in \cite{LaWa}   that this equation has a unique globally defined bounded classical solution with  non-negative initial $(u_0,v_0)\in (C(\bar \Omega)\times C^1(\bar\Omega))$  provided that $\chi\|v_0\|_{L^\infty(\Omega)}$ is small relative to $b$ (see \cite[Theorem 1.1]{LaWa}), and that any positive bounded globally defined classical solution converges to $(\frac{a}{b},0)$ as $t\to\infty$ (see \cite[Theorem 1.2]{LaWa}).
Furthermore, if $\tau = 1, g(u, v) = \mu u,$ and $h(u,v) = \lambda$, it is known (see \cite{IsSh, Win}) that finite-time blow-up does not occur provided
\begin{equation*}
b>{N|\chi|\mu}/{4}.
\end{equation*}

The study of chemotaxis models  on the whole space can provide some deep insight into the influence of chemotaxis on the evolution of biological species from  some different angles. For instance, it can help determine whether chemotaxis would speed up or slow down the invasion of the biological species. Also, there have been numerous studies investigating chemotaxis models on the entire space. For instance, refer to \cite{Bra, QgChOt, HaHe, Hend, HeRe, LiPa, LiWa, SaSh, SaSh2, SaSh3, SaShXu,ShXu,WaOu}, etc. for investigations into the following chemotaxis model:
\begin{equation}\label{Eq3}
\begin{cases}
u_{t}=\Delta u - \chi\nabla\cdot(u\nabla v)+ u(a-bu),&x\in\R^N,\\ 
{\tau v_t}=\Delta v-\lambda v + \mu u,&x\in\R^N,\\
u(0, x) = u_0(x), \, \, v(0, x) = v_0(x)  &x\in\R^N.
\end{cases}
\end{equation}

Observe that, in the absence of chemotaxis (i.e., $\chi= 0$),  the dynamic of \eqref{Eq3} is governed by the following reaction-diffusion equation,
\begin{equation}\label{fisher-kpp}
u_t=\Delta u+u(a-bu),\quad x\in\R^N.
\end{equation}
The equation is also known as Fisher-KPP equation
due to the pioneering works of Fisher \cite{Fisher} and Kolmogorov, Petrowsky, Piskunov \cite{KPP} on traveling wave solutions and take-over properties.
The spreading properties of \eqref{fisher-kpp} are well-established.
Equation \eqref{fisher-kpp} has  traveling wave solutions $u(t,x)=\phi(x\cdot\xi-ct)$ ($\xi\in \mathbb{S}^{N-1}$)
connecting $\frac{a}{b}$ and $0$ $(\phi(-\infty)=\frac{a}{b},\phi(\infty)=0)$ for all speeds $c\geq 2\sqrt a$ and there is no such traveling wave solutions of slower speeds.  For any given bounded $u_0\in C(\R^N, \R^{+})$ with
nonempty compact support,
\begin{equation}
\label{FKPP-speed-eq1}
\lim_{t\to \infty}\sup_{|x| \le c't}\left|u(t, x)-\frac{a}{b}\right|=0\quad \forall\,  c'<2\sqrt a,
\end{equation}
and
\begin{equation}
\label{FKPP-speed-eq2}
\lim_{t\to\infty}\sup_{|x|\ge c''t}u(t, x)=0 \quad \forall \, c''>2\sqrt a
\end{equation}
{(see \cite{ArWe1,ArWe2})}.
Thanks to \eqref{FKPP-speed-eq1} and \eqref{FKPP-speed-eq2}, in literature, the minimal wave speed  $c^*:=2\sqrt a$ is  also called the {\it
spreading speed} of  \eqref{fisher-kpp}.

Considering \eqref{Eq3},  it is an important question whether the chemotaxis speeds up or slows down the spreading of the biological species.
When $\tau = 1$, it is proved in  \cite{ShXu} that  if $b>\frac{N\chi\mu}{4},$ chemotaxis neither speed up nor slow down the spreading speed of the biological species  in the sense that  \eqref{FKPP-speed-eq1} and \eqref{FKPP-speed-eq2} hold for any solution $(u(t,x),v(t,x))$ of \eqref{Eq3} with initial data  $(u_0(\cdot),v_0(\cdot))$ whose supports are nonempty and compact.  When $\tau = 0$, it is proved in \cite{SaShXu} that if $b>\chi\mu$ and $\big(1+\frac{(\sqrt{a}- \sqrt{\lambda})_+}{2(\sqrt{a}+ \sqrt{\lambda})}\big)\chi\mu\le b,$ then again chemotaxis neither slows down nor speeds up the spreading speed. 
Assuming $\tau = 0$ and $N=1$, the authors of \cite{QgChOt} proved that negative chemotaxis speeds up the spatial spreading of the biological species in the sense that the minimal wave speed of traveling waves of \eqref{Eq3} must be greater than $c^*=2\sqrt a$ {when $-\chi$ is sufficiently large}.

 Very recently, the authors of the current paper  studied the global existence of  the chemotaxis models \eqref{main-Eq} and \eqref{Eq3}   in a unified way.   Sufficient conditions are provided for  positive classical solutions to exist globally and stay bounded (see \cite[Theorems 1.2-1.4]{HSZ}.
In this paper, we focus on the study of asymptotic behavior
of globally defined bounded solutions of \eqref{main-Eq}, specially, spreading speeds of  globally defined bounded positive solutions of \eqref{main-Eq}.

In the following, we introduce some notations and definitions in subsection 1.1,  state our main theoretical results, {and discuss novel ideas and techniques}
in subsection 1.2, and present some numerical observations in subsection 1.3.

\subsection{Notations and definitions}

In this subsection, we introduce some standing notations and the definition of spreading speeds.
Let us define
\begin{equation*}
X:=C_{\rm unif}^b(\R^N)=\big\{u\in C(\R^N)\,|\, u(x)\,\,\text{is uniformly continuous in}\,\, x\in\R^N\,\, {\rm and}\,\, \sup_{x\in\R^N}|u(x)|<\infty\big\}
\end{equation*}
equipped with the norm $\|u\|_\infty:=\sup_{x\in\R^N}|u(x)|$. We also set
$$
X^+:=\{u\in X\,|\, u\ge 0\},
$$
and
$$
X_c^+:=\big\{u\in X^+\,|\, {\rm supp}(u)\not=\emptyset,\,\, {\rm supp}(u)\,\, \text{is compact}\big\}.
$$

Next, we let
$$
X_1:=C_{\rm unif}^{1,b}(\R^N)=\left\{u\in X\,\big|\, \frac{\partial  u}{\partial x_i} \in  X\,\,
{\rm for}\,\, i=1,2,\cdots, N\right\}
$$
equipped with the norm $\|u\|_{X_1}:=\|u\|_\infty+\sum_{i=1}^N  \|\frac{\partial  u}{\partial x_i}\|_\infty$. We write $X_1^+:=X_1\cap X^+$.
Finally, we use $C_{\rm unif}^{2+\alpha, b}(\R^N)$ 
to denote all $u\in X_1$ such that $\frac{\partial^2 u}{\partial x_i\partial x_j}$ are uniformly bounded and $\alpha$-H\"{o}lder continuous for $i,j=1,2,\ldots, N$,
and use the notation
$$
C_0(\R^N):=\left\{u\in C(\R^N)\,\big|\, \lim_{|x|\to\infty} u(x)=0\right\}.
$$

For any given $(u_0, v_0)\in X^+\times X_1^+$, we denote by $(u(t,x ;u_0, v_0),v(t, x ; u_0, v_0))$ the classical solution of \eqref{main-Eq} satisfying
 \begin{equation*}
 \lim_{t\to 0+}(\|u(t,\cdot;u_0,v_0)-u_0(\cdot)\|_{X}+\|v(t,\cdot;u_0,v_0)-v_0(\cdot)\|_{X_1})=0,
\end{equation*}
\begin{equation*}
 u(\cdot,\cdot;u_0, v_0) \in C([0, T_{\max} ), X )\cap C^1((0,T_{\max}), X),
    \end{equation*}
    \begin{equation*}
  v(\cdot,\cdot;u_0, v_0) \in C([0, T_{\max} ), X_1 )\cap C^1((0,T_{\max}), X_1).
    \end{equation*}
Note that, by the comparison principle for parabolic equations, for every $(u_0, v_0)\in X^{+}\times X_1^+$, it always holds that $u(t, x ;u_0,v_0)\geq 0$ and $v(t, x;u_0, v_0)\geq 0$ for all $t>0$ in the existence interval of the solution.

\smallskip

The objective of this paper is to study the asymptotic behavior of globally defined positive solutions. We introduce  the following definitions.
For given $(u_0, v_0)\in X^+\times X_1^+$,  assuming that $(u(t,x;u_0,v_0),v(t,x;u_0,v_0))$
exists for all $t>0$, let
$$
S_{\rm low}(u_0,v_0):=\{c\,|\, c>0,\,\, \liminf_{t\to\infty}\inf_{|x|\le  ct}u(t,x;u_0,v_0)>0\}
$$
and
$$
S_{\rm up}(u_0,v_0):=\{c\,|\, c>0,\,\, \limsup_{t\to\infty}\sup_{|x|\ge ct} u(t,x;u_0,v_0)=0\}.
$$
We define
\begin{equation*}
c_{\rm low}^*(u_0,v_0):=\sup \{c\,|\, c \in S_{\rm low}(u_0,v_0)\}
\end{equation*}
and
\begin{equation*}
c_{\rm up}^*(u_0,v_0):=\inf\{c|\, c\in S_{\rm up}(u_0,v_0)\},
\end{equation*}
where $c_{\rm low}^*(u_0,v_0)=0$ if $S_{\rm low}(u_0,v_0)=\emptyset$ and $c_{\rm up}^*(u_0,v_0)=\infty$ if $S_{\rm up}(u_0,v_0)=\emptyset$.
By the definition of $c_{\rm low}^*(u_0,v_0)$ and $c_{\rm up}^*(u_0,v_0)$,  if $c_{\rm low}^*(u_0,v_0)>0$, then
\begin{equation*}
\liminf_{t\to\infty}\inf_{|x|\le {c'} t}u(t,x;u_0,v_0)>0\quad \forall\, 0<{c'}<c^*_{\rm low}(u_0,v_0),
\end{equation*}
and if $c_{\rm up}^*(u_0,v_0)<\infty$, then
\begin{equation*}
\lim_{t\to\infty} \sup_{|x|\ge {c''}t}u(t,x;u_0,v_0)=0\quad \forall\, {c''}>c^*_{\rm up}(u_0,v_0).
\end{equation*}
If $c_{\rm low}^*(u_0,v_0)=c_{\rm up}^*(u_0,v_0)$, then $c^*(u_0,v_0):=c_{\rm low}^*(u_0,v_0)$ is called
the {\it spreading speed} of
 the solution $(u(t,x;u_0,v_0),v(t,x;u_0,v_0))$.
It is well known  that when $\chi=0$, we have $c_{\rm low}^*(u_0,v_0)=c_{\rm up}^*(u_0,v_0)=2\sqrt a$ for any
$u_0\in X_c^+$ and $v_0\in X_1^+$ (see \cite{ArWe1,ArWe2}).

Many interesting questions arise when $\chi\not =0$.
For example, whether $c_{\rm low}^*(u_0,v_0)$ is positive and $c_{\rm up}^*(u_0,v_0)$ is finite;
whether $c_{\rm low}^*(u_0,v_0)\ge 2\sqrt a$,  in other words, whether the chemotaxis does not
slow down the population's spreading; whether $c_{\rm up}^*(u_0,v_0)\le 2 \sqrt a$, that is, whether
the chemotaxis does not speed up the population's spreading;  whether $c_{\rm low}^*(u_0,v_0)=c_{\rm up}^*(u_0,v_0)$, that is, the population has a single spreading speed; in the case $c_{\rm low}^*(u_0,v_0)>0$, whether $u(t,x;u_0,v_0)$ converges to $\frac{a}{b}$ in the region $|x|<ct$ for any $0<c<c_{\rm low}^*(u_0,v_0)$, which is strongly related to the stability of the constant equilibrium $(\frac{a}{b},0)$; {and how $\chi$ affects the spreading properties; etc.
The goal of this paper is to answer some of these questions. }

\subsection{Main results, biological interpretations, and techniques}

In this subsection, we state our main results on
the asymptotic behavior of  globally defined positive solutions,  give some biological interpretations of the results,  and highlight the techniques employed/developed  in the proofs of the results.

Our first main result provides a lower bound of  the spatial spreading speeds of solutions to  \eqref{main-Eq} with general non-negative initial data $u_0$.

\begin{tm}[Lower bound of spreading speeds]
\label{speed-lower-bound-thm}
Suppose that $u_0\in X^+$ and $v_0\in X^+_1$, and  $(u(t,x;u_0,v_0), v(t,x;u_0,v_0))$ is a globally defined bounded solution of \eqref{main-Eq}.
If ${\rm supp}(u_0)={\rm cl}\{x\in\R^N\,|\, u_0(x)>0\}$ is nonempty,  then the following hold.
\begin{itemize}
\item[(1)] $c_{\rm low}^*(u_0,v_0)\ge 2\sqrt a$, or equivalently,
\begin{equation}
\label{lower-bound-eqq1}
\liminf_{t\to\infty}\inf_{|x|\le {c'} t}u(t,x;u_0,v_0)>0\quad \forall\, 0<{c'}<2\sqrt a.
\end{equation}

\item[(2)]
\begin{equation}
\label{lower-bound-eqq2}
\lim_{t\to\infty}\sup_{|x|\le {c'} t}\left|u(t,x;u_0,v_0)-\frac{a}{b}\right|=0\quad \forall\, 0<{c'}<c^*_{\rm low}(u_0,v_0),
\end{equation}
and
\begin{equation}
\label{lower-bound-eqq3}
\lim_{t\to\infty} \sup_{|x|\le {c'}t}v(t,x;u_0,v_0)=0\quad \forall\, 0< {c'}<c^*_{\rm low}(u_0,v_0).
\end{equation}
\end{itemize}
\end{tm}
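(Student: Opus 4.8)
The plan is to regard the first equation in \eqref{main-Eq} as a perturbation of the Fisher--KPP equation \eqref{fisher-kpp} and to exploit the consumption term $-uv$, which makes $v$ and its derivatives negligible exactly where it matters --- near the leading edge of the invasion. Throughout I use the standing a priori information: global boundedness of $(u,v)$ together with interior parabolic estimates gives $\sup_{t\ge 1}\big(\|u(t,\cdot)\|_{C^1(\R^N)}+\|v(t,\cdot)\|_{C^2(\R^N)}\big)<\infty$; the comparison principle gives $0\le v\le\|v_0\|_\infty=:v_M$; and the strong maximum principle, using $\mathrm{supp}(u_0)\ne\emptyset$, gives $u(t,x)>0$ for all $t>0$, so $\inf_{|x|\le R}u(1,x)>0$ for every $R$. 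Substituting $\Delta v=\tau v_t+uv$ into the first equation rewrites it as $u_t=\Delta u-\chi\nabla v\cdot\nabla u+u\big(a-\chi\tau v_t-(b+\chi v)u\big)$, and it is convenient to pass to $p:=u\,e^{\chi\tau v}$, which is comparable to $u$ with constants depending only on $|\chi|\tau v_M$ and which satisfies
\[
p_t=\Delta p-\chi(2\tau+1)\,\nabla v\cdot\nabla p+\chi^2\tau(\tau+1)|\nabla v|^2\,p-\chi\tau(\Delta v)\,p+p\big(a-e^{-\chi\tau v}(b+\chi v)p\big).
\]
Here the $|\nabla v|^2$ term is favourable and can be dropped for a lower barrier, and the reaction equals $(a-o(1))p$ wherever $u$ is small, so the only real obstructions are the drift $-\chi(2\tau+1)\nabla v\cdot\nabla p$ and the zeroth order term $-\chi\tau(\Delta v)p$.

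For part (1), fix $\varepsilon,\eta>0$. The mechanism is that $|\nabla v|$ and $|\Delta v|$ are small on a full neighbourhood of the leading edge of $u$ once $t$ is large: behind the edge the consumption $-uv$ has driven $v$ close to a constant state, so --- taking the threshold $\delta$ that defines ``the edge'' small and invoking parabolic estimates --- $|\nabla v|+|\Delta v|\le\eta$ there; ahead of the edge, which sits at $|x|$ of order $t\to\infty$, $v$ has evolved essentially by the heat equation for a time of order $|x|$ and is therefore diffusively smoothed, $|\nabla v|+|\Delta v|\lesssim |x|^{-1/2}v_M\to 0$. Granting this, together with the persistence statement $\liminf_{t\to\infty}\inf_{|x|\le R}u>0$ (which follows from positivity of $u$ and the logistic structure, the $v$-dependent terms near a fixed ball being rendered harmless once $v$ has been consumed there), one compares $p$ from below with a compactly supported Fisher--KPP sub-solution of $w_t=\Delta w+(a-\varepsilon)w-bw^2$ of small height whose support is a ball localized at the leading edge and transported at any speed $<2\sqrt{a-\varepsilon}-\eta$: on this support the $p$-equation differs from that Fisher--KPP equation by $O(\eta)$, and the sub-solution fits under $p$ at its initial time by positivity and smallness of its height. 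A covering argument over initial times then gives $\liminf_{t\to\infty}\inf_{|x|\le(2\sqrt{a-\varepsilon}-\eta)t}u(t,x)>0$; letting $\varepsilon,\eta\downarrow0$ yields $c_{\rm low}^*(u_0,v_0)\ge 2\sqrt a$, which is \eqref{lower-bound-eqq1}.

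For part (2), fix $0<c'<c_{\rm low}^*(u_0,v_0)$ and choose $c''\in(c',c_{\rm low}^*)$; then there are $\delta>0$ and $T$ with $u(t,x)\ge\delta$ for all $t\ge T$, $|x|\le c''t$. Hence on the expanding ball $\{|x|<c''t\}$ the second equation gives $\tau v_t\le\Delta v-\tfrac{\delta}{2}v$, and comparison with an explicit travelling supersolution of $\tau w_t=\Delta w-\tfrac{\delta}{2}w$ that equals $v_M$ on $|x|=c''t$ forces $\sup_{|x|\le c't}v(t,x)\to0$; interior parabolic estimates upgrade this to $\|\nabla v(t,\cdot)\|_{L^\infty(|x|\le c't)}+\|\Delta v(t,\cdot)\|_{L^\infty(|x|\le c't)}\to0$, which is \eqref{lower-bound-eqq3}. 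Feeding these bounds into $u_t=\Delta u-\chi\nabla v\cdot\nabla u+(a-\chi\Delta v)u-bu^2$, on $\{|x|\le c't\}$ the equation is a vanishing perturbation of $w_t=\Delta w+aw-bw^2$ with $u$ bounded below by $\delta$; squeezing $u$ between the spatially constant solutions of $\dot W=(a\pm o(1))W-bW^2$ (both of which converge to $\tfrac ab$), the boundary values on $|x|=c''t$ being absorbed since $\delta\le u\le\|u\|_\infty$ there, yields $\sup_{|x|\le c't}\big|u(t,x)-\tfrac ab\big|\to0$, which is \eqref{lower-bound-eqq2}.

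The main obstacle is contained entirely in part (1). Because $v_0$ is only bounded (it may be a positive constant, or have large gradient) and no smallness of $\chi$ is assumed, the chemotactic coefficients $\nabla v$ and $\Delta v$ cannot be made small on all of $\R^N$; one has to show they are small precisely on the \emph{moving} leading edge of $u$, and one has to run the speed bootstrap so that the region on which this smallness is available advances at the full speed $2\sqrt a$, not at some a priori smaller speed. Making the two edge estimates --- consumption-driven flattening behind and diffusive smoothing ahead --- uniform, and gluing them along the enlarging family of sub-solution supports, is the heart of the argument; the rest is comparison-principle bookkeeping.
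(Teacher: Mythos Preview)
Your proposal has the right overall architecture for part~(1)---reduce the chemotaxis terms to a small perturbation near the front and then run a Fisher--KPP subsolution---but the mechanism you offer for the smallness of $\nabla v$ and $\Delta v$ has a genuine gap. Your ``behind the edge'' estimate (consumption drives $v$ to a constant) presupposes that $u$ is bounded below there, which is precisely what you are trying to prove; and your ``ahead of the edge'' estimate ($|\nabla v|\lesssim|x|^{-1/2}$ by heat smoothing) is not justified because $v$ is not the heat solution: the Duhamel term $-\tau^{-1}\int_0^t e^{(t-s)\Delta/\tau}(uv)\,ds$ contributes to $\nabla v$ and $\Delta v$, and controlling its gradient requires information on $u$ and $\nabla u$ that you do not yet have. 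You correctly identify this as ``the heart of the argument'' but do not close it.

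The paper breaks this circularity with a different device: a special Harnack inequality (Lemma~\ref{asymptotic-lm1}) gives $|\nabla u(t,x)|\le C\,u(t,y)^{1/p}$ for nearby $(t,x),(t,y)$, and feeding this into the Duhamel formula for $v$ yields the pointwise estimate $|\nabla v(t,x)|+|\Delta v(t,x)|\le \eps + C_\eps\,u(t,x)^{1/p}$ for all large $t$ (Lemma~\ref{asymptotic-lm2}). This is the crucial point: smallness of $\nabla v,\Delta v$ is tied \emph{pointwise} to smallness of $u$, with no reference to where the front is, so no bootstrap on the front location is needed. From there the paper works in the moving frame $\tilde u(t,x)=u(t,x+ct\xi)$ and argues by dichotomy: either $\sup_{B_{2R}}\tilde u\ge\eps_1$ often enough (then a compactness/Harnack argument, Lemma~\ref{lem-3}, gives a uniform lower bound), or $\sup_{B_{2R}}\tilde u<\eps_1$ on an interval (then the equation is within $O(\eps_1^{1/4})$ of the linear problem $u_t=\Delta u+c\xi\cdot\nabla u+\bar a u$, and a principal-eigenfunction subsolution on a large ball, with eigenvalue $\lambda(c,\bar a)>0$ for $|c|<2\sqrt a$, forces exponential growth and hence a lower bound, Lemma~\ref{lem-4}). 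Your Hopf--Cole substitution $p=u\,e^{\chi\tau v}$ is a pleasant simplification not used in the paper, but it does not by itself supply the missing pointwise control of $\nabla v,\Delta v$.

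For part~(2), your direct comparison is different from the paper's route: the paper argues by contradiction and compactness, extracting from a hypothetical bad sequence $(t_n,x_n)$ an entire solution $(\hat u,\hat v)$ of \eqref{main-Eq} with $\inf\hat u>0$, and then invoking Proposition~\ref{asymptotic-behavior-prop} to force $(\hat u,\hat v)\equiv(a/b,0)$. Your approach---first show $v\to 0$ on $\{|x|\le c't\}$ by a supersolution for $\tau v_t\le\Delta v-\tfrac{\delta}{2}v$, then squeeze $u$ between ODE solutions---is reasonable in spirit, and your derivation of \eqref{lower-bound-eqq3} is essentially fine. But the step ``boundary values on $|x|=c''t$ being absorbed since $\delta\le u\le\|u\|_\infty$'' does not give an upper barrier converging to $a/b$: on $|x|=c''t$ you only know $u\le\|u\|_\infty$, and outside $\{|x|\le c''t\}$ the coefficients $\nabla v,\Delta v$ are not small, so you cannot run the ODE comparison globally. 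The paper's compactness argument sidesteps this boundary issue entirely.
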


\begin{rk}
{\rm
\begin{itemize}
\item[(1)]
As it is mentioned in the above, when $\chi=0$,   $c_{\rm low}^*(u_0,v_0)=2\sqrt a$ for any $u_0\in X_c^+$ and $v_0\in X_1^+$ (in this case, $u(t,x;u_0,v_0)$ is independent of $v_0$).
Note that, when $\chi<0$, the chemical substance is a chemorepellent, and when $\chi>0$, the chemical substance is a chemoattractant. Theorem \ref{speed-lower-bound-thm} reveals an important biological observation:   chemical
substance does not slow down the propagation of the biological species with nonzero initial distribution even when the chemical substance is a chemorepellent.

\item[(2)] \eqref{lower-bound-eqq1} is proved by nontrivially modified arguments of \cite[Theorem 1.2(1)]{ShXu}.
The key idea is to show that, for any given $0<c'<2\sqrt a$,  $u(t,x+c\xi t;u_0,v_0)$
is bounded away from zero uniformly in $0<c<c'$, $\xi\in \mathbb{S}^{N-1}$, $|x|\le l$ for some $l>0$, and $t\gg 1$ (see Lemmas \ref{asymptotic2-lem2}, \ref{lem-3}, and \ref{lem-4}).
In the proof, we make use of special Harnack inequalities for bounded solutions of \eqref{main-Eq} established in Lemmas \ref {asymptotic-lm1} and \ref{asymptotic-lm2}. Specifically, the local-in-time Harnack inequality, as stated in Lemma \ref {asymptotic-lm1}, is a slight generalization of \cite[Theorem 1.2]{BHR} and \cite[Lemma 2.2]{HaHe}, which address advection Fisher-KPP equations.
Additionally, our approach employs the principal eigenvalue and eigenfunction of some linearized operator for $u$, and the comparison principal for parabolic equations.
\end{itemize}
}
\end{rk}

The following theorem is on upper bound of   the spatial spreading speeds of solutions to  \eqref{main-Eq} with compactly supported $u_0$.

\begin{tm}[Upper bound of spreading speeds]
\lb{speed-upper-bound-thm}
{Suppose that ${ u_0\in X_c^+}$ and ${v_0\in X^+_1}$}, and  $(u(t,x;u_0,v_0), v(t,x;u_0,v_0))$ is a globally defined bounded solution of \eqref{main-Eq}. Then we have
\begin{itemize}
\item[(1)] $ c_{\rm up}^*(u_0,v_0)<\infty$. Moreover,  for any $c''>c_{\rm up}^*(u_0,v_0)$, there is $M>0$ such that
\begin{equation}
\label{upper-bound-eqq1}
u(t,x;u_0,v_0)\le M e^{-\sqrt{a} (|x|-c'' t)}\quad \forall\, t>0,\quad x\in\R^N.
\end{equation}

\item[(2)] There exist $C,\gamma>0$ depending on $c{''}-c^*_{up}(u_0,v_0)$ such that
\begin{equation*}
\sup_{|x|\ge {c''}t}|v(t,x;u_0,v_0)-V(t,x;v_0)|\leq Ce^{-\gamma t}.
\quad \forall\, t>0,\,\, {c''}
>c^*_{\rm up}(u_0,v_0),
\end{equation*}
where $V(t,x):=V(t,x;v_0)$  is the solution of
\beq\lb{1.19}
\begin{cases}
\tau  V_t=\Delta V,\quad  &x\in\R^N,\,t>0\cr
V(0,x)=v_0(x),\quad &x\in\R^N.
\end{cases}
\eeq
\end{itemize}
\end{tm}

\begin{rk}
{\rm
\begin{itemize}
\item[(1)]
Theorem \ref{speed-upper-bound-thm} implies that the chemical substance does not drive the biological species spreads infinitely fast.

\item[(2)]  Theorem \ref{speed-upper-bound-thm}(1) is proved by the application of   special Harnack inequalities for bounded solutions of \eqref{main-Eq} established in Lemmas \ref {asymptotic-lm1} and \ref{asymptotic-lm2}. The exponential decay property \eqref{upper-bound-eqq1} for $u(t,x;u_0,v_0)$ and   the representation of $v(t,x;u_0,v_0)$ via the Duhamel's principle are the key ingredients  in the proof of   Theorem \ref{speed-upper-bound-thm}(2).

\end{itemize}
}
\end{rk}

We  point out that the methods developed in  the proofs of Theorems \ref{speed-lower-bound-thm} and \ref{speed-upper-bound-thm}
 can be applied to the study of the asymptotic dynamics of the following modified chemotaxis model for $\sigma>0$ (\eqref{main-Eq} corresponds to the case when $\sigma=1$),
\begin{equation}
\label{main-Eq1}
\begin{cases}
u_{t}=\Delta u - \chi\nabla\cdot  (u  \nabla v)+ u(a-bu^\sigma),\quad  &(t,x)\in[0,\infty)\times \R^N, \\
 \tau v_t= \Delta v-u v,\quad &(t,x)\in[0,\infty)\times \R^N.
\end{cases}
\end{equation}
Theorem \ref{speed-lower-bound-thm} and Theorem \ref{speed-upper-bound-thm} with $\frac{a}{b}$ being replaced by $(\frac{a}{b})^{{1}/{\sigma}}$ hold for globally defined bounded positive classical solutions of \eqref{main-Eq1}.

\smallskip

Our last two theorems discuss various sufficient conditions  for
 the existence of spreading speed of globally defined bounded solutions of \eqref{main-Eq}
and \eqref{main-Eq1}.

\begin{tm} [Existence of spreading speeds]
\label{speed-thm}
Suppose that $u_0\in X^+_c$ and $v_0\in X^+_1$, and $(u(t,x;u_0,v_0), v(t,x;u_0,v_0))$ is a globally defined bounded solution of \eqref{main-Eq}.
\begin{itemize}
\item[(1)]
If $v_0\in C_0(\R^N)$ or $v_0\in L^p(\R^N)$ for some $p\ge 1$, then
$$
c^*_{\rm low}(u_0,v_0)=c^*_{\rm up}(u_0,v_0)=2\sqrt a.
$$

\item[(2)] Further assume $\tau=1$, and that $u_0\in X^+_1$, $v_0\in C_{\rm unif}^{2+\alpha, b}(\R^N)$ for some $\alpha>0$, and $1-v_0\in X_c^+$.   Then there exists  $\chi_0>0$ such that
for any $-\chi_0<\chi<0$, we have
$$
c^*_{\rm low}(u_0,v_0)=c^*_{\rm up}(u_0,v_0)=2\sqrt a.
$$
\end{itemize}

\end{tm}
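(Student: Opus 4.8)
I first record a reduction used in both parts. From the definitions \eqref{lower-spreading-speed}--\eqref{upper-spreading-speed} one always has $c^*_{\rm low}(u_0,v_0)\le c^*_{\rm up}(u_0,v_0)$: if $c_1\in S_{\rm low}(u_0,v_0)$ and $c_2\in S_{\rm up}(u_0,v_0)$ with $c_1>c_2$, then on the sphere $|x|=(c_1+c_2)t/2$ the function $u(t,\cdot)$ would be simultaneously bounded away from $0$ and tending to $0$. Since in both parts $u_0\in X_c^+\cap X_1^+$ and $v_0\in X_2^+$, Theorem~\ref{speed-lower-bound-thm} gives $c^*_{\rm low}(u_0,v_0)\ge 2\sqrt a$ and Theorem~\ref{speed-upper-bound-thm} gives $c^*_{\rm up}(u_0,v_0)<\infty$; hence in each case it suffices to prove $c^*_{\rm up}(u_0,v_0)\le 2\sqrt a$. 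The plan is, in both cases, to construct a supersolution of the $u$-equation of the form $\bar u(t,x)=\min\{M,\,Ce^{-\sqrt a(|x|-m(t))}\}$ with $\bar u(0,\cdot)\ge u_0$ and $m(t)=2\sqrt a\,t+o(t)$; then for every $c''>2\sqrt a$ one gets $\sup_{|x|\ge c''t}u(t,x)\le Ce^{-\sqrt a((c''-2\sqrt a)t-o(t))}\to0$, so $c^*_{\rm up}(u_0,v_0)\le c''$. For this I use: expanding $-\chi\nabla\!\cdot(u\nabla v)$ and keeping the logistic sink, $u$ is a subsolution of $w_t=\Delta w-\chi\nabla v\cdot\nabla w+w(a-\chi\Delta v-bw)$; the minimum of two supersolutions is a supersolution and the comparison principle applies to such bounded-coefficient, bounded parabolic problems on $\mathbb R^N$; on the exponential part of $\bar u$ one has $|\nabla\bar u|=\sqrt a\,\bar u$ and $\Delta\bar u\le a\bar u$, so that $\lambda+a/\lambda$ is minimized at $\lambda=\sqrt a$ with value $2\sqrt a$; and, by $u_0\in X_1^+$, $v_0\in X_2^+$ and the global boundedness of the solution (with the regularity estimates of \cite{HSZ}), $C_1:=\sup_{t\ge0}\big(\|\nabla v(t,\cdot)\|_\infty+\|\Delta v(t,\cdot)\|_\infty+\|\partial_t v(t,\cdot)\|_\infty\big)<\infty$. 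The statement for \eqref{main-Eq1} then follows from the same scheme with $a/b$ replaced by $(a/b)^{1/\sigma}$, as only $u(a-bu^\sigma)\le au$ and the presence of a logistic sink are used.

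\emph{Part (1).} Here $v$ and its first and second derivatives tend to $0$ uniformly. Comparing the $v$-equation with \eqref{1.19} (the term $-uv\le0$) gives $0\le v(t,\cdot)\le V(t,\cdot;v_0)=e^{(t/\tau)\Delta}v_0$; for $v_0\in C_0(\mathbb R^N)$ (approximating by compactly supported functions) or $v_0\in L^p(\mathbb R^N)$, the $L^1$- resp.\ $L^p$-$L^\infty$ decay of the heat semigroup yields $\|V(t,\cdot)\|_\infty\to0$, hence $\|v(t,\cdot)\|_\infty\to0$, and interpolating with the uniform $C^{2,\alpha}$-bound gives $\eta(t):=\|\nabla v(t,\cdot)\|_\infty+\|\Delta v(t,\cdot)\|_\infty\to0$. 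Then $u$ is a subsolution of $w_t=\Delta w-\chi\nabla v\cdot\nabla w+w(a+|\chi|\eta(t)-bw)$. Taking $M\ge\max\{\|u_0\|_\infty,\,(a+|\chi|\sup_t\eta)/b\}$, $m(t)=\int_0^t(2\sqrt a+C|\chi|\eta(s))\,ds=2\sqrt a\,t+o(t)$ for a suitable constant $C$, and $C$ large enough that $\bar u(0,\cdot)\ge u_0$ (possible because ${\rm supp}\,u_0$ is compact), a direct computation shows that $\bar u$ is a supersolution of this equation (it equals the supersolution $M$ where $\bar u=M$; on the exponential part the required inequality reduces to $\sqrt a\,m'(t)\ge 2a+C|\chi|\eta(t)$). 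By comparison $u\le\bar u$, so $c^*_{\rm up}(u_0,v_0)\le2\sqrt a$.

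\emph{Part (2).} Now $v$ does not decay: with $\psi:=1-v$ the $v$-equation (with $\tau=1$) reads $\psi_t=\Delta\psi+u(1-\psi)$, $\psi(0)=1-v_0\in X_c^+$, $0\le\psi\le1$, and since $e^{t\Delta}(1-v_0)\to0$ uniformly, $v\le V(t,\cdot;v_0)=1-e^{t\Delta}(1-v_0)\to1$ uniformly; so $\nabla v$, $\Delta v$ are of order one near the front. The plan is still to run the comparison above, now using three structural features valid for $\tau=1$ and $0<-\chi\ll1$: (a) $0\le v\le1$, so the effective logistic coefficient $b+\chi v\ge b-|\chi|>0$ once $|\chi|<\chi_0\le b$, keeping a genuine sink (so that constants are supersolutions); (b) the source $u(1-\psi)=uv\ge0$ in the $\psi$-equation is supported essentially where $u$ is not small, so by Theorem~\ref{speed-upper-bound-thm} (applied with the already-known finite $c^*_{\rm up}$) and a Duhamel estimate, $\psi$---hence $\nabla v$, $\Delta v$, $\partial_t v$---is small (algebraically or exponentially in $t$) outside a neighbourhood of the front, while in that neighbourhood $\partial_t v$ is small because $v$ there converges to a moving profile; (c) for $\chi<0$ the chemorepulsive drift $-\chi\nabla v=|\chi|\nabla v$ points toward smaller $v$, i.e.\ \emph{inward} (since $v$ is small behind and $\approx1$ ahead of the front), so on the exponential part of $\bar u$ the drift term $-\chi\nabla v\cdot\nabla\bar u=\chi\sqrt a\,(\partial_r v)\,\bar u$ is $\le o(1)\bar u$. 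Using (a)--(c), and rewriting (with $\tau=1$) $-\chi\,u\,\Delta v=-\chi\,u\,\partial_t v-\chi\,u^2 v$ so that the $\Delta v$-term splits into a genuine sink contribution ($-\chi u^2 v=|\chi|u^2 v\le|\chi|u^2$) plus the small term $-\chi\,u\,\partial_t v$, one is led to the requirement $\sqrt a\,m'(t)\ge 2a+\sup_{|x|\ge m(t)+O(1)}\big(\chi\sqrt a\,\partial_r v(t,x)+|\chi|\,\partial_t v(t,x)\big)$, in which the supremum is $o(1)$. Hence $m(t)=2\sqrt a\,t+o(t)$ and, by comparison, $c^*_{\rm up}(u_0,v_0)\le2\sqrt a$.

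\emph{Main obstacle.} The delicate step is precisely the claim that the last supremum is $o(1)$: near the front $|x|\approx2\sqrt a\,t$ the gradient $\nabla v$ and Laplacian $\Delta v$ are genuinely $O(1)$, and $\Delta v$ has indefinite sign, so the chemotactic terms cannot be bounded pointwise by a quantity tending to $0$; one must instead control their time-accumulated effect along the propagating level sets $|x|-m(t)=\text{const}$. Establishing this rigorously requires combining the favourable inward sign of the chemorepulsive drift, the $\tau=1$, $0\le v\le1$ structure of the consumption equation (to replace $\Delta v$ by $\partial_t v+uv$), the a priori finiteness of $c^*_{\rm up}$ from Theorem~\ref{speed-upper-bound-thm} (to localize $\psi=1-v$ near the front), and the smallness of $|\chi|$ to render all the error contributions lower order---and it is here that the hypotheses $\tau=1$ and $0<-\chi<\chi_0$ are genuinely used.
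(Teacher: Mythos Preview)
Your Part~(1) is correct and follows the same two-step strategy as the paper (show $\|\nabla v(t,\cdot)\|_\infty+\|\Delta v(t,\cdot)\|_\infty\to0$, then run an exponential supersolution comparison). Your route to $\|v(t,\cdot)\|_\infty\to0$ via $0\le v\le e^{(t/\tau)\Delta}v_0$ and heat-semigroup decay is in fact more direct than what the paper does (the paper invokes Theorem~\ref{speed-lower-bound-thm}(2) in the $C_0$ case and a compactness/energy argument in the $L^p$ case); either way the interpolation with the uniform $C^{2,\nu}$ bounds of Proposition~\ref{global-existence-prop} then yields the derivative decay, and the supersolution step is the same.

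Part~(2), however, has a genuine gap. Your scheme needs, on the exponential part of $\bar u$, the pointwise inequality
\[
\sqrt a\,m'(t)\ \ge\ 2a+\sup_{|x|\ge m(t)+O(1)}\bigl(\chi\sqrt a\,\partial_r v(t,x)+|\chi|\,\partial_t v(t,x)\bigr),
\]
and your claim that the supremum is $o(1)$ as $t\to\infty$ is not justified. The heuristic in~(b) (``$\partial_t v$ is small near the front because $v$ converges to a moving profile'') is incorrect: if $v(t,x)\approx\phi(|x|-ct)$ then $\partial_t v\approx -c\phi'$, which is $O(1)$ near the transition, not small. Nor do you establish $\partial_r v\ge 0$ on $\{|x|\ge m(t)\}$ (the data need not be radial). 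At best your argument gives $\sqrt a\,m'(t)\le 2a+C|\chi|$ with $C$ independent of $t$, i.e.\ $c^*_{\rm up}\le 2\sqrt a+C'|\chi|$, which falls short of the exact value and does not bootstrap: improving $c^*_{\rm up}$ only yields smallness of the chemotaxis terms \emph{ahead} of the improved front, not down to $|x|\approx 2\sqrt a\,t$. The ``time-accumulated'' control you allude to in the obstacle paragraph is also incompatible with the supersolution comparison you set up, which is a pointwise inequality.

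The paper proceeds by a completely different mechanism. Writing $\zeta=1-v$, it first shows (via inf/sup-convolutions for short time, then a maximum-principle argument for the quotient $w=\zeta^{p}/u$) that $\zeta^{p}\le M u$ globally in time; this is where the hypotheses $\tau=1$ and $-\chi\le p-1$ enter, as they make the parabolic inequality for $w$ close. From $\zeta^{p}\lesssim u$ one obtains $|\nabla\zeta|,|D^2\zeta|\lesssim u^{1/(pp')}$. The decisive step is then a \emph{change of unknown}: setting $W:=u-\tfrac{\chi}{2}|\nabla\zeta|^2$, a direct computation plus the above gradient bounds and $\zeta\le C u^{1/2}$ give $W_t\le \Delta W+aW$ once $|\chi|$ is small enough to absorb the error terms into $-bu^2$. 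This yields the exact speed $2\sqrt a$ without ever needing $\partial_t v$ or $\partial_r v$ to be $o(1)$ near the front. Your proposal misses this $\zeta^{p}/u$ coupling and the $u-\tfrac{\chi}{2}|\nabla\zeta|^2$ substitution, which are precisely the ideas that close the $O(|\chi|)$ gap.
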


\begin{tm}
\label{speed-thm1}
Suppose that $(u(t,x;u_0,v_0), v(t,x;u_0,v_0))$ is a globally defined bounded solution
of \eqref{main-Eq1}. Assume that $\tau=1$,  $\sigma\in (0,1)$,  $1-v_0$, $u_0\in X_c^+\cap X_1$, and $v_0\in C_{\rm unif}^{2+\alpha, b}(\R^N)$ for some $\alpha>0$.  Then there exists  $\chi_0>0$ such that if $|\chi|<\chi_0$, the conclusion of Theorem \ref{speed-thm} (2) holds the same.
\end{tm}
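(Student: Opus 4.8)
The plan is to follow the strategy already established for Theorem 1.4(2), adapting it to the modified logistic nonlinearity $u(a-bu^\sigma)$ with $\sigma\in(0,1)$. By Theorem 1.1 applied to \eqref{main-Eq1} (with $\frac{a}{b}$ replaced by $(\frac{a}{b})^{1/\sigma}$), we already know $c^*_{\rm low}(u_0,v_0)\ge 2\sqrt a$ and that $u\to(\frac{a}{b})^{1/\sigma}$, $v\to 0$ locally uniformly inside the lower spreading cone. Hence the entire content of the theorem is the upper bound $c^*_{\rm up}(u_0,v_0)\le 2\sqrt a$, for which it suffices (as in Theorem 1.2) to construct, for each fixed $c''>2\sqrt a$, a supersolution of the $u$-equation of the form $M e^{-\sqrt a(|x|-c''t)}$, or more precisely a one-dimensional traveling profile $\bar u(t,x)=\min\{\|u_0\|_\infty\vee(\frac a b)^{1/\sigma},\ A e^{-\mu(x\cdot\xi-c''t)}\}$ dominating $u$ along every direction $\xi$. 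The key simplification exploited here is $\tau=1$ together with $1-v_0\in X_c^+$: this lets one control $v$ from below by comparison with the heat semigroup acting on $1-v_0$, so that $1-v(t,x)$ has compact-in-a-moving-frame support estimates, and hence $\|\nabla v(t,\cdot)\|_\infty$ decays (or stays suitably small) where $u$ is being estimated.

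The main steps, in order, would be: (i) Record the a priori bounds from \cite{HSZ} and Remark 1.4 giving global boundedness of $u$, $\nabla u$, $\Delta v$, and $\nabla v$ on $[0,\infty)\times\R^N$, using $u_0\in X_1$, $v_0\in X_2^+$. (ii) Use $\tau=1$ and $1-v_0\in X_c^+$ to show, via the comparison principle applied to the equation satisfied by $w:=1-v$ (namely $w_t=\Delta w+u(1-w)$, with $w(0,\cdot)=1-v_0$ compactly supported and $0\le w\le 1$), that $\|\nabla v(t,\cdot)\|_{L^\infty(|x|\ge c't)}$ is exponentially small for any fixed $c'$ with $2\sqrt a<c'<c''$; this is the analogue of Theorem 1.3(2)'s exponential estimate but only the smallness of $\nabla v$ far out is needed. (iii) In the region $|x|\ge c't$, the $u$-equation reads $u_t\le \Delta u-\chi\nabla v\cdot\nabla u-\chi u\Delta v+au$ (dropping the good $-bu^{1+\sigma}$ term), and since $\chi\Delta v=\chi(uv-v_t)$ and $|\chi|<\chi_0$ small, the drift and zeroth-order perturbations are small; comparing with the explicit exponential supersolution of the perturbed linear operator, whose decay rate is close to $\sqrt a$, yields \eqref{upper-bound-eqq1} with $c''$ arbitrarily close to $2\sqrt a$. (iv) Conclude $c^*_{\rm up}(u_0,v_0)\le 2\sqrt a$, combine with part (1)'s lower bound to get equality, and invoke Theorem 1.1(2) (in its $\sigma$-version) for the convergence statements bundled into "the conclusion of Theorem 1.4(2) holds the same."

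The hard part will be step (iii): making the supersolution comparison rigorous when the drift term $-\chi\nabla v\cdot\nabla u$ and the potential term $-\chi u\Delta v$ are present. Unlike the Fisher–KPP case, one cannot simply quote a scalar comparison; one must either (a) absorb $-\chi u\Delta v=-\chi u(uv-\tau v_t)$ using that $v$ and $v_t$ are bounded while $uv$ is controlled by the already-established decay of $u$ itself (a bootstrap), or (b) work with the substitution $\tilde u=ue^{-\chi v}$ which removes the drift, turning the equation into $\tilde u_t=\Delta\tilde u+\chi\tilde u\,u v\ (=\chi\tilde u(\tau v_t-\Delta v)\cdot\text{stuff})+\ldots$ — the cleaner route, already implicitly used for Theorem 1.4(2). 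With $\tau=1$ the term $\chi\tilde u\, uv$ is a nonnegative-sign-controllable perturbation of size $O(|\chi|)$ relative to the linear part, so choosing $\chi_0$ small and $c''-2\sqrt a$ a fixed positive gap lets the exponential supersolution absorb it. The role of $\sigma\in(0,1)$ is essentially cosmetic for the upper bound — the term $-bu^{1+\sigma}$ is discarded — but it does matter for part (1), where Theorem 1.1's $\sigma$-version must be invoked rather than re-proven; I would simply cite that extension as stated in the paragraph following Remark 1.3.
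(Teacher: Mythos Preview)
Your outline has a genuine gap, and it also diverges sharply from the paper's method.

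\textbf{The circularity in step (ii).} You propose to show that $\|\nabla v(t,\cdot)\|_{L^\infty(|x|\ge c't)}$ is exponentially small for any $c'\in(2\sqrt a,c'')$, by comparison on the equation $w_t=\Delta w+u(1-w)$ for $w:=1-v$. But the source term $u(1-w)=uv$ is of order one wherever $u$ is of order one, and at this stage the only a~priori upper bound on $u$ is the rough one from Theorem~\ref{speed-upper-bound-thm}(1), namely $u(t,x)\le M e^{-\sqrt a(|x|-c_1 t)}$ with $c_1=2\sqrt a+|\chi|A(1+1/\sqrt a)$, which may be strictly larger than $2\sqrt a$. So for $c'\in(2\sqrt a,c_1)$ the region $\{|x|\ge c't\}$ contains points where the source has been $O(1)$ for a long time, and $w$ (hence $\nabla v$) need not be small there. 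Any bootstrap attempting to shrink $c_1$ toward $2\sqrt a$ would require the supersolution comparison of step (iii) to hold in a region where you have not yet controlled the perturbation; you have not indicated how to close such an iteration, and it is not clear that it can be closed this way.

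\textbf{The role of $\sigma<1$ and of $-bu^{1+\sigma}$.} You write that $\sigma\in(0,1)$ is ``essentially cosmetic'' and that ``the term $-bu^{1+\sigma}$ is discarded''. In the paper's proof both are essential. The paper does \emph{not} try to make $\nabla v$ and $\Delta v$ small in a spatial region; instead it establishes the \emph{pointwise} inequality $\zeta^p\le Mu$ for $\zeta:=1-v$ and suitable $p>1$, everywhere in space and for all $t\ge\beta$ (Lemmas~\ref{L.8.1}--\ref{L.8.3}). Parabolic regularity then yields $|\nabla\zeta|,\,|D^2\zeta|\le C u^{1/(p'p)}$ (Lemma~\ref{L.8.2}), and by taking $p'p$ close enough to $1$---which is possible precisely because $(1+\sigma)/2<1$ and $\sigma<1$---one gets $|\nabla u|,|\nabla\zeta|\le Cu^{(1+\sigma)/2}$ and $|\Delta\zeta|\le Cu^{\sigma}$. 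Plugging into the $u$-equation gives
\[
u_t\le \Delta u+au-bu^{1+\sigma}+C|\chi|\,u^{1+\sigma},
\]
and the nonlinear damping $-bu^{1+\sigma}$ absorbs the perturbation once $|\chi|\le b/C$, leaving $u_t\le\Delta u+au$ globally. The spreading speed $2\sqrt a$ follows immediately. None of this structure---the quotient $w=\zeta^p/u$, the inf/sup-convolution trick to initialize it, or the matching of exponents with $1+\sigma$---appears in your proposal. Your alternative (b), the Hopf--Cole type substitution $\tilde u=ue^{-\chi v}$, does not remove the troublesome terms cleanly either (compute $\Delta\tilde u$: cross terms $-2\chi\nabla u\cdot\nabla v$ and $\chi^2 u|\nabla v|^2$ remain).
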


\begin{rk}
\label{speed-rk}
{\rm
\begin{itemize}
\item[(1)] Note that the biological species gradually consumes the chemical substance over time. Consequently, if initially, the chemical substance does not occupy the entire space to a certain extent, as indicated by conditions such as $v_0\in L^p(\R^N)$ for some $p\ge 1$ or $v_0\in C_0(\R^N)$, it intuitively follows that the spreading of the species should be unaffected. This intuition aligns with the findings of Theorem \ref{speed-thm} (1).  

\item[(2)] Recall that when $\chi<0$ (resp. $\chi>0$), the chemical substance acts as a chemorepellent (resp. chemoattractant). If $v_0$ is not negligible as $|x|\to\infty$, it is reasonable to expect that negative chemotaxis wouldn't enhance the spread of the biological species, while a chemoattractant might accelerate it. This expectation is partly supported by our Theorem \ref{speed-thm} (2) when $\chi<0$ but $|\chi|\ll 1$. However, the situation regarding chemoattractants is more complicated. Indeed, as confirmed by numerical simulations (refer to Section \ref{simulation}), a phase transition emerges: there exists a critical threshold, denoted as $\chi^* > 0$, such that the spread accelerates when $\chi>\chi^*$, while it maintains a constant speed of $2\sqrt{a}$ when $\chi<\chi^*$. Although Theorem \ref{speed-thm1} provides another evidence, the problem remains open. 



\item[(3)] The additional smooth assumption $u_0\in  X^+_1$ and $v_0\in C_{\rm unif}^{2+\alpha, b}(\R^N)$  guarantees the boundedness of $\|\nabla u(t,\cdot;u_0,v_0)\|_\infty$ and $\|\Delta v(t,\cdot;u_0,v_0)\|_\infty$ up to $t=0$, which is a technical assumption used in our proof. 

\item[(4)] To establish Theorem \ref{speed-thm}(2) and Theorem \ref{speed-thm1}, we explore a novel connection between the population densities of the species $u$ and the chemical substance $v$. Specifically, we introduce the variables $\zeta:=1-v$ and
\[
w:=\zeta^p/u\qquad\text{ for some }p>1,
\]
and $w$ turns out to satisfy a favorable parabolic equation, see the proof of Lemma \ref{L.8.3}.
Leveraging the smoothing effect inherent in the parabolic equation, we are able to show $\zeta^p\lesssim u$ which leads to the estimates of $\zeta$ and $\nabla \zeta$ in terms of $u^{{1}/{p}}$.
However, a significant difficulty arises in this approach: since $u_0$ and $\zeta_0:=1-v_0$ are known to have compact support only, $w$ is not well-defined at time $0$. To circumvent this obstacle, we prove that $\zeta^p\lesssim u$ at some small positive times by using the comparison principle and employing the inf- and sup- convolution technique for viscosity solutions.
Having $w$ be finite over at least a small positive time interval, we propagate this property to all subsequent times through the parabolic equation satisfied by $w$. Linking the two density variables $u$ and $v$ is essential in the analysis of spreading properties.

\end{itemize}
}
\end{rk}

\subsection{Numerical simulations and biological  indications}

As mentioned before, numerical experiments indicate a phase transition, that is there exists a positive critical value of $\chi$, beyond which the spreading speed accelerates, while it remains to be $2\sqrt{a}$ when $\chi<\chi^*$. Theorem \ref{speed-thm}(2) confirmed this expectation for $\chi<0$ but $|\chi|\ll 1$. In this subsection, we present the numerical experiments that explore the influence of chemotaxis on the spread of the biological species in \eqref{main-Eq} when $N=1$ and {$a=b=1$}.
The equation becomes
\begin{equation*}
\begin{cases}
u_{t}= u_{xx} - \chi(uv_x)_x+ u(1-u),\quad  &x\in \R \\
{\tau}{v_t}=v_{xx}-uv,\quad  & x\in \R.
\end{cases}
\end{equation*}

For given    $u_0\in X^+$ and $v_0\in X^+_1$, in order to see the behavior of $u(t,x;u_0,v_0)$ near $(t,ct)$,  we  consider
$ (\tilde u(t, x),\tilde v(t,x)):= (u(t, x+ct;u_0,v_0)$, $v(t,x+ct;u_0,v_0))$, which solves
\begin{equation}\label{num02}
\begin{cases}
\tilde u_{t}=\tilde u_{xx} + c\tilde u_x - \chi(\tilde u\tilde v_x)_x+ \tilde u(1-\tilde u),\quad & x\in \R, \\
{\tau}{ \tilde v_t}=\tilde v_{xx} + c\tau\tilde v_x-\tilde u\tilde v,\quad &x\in \R,\\
\tilde u(0, x) = u_0(x),\, \, \tilde v(0, x) = v_0(x), \quad & x \in \R.
\end{cases}
\end{equation}
For the numerical simulations, we use the following cut-off system of \eqref{num02}  on $(-L,L)$,
\begin{equation}\label{num03}
\begin{cases}
\tilde u_{t}=\tilde u_{xx} + c\tilde u_x - \chi(\tilde u\tilde v_x)_x+ \tilde u(1-\tilde u),\quad & x\in (-L, L), \\
{\tau}{\tilde v_t}=\tilde v_{xx} + c{\tau}\tilde v_x-\tilde u\tilde v,\quad& x\in (-L, L),\\
\tilde u(0, x) = u_0(x),\, \, \tilde v(0, x) = v_0(x), \quad& x \in (-L, L)
\end{cases}
\end{equation}
 complemented with the following boundary conditions:
\begin{equation}\label{BC}
    \tilde u(t, \pm L) = 0, \qquad \frac{\partial\tilde v}{\partial x}(t, \pm L)= 0.
\end{equation}

If $\chi=0$, it suffices to solve for the following Fisher-KPP equation with convection,
\begin{equation}
\label{reduced-fisher-kpp}
\begin{cases}
\tilde u_t=\tilde u_{xx}+c\tilde u_x +\tilde u(1-\tilde u),\quad -L<x<L\cr
\tilde{u}(0,x)=u_0,\,\, \tilde u(t,-L)=\tilde u(t,L)=0.
\end{cases}
\end{equation}
Following from the arguments of \cite[Theorem 2.2]{FrZh},
{we have the following dichotomy about the asymptotic dynamics of \eqref{reduced-fisher-kpp}: for fixed $c\in\mathbb{R}$ and $L>0$, either $\tilde u(t,x)\to 0$ as $t\to \infty$ uniformly in $x\in[-L,L]$ for any $u_0\in C([-L,L])$ and $u_0>0$, or \eqref{reduced-fisher-kpp} has a unique positive stationary solution $u^*(x)$ and
$u(t,x)\to u^*(x)$  as $t\to \infty$ uniformly for all $x\in[-L,L]$ and $u_0\in C([-L,L])$ with $u_0>0$. The former occurs when $\lambda(c,L)\le 0$ 
and
the latter occurs when  $\lambda(c,L)>0$, where $\lambda(c,L)$ is the principal eigenvalue of
$$
\begin{cases}
\tilde u_{xx}+c\tilde u_x+\tilde u=\lambda u,\quad -L<x<L\cr
\tilde u(-L)=\tilde u(L)=0.
\end{cases}
$$
One can check that if $c>2$ and $L\gg 1$, 
we have $\lambda (c,L)< 0$; and when $0<c<2$, we have $\lambda(c,L)>0$ if $L\gg1$. Thus, this confirms that the spreading speed for Fisher-KPP is $2\sqrt{a}$, which is $2$ here. 

When $\chi\not =0$, if  for some  $c>2$ and $L$ sufficiently large, $\tilde u(t,x)\not \to 0$ as $t\to\infty$, we can conclude that the chemotaxis speeds up the spreading of the species.}

We  choose the following initial functions  $u_0$ and $v_0$,
\begin{equation}\label{u0-v0}
u_0(x) = \begin{cases}
0,\quad  & x\le-1, \\
e^{\frac{1}{x^2 - 1}},\quad & x\in (-1, 1),\\
0, \quad & x\ge 1
\end{cases} \quad v_0(x) = 1,
\end{equation}
and $L=20$.
We compute the numerical solution  of \eqref{num03}+\eqref{BC} using the finite difference method (see Section \ref{simulation} for more detail). The following scenarios are observed numerically for ${\tau =1}$.

\begin{itemize}

\item[(i)] When $\chi>0$ and is not  large, for any $c>2$, $\tilde u(t,x)\to 0$ as $t\to\infty$, which indicates that small positive chemotaxis does not speed up the spreading of the biological species (see
the numerical experiments in subsection \ref{simulation-1}).

\item[(ii)] When $\chi>0$ and is large, $\tilde u(t,x)\not \to 0$ as $t\to\infty$ for some $c>2$, which indicates that large positive chemotaxis speeds up the spreading of the biological species (see the numerical experiments in subsection \ref{simulation-2}).

\item[(iii)] When $\chi<0$, for any $c>2$,  $\tilde u(t,x)\to 0$ as $t\to\infty$,
which indicates that negative chemotaxis does not speed up the spreading of the biological species
(see the numerical experiments in subsection \ref{simulation-3}).
\end{itemize}

We further compare the behavior of solutions for different values of $\tau$. We choose $\tau = 0.5$ and $\tau = 4.$ The result of the simulation shows that for large positive $\chi$, chemotaxis speeds up the spreading of the biological species for these $\tau$. 

The rest of the paper is organized as follows:
In Section 2, we present some preliminary materials for use in later sections, including a review of the global existence of classical solutions of \eqref{main-Eq}; special Harnack inequalities for bounded solutions of \eqref{main-Eq}; the comparison principle for viscosity solutions to general parabolic type equations; and convergence of globally defined bounded solutions of \eqref{main-Eq} to the constant solution $(\frac{a}{b},0)$. Section 3 is devoted to the investigation of the lower bounds of spreading speeds of global bounded solutions of \eqref{main-Eq} and the proof of Theorem \ref{speed-lower-bound-thm}. In Section 4, we study the upper bounds of spreading speeds of global bounded solutions to \eqref{main-Eq} and prove Theorem \ref{speed-upper-bound-thm}. The existence of spreading speeds is discussed in Section 5. Theorems \ref{speed-thm} and \ref{speed-thm1} are proved in this section. In Section 6, we present our numerical experiments. Finally, we give a proof of Lemma \ref{asymptotic-lm1} in the appendix.

\section{Preliminary}

In this section, we recall some results from \cite{HSZ} about global existence and $C^{1+\theta,2+\nu}$--boundedness  of
classical solutions of \eqref{main-Eq}, present a Harnack type inequality for \eqref{main-Eq}, and introduce the concept of viscosity solutions of general parabolic type equations and recall the comparison principle for viscosity solutions.

\subsection{Global existence  {and $C^{1+\theta, 2+\nu}$-boundedness} of classical solutions}


We first introduce the following notations. For given $0<\nu<1$, let
\begin{equation*}
C^{\nu, b}_{\rm unif}(\R^N)=\Big\{u\in X\,|\, \sup_{x,y\in\R^N,x\not =y}\frac{|u(x)-u(y)|}{|x-y|^\nu}<\infty\Big\}
\end{equation*}
with the norm $\|u\|_{\infty,\nu}:=\|u\|_\infty +\sup_{x,y\in\R^N,x\not =y}\frac{|u(x)-u(y)|}{|x-y|^\nu}$.
For given  $0<\theta<1$ and an interval $I\subset\R$, let
\begin{align*}
  C_{\rm unif} ^{\theta,\nu}(I\times \R^N)=\Big\{ u(\cdot,\cdot)\in C_{\rm unif}^b (I\times \R^N)\,|\,
\quad  \|u\|_{C_{\rm unif}^{\theta,\nu}(I\times \R^N)}<\infty\Big\}
\end{align*}
where the norm is given by
\[
\|u\|_{C_{\rm unif}^{\theta,\nu}(I\times \R^N)}:=\sup_{(t,x)\in I\times\R^N}|u(t,x)|+\sup_{(t,x),(s,y)\in I\times\R^N, (t,x)\not =(s,y)}\frac{|u(t,x)-u(s,y)|}{|t-s|^\theta+|x-y|^\nu}.
\]

Now, we recall the global existence of classical solutions of \eqref{main-Eq}  
and 
present a lemma on the $C^{1+\theta,2+\nu}$-boundedness of globally defined bounded classical solutions of \eqref{main-Eq}.

\begin{prop}
\label{global-existence-prop}  {\rm (Global existence)}
For any given  $u_0\in X^+$ and $v_0\in X_1^+$,  if
\begin{equation*}
|\chi|\cdot \|v_0\|_\infty <\max\left\{ {D^*_{\tau,N}, \,  b \cdot {C^*_{N}} }\right\} ,
\end{equation*}
then  the classical solution
$(u(t,x;u_0,v_0)$, $v(t,x;u_0,v_0))$ of \eqref{main-Eq}  exists for all $t>0$,  and
\[
\|u(t,\cdot;u_0,v_0)\|_\infty\quad\text{and}\quad\|\nabla v(t,\cdot;u_0,v_0)\|_\infty \text{ stays bounded as $t\to\infty$},
\]
where
\begin{equation*}
D^*_{\tau,N}:=\begin{cases}  \frac{2}{\tau N^* }\left(2\sqrt{\frac{(\tau^*)^2}4+\frac1{\tau N^*}}+j|\tau^*|\right)^{-1}\quad &{\rm if}\quad \sqrt{\frac{(\tau^*)^2}4+\frac1{\tau N^*}}>-j|\tau^*|\cr\cr
\frac{2}{ \tau N^*}\left(\sqrt{\frac{(\tau^*)^2}4+\frac1{\tau N^*}}\right)^{-1}\quad &{\rm if}\quad \sqrt{\frac{(\tau^*)^2}4+\frac1{\tau N^*}}\leq -j|\tau^*|
\end{cases}
\end{equation*}
with $N^*:=\max\{1,\frac{N}{2}\}$, $
\tau^*:=\frac{1}{\tau}-1$, and $j:=\text{\rm Sign}(\chi\tau^*)$ is the sign of $\chi\tau^*$,
and
\begin{equation*}
{{ C^*_{N}}:=\sup_{\gamma>\max\{1,N/2\}}\frac{\gamma}{\gamma-1}\left({ C_{\gamma+1,N}}\right)^{-\frac{1}{\gamma+1}},}
\end{equation*}
where $C_{\gamma+1,N}$ is  a constant associated with the maximal regularity for  the following parabolic equation
\begin{equation*}
\begin{cases}
\tau v_t =\Delta v -\alpha  v + g,\quad &x\in \R^N,\,\,  0<t<T\cr
v(0,x) = v_0(x),\quad  &x\in\R^N
\end{cases}
\end{equation*}
and $C_{\gamma+1,N}\leq ({C\gamma^2})^\gamma({\gamma-1})^{-\gamma-1} 2^{9n(\gamma-1)}$ with $C$ an absolute constant
(see \cite[Lemma 2.3, Theorem A.1]{HSZ}).
\end{prop}

The above proposition  follows from \cite[Theorem 1.2]{HSZ}. We comment that the estimates on $u$ and $v$ remain uniform as $\chi\to 0$, meaning that they depend only on an upper bound of $|\chi|$.

\begin{lem}
\label{derivative-boundedness-lm}
 {\rm ($C^{1+\theta,2+\nu}$-boundedness)}  Suppose  that $u_0\in X^+$ and $v_0\in X^+_1$, and  $(u(t,x;u_0,v_0)$, $ v(t,x;u_0,v_0))$ is a globally defined bounded solution \eqref{main-Eq}. Then  for any given $0<\nu\ll 1$, $0<\theta\ll 1$,  and $t_0>0$, there is $C>0$ depending on the system parameters, $\|v_0\|_{X_1}$,
$\sup_{t\geq 0}\|u(t,\cdot;u_0,v_0)\|_\infty$, $\nu$, $\theta$ and $t_0$ such that
\begin{equation}
\label{bound-on-holder-norm-eq}
\|w\|_{C_{\rm unif}^{\theta,\nu}([t_0,\infty)\times \R^N)}\le  C,
\end{equation}
where $w(t,x)$ is any one  of the following functions:  $u(t,x;u_0,v_0)$, $\partial_{x_i}u(t,x;u_0,v_0)$, $\partial_t u(t,x;u_0,v_0)$, $\partial^2_{x_i x_j}u(t,x;u_0,v_0)$, $v(t,x;u_0,v_0)$, $\partial_{x_i}v(t,x;u_0,v_0)$,
$\partial_t v(t,x$; $u_0,v_0)$ or $\partial^2_{x_ix_j}v(t,x;u_0,v_0)$ for $1\le i,j\le N$.

Moreover, if $u_0\in X_1^+$ and $v_0\in C_{\rm unif}^{2+\alpha, b}(\R^N)$ for some $\alpha>0$, then \eqref{bound-on-holder-norm-eq} holds for $t_0=0$ with $w$ being $v(t,x;u_0,v_0)$, $\partial_{x_i}v(t,x;u_0,v_0)$,
$\partial_t v(t,x$; $u_0,v_0)$ or $\partial^2_{x_ix_j}v(t,x;u_0,v_0)$ for $1\le i,j\le N$.
\end{lem}

The proof  of the above lemma follows from the outline of the proof of \cite[Proposition 2.1]{HSZ}. For the reader's convenience,  we provide a proof in the appendix.

\begin{rk}
\label{lm2.2-rk} 
It follows from Proposition \ref{global-existence-prop} that the classical solution $(u(t,x;u_0,v_0)$, $v(t,x;u_0,v_0))$  to \eqref{main-Eq}  exists globally when $|\chi|$ is sufficiently small. Moreover, from its proof, we have that
$\sup_{t\geq 0}\|u(t,\cdot;u_0,v_0)\|_\infty$ stays uniformly finite for all small $|\chi|$. Due to this, it follows from the proof of Lemma \ref{derivative-boundedness-lm} that the constant $C$ in \eqref{bound-on-holder-norm-eq} is uniform for all $|\chi|$ sufficiently small as well.
\end{rk}

We recall \cite[Proposition 2.1(1)]{HSZ} about local existence of solutions:   For any  given  $u_0\in X$ and $v_0\in  X_1$,  there is $T_{\max}:=T_{\max}(u_0,v_0)\in (0,\infty]$ such that \eqref{main-Eq}  has a unique classical solution
$(u(t,x;u_0,v_0),v(t,x;u_0,v_0))$ on $(0,T_{\max}(u_0,v_0))$ with $u(0,x;u_0,v_0)=u_0(x)$ and $v(0,x;u_0,v_0)=v_0(x)$. Moreover,
if $u_0,v_0\ge 0$, then $u(t,x;u_0,v_0)\ge 0$ and $v (t,x;u_0,v_0)\ge 0$ for $t\in [0,T_{\max} (u_0,v_0))$ and $x\in\R^N$.
We end up this subsection with  some  remarks on these solutions.
\begin{rk}
{\rm
\begin{itemize}
\item[(1)]  When $N=1$ or $2$,  for any $u_0\in X^+$ and $v_0\in X_1^+$, $T_{\max}(u_0,v_0)=\infty$ (see \cite[Remark 1.3]{HSZ}).

\item[(2)] Following the arguments of { Lemma \ref{derivative-boundedness-lm}}, for any given $u_0\in X^+$,  $v_0\in X^+_1$, $0<\nu\ll 1$, $0<\theta\ll 1$,  and $[t_0,T]\subset (0, T_{\max}(u_0,v_0))$, there is $C>0$ depending on $\chi$,  $\|v_0\|_{X_1}$,
$\sup_{t\in[0,T]}\|u(t,\cdot;u_0,v_0)\|_\infty$, $\nu$, $\theta$, and $t_0$ such that \eqref{bound-on-holder-norm-eq} holds with $C_{\rm unif}^{\theta,\nu}([t_0,T]\times \R^N)$ in place of $C_{\rm unif}^{\theta,\nu}([t_0,\infty)\times \R^N)$.


\item[(3)] Considering \eqref{main-Eq1},   by similar  arguments of \cite[Proposition 2.1(1)]{HSZ}, for any given $u_0\in  { X^+}$ and $v_0\in {X_1^+}$, there is  $T_{\max}^\sigma(u_0,v_0)\in (0,\infty]$ such that
\eqref{main-Eq1} has a unique classical solution $(u^\sigma(t,x;u_0,v_0),v^\sigma(t,x;u_0,v_0))$ on $(0,T_{\max}^\sigma(u_0,v_0))$
with $u^\sigma(0,x;u_0,v_0)=u_0(x)$ and $v^\sigma(0,x;u_0,v_0)=v_0(x)$. 

\item[(4)]
 We refer $u(a-bu^\sigma)$ as a weak (resp. regular, strong)  logistic source if
$0<\sigma<1$ (resp. $\sigma=1$, $\sigma>1$).  When $\sigma>1$, it can be proved that
$T_{\max}^\sigma(u_0,v_0)=\infty$ for any $u_0\in X^+$ and $v_0\in X^+_1$.
The global existence of classical solutions of \eqref{main-Eq1} with $0<\sigma<1$ will not be studied in this paper.
\end{itemize}
}
\end{rk}

Throughout the rest of the paper, for given  $\sigma > 0$  and $(u_0,v_0)\in X^+\times X^+_1$,  if $(u(t,x;u_0,v_0)$,  $v(t,x;u_0,v_0))$ is a   globally defined bounded solution of \eqref{main-Eq1}, we put
$$\|u\|_\infty=\sup_{t\ge 0}\|u(t,\cdot;u_0,v_0)\|_\infty.
$$

\subsection{Special Harnack inequality}

In this subsection,  we present two lemmas, which  will be used frequently in the proofs of the main results. The first lemma is a Harnack type inequality, which is a generalization \cite[Lemma 2.2]{HaHe}.
The constants $C$'s might depend on the parameters $a$, $b$, $\sigma$, $\tau$ in the equation, as well as the dimension $N$, without further notification.

\begin{lem}
\label{asymptotic-lm1}
For given  $\sigma > 0$, $u_0\in X^+$ and $v_0\in X^+_1$,
assume
  that $(u(t,x;u_0,v_0)$,  $v(t,x;u_0,v_0))$ is a   globally defined bounded solution of \eqref{main-Eq1}.
Then for any $s_0 \ge 0$,
$R>0$,  and $p\in (1,\infty)$,  there exists a constant $ C=C(s_0,R,p,\chi,\|v_0\|_{X_1}, \|u\|_\infty)$ such that if $t\ge 1$,
$s\in [0,s_0]$,
 and $|x-y|\le  R$, then
\begin{equation}
\label{asymptotic-eq1}
u(t,x;u_0,v_0)\le  C u^{\frac{1}{p}}(t+s, y;u_0,v_0)
\end{equation}
and
\begin{equation}
\label{asymptotic-eq2}
|\nabla u(t,x;u_0,v_0)|\le C u^{\frac{1}{p}}(t, y;u_0,v_0).
\end{equation}
The constant $C$ is uniform as $\chi\to 0$.
\end{lem}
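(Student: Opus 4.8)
\textbf{Proof strategy for Lemma \ref{asymptotic-lm1}.}

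The plan is to combine parabolic regularity estimates with a Harnack-type inequality for a suitably linearized scalar equation. The key observation is that, since $(u,v)$ is a globally defined bounded solution, the function $v$ and its gradient are bounded, and in fact (by Proposition \ref{global-existence-prop} and Remark \ref{existence-rk}) $v$, $\nabla v$, $\Delta v$, $\partial_t v$ all enjoy uniform H\"older bounds on $[t_0,\infty)$ for any $t_0>0$. Therefore, writing out $\nabla\cdot(u\nabla v) = \nabla u\cdot\nabla v + u\Delta v$, the first equation in \eqref{main-Eq1} becomes a linear (in $u$) uniformly parabolic equation
\[
u_t = \Delta u - \chi\nabla v\cdot\nabla u + \big(a - \chi\Delta v - bu^\sigma\big)u,
\]
with bounded, uniformly H\"older continuous coefficients on $[1/2,\infty)\times\R^N$ (the zeroth-order coefficient $a - \chi\Delta v - bu^\sigma$ is bounded because $\|u\|_\infty<\infty$; the drift $-\chi\nabla v$ is bounded; all are H\"older in $(t,x)$). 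First I would invoke this reformulation and record the uniform bounds on the coefficients.

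Next I would apply the parabolic Harnack inequality to the nonnegative solution $u$ of this linear equation on cylinders of fixed size. The classical Krylov--Safonov / Moser Harnack inequality gives, for any nonnegative solution on a parabolic cylinder $Q_{2r}(t_0,x_0)$ with uniformly parabolic, bounded-coefficient operator, a constant $C_0$ (depending only on the ellipticity, the sup-norms of the coefficients, the dimension, and $r$) such that $\sup_{Q_r^-} u \le C_0 \inf_{Q_r^+} u$, where $Q_r^-$ and $Q_r^+$ are the past and future half-cylinders. Chaining finitely many such cylinders along a space-time path connecting $(t,x)$ to $(t+s,y)$ — using $|x-y|\le R$ and $s\in[0,s_0]$, and the fact that $t\ge 1$ keeps us away from the initial layer — yields a constant $C_1 = C_1(s_0,R,\chi,\|v_0\|_\infty,\|u\|_\infty)$ with $u(t,x) \le C_1\, u(t+s,y)$. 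This is stronger than \eqref{asymptotic-eq1} with exponent $1$; to get the exponent $1/p$ I would additionally use that $u$ is bounded: since $u(t+s,y)\le \|u\|_\infty$, we have $u(t+s,y) \le \|u\|_\infty^{1-1/p}\, u(t+s,y)^{1/p}$, hence $u(t,x)\le C_1\|u\|_\infty^{1-1/p}\, u^{1/p}(t+s,y)$, which gives \eqref{asymptotic-eq1} with $C = C_1\|u\|_\infty^{1-1/p}$. (If one prefers not to invoke Krylov--Safonov in full strength, the weak Harnack inequality for supersolutions combined with local boundedness for subsolutions suffices, and is arguably more elementary here since the coefficients are H\"older continuous so Schauder theory also applies.)

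For the gradient estimate \eqref{asymptotic-eq2}, I would use interior parabolic Schauder (or $L^p$) estimates: on the cylinder $Q_1(t,x)$, since $t\ge 1$ we are at distance $\ge 1/2$ from $t=0$, so interior estimates give $|\nabla u(t,x)| \le C_2 \sup_{Q_{1}(t,x)} u$ with $C_2$ depending only on the coefficient bounds and the dimension. Then combine this with the Harnack chaining from the previous step: $\sup_{Q_1(t,x)} u \le C_3\, u(t,y)$ for any $y$ with $|x-y|\le R$ (enlarging the chain to cover the whole cylinder $Q_1(t,x)$ and to reach the point $(t,y)$), and finally use boundedness of $u$ once more to pass from $u(t,y)$ to $u^{1/p}(t,y)$. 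This produces \eqref{asymptotic-eq2} with the claimed dependence of the constant.

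The main obstacle, and the reason the authors defer the proof to the appendix, is bookkeeping the constants carefully: one must ensure the Harnack constant in the chaining argument does not blow up as we increase $s_0$ or $R$ (it grows, but stays finite for each fixed $s_0,R$), handle the anisotropy between the forward-time Harnack inequality and the need to sometimes connect points at the \emph{same} time slice (which requires the parabolic Harnack inequality's characteristic time shift and hence explains the $s$-shift in \eqref{asymptotic-eq1} versus its absence in \eqref{asymptotic-eq2}, where one only connects points at time $t$ via a cylinder reaching slightly into the past $t-\tfrac12\ge\tfrac12$), and verify that the zeroth-order coefficient $a-\chi\Delta v-bu^\sigma$ is genuinely bounded, which relies on the global boundedness hypothesis and on the regularity estimate \eqref{bound-on-holder-norm-eq} for $\Delta v$. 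Everything else is a routine application of standard parabolic theory.
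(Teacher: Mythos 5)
Your reduction of the $u$-equation to a linear uniformly parabolic equation with bounded coefficients is sound, and it is also, in spirit, how the paper begins (it traps $u$ between explicit exponential multiples of $\bar u$, the solution of $\bar u_t=\Delta\bar u-\chi\nabla v\cdot\nabla\bar u$). The gap is the next step: the intermediate claim $u(t,x)\le C_1\,u(t+s,y)$ with exponent $1$, uniformly for $s\in[0,s_0]$ --- in particular for $s=0$, which the lemma allows --- is false, and no Harnack chaining can establish it. Each link of a parabolic Harnack chain compares a value only with values at strictly \emph{later} times (later by an amount proportional to the square of the spatial displacement), so a chain starting at $(t,x)$ can never terminate on the time slice $\{t\}$ at a different point $y$. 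Moreover the statement itself fails: for the heat equation on $\R$ with $u_0=\mathbf{1}_{[-K,-K+1]}+e^{-K^2/2}\mathbf{1}_{[0,1]}$ one checks that $u(1,0)/u(1,R)\ge c\,e^{RK/2}\to\infty$ as $K\to\infty$ while $\|u\|_\infty\le 1$; since the Krylov--Safonov constant is uniform over the class of operators you invoke (which contains $\partial_t-\Delta$), your chaining argument would prove this false statement, so it cannot be correct. The interpolation $u\le\|u\|_\infty^{1-1/p}u^{1/p}$ is applied only \emph{after} the exponent-$1$ bound and cannot repair it. The same defect sinks your proof of \eqref{asymptotic-eq2}, which again requires comparing $(t,x)$ with $(t,y)$ on the same time slice; ``reaching slightly into the past'' does not help, because the Harnack inequality never bounds values at the top of a cylinder from above by values elsewhere on that top.

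The exponent $1/p<1$ is therefore not a cosmetic weakening to be recovered from boundedness at the end; it is precisely what makes a same-time (and small-$s$) comparison possible, and it must be produced at the level of the kernel. The paper, following Hamel--Henderson, writes $u(t_0,x)$ as comparable to $\int\Gamma(t_0,t_1,x,z)u(t_1,z)\,dz$ with $t_1=t_0-\tfrac12$, splits $\Gamma=\Gamma^{\alpha}\cdot\Gamma^{1-\alpha}$ with $\alpha=(1+p)/(2p)$, applies H\"older's inequality (this is where the factor $\|u\|_\infty^{1/q}$ enters), and then uses Aronson's Gaussian bounds to obtain the pointwise kernel comparison $\Gamma(t_0,t_1,x,z)^{\alpha p}\le C_0\,\Gamma(t,t_1,y,z)$ for $|x-y|\le R$ and $t\in[t_0,t_0+s_0]$ --- possible precisely because $\alpha p=(p+1)/2>1$ makes the left-hand Gaussian decay strictly faster in $z$, which absorbs the spatial offset between $x$ and $y$ at comparable times. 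To salvage your outline you would need to replace the chaining step by such a kernel-level argument; your Schauder/$L^q$ interior estimate for the gradient in \eqref{asymptotic-eq2} is fine once \eqref{asymptotic-eq1} is available, and indeed matches what the paper does.
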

The proof is similar to the one of \cite[Lemma 2.2]{HaHe}. For the reader's convenience,  we outline the  proof in the appendix.


The following estimates on $v$ can be obtained as a corollary of Lemma \ref{asymptotic-lm1}.

\begin{lem}
\label{asymptotic-lm2}
Let  $(u(t,x;u_0,v_0), v(t,x;u_0,v_0))$ be as in Lemma \ref{asymptotic-lm1}. For any ${\eps}>0$ and $p\in (1,\infty)$, there are $ T(\eps,\chi, {\|v_0\|_{X_1}})\geq 1$ and $C=C(\eps, p, \chi, \|v_0\|_{X_1},\|u\|_\infty)>0$ such that
\begin{equation}
\label{asymptotic-eq3}
|\nabla v(t,x;u_0,v_0)|\le {\eps}+ C u^{\frac{1}{p}}(t,x;u_0,v_0),\quad \forall\, t\ge T,\,\, x\in\R^N
\end{equation}
and
\begin{equation}
\label{asymptotic-eq4}
|\Delta v(t,x;u_0,v_0)|\le {\eps}+ C  u^{\frac{1}{p}}(t,x;u_0,v_0),\quad \forall\, t\ge T,\,\, x\in\R^N.
\end{equation}
The constants $T,C$ are uniform as $\chi\to 0$.
\end{lem}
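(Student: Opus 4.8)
The plan is to derive Lemma \ref{asymptotic-lm2} from Lemma \ref{asymptotic-lm1} by representing $\nabla v$ and $\Delta v$ through the heat semigroup applied to the equation $\tau v_t = \Delta v - uv$, and then exploiting the Harnack-type bound \eqref{asymptotic-eq1} to control the nonlocal contributions. First I would rescale time by $\tau$ so that $v$ solves $v_t = \tfrac1\tau(\Delta v - uv)$, and write, for $t$ large and any fixed $t_1 \ge 1$ (to be chosen depending on $\eps$),
\[
v(t,\cdot) = e^{\frac{t-t_1}{\tau}\Delta} v(t_1,\cdot) - \frac1\tau\int_{t_1}^t e^{\frac{t-s}{\tau}\Delta}\big(u(s,\cdot)v(s,\cdot)\big)\,ds .
\]
Applying $\nabla$ (resp. $\Delta$) and using the standard kernel estimates $\|\nabla e^{r\Delta}\|_{L^\infty\to L^\infty}\lesssim r^{-1/2}$ and, for the Laplacian term, the analyticity bound combined with the Hölder regularity of $uv$ coming from \eqref{bound-on-holder-norm-eq}, the first term is $O\big((t-t_1)^{-1/2}\big)$ (resp. decays likewise), hence smaller than $\eps$ once $t-t_1$ is large. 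This takes care of the ``$\eps$'' part: choosing $T = T(\eps,\|v_0\|_\infty)$ large forces these ``old data'' contributions below $\eps$ uniformly in $x$, using that $\|v(t,\cdot)\|_\infty \le \|v_0\|_\infty$ and $\|u\|_\infty < \infty$.

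Next I would handle the integral term, which must produce the $C|u(t,x)|^{1/p}$ bound. The key point: since $0\le v\le \|v_0\|_\infty$, we have $|u(s,\cdot)v(s,\cdot)| \le \|v_0\|_\infty\, u(s,\cdot)$, and by Lemma \ref{asymptotic-lm1}, for $s$ in a bounded window below $t$ and $|x-y|$ bounded, $u(s,y) \le C\, u(t,x)^{1/q}$ for any chosen $q>1$ — here one uses \eqref{asymptotic-eq1} with the roles arranged so that the later time is $t$; more precisely, for $s\le t$ with $t-s\le s_0$, \eqref{asymptotic-eq1} gives $u(s,y)\le C u(t,x)^{1/q}$ when $|x-y|\le R$. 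So I split $\int_{t_1}^t = \int_{t_1}^{t-s_0} + \int_{t-s_0}^t$. On $[t-s_0,t]$ the Gaussian kernel localizes the mass within $|x-y|\lesssim 1$ (the far tails contribute exponentially little, absorbed into $\eps$ or handled by a cruder bound), and there the Harnack inequality upgrades the integrand to $\lesssim u(t,x)^{1/q}$, giving a contribution $\lesssim s_0^{1/2} u(t,x)^{1/q}$ for $\nabla v$. Choosing $q$ slightly smaller than $p$ and noting $u(t,x)^{1/q}\le \|u\|_\infty^{1/q-1/p} u(t,x)^{1/p}$ yields the desired form $C|u(t,x)|^{1/p}$. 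On $[t_1, t-s_0]$ the integrand is only bounded by $\|v_0\|_\infty \|u\|_\infty$ times $(t-s)^{-1/2}$, but since the integration variable stays at temporal distance $\ge s_0$, this contributes at most $\lesssim \int_{s_0}^\infty$... no — this does not converge. So instead on the far-in-time piece I would use the decay of the heat kernel in space together with the fact that $\|u(s,\cdot)v(s,\cdot)\|_{L^1\cap L^\infty}$ is at least bounded, giving $(t-s)^{-1/2-N/4}$-type integrable decay, or more simply bound that whole piece by $\eps$ after enlarging $T$; the cleanest route is to fold this contribution, which is uniformly small for $t-t_1$ large, into the ``$\eps$'' term.

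For \eqref{asymptotic-eq4}, the argument for $\Delta v$ is parallel but needs one extra care: $\|\Delta e^{r\Delta}\|_{L^\infty\to L^\infty}\lesssim r^{-1}$ is not integrable at $r=0$, so one cannot differentiate the Duhamel term twice in space directly. The standard fix is to move one derivative onto the nonlinearity: write $\Delta e^{r\Delta}(uv) = \sum_i \partial_i e^{r\Delta}\big(\partial_i(uv)\big)$, which needs $\|\partial_i(uv)\|_\infty$ bounded — available from \eqref{bound-on-holder-norm-eq} (controlling $\nabla u$, $\nabla v$, and $u$, $v$) — and then $\|\partial_i e^{r\Delta}\|_{L^\infty\to L^\infty}\lesssim r^{-1/2}$ is integrable. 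Then $|\partial_i(uv)| \le |\nabla u| v + u|\nabla v| \lesssim |\nabla u| + u$ (using $v$ bounded and, recursively, $|\nabla v|$ bounded), and by \eqref{asymptotic-eq2} of Lemma \ref{asymptotic-lm1}, $|\nabla u(s,y)| \le C u(t,x)^{1/q}$ on the localized window, which again upgrades the short-time integral to the form $C u(t,x)^{1/p}$, with the remainder absorbed into $\eps$ as before. I expect the main obstacle to be the bookkeeping in the short-time integral $\int_{t-s_0}^t e^{\frac{t-s}{\tau}\Delta}(\,\cdot\,)\,ds$: one must simultaneously (a) truncate the Gaussian kernel to $|x-y|\le R$ so that Lemma \ref{asymptotic-lm1} applies with a fixed $R$, showing the discarded far-field tail is harmless; (b) keep track of the time separation so that $t-s\le s_0$ makes \eqref{asymptotic-eq1}–\eqref{asymptotic-eq2} usable (these require the later time to exceed $1$, hence the hypothesis $t\ge T\ge 1$); and (c) trade the exponent $1/q$ against $1/p$ using $\|u\|_\infty<\infty$. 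Everything else — semigroup estimates, $L^\infty$ bounds on $u,v$, Hölder bounds — is quoted from Proposition \ref{global-existence-prop} and Lemma \ref{asymptotic-lm1}.
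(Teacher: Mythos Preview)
Your overall strategy --- Duhamel representation, small initial-data term, and Lemma~\ref{asymptotic-lm1} on the near-field --- is correct in spirit, but there is a genuine gap in how you handle the Duhamel integral. You fix $t_1$ once (depending on $\eps$) and integrate over $[t_1,t]$, then split temporally into $[t_1,t-s_0]$ and $[t-s_0,t]$. The near-in-time piece is fine, but the far-in-time piece is not: with only the crude bound $\|uv\|_\infty\le\|u\|_\infty\|v_0\|_\infty$, the contribution to $|\nabla v(t,x)|$ is of order $\int_{t_1}^{t-s_0}(t-s)^{-1/2}\,ds\sim\sqrt{t-t_1}$, which diverges as $t\to\infty$. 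Your fallback suggestions do not work either: there is no reason $uv\in L^1(\R^N)$ in general, so the $(t-s)^{-1/2-N/4}$ decay is unavailable; and ``enlarging $T$'' cannot make a quantity that grows like $\sqrt t$ smaller than $\eps$.

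The fix --- and this is what the paper does --- is to use a \emph{moving} window: for each $t$ take $t_1=t-T$ with $T=T(\eps)$ fixed, so the Duhamel integral always has length $T$. The initial-data term at $t-T$ is then bounded by $\|v_0\|_\infty\cdot O(T^{-1/2})$, which is made $\le\eps$ by choosing $T$ large. The integral over $[t-T,t]$ is then split \emph{spatially}, not temporally: writing the Gaussian variable as $z$ with $y=x+2\sqrt{t-s}\,z$, one separates $|z|\le R$ from $|z|>R$. The tail $|z|>R$ contributes at most $\eps$ once $R=R(T,\|v_0\|_\infty,\|u\|_\infty)$ is large; on $|z|\le R$ one has $|x-y|\le 2\sqrt T R$ and $0\le t-s\le T$, so Lemma~\ref{asymptotic-lm1} applies with $s_0=T$ and radius $2\sqrt T R$, giving $u(s,y)\le C\,u(t,x)^{1/p}$ and $|\nabla u(s,y)|\le C\,u(t,x)^{1/p}$ uniformly on that region. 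This yields \eqref{asymptotic-eq3} directly, and \eqref{asymptotic-eq4} follows by the same argument after writing $\Delta v$ via one more derivative on the kernel (your idea of moving a derivative onto $uv$ is exactly right and matches the paper). Once you replace the fixed $t_1$ by $t-T$, your outline goes through with no further difficulty.
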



\begin{proof}
For simplicity of notations, we drop $u_0,v_0$ from the notations of $u(t,x;u_0,v_0)$, $v(t,x;u_0,v_0)$.

First, note that, for any $t>t_1\ge 0$ and $x\in\R^N$, we have
\beq\label{asymptotic-proof-eq0}
\begin{aligned}
v(t,x)&=\left(\frac{\tau}{4\pi (t-t_1)}\right)^{N/2}\int_{\R^N} e^{-\frac{\tau|x-y|^2}{4(t-t_1)}}v(t_1,y)dy \\
&\quad
-\frac{1}{\tau}\int_{t_1}^t \left(\frac{\tau}{4\pi (t-s)}\right)^{N/2}\int_{\R^N} e^{-\frac{\tau|x-y|^2}{4(t-s)}}u(s,y) v(s,y)dy ds \\
&=\left(\frac{\tau}{4\pi (t-t_1)}\right)^{N/2}\int_{\R^N} e^{-\frac{\tau|x-y|^2}{4(t-t_1)}}v(t_1,y)dy \\
&\quad  -\frac{1}{\tau}\int_{t_1}^t \left(\frac{\tau}{\pi}\right)^{N/2}\int_{\R^N} e^{-\tau|z|^2}u(s,x+2\sqrt{t-s} \, z) v(s,x+2\sqrt{t-s} \, z)dz ds.
\end{aligned}
\eeq
Since $u,v\geq 0$, this with $t_1=0$ implies that $\|v\|_\infty\leq \|v_0\|_\infty$. By \cite[Proposition 3.3]{HSZ}, there is $C=C(\|v_0\|_{X_1},\|u\|_\infty)$  such that
\begin{equation}
\label{new-v-est-eq1}
v(t,x) + |\nabla v(t,x)|\le C\quad \forall\, t\ge 0,\,\, x\in\R^N.
\end{equation}
By Lemma \ref{derivative-boundedness-lm}, there is $C=C(\chi,\|v_0\|_{X_1},\|u\|_\infty)>0$ such that
\begin{equation*}
|\nabla v(t,x)|+|\Delta v(t,x)|\le C\quad \forall\, t\ge 1,\,\, x\in\R^N.
\end{equation*}
This yields that  for any $\eps>0$, there is $T(\eps,\chi, \|v_0\|_{X_1},\|u\|_\infty)$ such that 
\begin{equation}
\label{asymptotic-proof-eq1}
\begin{cases}
\left|\nabla \frac{\tau^{N/2}}{(4\pi T)^{N/2}}\int_{\R^N} e^{-\frac{\tau|x-y|^2}{4T}}v(t_1,y)dy\right|
= \left| \frac{\tau^{N/2}}{(4\pi T)^{N/2}}\int_{\R^N} e^{-\frac{\tau|x-y|^2}{4T}}\nabla v(t_1,y)dy\right|\cr
 \qquad\qquad\qquad\qquad\qquad\qquad\qquad\quad \le {\eps/3}\quad \forall\, t_1\ge 1,\,\, x\in\R^N,\cr\cr
\left|\Delta \frac{\t^{N/2}}{(4\pi T)^{N/2}}\int_{\R^N} e^{-\frac{\tau|x-y|^2}{4T}}v(t_1,y)dy\right|   = \left| \frac{\t^{N/2}}{(4\pi T)^{N/2}}\int_{\R^N} e^{-\frac{\tau|x-y|^2}{4T}}\Delta v(t_1,y)dy\right|\cr
\qquad\qquad\qquad\qquad\qquad\qquad\qquad\quad \le {\eps/3}\quad \forall\, t_1\ge 1,\,\,  x\in\R^N.
\end{cases}
\end{equation}
Moreover,  \eqref{new-v-est-eq1} implies  that there is 
$R=R(\eps, T,{\|v_0\|_{X_1}, \|u\|_\infty})>0$
  such that for all $t_1\le t\le t_1+T$ and $x\in\R^N$,
\begin{equation}
\label{asymptotic-proof-eq2}
\begin{cases}
\left|\int_{t_1}^t \left(\frac{\tau}{\pi}\right)^{N/2}\int_{|z|>R} e^{-\tau|z|^2}|\nabla_x u(s,x+2\sqrt{t-s} \, z)| v(s,x+2\sqrt{t-s} \, z)dz ds\right|\le \eps\tau/{3}, \cr
\cr
\left|\int_{t_1}^t \left(\frac{\tau}{\pi}\right)^{N/2}\int_{|z|>R} e^{-\tau|z|^2}u(s,x+2\sqrt{t-s} \, z) |\nabla_x v(s,x+2\sqrt{t-s} \, z)|dz ds\right|\le \eps\tau/{3},\cr
\cr
\left|\int_{t_1}^t \left(\frac{\tau}{\pi}\right)^{N/2}\frac{1}{\sqrt{t-s}}\int_{|z|>R} e^{-\tau|z|^2} |z\cdot \nabla_x  u(s,x+2\sqrt{t-s} \, z)| v(s,x+2\sqrt{t-s} \, z)dz ds\right|\le {\eps}/{3},\cr
\cr
\left|\int_{t_1}^t \left(\frac{\tau}{\pi}\right)^{N/2}\frac{1}{\sqrt{t-s}}\int_{|z|>R} e^{-\tau|z|^2}u(s,x+2\sqrt{t-s} \, z)  |z\cdot \nabla_x  v(s,x+2\sqrt{t-s} \, z)|dz ds\right|\le {\eps}/{3}.
\end{cases}
\end{equation}

Next, note that, for  any $t\ge 1+T$, by \eqref{asymptotic-proof-eq0} with $t_1=t-T$,   we have
\beq\label{asymptotic-proof-eq3}
\begin{aligned}
\nabla v(t,x)
&=\nabla \frac{\tau^{N/2}}{(4\pi T)^{N/2}}\int_{\R^N} e^{-\frac{\tau|x-y|^2}{4T}}v(t-T,y)dy\\
&\quad  -{\frac{1}{\tau}} \int_{t-T}^t \left(\frac{\tau}{\pi}\right)^{N/2}\int_{\R^N} e^{-\tau|z|^2}\nabla_x u(s,x+2\sqrt{t-s} \, z) v(s,x+2\sqrt{t-s} \, z)dz ds\\
&\quad - {\frac{1}{\tau}} \int_{t-T}^t \left(\frac{\tau}{\pi}\right)^{N/2}\int_{\R^N} e^{-\tau|z|^2} u(s,x+2\sqrt{t-s} \, z) \nabla_x v(s,x+2\sqrt{t-s} \, z)dz ds.
\end{aligned}
\eeq
By \eqref{new-v-est-eq1}--\eqref{asymptotic-proof-eq3}, we have
\[
\begin{aligned}
&|\nabla v(t,x)|\\
&\le {{\eps}}+{\frac{1}{\tau}} \int_{t-T}^t  \left(\frac{\tau}{\pi}\right)^{N/2}\int_{|z|\le R} e^{-\tau|z|^2}|\nabla_x u(s,x+2\sqrt{t-s} \, z)| v(s,x+2\sqrt{t-s} \, z)dz ds\\
&\quad +{\frac{1}{\tau}} \int_{t-T}^t \left(\frac{\tau}{\pi}\right)^{N/2}\int_{|z|\le R} e^{-\tau|z|^2}u(s,x+2\sqrt{t-s} \, z)| \nabla_x v(s,x+2\sqrt{t-s} \, z)|dz ds\\
&\le \eps+ {\frac{C}{\tau}} \int_{t-T}^t  \left(\frac{\tau}{\pi}\right)^{N/2}\int_{|z|\le R} e^{-\tau|z|^2}|\nabla_x u(s,x+2\sqrt{t-s} \, z)| dz ds\\
&\quad+{\frac{C}{\tau}} \int_{t-T}^t \left(\frac{\tau}{\pi}\right)^{N/2}\int_{|z|\le R} e^{-\tau|z|^2}u(s,x+2\sqrt{t-s} \, z)| dz ds.
\end{aligned}
\] 
This, together with $2\sqrt{t-s}|z|\le 2\sqrt T R$ for $|z|\le R$ and  Lemma \ref{asymptotic-lm1}, implies that there is {$C(\eps, p, \chi, T, \|v_0\|_\infty, \|u\|_\infty)>0$ } such that
$$
|\nabla v(t,x)|\le {\eps} +C u^{1/p}(t,x)\quad \forall\, t\ge 1+T.
$$
This proves \eqref{asymptotic-eq3}. 

Now, note that,  for any $t\ge 1+T$, by \eqref{asymptotic-proof-eq0} with $t_1=t-T$,  \eqref{asymptotic-proof-eq1},  and  \eqref{asymptotic-proof-eq2}, we have
\begin{align*}
|\Delta v(t,x)|&\le |\Delta  \left(\frac{\tau}{4\pi T}\right)^{N/2}\int_{\R^N} e^{-\frac{\tau|x-y|^2}{4 T}}v(t-T,y)dy |  \\
&\quad  +|\int_{t-T}^t \left(\frac{\tau}{\pi}\right)^{N/2}\frac{1}{\sqrt{t-s}}\int_{\R^N} e^{-\tau|z|^2} \left[ z\cdot \nabla_x u(s,x+2\sqrt{t-s}\, z) \right] v(s,x+2\sqrt{t-s}\, z)dz ds| \\
&\quad +|\int_{t-T}^t \left(\frac{\tau}{\pi}\right)^{N/2}\frac{1}{\sqrt{t-s}}\int_{\R^N} e^{-\tau|z|^2} u(s,x+2\sqrt{t-s}\, z)\left[ z\cdot \nabla_x v(s,x+2\sqrt{t-s}\,z )\right]dz ds|.
\end{align*}
This, together with \eqref{new-v-est-eq1}, \eqref{asymptotic-proof-eq1}, \eqref{asymptotic-proof-eq2} and  Lemma \ref{asymptotic-lm1}  implies that there is $C(\eps, p, \chi, T, \|v_0\|_\infty, \|u\|_\infty)>0$ such that
$$
|\Delta  v(t,x)|\le  {\eps} +C u^{1/p}(t,x)\quad \forall\, t\ge 1+T,
$$
which proves \eqref{asymptotic-eq4}.
\end{proof}

\subsection{Viscosity solutions}

In this subsection, we briefly recall viscosity solutions. We refer readers to \cite{user} for more details. This notion of solutions as well as the comparison principle will be one of the main tools we use in Section \ref{S.6.2}.

Consider the following parabolic type equation:
\beq\lb{2.1}
u_t+F(t,x,u,\nabla u,D^2u)=0.
\eeq
Let $\calS^N$ denote the set of $N \times N$ symmetric matrices with the spectral norm. We say that $F$ is uniformly elliptic, if there exists $\Lambda>0$ such that for any positive semi-definite matrix $P\in \calS^N$, and any $(t,x,u,p,X)\in [0,\infty)\times \bbR^N\times\bbR\times\bbR^N\times \calS^N$,
\[
\Lambda \text{ \rm Tr}(P)   \leq  F(t,x,u,p,X)-F(t,x,u,p,X+P).
\]
We assume $F$ to be continuous and uniformly elliptic.

Now we recall the definition of viscosity solutions. Let $\Omega\subseteq\bbR^N$ be open and $T>0$.
\begin{enumerate}
\item[(i)]
We say that an upper semicontinuous (resp. lower semicontinuous) function $u:(0,T)\times\Omega\to \mathbb{R}$ is a (viscosity) subsolution (resp. (viscosity) supersolution) to \eqref{2.1} if the following holds:
for any smooth function $\phi$ in $\Omega$ such that $u-\phi$ has a local maximum (resp. minimum) at $(t_0,x_0)\in (0,T)\times\Omega$, we have
\[
\partial_t u(t_0,x_0)+{F}(t_0,x_0, u(t_0,x_0),\nabla \phi(t_0,x_0),D^2 \phi(t_0,x_0))\leq 0
\]
\[
\left(\text{resp. }\partial_t u(t_0,x_0)+{F}(t_0,x_0, u(t_0,x_0),\nabla \phi(t_0,x_0),D^2 \phi(t_0,x_0))\geq 0\right).
\]
\item[(ii)]
We say that a continuous function $u:(0,T)\times \Omega\to \mathbb{R}$ is a (viscosity) solution to \eqref{2.1} if it is both a subsolution and a supersolution.
\end{enumerate}
It is easy to see that a classical solution is a viscosity solution.

\medskip

For the purpose of the paper, we take
\beq\lb{2.2}
F(t,x,u,\nabla u, D^2 u)=-\Delta u+f(t,x) \cdot\nabla u+g(t,x,u)
\eeq
where $f,g$ are uniformly continuous and bounded functions. It is easy to check that the operator satisfies the condition (3.14) in \cite{user}.
Consequently, we have the following comparison principle.
\begin{lem}
Let $T>0$ and let $F$ be given in \eqref{2.2}. Let   $ u^+ :(0,T)\times\bbR^N\to\bbR$ and $ u^-:(0,T)\times\bbR^N \to\bbR$ be, respectively, a supersolution and a subsolution to \eqref{2.1}. If $ u^+(0,\cdot)\geq u^-(0,\cdot)$ and $\inf_{t\in (0,T)}\liminf_{|x|\to\infty} (u^+(t,x)-u^-(t,x))\geq 0$, then
\[
u^+(t,x)\geq u^-(t,x)\quad\text{ for all }(t,x)\in (0,T)\times\bbR^N.
\]
\end{lem}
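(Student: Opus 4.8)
The plan is to reduce the statement to the by-now-standard viscosity comparison machinery — doubling of variables together with the parabolic theorem on sums, as in Sections~3 and~8 of \cite{user} — the only two features beyond the textbook situation being the lack of spatial compactness, which must be absorbed using the hypothesis $\inf_{t\in(0,T)}\liminf_{|x|\to\infty}(u(t,x)-v(t,x))\ge 0$, and the possible failure of $F$ to be nondecreasing in its $u$-slot, which I would first remove by the usual exponential rescaling. Since $g$ is locally Lipschitz in its last argument on the bounded range of the sub/supersolutions under consideration, replacing $(u,v)$ by $(e^{-\lambda t}u,e^{-\lambda t}v)$ for $\lambda$ large turns $F$ into a \emph{proper} operator of the same form \eqref{2.2} (same $f$, a new uniformly continuous and bounded $g$) while preserving every hypothesis of the lemma, so I may assume $r\mapsto F(t,x,r,p,X)$ is nondecreasing. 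Then I would argue by contradiction, assuming $M:=\sup_{(0,T)\times\R^N}(v-u)>0$.

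Next I would localize. Fix a smooth coercive weight, say $\psi(x)=(1+|x|^2)^{1/2}$, so that $\psi(x)\to\infty$ as $|x|\to\infty$ while $\|\nabla\psi\|_\infty$ and $\|D^2\psi\|_\infty$ are finite, and for small $\beta,\eta>0$ consider $\Phi(t,x):=v(t,x)-u(t,x)-\frac{\eta}{T-t}-\beta\psi(x)$ on $[0,T)\times\R^N$. The decay hypothesis together with the coercivity of $\beta\psi$ forces $\Phi(t,x)\to-\infty$ as $|x|\to\infty$ uniformly in $t$, while $\frac{\eta}{T-t}\to-\infty$ as $t\uparrow T$; hence $\Phi$ attains its supremum, and since $\Phi(0,\cdot)\le 0<M/2\le\sup\Phi$ once $\beta,\eta$ are small, the maximizer $(\hat t,\hat x)$ is interior with $\hat t\in(0,T)$ bounded away from $T$, and there $v-u>\frac{\eta}{T-\hat t}+\beta\psi(\hat x)>0$. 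This is precisely where the hypothesis at spatial infinity is consumed.

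With $\beta,\eta$ now frozen, I would double the space variable: maximize $\Psi_\alpha(t,x,y):=v(t,x)-u(t,y)-\frac{\alpha}{2}|x-y|^2-\frac{\eta}{T-t}-\frac{\beta}{2}\psi(x)-\frac{\beta}{2}\psi(y)$ at some $(t_\alpha,x_\alpha,y_\alpha)$, again interior in $t$. Standard estimates give, along $\alpha\to\infty$, $t_\alpha\to\hat t$, $x_\alpha,y_\alpha\to\hat x$, $\alpha|x_\alpha-y_\alpha|^2\to0$, and $v(t_\alpha,x_\alpha)-u(t_\alpha,y_\alpha)\to v(\hat t,\hat x)-u(\hat t,\hat x)>0$, and the parabolic theorem on sums (\cite[Theorem~8.3]{user}) produces $b_1-b_2=\frac{\eta}{(T-t_\alpha)^2}$, slopes $p_1=\alpha(x_\alpha-y_\alpha)+\frac{\beta}{2}\nabla\psi(x_\alpha)$, $p_2=\alpha(x_\alpha-y_\alpha)-\frac{\beta}{2}\nabla\psi(y_\alpha)$, and matrices $X,Y\in\calS^N$ in the appropriate parabolic semijets of $v$ at $(t_\alpha,x_\alpha)$ and of $u$ at $(t_\alpha,y_\alpha)$, with
\[
\begin{pmatrix}X&0\\0&-Y\end{pmatrix}\le 3\alpha\begin{pmatrix}I&-I\\-I&I\end{pmatrix}+\beta\begin{pmatrix}D^2\psi(x_\alpha)&0\\0&D^2\psi(y_\alpha)\end{pmatrix}.
\]
Feeding these jets into the sub- and supersolution inequalities for \eqref{2.1}--\eqref{2.2} and subtracting gives $\frac{\eta}{(T-t_\alpha)^2}\le(\mathrm{Tr}\,X-\mathrm{Tr}\,Y)-\big(f(t_\alpha,x_\alpha)\cdot p_1-f(t_\alpha,y_\alpha)\cdot p_2\big)-\big(g(t_\alpha,x_\alpha,v(t_\alpha,x_\alpha))-g(t_\alpha,y_\alpha,u(t_\alpha,y_\alpha))\big)$. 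Testing the matrix inequality on vectors $(\xi,\xi)$ bounds $\mathrm{Tr}\,X-\mathrm{Tr}\,Y$ by $2N\beta\|D^2\psi\|_\infty$; the first-order difference equals $\big(f(t_\alpha,x_\alpha)-f(t_\alpha,y_\alpha)\big)\cdot\alpha(x_\alpha-y_\alpha)+O(\beta)$, whose main term $\to0$ by the structure condition~(3.14) (uniform continuity of $f$ and $\alpha|x_\alpha-y_\alpha|^2\to0$); and the $g$-difference is $\ge-o(1)$, since $g$ is now nondecreasing in its last slot and $v(t_\alpha,x_\alpha)>u(t_\alpha,y_\alpha)$ for $\alpha$ large, while $|g(t_\alpha,x_\alpha,\cdot)-g(t_\alpha,y_\alpha,\cdot)|\to0$. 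Sending $\alpha\to\infty$ yields $\frac{\eta}{(T-\hat t)^2}\le 2N\beta\|D^2\psi\|_\infty$, and then $\beta\to0$ with $\eta$ fixed gives $\frac{\eta}{(T-\hat t)^2}\le0$, a contradiction; hence $M\le0$, i.e.\ $v\le u$ on $(0,T)\times\R^N$.

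The step I expect to be most delicate is the penalization bookkeeping: selecting $\psi$, $\beta$, $\eta$ so that all of the maxima above are genuinely attained at interior points staying away from $t=T$ and from spatial infinity — this is exactly where $\inf_t\liminf_{|x|\to\infty}(u-v)\ge0$ does its work — and then tracking that every term generated by the barrier $\beta\psi$ (in $p_1,p_2$ and in the matrix inequality) is uniformly $O(\beta)$, so that it survives the limit $\alpha\to\infty$ and is killed only afterwards by $\beta\to0$. The one genuinely equation-dependent verification is that $f(t,x)\cdot\nabla u$ satisfies the structure condition~(3.14) of \cite{user}, i.e.\ that $\big(f(t,x)-f(t,y)\big)\cdot\alpha(x-y)$ is dominated by a modulus of $\alpha|x-y|^2+|x-y|$; here this holds because $f$ is uniformly continuous and bounded.
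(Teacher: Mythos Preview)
Your sketch is correct and is precisely the standard Crandall--Ishii--Lions argument the paper points to: the paper does not give its own proof but simply refers to Sections~5D and~8 of \cite{user}, and what you have written is exactly the penalize-at-infinity, double-variables, parabolic-theorem-on-sums machinery developed there. One small remark: your exponential rescaling step implicitly uses a one-sided Lipschitz bound on $g$ in its last argument, which is slightly stronger than the bare uniform continuity stated in the lemma; this is harmless in every application made in the paper (where $g$ is smooth), and the paper is equally informal on this point.
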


We refer readers to Sections 5D and 8 \cite{user} for the proof and for more general cases.

\subsection{Convergence to the constant equilibrium}

In this subsection, we  prove the convergence of globally defined bounded solutions with strictly positive initial data to the constant solution $(\frac{a}{b},0)$. 
The result implies that there are no other positive stationary solutions
$(u(x),v(x))$ of \eqref{main-Eq} with $\inf_{x\in\R^N} u(x)>0$  rather than $(\frac{a}b,0)$.

\begin{prop}[Convergence to constant equilibrium]
\label{asymptotic-behavior-prop}
Suppose  that $u_0\in X^+$ and $v_0\in X^+_1$, and  $(u(t,x;u_0,v_0), v(t,x;u_0,v_0))$ is a globally defined bounded solution \eqref{main-Eq}.
If $\inf_{x\in\R^N}u_0(x)>0$, then
$$
\lim_{t\to\infty}u(t,x;u_0,v_0)=\frac{a}{b},\quad \lim_{t\to\infty}v(t,x;u_0,v_0)=0\quad \text{uniformly in}\,\, x\in\R^N.
$$
\end{prop}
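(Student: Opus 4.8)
The plan is a two-stage bootstrap. First I would show that $v$, and with it $\nabla v$ and $\Delta v$, tends to $0$ uniformly in $x$, exploiting the consumption term $-uv$ and the fact that $u$ stays bounded away from $0$; then, the chemotactic perturbation having become negligible, I would squeeze $u$ between sub- and super-solutions of the logistic ODE and conclude $u\to a/b$. For the elementary parts of the first stage: comparing $v$ with the spatially constant supersolution $\|v_0\|_\infty$ of $\tau v_t=\Delta v-uv$ gives $0\le v(t,\cdot)\le\|v_0\|_\infty$ for all $t$; and for each $t>0$ one has $\inf_x u(t,\cdot)>0$, since $u_0\ge\delta:=\inf_x u_0>0$ and $u$ may be compared from below with the solution $\underline{u}(t)$ of $\underline{u}'=\underline{u}\bigl(a-b\underline{u}-|\chi|\,\|\Delta v(t,\cdot)\|_\infty\bigr)$ — a function constant in $x$ kills diffusion and the chemotactic drift, and $\int_0^T\|\Delta v(t,\cdot)\|_\infty\,dt<\infty$ for every $T$ by parabolic smoothing (in the Duhamel formula one space derivative of the inhomogeneity $uv$ may be thrown on $v$, and $\nabla v_0$ is bounded as $v_0\in X_1$). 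Inserting any such lower bound $u\ge\underline{u}(t)$ into the $v$-equation yields $\tau v_t\le\Delta v-\underline{u}(t)v$, whence $v(t,\cdot)\le\|v_0\|_\infty\exp\bigl(-\tfrac1\tau\int_0^t\underline{u}(s)\,ds\bigr)$; moreover, from the integral representations of $\nabla v$ and $\Delta v$ used in the proof of Lemma~\ref{asymptotic-lm2}, together with the uniform bounds on $u$ and $\nabla u$ from Proposition~\ref{global-existence-prop}, uniform smallness of $v$ propagates to $\nabla v$ and $\Delta v$.

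The delicate point — and the main obstacle — is to close this loop, since a priori the term $-\chi u\,\Delta v$ could prevent $\underline{u}$ from being bounded away from $0$. I would set it up as an iteration: a bound $\limsup_{t\to\infty}\sup_x v(t,\cdot)\le V$ produces, via that same representation of $\Delta v$ and the uniform bounds of Proposition~\ref{global-existence-prop}, a bound $\limsup_{t\to\infty}\|\Delta v(t,\cdot)\|_\infty\le C_*V$ with $C_*$ independent of $V$; once $V$ is small this makes $a-|\chi|\,\|\Delta v(t,\cdot)\|_\infty$ eventually positive, so the ODE comparison gives $\liminf_{t\to\infty}\inf_x u(t,\cdot)>0$, which in turn forces $v(t,\cdot)\le Ce^{-\gamma t}$ and hence $V=0$. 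The smallness needed to launch the iteration is available from the standing hypothesis $|\chi|\,\|v_0\|_\infty<\max\{D^*_{\tau,N},\,b\,C^*_N\}$ underlying Proposition~\ref{global-existence-prop}. (Since Stage 2 only needs $\liminf_{t\to\infty}\inf_x u>0$, or indeed only $\inf_x u(T,\cdot)>0$ at one large time, some of the bookkeeping here can be lightened.)

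Given $v(t,\cdot),\nabla v(t,\cdot),\Delta v(t,\cdot)\to0$ uniformly, fix $\epsilon>0$ and pick $T_\epsilon$ with $|\chi|\,\|\Delta v(t,\cdot)\|_\infty\le\epsilon$ for $t\ge T_\epsilon$. Writing the $u$-equation as $u_t=\Delta u-\chi\nabla v\cdot\nabla u+u\bigl(a-bu-\chi\Delta v\bigr)$, a spatially constant function is a subsolution as soon as $\underline{\phi}'\le\underline{\phi}(a-\epsilon-b\underline{\phi})$ and a supersolution as soon as $\overline{\phi}'\ge\overline{\phi}(a+\epsilon-b\overline{\phi})$, the drift term dropping out. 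Comparing on $[T_\epsilon,\infty)\times\R^N$ — legitimate by the comparison principle for parabolic equations, the coefficients $-\chi\nabla v$ and $a-bu-\chi\Delta v$ being bounded and uniformly continuous there by Proposition~\ref{global-existence-prop} — with $\underline{\phi}(T_\epsilon)=\inf_x u(T_\epsilon,\cdot)>0$ and $\overline{\phi}(T_\epsilon)=\sup_x u(T_\epsilon,\cdot)$, and using that the logistic ODE with intrinsic rate $a\pm\epsilon$ attracts every positive datum to $(a\pm\epsilon)/b$, we obtain $\tfrac{a-\epsilon}{b}\le\liminf_{t\to\infty}\inf_x u\le\limsup_{t\to\infty}\sup_x u\le\tfrac{a+\epsilon}{b}$. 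Letting $\epsilon\downarrow0$ gives $u(t,\cdot)\to a/b$ uniformly, while $v(t,\cdot)\to0$ uniformly has already been established; in particular $(\tfrac ab,0)$ is the only positive bounded steady state.
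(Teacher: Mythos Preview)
The gap is in Stage~1, at exactly the point you flag as delicate. Your iteration hinges on the implication ``once $V$ is small, $a - |\chi|\,\|\Delta v\|_\infty$ becomes positive and the ODE comparison gives $\liminf_t \inf_x u > 0$.'' But nothing in the argument makes $V$ small to begin with: you start from $V = \|v_0\|_\infty$, and the step $\|\Delta v\|_\infty \le C_* V$ involves a constant $C_*$ depending on $\|u\|_\infty$ and $\|\nabla u\|_\infty$ over which you have no control. The condition $|\chi|\,\|v_0\|_\infty < \max\{D^*_{\tau,N},\, bC^*_N\}$ is \emph{not} a hypothesis of the proposition---it is merely a sufficient condition for global existence in Proposition~\ref{global-existence-prop}, whereas Proposition~\ref{asymptotic-behavior-prop} assumes only that a global bounded solution exists (which can happen for other reasons, e.g.\ $N\le 2$; see Remark~\ref{existence-rk}(1)). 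Even if it were assumed, there is no reason $|\chi|\, C_* \|v_0\|_\infty < a$ should follow, since $C_*$ is unrelated to $D^*_{\tau,N}$ or $C^*_N$.

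The paper closes this gap by using Lemma~\ref{asymptotic-lm2} in its full strength rather than just its proof machinery: for any $\eps > 0$ there are $T$ and $C$ with $|\Delta v(t,x)| \le \eps + C\, u(t,x)^{1/2}$ for $t \ge T$. This is a \emph{pointwise} bound tying $\Delta v$ to $u$ at the same space--time point, not a uniform bound in terms of $\sup_x v$. Substituting into the $u$-equation gives
\[
u_t \ge \Delta u - \chi \nabla u \cdot \nabla v + u\bigl(a - |\chi|\eps - |\chi| C u^{1/2} - b u\bigr),
\]
and for any fixed $\eps < a/|\chi|$ the linear coefficient $a - |\chi|\eps$ is positive while the remaining term $|\chi| C u^{3/2}$ is higher order near $u=0$. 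The comparison ODE $\underline{u}' = \underline{u}(a - |\chi|\eps - |\chi|C \underline{u}^{1/2} - b\underline{u})$ therefore has $0$ unstable regardless of the size of $C$, so any positive initial value stays bounded below. No smallness of $\|v_0\|_\infty$ is needed. From there your Stage~2 sandwich argument is correct, and indeed a bit more direct than the paper's compactness route.
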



\begin{proof}
For any fixed $\eps \in (0, \frac{a}{|\chi|})$, by Lemma \ref{asymptotic-lm2}, there are  $T=T(\eps,{\|v_0\|_\infty})\geq 1$ and  $C= C(\eps, {\chi})>0$ such that for any $t\ge T,$ we have
\begin{equation}
\label{new-u-est-eq0}
     u_t \ge \Delta u -\chi\nabla u \cdot \nabla v -| \chi|  u(\eps + Cu^{1/2}) + au - bu^2.
\end{equation}
We first claim that
$$
\delta_T:=\inf_{x\in \R^N}u(T,x;u_0,v_0)>0.
$$

It follows from \cite[Proposition 2.1]{HSZ} that $u(t,\cdot;u_0,v_0)$ is uniformly continuous in $L^\infty$-norm as $t\to 0$.
Since $\inf_{x\in\R^N}u_0(x)>0$, there is $0<t_1<T$ such that
$$
\delta_1:=\inf_{x\in\R^N}u(t_1,x;u_0,v_0)>0.
$$
It follows from  {Lemma  \ref{derivative-boundedness-lm}}  that
$$
{M}:=\max\left\{\sup_{t\in [t_1,T],x\in\R^N} |\Delta v(t,x;u_0,v_0)|, \sup_{t_1\le t\le T,x\in\R^N}u(t,x;u_0,v_0)\right\}<\infty.
$$
We then have
\begin{equation}
\label{new-u-est-eq1}
\begin{cases}
u_t\ge \Delta u-\chi\nabla v\cdot\nabla u -|\chi| {M} u +a u- b{M}  u, \quad t_1<t<T,\,\, x\in\R^N\cr
u(t_1,x)\ge \delta_1,\quad x\in\R^N.
\end{cases}
\end{equation}
Note that $u$ is a classical solution, and $|\nabla u|$ and $|\nabla v|$ are uniformly bounded and continuous on $[t_1,T]\times \R^N$. Thus, by viewing $\nabla v(t,x;u_0,v_0)$ as a given function, the comparison principle  for parabolic equations (see e.g., \cite[Proposition 52.10]{quittner})  yields
$$
u(t,x;u_0,v_0)\ge e^{(-|\chi|{M} +a-b{M})(t-t_1)}\delta_1\quad \forall\, t_1\le t\le T,\,\, x\in\R^N.
$$
This implies that $\delta_T\ge e^{(-|\chi|{M}+a-b{M})(T-t_1) }\delta_1>0$. The claim then follows.

Next, let $\underline u(t)$ be the solution to the ODE
\begin{equation}
\label{ODE1}
\begin{cases}
\underline{u}_t= -|\chi| \underline{u}(\eps + C\underline{u}^{1/2})+\underline{u}(a-b\underline{u})\cr
\underline{u}(T)=\delta_T.
\end{cases}
\end{equation}
Note that  $ -|\chi| \eps + a >0.$  Hence
 $\underline u=0$ is an unstable solution of \eqref{ODE1}, and  there  exists $\eps_0 >0$ such that
$$
\underline{u}(t;\delta_T)\ge \eps_0, \qquad \forall \, t>T.
$$
Then, by the comparison principle, we have
\begin{equation}\label{lowerbound}
  u(t, x; u_0,v_0)\ge \underline{u}(t;\delta_T)\ge\eps_0,\quad \forall\, t\ge T.
\end{equation}
This shows that  $\tau v_t\leq \Delta v-\eps_0v$ for all $t\geq T$.
Thus, by comparing $v$ with $\bar{v}(t)$ which solves the ODE $\tau \bar{v}_t= -\eps_0 \bar{v}$ with $\bar{v}(T)=\|v(T,\cdot;u_0,v_0)\|_{\infty}\leq \|v_0\|_\infty$,
we obtain
\begin{equation}
\label{new-v-est-eq0}
    {0\le v(t, x) \le \bar{v}(t) \le \|v_0\|_{\infty} e^{-\frac{\eps_0}{\tau}(t-T)}, \qquad \forall t\ge T.}
\end{equation}
In particular, we proved
\begin{equation}
\label{v-to-0-eq}\lim_{t\to\infty}v(t,x;u_0,v_0)=0\quad \text{uniformly in}\,\, x\in\R^N.
\end{equation}

\smallskip

Now, we prove  that
\begin{equation}
\label{u-to-a-over-b-eq}
\lim_{t\to\infty}u(t,x;u_0,v_0)=a/b \quad \text{uniformly in} \,\,  x\in \R^N.
\end{equation}
Suppose for contradiction that
 there exist $\delta_0>0$, a sequence $t_n\to\infty$,  and  $x_n\in \R^N$ such that
\begin{equation}\label{seq2}
    |u(t_n,x_n; u_0,v_0)-a/b| > \delta_0.
\end{equation}
Define
$$
u_n(t,x): = u(t+t_n, x+x_n;u_0, v_0)\quad \text{and} \quad v_n(t,x) = v(t+t_n, x+x_n;u_0, v_0), \quad t\ge -t_n.
$$
By \eqref{v-to-0-eq},
\begin{equation}
\label{v-to-0-eq1}
\lim_{n\to\infty} v_n(t,x)=0\quad \text{locally uniformly in}\, t\in\R,\, \text{and uniformly in}\, x\in\R^N.
\end{equation}
By Lemma \ref{derivative-boundedness-lm}, for any bounded subset $I\subset \R$, there is $n_0\ge 1$ such that  $\{w_n(t,x)\}_{n\ge n_0}$ is   uniformly bounded and equi-continuous on $I\times \R^N$, where 
$w_n(t,x)=u(t+t_n,x+x_n)$, $\partial_{x_i}u(t+t_n,x+x_n;u_0,v_0)$, $\partial_t u(t+t_n,x+x_n;u_0,v_0)$, $\partial^2_{x_i x_j}u(t+t_n,x+x_n;u_0,v_0)$, $v(t+t_n,x+x_n;u_0,v_0)$, $\partial_{x_i}v(t+t_n,x+x_n;u_0,v_0)$,
$\partial_t v(t+t_n,x+x_n$; $u_0,v_0)$ or $\partial^2_{x_ix_j}v(t+t_n,x+x_n;u_0,v_0)$ for $1\le i,j\le N$. Then, by   Arzel\`{a}-Ascoli theorem, and \eqref{v-to-0-eq1},  there is 
a subsequence  $(u_{n_j}, v_{n_j})$ and a smooth function $\tilde u(t,x)$  such that
\beq\lb{new-conv-eq1}
\lim_{j\to\infty} u_{n_j}(t,x)=\tilde u(t,x),\quad \lim_{j\to \infty}v_{n_j}(t,x)=0\quad \forall\, t\in\R,\,\, x\in\R^N.
\eeq
Due to the uniform regularity, $\partial_t u_{n_j}$ converges and the limit is equal to $\partial_t\tilde u$. Similarly, and after applying a diagonal argument, we can assume that $Du_{n_j}$, $D^2u_{n_j}$, $Dv_{n_j}$ and $D^2v_{n_j}$ converge to $D\tilde u$, $D^2\tilde u$, $0$ and $0$, respectively.
Therefore,  $\tilde u$ satisfies
\begin{equation*}
    \tilde{u}_t = \Delta \tilde{u} + a \tilde{u} - b\tilde{u}^2, \qquad t\in \R, \, x\in\R^N.
\end{equation*} 
By  \eqref{lowerbound} and the global boundedness  of
$(u(t,x;u_0,v_0),v(t,x;u_0,v_0))$, there exists $K> \eps_0$ such that
$$0<\eps_0\le \tilde{u}(t,x)\le K, \quad \forall \, t\in \R, \,\, x\in\R^N.$$
Set $\underline{u}_{0}:=\inf_{(t,x)\in\R^{N+1}}\tilde{u}$ and $\overline{u}_{0}:=\sup_{(t,x)\in\R^{N+1}}\tilde{u}.$
 For every $t_{0}\in \R,$ let $\underline{u}(\cdot;t_{0})$ and $\overline{u}(\cdot;t_{0})$ be the solutions of
 $$
 \begin{cases}
 \frac{d}{dt}\overline{u}=\overline{u}(a-b\overline{u}),\ \ t>t_0\\
 \overline{u}(t_{0};t_{0})=\overline{u}_{0}
 \end{cases}
 $$
 and
 $$
 \begin{cases}
 \frac{d}{dt}\underline{u}=\underline{u}(a-b\underline{u}),\ \ t>t_0\\
 \underline{u}(t_{0};t_{0})=\underline{u}_{0},
 \end{cases}
 $$
respectively. Then for every $t_0\in\R$,
\beq\label{Q3}
 \lim_{t\to \infty}\overline{u}(t;t_{0})=\lim_{t\to\infty} \underline{u}(t;t_{0})=\frac{a}b.
\eeq
 Since $0<\underline{u}_{0}\leq \tilde{u}(t, x)\leq \overline{u}_{0}$ for every $(t, x)\in\R^{N+1},$ we obtain that
 \begin{equation*}
 \underline{u}(t-t_0;0)=\underline{u}(t;t_{0})\leq \tilde{u}(t, x)\leq\overline{u}(t;t_{0})=\overline{u}(t-t_0;0)\ \ \forall\ x\in\R^N,\ t\geq t_{0}.
 \end{equation*}
 Taking limit as $t_0\to -\infty$ on both sides and using \eqref{Q3} imply
$\tilde u(t,x) \equiv a/b,$
which contradicts with \eqref{seq2}.
Therefore, \eqref{u-to-a-over-b-eq} holds, and
Proposition \ref{asymptotic-behavior-prop} is proved.
\end{proof}

\section{Lower bound of spreading speeds}

In this section, we study the lower bounds of spreading speeds and prove Theorem \ref{speed-lower-bound-thm}.   Throughout this section, we fix $(u_0,v_0)\in X^+\times X_1^+$ and  suppose  that $(u(t,x;u_0,v_0), v(t,x;u_0,v_0))$ is a globally defined bounded and non-negative solution of \eqref{main-Eq}. {We will often omit the dependence of constants on $a,b,\tau$ and $N$.}

\subsection{Proof of Theorem \ref{speed-lower-bound-thm}(1)}


The proof follows the line of those in \cite{ShXu}, which studied \eqref{Eq3}. For us, nontrivial modifications are needed due to the difference of the system.

Let us fix any
\beq\lb{4.1}
0<{c'}<2\sqrt a\quad\text{and}\quad 0<\delta_0<\min\left\{1, 2\sqrt a  -c', (a^{1/4}(2N)^{-1})^N\right\}.
\eeq
Since $2\sqrt{a}-\delta_0>c'$, for any $R>0$,
$$
\inf_{|x|\le {c'}t}u(t,x;u_0,v_0)\ge \inf_{-2\sqrt{a}+\delta_0\le  c\le 2\sqrt{a}-\delta_0,\xi\in\mathbb{S}^{N-1}}\left(\inf_{|x|\le R}u(t,x+ct\xi;u_0,v_0)\right).
$$
To prove Theorem \ref{speed-lower-bound-thm}(1) (i.e. \eqref{lower-bound-eqq1}), it then suffices to prove   that there is $R_0>0$ such that
\begin{equation}
\label{lower-bound-eqq1-0}
\liminf_{t\to\infty} \inf_{-2\sqrt{a}+\delta_0\le  c\le 2\sqrt{a}-\delta_0,\xi\in \mathbb{S}^{N-1}}\left(\inf_{|x|\le R_0}u(t,x+ct\xi;u_0,v_0)\right)>0.
\end{equation}

For  any $\xi\in \mathbb{S}^{N-1}$ and  $c\in\R$, let $\tilde u(t, x;\xi, c):=u(t,x+ct\xi;u_0,v_0)$ and $\tilde v(t, x;\xi, c):=v(t, x+ct\xi;u_0,v_0)$. 
Then
$(\tilde u(t, x;\xi, c),\tilde v(t,x;\xi, c))$ satisfies
\begin{equation}\label{tilde-u-v-eq}
\begin{cases}
\tilde u_{t}=\Delta\tilde u+c\xi\cdot\nabla \tilde u- \chi \nabla\cdot (\tilde u\nabla \tilde v) + \tilde u(a-b\tilde u)\quad & x\in\R^N, \\
\tau \tilde v_{t}=\Delta \tilde v+\tau c\xi\cdot\nabla\tilde v - \tilde u \tilde v,\quad & x\in\R^N.
\end{cases}
\end{equation}
To prove \eqref{lower-bound-eqq1-0}, it  is equivalent to prove
 that,  for any fixed $0<c'<2\sqrt a$ and $0<\delta_0<2\sqrt a-c'$, there is ${R_0}>0$ such that
\begin{equation}
\label{lower-bound-eqq1-1}
\liminf_{t\to\infty} \inf_{-2\sqrt{a}+\delta_0\le  c\le 2\sqrt{a}-\delta_0,\xi\in \mathbb{S}^{N-1}}\left(\inf_{|x|\le {R_0}} \tilde u(t,x;\xi,c)\right)>0.
\end{equation}
To do so, we first prove some lemmas.

In the following,
for any $R>0$, let
$$
B_R=\{x\in\R^N\,|\, |x|<R\}.
$$
For any
$0<\eps<1$, let
\begin{equation}\lb{4.4}
T_\eps:={1}/{\eps},\quad R_\eps:=  {2}{\eps^{-(1/2 + 1/N)}}\geq 2.
\end{equation}
Our first lemma  shows that if supremum of $\tilde u$ is small for $x$ in a ball of radius $2R$ and $t$ within a small time interval,  then both $\nabla \tilde v$ and $\Delta \tilde v$ are  small  for $x$ in a ball of radius $R$ and $t$ within a smaller time interval.

\begin{lem}\label{asymptotic2-lem2}
There exists $\tilde M = \tilde M(\chi, \|v_0\|_\infty, \|u\|_\infty)>0$ 
such that
for any  $0<\eps<1$, $\xi\in \mathbb{S}^{N-1}$,  $c\in\R$,   and  $1<t_1<t_1+T_\eps<t_2 \le \infty$, if
\begin{equation}\label{ulowerbound-est}
   \sup_{x\in B_{2R_\eps}}  \tilde u(t, x;\xi,c)\le \eps,\quad \forall\, t_1\le t< t_2,
\end{equation}
then
\begin{equation}
\label{v-dv-est-eq1}
\sup_{x\in B_{R_\eps}}|\nabla \tilde v(t, x;\xi,c)|\leq \tilde M\eps^{1/2}\quad   \forall\, t_1+T_\eps \le t <t_2,
\end{equation}
and
\begin{equation}
\label{v-dv-est-eq2}
 \sup_{x\in B_{R_\eps}}|\Delta \tilde v(t, x;\xi,c)|\leq \tilde M\eps^{1/4}\quad  \forall\, t_1+T_\eps\le t <t_2.
\end{equation}
\end{lem}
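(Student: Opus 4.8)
The plan is to represent $\tilde v$ through the Duhamel (variation-of-parameters) formula for its equation in \eqref{tilde-u-v-eq}, exactly as was done for $v$ in the proof of Lemma \ref{asymptotic-lm2}, and then exploit the smallness hypothesis \eqref{ulowerbound-est} to control the contribution of the nonlinear term $\tilde u\tilde v$ on the ball $B_{2R_\eps}$. Writing $s_0=t-T_\eps$ as the base time, we have
\[
\tilde v(t,x)=\left(\frac{\tau}{4\pi T_\eps}\right)^{N/2}\!\!\int_{\R^N}e^{-\frac{\tau|x-y-\tau c\xi T_\eps|^2}{4T_\eps}}\tilde v(s_0,y)\,dy-\frac1\tau\int_{s_0}^t\left(\frac{\tau}{4\pi(t-s)}\right)^{N/2}\!\!\int_{\R^N}e^{-\frac{\tau|x-y-\tau c\xi(t-s)|^2}{4(t-s)}}\tilde u(s,y)\tilde v(s,y)\,dy\,ds,
\]
where the convective drift $\tau c\xi$ just shifts the Gaussian centers. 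Differentiating in $x$ brings down a factor like $(t-s)^{-1/2}$ against the Gaussian, which is integrable over $s\in(s_0,t)$ and contributes the $\eps^{1/2}\sim T_\eps^{-1/2}$ scaling; differentiating twice brings $(t-s)^{-1}$, which is not integrable near $s=t$ unless we integrate by parts and move one derivative onto $\tilde u$, using the global gradient bound from Proposition \ref{global-existence-prop}.

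First I would handle the first (linear-in-initial-data) term: since $\|\tilde v(s_0,\cdot)\|_\infty\le\|v_0\|_\infty$ (the $L^\infty$ bound propagates and the drift is a pure translation), the spatial gradient and Laplacian of the heat semigroup applied to a bounded function are bounded by $C\|v_0\|_\infty T_\eps^{-1/2}$ and $C\|v_0\|_\infty T_\eps^{-1}$ respectively, hence by $C\|v_0\|_\infty\eps^{1/2}$ and $C\|v_0\|_\infty\eps$, which is absorbed into the claimed bounds. For the nonlinear term, I would split the $y$-integral into the region $\{|y|\le 2R_\eps\}$, where \eqref{ulowerbound-est} gives $\tilde u(s,y)\le\eps$ and $\tilde v$ is bounded by $\|v_0\|_\infty$, so this piece contributes at most $C\|v_0\|_\infty\eps\int_0^{T_\eps}(t-s)^{-1/2}ds\le C\|v_0\|_\infty\eps\,T_\eps^{1/2}=C\|v_0\|_\infty\eps^{1/2}$; and the tail region $\{|y|>2R_\eps\}$, where for $|x|\le R_\eps$ the Gaussian weight $e^{-\tau|x-y-\tau c\xi(t-s)|^2/4(t-s)}$ is super-exponentially small in $R_\eps/\sqrt{t-s}$. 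The choice $R_\eps=2\eps^{-(1/2+1/N)}$ in \eqref{4.4} is tuned so that, even after accounting for the drift displacement $|\tau c\xi(t-s)|\le|\tau c|T_\eps=|\tau c|/\eps$ (which for fixed $c$ is of lower order than $R_\eps$ as $\eps\to0$) and the $c$-dependence, this tail is bounded by any fixed power of $\eps$; combined with the uniform bounds on $\tilde u$, $\tilde v$, the tail contributes at most $C\eps^{1/2}$. Summing, $|\nabla\tilde v(t,x)|\le\tilde M\eps^{1/2}$ on $B_{R_\eps}$ for $t\in[t_1+T_\eps,t_2)$.

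For the Laplacian estimate, the new ingredient is that the $(t-s)^{-1}$ singularity forces an integration by parts in $y$ moving one derivative onto the product $\tilde u\tilde v$: this yields terms with $\nabla\tilde u\cdot\tilde v$ and $\tilde u\cdot\nabla\tilde v$ against a $(t-s)^{-1/2}$ kernel. On $\{|y|\le 2R_\eps\}$ the factor $\tilde u$ or $\tilde v$ is small (either $\le\eps$ or, using \eqref{v-dv-est-eq1} already established, $\nabla\tilde v$ is $O(\eps^{1/2})$ there while $\nabla\tilde u$ is globally bounded and $\tilde u\le\eps$), so one gains at worst a factor $\eps^{1/2}\cdot T_\eps^{1/2}=\eps^{1/4}\cdot\eps^{1/4}\cdot\eps^{-1/2}$... more carefully, the product of a size-$\eps^{1/2}$ factor with $\int_0^{T_\eps}(t-s)^{-1/2}ds=C\eps^{-1/2}$ gives $O(1)$, so one instead keeps the size-$\eps$ factor ($\tilde u\le\eps$ directly) times $\eps^{-1/2}$ to get $\eps^{1/2}$, but where $\nabla\tilde v$ appears without a small $\tilde u$ one only has $\eps^{1/2}\cdot\eps^{-1/2}=O(1)$ — so in fact I expect the honest bound here is $\eps^{1/4}$, obtained by using $\tilde u\le\eps$ wherever possible and, in the worst term, pairing $\eps^{1/2}$ ($\nabla\tilde v$ bound) against only $\int_0^{T_\eps}(t-s)^{-1/2}ds$ cut at an intermediate scale $\eps^{-1/2}$, or equivalently bounding $\|\Delta\tilde v\|$ by interpolating between $\|\nabla\tilde v\|_\infty=O(\eps^{1/2})$ on the larger ball and the global bound on $D^3\tilde v$-type quantities; the tail region is handled exactly as before. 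The main obstacle is precisely this $\Delta\tilde v$ estimate: making the bookkeeping of which factor is small, over which spatial region and with which power of $\eps$, consistent so that the $(t-s)^{-1}$ singularity is tamed and the final exponent comes out to $\eps^{1/4}$ rather than $O(1)$. Everything else — the linear term, the tail bounds, the $\eps^{1/2}$ gradient estimate — is a routine Duhamel computation parallel to Lemma \ref{asymptotic-lm2}, with the drift term contributing only harmless Gaussian-center translations.
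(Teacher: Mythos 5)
Your gradient estimate follows the paper's route exactly (Duhamel over a window of length $T_\eps$, near region where $\tilde u\le\eps$ against $\int(t-s)^{-1/2}ds\le C T_\eps^{1/2}=C\eps^{-1/2}$, Gaussian tail killed by $R_\eps/\sqrt{T_\eps}=2\eps^{-1/N}\to\infty$), and your use of $s_0=t-T_\eps$ as a sliding base time is a slightly cleaner packaging of the paper's ``replace $t_1$ by $t_3$'' iteration. That half of the proposal is sound.

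The Laplacian estimate, however, has a genuine gap, and it is exactly where you flag the ``main obstacle.'' After integrating by parts, the dangerous term is $\int_{s_0}^t (t-s)^{-1/2}\int |z|e^{-\tau|z|^2}\,\nabla\tilde u\cdot\tilde v\,dz\,ds$ on the near region: there $\tilde v$ is only bounded by $\|v_0\|_\infty$ (it is \emph{not} small — the whole point of this regime is that $v$ need not decay where $u$ is small), and with only the global bound on $\nabla\tilde u$ that you invoke, this term is $O(1)\cdot\int_0^{T_\eps}(t-s)^{-1/2}ds=O(\eps^{-1/2})$, i.e.\ useless. The missing ingredient is the Harnack-type inequality of Lemma \ref{asymptotic-lm1}: applied with $p=4/3$ it gives $|\nabla\tilde u(t,x)|\le C\,\tilde u(t,x)^{3/4}\le C\eps^{3/4}$ on $B_{2R_\eps}$ under \eqref{ulowerbound-est}, which converts this term into $C\eps^{3/4}\cdot\eps^{-1/2}=C\eps^{1/4}$ — this is precisely where the exponent $1/4$ comes from. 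The companion term $\tilde u\cdot\nabla\tilde v$ is harmless ($\eps\cdot\eps^{-1/2}=\eps^{1/2}$ using only the uniform bound on $\nabla\tilde v$), as you note. Your two proposed rescues do not close the argument: ``cutting $\int(t-s)^{-1/2}ds$ at an intermediate scale'' discards part of the Duhamel integral and is not a legitimate operation, and the interpolation $\|\Delta\tilde v\|\lesssim\|\nabla\tilde v\|^{1/2}\|D^3\tilde v\|^{1/2}$ would require a uniform bound on third derivatives of $v$ that Proposition \ref{global-existence-prop} does not provide (it only controls $D^2v$ in a H\"older norm, which interpolates to a strictly worse exponent). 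So the statement is provable by your framework, but only after importing the pointwise bound $|\nabla\tilde u|\lesssim\tilde u^{3/4}$, which your write-up never uses.
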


\begin{proof}
First, we prove \eqref{v-dv-est-eq1}.  If no confusion occurs, we may drop $\xi,c$ in the notations of $\tilde u(t,x;\xi,c)$ and $\tilde v(t,x;\xi,c)$.  By the definition of   $\tilde v(t,x) $, we have for $t>t_1$,
\begin{align*}
        \tilde v(t,x) &= v(t, x+ct\xi)=\left(\frac{\tau}{4\pi (t-t_1)}\right)^{N/2}\int_{\R^N} e^{-\frac{\tau|x+ct\xi-y|^2}{4(t-t_1)}}v(t_1,y)dy\\
        &\qquad -\frac{1}{\tau}\int_{t_1}^t \left(\frac{\tau}{4\pi (t-s)}\right)^{N/2}\int_{\R^N} e^{-\frac{\tau|x+ct\xi-y|^2}{4(t-s)}}u(s,y) v(s,y)dy ds.
\end{align*}
Writing $z=\frac{y-x-ct\xi}{2\sqrt{t-t_1}}$, this implies that
\begin{align}
\label{nabla-v-eq0}
       \nabla \tilde v(t,x)& =-\left(\frac{\tau}{\pi}\right)^{N/2}\int_{\R^N} \frac{z\tau}{\sqrt{t-t_1}}e^{-\tau|z|^2}\tilde v(t_1,x+2z\sqrt{t-t_1})dz\nonumber \\
        &\quad -\int_{t_1}^t \left(\frac{\tau}{\pi}\right)^{N/2}\int_{|z| \le \frac{R_\eps}{2\sqrt{T_\eps}}} \frac{{z}}{\sqrt{t-s}}e^{-\tau|z|^2} \tilde u(s,x+ 2z\sqrt{t-s} ) \tilde v(s,x+2z\sqrt{t-s} )dz ds\nonumber \\
        &\quad -\int_{t_1}^t \left(\frac{\tau}{\pi}\right)^{N/2}\int_{|z| > \frac{R_\eps}{2\sqrt{T_\eps}}} \frac{{z}}{\sqrt{t-s}}e^{-\tau|z|^2} \tilde u(s,x+ 2z\sqrt{t-s} ) \tilde v(s,x+2z\sqrt{t-s})dz ds.
    \end{align}
In the following, we estimate each term in \eqref{nabla-v-eq0}.

Note that  $(\frac{\tau}{\pi})^{N/2}\int_{\R^N} |z|e^{-\tau|z|^2}dz\leq C_1\tau^{-1/2}$ for some $C_1>0$, which is because
\begin{align*}
    \left(\frac{\tau}{\pi}\right)^{N/2}\int_{\R^N} |z|e^{-\tau|z|^2}dz &= \left(\frac{\tau}{\pi}\right)^{N/2}\int_0^{\infty}\int_{\partial B(0, r)} re^{-\tau r^2}dS(z) dr= \left(\frac{\tau}{\pi}\right)^{N/2}\int_0^{\infty} \frac{2\pi^{\frac{N}{2}}}{\Gamma(\frac{N}{2})}r^Ne^{-\tau r^2} dr\\
    &=  \left(\frac{\tau}{\pi}\right)^{N/2}\frac{\pi^{\frac{N}{2}}}{\tau^{N/2 +1/2}\,\Gamma(\frac{N}{2})}\int_0^{\infty}R^{\frac{N-1}{2}}e^{-R} dR = \frac{\Gamma(\frac{N+1}{2})}{\tau^{1/2}\Gamma(\frac{N}{2})}.
\end{align*}
Thus, for  the  first integral in \eqref{nabla-v-eq0} we have for all $t\ge  t_1+T_\eps$ and $x\in\R^N$,
\beq\lb{nabla-v-eq1}
\begin{aligned}
\left|\left(\frac{\tau}{\pi}\right)^{N/2}\int_{\R^N} \frac{z \tau}{\sqrt{t-t_1}}e^{-\tau|z|^2}\tilde v(t_1,x+2z\sqrt{t-t_1})dz\right|  \le 
C_1\tau^{\frac{1}{2}}\eps^{\frac{1}{2}}\|v_0\|_{\infty}.
\end{aligned}
\eeq
For  the second integral in \eqref{nabla-v-eq0}, using \eqref{ulowerbound-est}, we have for any $ t_1+T_\eps\le t\le \min\{t_1 + 2T_\eps, t_2\}$,
\begin{align}
\label{nabla-v-eq2}
    &\int_{t_1}^t \left(\frac{\tau}{\pi}\right)^{N/2}\int_{|z|\le \frac{R_\eps}{2\sqrt{T_\eps}}} \frac{|z|}{\sqrt{t-s}}e^{-\tau|z|^2} \tilde u(s,x+ 2z\sqrt{t-s}) \tilde v(s,x+2z\sqrt{t-s})drds \nonumber\\
    &\qquad \le  C_1\tau^{-{\frac{1}{2}}}\|v_0\|_\infty \Big[\sup_{\substack{s\in (t_1,t),\nonumber\\
    y\in B(x,R_\eps)}}\tilde{u}(s,y)\Big] \int_{t_1}^t \frac {1}{\sqrt{t-s}}ds  \\
&\qquad \le  2\sqrt 2 C_1 \tau^{-\frac{1}{2}}\eps^{\frac12} \|v_0\|_\infty\|u\|
    \quad \forall\, |x|\le R_\eps.
\end{align}
Then, choose a $C_2=C_2(N)>0$ such that $e^{-|\tau^{1/2}z|^2} \le C_2|\tau^{1/2}z|^{-2N-1}$ for $|z|\ge \frac{R_\eps}{2\sqrt T_\eps}=\eps^{-1/N}$. With this and similarly as before, we obtain for $t_1 +T_\eps < t<\min\{t_1 + 2T_\eps, t_2\}$:
\begin{align}
\label{nabla-v-eq3}
    &\int_{t_1}^t \left(\frac{\tau}{\pi}\right)^{N/2}\int_{|z|> \frac{R_\eps}{2\sqrt{T_\eps}}} \frac{{|z|}}{\sqrt{t-s}}e^{-\tau|z|^2} \tilde u(s,x+ 2\sqrt{t-s} \, z)\tilde v(s,x+2\sqrt{t-s} \, z)dz ds \nonumber \\
    &\qquad\le C_2\left(\frac{\tau}{\pi}\right)^{N/2} \|u\|_{\infty}\|v_0\|_\infty \int_{t_1}^t \int_{|z|> \frac{R_\eps}{2\sqrt{T_\eps}}} \frac{\tau^{-N-\frac12}|z|^{-2N}}{\sqrt{t-s}}dzds\nonumber \\
 &\qquad \le C_2\tau^\frac{-1-N}{2}\|u\|_{\infty}\|v_0\|_\infty \sqrt{T_\eps}\left({2\sqrt{T_\eps}}/{R_\eps}\right)^N\leq C\eps^\frac12\tau^\frac{-1-N}{2}\|u\|_{\infty}\|v_0\|_\infty .
\end{align}

Combining \eqref{nabla-v-eq0}--\eqref{nabla-v-eq3},  there is  $\tilde M =\tilde M(N, \tau, \|v_0\|_\infty,\|u\|_\infty)>0$ such that
\begin{equation*}
    |\nabla \tilde v(t,x)| \le \tilde M\eps^{1/2}, \qquad \forall\,  t_1 + T_\eps\le t \le \min\{t_1+ 2T_\eps, t_2\},\,\, |x|\le R_\eps.
\end{equation*}
Identical argument with $t_3\in [t_1,t_2)$ in place of $t_1$ yields the same estimate  for $t_3+T_\eps < t\le \min\{t_3 + 2T_\eps, t_2\}$. Therefore, we conclude that there is $\tilde M = \tilde M(N, \tau, \|v_0\|_\infty, \|u\|_\infty)>0$ such that
\begin{equation*}
    |\nabla \tilde v(t,x)| \le \tilde M\eps^{1/2}, \qquad \forall\,  t_1 + T_\eps\le t \le t_2,\,\, |x|\leq R_\eps.
\end{equation*}

Next, we prove \eqref{v-dv-est-eq2}.
By \eqref{nabla-v-eq0},  we have
\begin{align}
\label{delta-v-eq0}
           & \Delta \tilde v(t,x) =-\left(\frac{\tau}{\pi}\right)^{N/2}\int_{\R^N} \frac{z\tau}{\sqrt{t-t_1}}e^{-\tau|z|^2}\cdot \nabla \tilde v(t_1,x+2z\sqrt{t-t_1})dz \nonumber\\
        &\qquad -\int_{t_1}^t \left(\frac{\tau}{\pi}\right)^{N/2}\int_{|z| \le \frac{R_\eps}{2\sqrt{T_\eps}}} \frac{z}{\sqrt{t-s}}e^{-\tau|z|^2}\cdot \nabla  \tilde u(s,x+ 2\sqrt{t-s} \, z) \tilde v(s,x+2\sqrt{t-s} \, z)dz ds\nonumber \\
 &\qquad -\int_{t_1}^t \left(\frac{\tau}{\pi}\right)^{N/2}\int_{|z| \le \frac{R_\eps}{2\sqrt{T_\eps}}} \frac{z}{\sqrt{t-s}}e^{-\tau|z|^2}   \tilde u(s,x+ 2\sqrt{t-s} \, z)\cdot \nabla  \tilde v(s,x+2\sqrt{t-s} \, z)dz ds\nonumber \\
        &\qquad -\int_{t_1}^t \left(\frac{\tau}{\pi}\right)^{N/2}\int_{|z| > \frac{R_\eps}{2\sqrt{T_\eps}}} \frac{z}{\sqrt{t-s}}e^{-\tau|z|^2} \cdot\nabla \tilde u(s,x+ 2\sqrt{t-s} \, z) \tilde v(s,x+2\sqrt{t-s} \, z)dz ds\nonumber \\
&\qquad -\int_{t_1}^t \left(\frac{\tau}{\pi}\right)^{N/2}\int_{|z| > \frac{R_\eps}{2\sqrt{T_\eps}}} \frac{z}{\sqrt{t-s}}e^{-\tau|z|^2} \tilde u(s,x+ 2\sqrt{t-s} \, z) \cdot\nabla \tilde v(s,x+2\sqrt{t-s} \, z)dz ds
    \end{align}
for all $t\ge t_1$ and $x\in\R^N$.
In the following, we estimate each term in \eqref{delta-v-eq0}.

First, since \eqref{ulowerbound-est} and $t_1\geq 1$, by Lemma \ref{asymptotic-lm1} (with $s_0,R,p$ in the lemma being $0,1,\frac43$), there is
$C_0=C_0({\chi},\|v_0\|_\infty,\|u\|_\infty)>0$
independent of $\eps$ such that
\begin{equation}
\label{nabla-u-eq0}
|\nabla \tilde u(t,x)|\le C_0\, \tilde u(t,x)^{\frac{3}{4}}\le C_0\, \eps^{\frac{3}{4}}\quad  \forall\,  t_1\le t<t_2,\,\, |x|\le 2 R_\eps.
\end{equation}
{Notice that $\nabla \tilde v(t,x)$ is uniformly bounded for $t\geq 1$ by the classical parabolic regularity theory.} So, similarly as done in \eqref{nabla-v-eq1}, there is $C=C(\chi, \|v_0\|_\infty,\|u\|)>0$  such that
\begin{equation}
\label{delta-v-eq1}
\left|\left(\frac{\tau}{\pi}\right)^{N/2}\int_{\R^N} \frac{{z}}{\sqrt{t-t_1}}e^{-\tau|z|^2}\cdot \nabla \tilde v(t_1,x+2z\sqrt{t-t_1})dz\right|\le C{\eps^{\frac{1}{2}}}\quad \forall\, t\ge t_1+T_\eps,\,\, x\in\R^N.
\end{equation}
By \eqref{nabla-u-eq0} and the arguments of \eqref{nabla-v-eq2}, there is $C=C(\chi,\|v_0\|_\infty,\|u\|)>0$ such that for $t_1+T_\eps\le t\le \min\{t_2,t_1+2T_\eps\}$ and $|x|\le R_\eps$,
\begin{equation*}
 \left|\int_{t_1}^t \left(\frac{\tau}{\pi}\right)^{N/2}\int_{|z| \le \frac{R_\eps}{2\sqrt{T_\eps}}} \frac{z}{\sqrt{t-s}}e^{-\tau|z|^2}\cdot \nabla  \tilde u(s,x+ 2\sqrt{t-s} \, z) \tilde v(s,x+2\sqrt{t-s} \, z)dz ds\right|\le C \eps^{\frac{1}{4}}
\end{equation*}
and
\begin{equation*}
\left|\int_{t_1}^t \left(\frac{\tau}{\pi}\right)^{N/2}\int_{|z| \le \frac{R_\eps}{2\sqrt{T_\eps}}} \frac{{z}}{\sqrt{t-s}}e^{-\tau|z|^2}\tilde u(s,x+ 2\sqrt{t-s} \, z)\cdot\nabla  \tilde v(s,x+2\sqrt{t-s} \, z)dz ds\right|\le C \eps^{\frac{1}{2}}.
\end{equation*}
By \eqref{nabla-u-eq0} and the arguments of \eqref{nabla-v-eq3}, there is $C=C(\chi,\|v_0\|_\infty,\|u\|)>0$ such that for $t_1+T_\eps\le t\le \min\{t_2, t_1+2T_\eps\}$ and $|x|\leq R_{\eps}$,
\begin{equation*}
\left|\int_{t_1}^t \left(\frac{\tau}{\pi}\right)^{N/2}\int_{|z| > \frac{R_\eps}{2\sqrt{T_\eps}}} \frac{{z}}{\sqrt{t-s}}e^{-\tau|z|^2} \cdot\nabla \tilde u(s,x+ 2\sqrt{t-s} \, z) \tilde v(s,x+2\sqrt{t-s} \, z)dz ds\right|\le  C \eps^{\frac{1}{4}}
\end{equation*}
and
\begin{equation}
\label{delta-v-eq3-2}
\left|\int_{t_1}^t \left(\frac{\tau}{\pi}\right)^{N/2}\int_{|z| > \frac{R_\eps}{2\sqrt{T_\eps}}} \frac{z}{\sqrt{t-s}}e^{-\tau|z|^2}  \tilde u(s,x+ 2\sqrt{t-s} \, z)\cdot\nabla  \tilde v(s,x+2\sqrt{t-s} \, z)dz ds\right|\le C \eps^{\frac{1}{2}}.
\end{equation}
By \eqref{delta-v-eq0}
and \eqref{delta-v-eq1}--\eqref{delta-v-eq3-2},
there is  $\tilde M = \tilde M( \chi,  \|v_0\|_\infty,\|u\|_\infty)>0$ such that
\begin{equation*}
    |\Delta  \tilde v(t,x)| \le \tilde M\eps^{1/4}, \qquad \forall\,  t_1 + T_\eps\le t \le \min\{t_1+ 2T_\eps, t_2\},\,\, |x|\le R_\eps.
\end{equation*}

After replacing $t_1$ by any $t_3\in [t_1,t_2)$, we can get the estimate for $t_3+T_\eps < t\le \min\{t_3 + 2T_\eps, t_2\}$. Therefore, we conclude that there exists $\tilde M = \tilde M(\chi,  \|v_0\|_\infty,\|u\|_\infty)>0$ such that
\begin{equation*}
    |\Delta \tilde v(t,x)| \le \tilde M\eps^{1/4}, \qquad \forall\,  t_1 + T_\eps\le t \le t_2,\,\, |x|\leq R_\eps.
\end{equation*}
\end{proof}


Our second  lemma shows that if we can bound $\tilde u$ below at some given time $t_0$ in a ball $B_{2R},$ then we can bound it below up to some time $t_1>t_0$ in this ball.

\begin{lem}\label{lem-3} 
Fix $0<\eps< 1$.
For any $\eta >0$, there is $\delta_{\eta}>0$ such that for any $\xi\in \mathbb{S}^{N-1}$, any $ c\in [-2\sqrt{a}, 2\sqrt{a}]$,  and any  $t_0\ge 2$,  if
$$
\sup_{x\in B_{2R_\eps}} \tilde u(t_0, x;\xi,c)\ge \eta,
$$
then
$$
\inf_{x\in B_{2R_\eps}} \tilde u(t, x;\xi,c)\ge \delta_{\eta},\quad \forall\, t_0\le t\le t_0+T_\eps+1.
$$
\end{lem}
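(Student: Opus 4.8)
\emph{Proof proposal.} The plan is to obtain Lemma~\ref{lem-3} as a fairly direct consequence of the special Harnack inequality in Lemma~\ref{asymptotic-lm1}, using crucially that $\eps$ is fixed so that all constants below may depend on $\eps$. First I would reformulate the statement in terms of the original solution $u$. Since $\tilde u(t_0,\cdot;\xi,c)$ is continuous and $\sup_{x\in B_{2R_\eps}}\tilde u(t_0,x;\xi,c)\ge\eta$, there is a point $x_0\in B_{2R_\eps}$ with $\tilde u(t_0,x_0;\xi,c)=u(t_0,x_0+ct_0\xi;u_0,v_0)\ge \eta/2$. Writing an arbitrary $t\in[t_0,t_0+T_\eps+1]$ as $t=t_0+s$ with $s\in[0,T_\eps+1]$, and recalling $\tilde u(t_0+s,x;\xi,c)=u(t_0+s,x+(t_0+s)c\xi;u_0,v_0)$, it then suffices to bound $u(t_0+s,x+(t_0+s)c\xi;u_0,v_0)$ from below, uniformly for $s\in[0,T_\eps+1]$ and $x\in B_{2R_\eps}$.

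Next I would apply Lemma~\ref{asymptotic-lm1} with exponent $p=2$, base time $t=t_0$ (legitimate since $t_0\ge2\ge1$), shift range $s_0:=T_\eps+1$, and radius $R:=4R_\eps+2\sqrt a\,(T_\eps+1)$. For any $x_0,x\in B_{2R_\eps}$, any $\xi\in\mathbb{S}^{N-1}$, any $|c|\le 2\sqrt a$ and any $s\in[0,T_\eps+1]$,
\[
\bigl|(x_0+ct_0\xi)-(x+(t_0+s)c\xi)\bigr|=|x_0-x-sc\xi|\le 4R_\eps+2\sqrt a\,(T_\eps+1)=R,
\]
so the hypotheses of Lemma~\ref{asymptotic-lm1} hold. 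It therefore provides a constant $C=C(s_0,R,p,\chi,\|v_0\|_\infty,\|u\|_\infty)$ --- equivalently $C=C(\eps,\chi,\|v_0\|_\infty,\|u\|_\infty)$, independent of $\xi$, $c$ and $t_0$ --- with
\[
\tilde u(t_0,x_0;\xi,c)\le C\,u(t_0+s,x+(t_0+s)c\xi;u_0,v_0)^{1/2}=C\,\tilde u(t_0+s,x;\xi,c)^{1/2}.
\]
Hence $\tilde u(t_0+s,x;\xi,c)\ge(\tilde u(t_0,x_0;\xi,c)/C)^2\ge(\eta/2C)^2=:\delta_\eta>0$, and taking the infimum over $x\in B_{2R_\eps}$ and setting $t=t_0+s$ gives the conclusion.

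I do not expect a substantive obstacle: the analytic content is already packaged in Lemma~\ref{asymptotic-lm1}, and the proof reduces to choosing its parameters correctly. The two points that require attention are the passage through the moving-frame shift $x\mapsto x+ct\xi$, which is absorbed by enlarging the Harnack radius via the uniform bound $|c|\le2\sqrt a$, and the fact that $s_0$ and $R$ both grow like $1/\eps$; the latter is harmless because $\eps$ is fixed in the statement, so $\delta_\eta$ is permitted to depend on $\eps$ (and on $\eta$, $\chi$, $\|v_0\|_\infty$, $\|u\|_\infty$) while remaining independent of $\xi$, $c$ and $t_0$, exactly as asserted.
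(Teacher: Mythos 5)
Your proof is correct, and it takes a genuinely different route from the paper's. The paper argues by contradiction: it extracts sequences $\xi_n$, $c_n$, $t_{0n}$, $x_n$, $y_n$, $t_n$ violating the conclusion, uses the uniform regularity estimates of Proposition \ref{global-existence-prop} together with Arzel\`a--Ascoli to pass to a limiting entire solution $(u^*,v^*)$ of \eqref{tilde-u-v-eq}, and then invokes Lemma \ref{asymptotic-lm1} only at the very end to conclude that $u^*$ cannot be simultaneously $\ge\eta_0$ at one point and $0$ at a nearby point. This yields a non-quantitative $\delta_\eta$. You instead apply Lemma \ref{asymptotic-lm1} directly in the moving frame, choosing $s_0=T_\eps+1$, $p=2$, and radius $R=4R_\eps+2\sqrt a\,(T_\eps+1)$ to absorb both the spatial spread of $B_{2R_\eps}$ and the drift $sc\xi$ over the time window; since the constant in Lemma \ref{asymptotic-lm1} is uniform over base times $t\ge1$ and over all points at distance $\le R$, the resulting $\delta_\eta=(\eta/2C)^2$ is automatically uniform in $\xi$, $c$, and $t_0$. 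Your version is shorter, avoids the compactness machinery, and has the added benefit of producing an explicit $\delta_\eta$; the only thing the paper's soft argument "buys" is that it does not require tracking how $R$ and $s_0$ enter the Harnack constant, but since $\eps$ is fixed in the statement this is no obstacle, exactly as you note.
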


\begin{proof} 
Suppose for contradiction that there exist $\eta_0 >0$,
 $\xi_n \in \mathbb{S}^{N-1}$, $-2\sqrt{a}\leq c_n\leq 2\sqrt{a}$, $t_{0n}\ge 2$,  $x_n, y_n\in B_{2R}$, and   $t_n\in [t_{0n}, t_{0n}+T_\eps+1]$ such that
\begin{equation}\label{4.11}
\lim_{n\to\infty} \tilde u(t_{0n}, x_n;\xi_{n}, c_n)\geq \eta_0,
\end{equation}
and
\begin{equation}\label{4.12}
\lim_{n\to\infty} \tilde u(t_{n}, y_{n};\xi_{n}, c_n)=0.
\end{equation}
Let $\tilde u_n(t, x)=\tilde u(t+t_{0n}-1, x+x_{n};\xi_{n},c_n)$, $\tilde v_n(t, x)=\tilde v( t+t_{0n}-1, x+x_{n};\xi_{n}, c_n)$.
Since $x_n,  y_n$ and $t_n - t_{0n} +1 $ are bounded sequences, without loss of generality, we may assume that
$$
\xi_{n}\to \xi^{*},\quad c_n\to c^*, \quad x_n  \to x^*, \quad y_n\to y^*,  \quad t_n - t_{0n} +1\to t^*\ge 1\qquad \text{ as } n\to \infty,
$$
for some $\xi^{*}\in \mathbb{S}^{N-1}$, $-2\sqrt{a}\leq c^{*}\leq 2\sqrt{a}$, $x^*,y^*\in B_{2R}$ and $t^*\in [1,T_\eps+2]$.
By Lemma \ref{derivative-boundedness-lm} and   Arzel\`a-Ascoli theorem, after passing to a subsequence, we can assume that there is $(u^*(t,x),v^*(t,x))$ such that
\begin{equation*}
(\tilde u_n(t, x),\tilde v_n(t, x))\to (u^*(t, x),v^*(t, x))\quad\text{ as $n\to\infty$}
\end{equation*}
locally uniformly in $(t,x)\in [0,\infty)\times \R^{N}$, and     $(u^*, v^*)$ is a solution of \eqref{tilde-u-v-eq} with $\xi$ and $c$ being replaced by $\xi^{*}$ and $c^{*}$ for $t\ge 0$. By \eqref{4.11},
$$u^{*}( 1, 0)\geq \eta_{0}.$$
It then follows from Lemma \ref{asymptotic-lm1}
that $u^{*}(t^*, \cdot)>0$ in $B_{4R}$, which contradicts with $u^{*} (t^{*}, y^*-x^{*})=0$ by \eqref{4.12}.
The lemma is thus proved.
\end{proof}

To proceed, since $(2\sqrt{a}-\delta_0)^2+\delta_0\sqrt{a}<4a$}, take $\bar a\in (0,a)$   such that
\begin{equation}
\label{bar-r-eq}
4\bar a -c^2\ge \delta_0\sqrt{a} \quad \text{for any}\quad c\in [ -2\sqrt{a}+\delta_0 ,  2\sqrt {a}-\delta_0].
\end{equation}
For $\xi\in\mathbb{S}^{N-1}$, let $\lambda (c,\bar a)$ be the principal eigenvalue of
\begin{equation*}
\begin{cases}
\Delta\phi+ c\xi\cdot\nabla\phi+\bar a \phi=\lambda \phi,\quad &x\in B_{R_0}\cr
\phi(x)=0, \quad  & x\in \partial B_{R_0},
\end{cases}
\end{equation*}
and $\phi(x;\xi,c,\bar a)$ be the corresponding positive eigenfunction with $\| \phi(\cdot;\xi,c,\bar a)\|_\infty=1$. By symmetry, $\lambda(c,\bar{a})$ is independent of $\xi$.
We claim that there are $R_0,\lambda_0>0$ such that
\begin{equation}
\label{lambda-lambda-eq}
\lambda(c,\bar a)\ge \lambda_0>0\quad \forall\, c\in [-2\sqrt a+\delta_0,2\sqrt a-\delta_0],\,\, \xi\in \mathbb{S}^{N-1}.
\end{equation}

The proof for the claim is easy. Indeed, let
\begin{equation}
\label{L-R-eq}
l_0=l_0(\delta_0):={2\pi\sqrt{N}}{(\delta_0\sqrt{a})^{-\frac{1}{2}}},  \quad R_0:=\sqrt N l_0,
\end{equation}
and set
$$
D_{l_0}:=\{x\in\R^N \,\ | \,\ |x_{i}|<l_0\,\ {\rm for}\,\ i=1,2,\cdot\cdot\cdot N\}.
$$
Then it is direct to check that for $\xi\in\mathbb{S}^{N-1}$, we have
\begin{equation*}
\tilde\lambda(c,\bar a):=\bar a-\frac{c^2}4-\frac{N\pi^2}{4l_0^2}
\quad\text{and}\quad
\tilde\phi(x;\xi, c,\bar a):=e^{-\frac{c}{2}\xi\cdot x}\prod_{i=1}^{N}{ \cos (\frac{\pi}{2l_0}x_{i}})
\end{equation*}
satisfy
\begin{equation*}
\begin{cases}
\Delta\tilde\phi+ c\xi\cdot\nabla\tilde\phi+\bar a \tilde\phi=\tilde \lambda \tilde\phi,\quad & x\in D_{l_0}\cr
\tilde\phi(x)=0, \quad & x\in \partial D_{l_0},
\end{cases}
\end{equation*}
and
\begin{equation*}
\lambda_0:=\tilde\lambda(2\sqrt{a}-{\delta_0},\bar a)=\min_{-2\sqrt{a}+{\delta_0}\leq c\leq 2\sqrt{a}-{\delta_0}}\tilde\lambda(c,\bar a)>0.
\end{equation*}
Since $D_{l_0}\subseteq B_{R_0}$, the domain monotonicity for the Dirichlet  principal eigenvalues yields the claim \eqref{lambda-lambda-eq}.

\smallskip

The next lemma shows that if  the supremum of $\tilde u$ is small on some interval $(t_1,t_2)\subset (2,\infty)$,  we can obtain a lower bound  for $\tilde u$ on that interval.

\begin{lem}\label{lem-4}
 {Recall the notations of \eqref{4.4} and $\delta_0$ from \eqref{4.1}.} There is  $\eps_1>0$ such that for  any $0< \eta\le \eps_1$, there is $\delta_\eta>0$ such that
for  any $\xi\in \mathbb{S}^{N-1}$, any $c\in [-2\sqrt{a}+\delta_0, 2\sqrt{a}-\delta_0]$, and  any $t_1,t_2$ with $2\le t_1< t_2\le \infty$,
if
\beq\lb{4.8}
\sup_{x\in B_{2R_{\eps_1}}}\tilde u(t_1, x;\xi, c)=\eta,\, \,\, \sup_{x\in B_{2R_{\eps_1}}} \tilde u(t,x;\xi, c)\le \eta,\qquad \forall\, t_1< t<t_2,
\eeq
then
$$
\inf_{x\in B_{R_{0}}} \tilde u(t,x;\xi, c)\ge \tilde\delta_\eta\qquad \forall\, t_1< t< t_2.
$$
\end{lem}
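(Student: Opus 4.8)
The plan is to combine Lemma \ref{lem-3} — which propagates a lower bound for $\tilde u$ over the fixed window $[t_1,t_1+T_{\eps_1}+1]$ once $\sup\tilde u$ is controlled from below at $t_1$ — with a comparison argument anchored on the positive principal eigenvalue of \eqref{lambda-lambda-eq}, which converts an instantaneous lower bound into a time-independent one valid on all of $[t_1+T_{\eps_1},t_2)$. I would fix $\eps_1\in(0,1)$ small enough that $B_{2R_0}\subset B_{R_{\eps_1}}$ (possible since $R_0$ from \eqref{L-R-eq} is fixed while $R_{\eps_1}\to\infty$ as $\eps_1\to 0$) and small enough for the quantitative requirement appearing in the next paragraph. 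Given $0<\eta\le\eps_1$, the hypothesis $\sup_{x\in B_{2R_{\eps_1}}}\tilde u(t_1,x;\xi,c)=\eta$ together with Lemma \ref{lem-3} (applied with $\eps=\eps_1$, $t_0=t_1\ge 2$) yields $\delta_\eta>0$ with $\inf_{x\in B_{2R_{\eps_1}}}\tilde u(t,x;\xi,c)\ge\delta_\eta$ for $t\in[t_1,t_1+T_{\eps_1}+1]$. Since $B_{R_0}\subset B_{2R_{\eps_1}}$, this already gives the desired estimate on $(t_1,t_2)\cap(t_1,t_1+T_{\eps_1}+1]$, so it remains to treat $t\in[t_1+T_{\eps_1},t_2)$ in the case $t_2>t_1+T_{\eps_1}+1$.

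For such $t$, since $\sup_{B_{2R_{\eps_1}}}\tilde u\le\eta\le\eps_1$ on $(t_1,t_2)$, Lemma \ref{asymptotic2-lem2} (with $\eps=\eps_1$) gives $|\nabla\tilde v|\le\tilde M\eps_1^{1/2}$ and $|\Delta\tilde v|\le\tilde M\eps_1^{1/4}$ on $B_{R_{\eps_1}}\supset B_{2R_0}$ for all $t\in[t_1+T_{\eps_1},t_2)$. I then rewrite the $\tilde u$-equation in \eqref{tilde-u-v-eq} as $\tilde u_t=\Delta\tilde u+(c\xi-\chi\nabla\tilde v)\cdot\nabla\tilde u+\tilde u(a-b\tilde u-\chi\Delta\tilde v)$ and, using that $-\chi\nabla\tilde v=\nabla(-\chi\tilde v)$ is an honest gradient, perform the gauge change $U:=e^{-\chi\tilde v/2}\tilde u$. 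A direct computation gives
\[
U_t=\Delta U+c\xi\cdot\nabla U+\tilde q\,U,\qquad \tilde q=a-b\tilde u-\tfrac12\chi\Delta\tilde v-\tfrac14\chi^2|\nabla\tilde v|^2+\tfrac12 c\chi\,\xi\cdot\nabla\tilde v-\tfrac12\chi\tilde v_t .
\]
Bounding $|\tilde v_t|$ through the second equation in \eqref{main-Eq} (using $|\Delta\tilde v|,|\nabla\tilde v|$ small, $\tilde u\le\eps_1$, and $0\le\tilde v\le\|v_0\|_\infty$ by the maximum principle), one gets $\tilde q\ge a-b\eps_1-C\eps_1^{1/4}\ge\bar a$ once $\eps_1$ is small, where $\bar a\in(0,a)$ is the constant from \eqref{bar-r-eq}; this is the quantitative smallness alluded to above. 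Hence $U$ is a supersolution of $W_t=\Delta W+c\xi\cdot\nabla W+\bar a W$ on $B_{2R_0}\times[t_1+T_{\eps_1},t_2)$, with $U\ge 0$ on $\partial B_{2R_0}$, and — since $e^{-\chi\tilde v/2}\in[e^{-|\chi|\|v_0\|_\infty/2},e^{|\chi|\|v_0\|_\infty/2}]$ and $\tilde u(t_1+T_{\eps_1},\cdot)\ge\delta_\eta$ on $B_{2R_0}$ by the previous step — one has $U(t_1+T_{\eps_1},\cdot)\ge e^{-|\chi|\|v_0\|_\infty/2}\delta_\eta$ there.

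I would then compare $U$ with the \emph{time-independent} subsolution $\underline W:=e^{-|\chi|\|v_0\|_\infty/2}\delta_\eta\,\psi$, where $\psi=\psi(\cdot;\xi,c)$ is the $L^\infty$-normalized positive principal eigenfunction of $\Delta+c\xi\cdot\nabla+\bar a$ on $B_{2R_0}$: its eigenvalue is $\ge\lambda(c,\bar a)\ge\lambda_0>0$ by domain monotonicity and \eqref{lambda-lambda-eq}, so $\underline W$ is a strict subsolution and $\underline W\le U$ on the parabolic boundary of $B_{2R_0}\times[t_1+T_{\eps_1},t_2)$. The comparison principle gives $U\ge\underline W$ there. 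Since $\psi(x;\xi,c)$ equals, up to normalization, $e^{-\frac c2\xi\cdot x}$ times the fixed radial principal eigenfunction of $-\Delta$ on $B_{2R_0}$, the quantity $\inf_{\overline{B_{R_0}}}\psi(\cdot;\xi,c)$ is bounded below by a positive constant $c_\psi$ uniform in $\xi\in\mathbb{S}^{N-1}$ and $c\in[-2\sqrt a+\delta_0,2\sqrt a-\delta_0]$. Therefore $\tilde u\ge e^{-|\chi|\|v_0\|_\infty/2}U\ge e^{-|\chi|\|v_0\|_\infty}c_\psi\delta_\eta$ on $B_{R_0}\times[t_1+T_{\eps_1},t_2)$, and $\tilde\delta_\eta:=\min\{\delta_\eta,\,e^{-|\chi|\|v_0\|_\infty}c_\psi\delta_\eta\}$ works on all of $(t_1,t_2)$.

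The step I expect to be the main obstacle is handling the chemotaxis drift $-\chi\nabla\tilde v\cdot\nabla\tilde u$. It is small by Lemma \ref{asymptotic2-lem2}, but it is time-dependent, so one cannot simply pass to the principal eigenvalue of a fixed perturbed operator; and because $\psi$ vanishes on $\partial B_{2R_0}$, a separated-variables subsolution $\beta(t)\psi(x)$ cannot absorb this drift near the boundary. The remedy is the gauge change $U=e^{-\chi\tilde v/2}\tilde u$ — available precisely because $\nabla\tilde v$ is literally a gradient — which turns the drift into a small perturbation of the reaction coefficient and leaves an operator carrying only the fixed drift $c\xi$, to which the eigenfunction comparison of \eqref{lambda-lambda-eq} applies cleanly. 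A secondary technical point is the uniformity of all constants in $(\xi,c)$, which is transparent because $\psi$ depends on $(\xi,c)$ only through the explicit weight $e^{-\frac c2\xi\cdot x}$ on the fixed ball $B_{2R_0}$.
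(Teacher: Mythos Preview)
Your proof is correct and takes a route genuinely different from the paper's. Both arguments start identically: Lemma~\ref{lem-3} handles $[t_1,t_1+T_{\eps_1}+1]$, and Lemma~\ref{asymptotic2-lem2} makes $\nabla\tilde v$, $\Delta\tilde v$ (hence also $\tilde v_t$, via the $\tilde v$-equation) small on $B_{R_{\eps_1}}\times[t_1+T_{\eps_1},t_2)$. The divergence is in how the small chemotaxis drift $-\chi\nabla\tilde v\cdot\nabla\tilde u$ is handled afterwards. The paper keeps the drift and proves a perturbation result: for any $C^1$ function $q$ with $\|\nabla q\|_\infty$ small, the solution $u_q$ of \eqref{cut-off-linear-eq1} on $B_{R_0}$ satisfies $u_q(T_0,\cdot)\ge 2\phi$; this is obtained from $C^1(\bar B_{R_0})$-continuous dependence of $u_q$ on $\nabla q$ together with Hopf's lemma to control the boundary layer where $\phi$ vanishes. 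Iterating in steps of length $T_0$ gives $\tilde u\ge 2^k\delta_1\phi$ at the discrete times $t_1+T_{\eps_1}+1+kT_0$, and Lemma~\ref{lem-3} fills the gaps. Your Hopf--Cole substitution $U=e^{-\chi\tilde v/2}\tilde u$ removes the drift entirely, converting it into a small additive perturbation of the zeroth-order coefficient; a single comparison of $U$ with the stationary subsolution $e^{-|\chi|\|v_0\|_\infty/2}\delta_\eta\,\psi$ on the \emph{enlarged} ball $B_{2R_0}$ then suffices, the enlargement guaranteeing $\inf_{\overline{B_{R_0}}}\psi>0$ uniformly in $(\xi,c)$ via the explicit factorization $\psi\propto e^{-\frac c2\xi\cdot x}\Phi(|x|)$. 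Your argument is shorter and avoids both Hopf's lemma and the time-stepping; it exploits that the drift is literally a gradient, which the paper's approach does not need but which is always the case here. One minor correction: your reference to ``the second equation in \eqref{main-Eq}'' for bounding $\tilde v_t$ should be to the moving-frame system \eqref{tilde-u-v-eq}, since the extra transport term $\tau c\xi\cdot\nabla\tilde v$ must be included --- it is harmless, being $O(\eps_1^{1/2})$.
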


\begin{proof}
First of all, we give a construction of $\eps_1$. Let $\bar{a}$ from \eqref{bar-r-eq} and set
\begin{equation}\lb{T0}
T_0:=\max\left\{1,\lambda_0^{-1}\ln 4\right\}.
\end{equation}
Note that
$ u^*(t,x;\xi,c):=  e^{\lambda (c, \bar a) t}\phi(x;\xi,c,\bar a)$ is the solution of
\begin{equation}
\label{new-cut-off-eq1}
\begin{cases}
 u_t=\Delta u+c\xi \cdot \nabla u+\bar a u, \quad & x\in B_{R_0}, \,\, t>0,\\
 u(t,x)=0, \quad & x\in \p{B_{R_0}}, \,\,  t>0,\\
 u(0,x)= \phi(x;\xi,c,\bar a), \quad &x\in B_{R_0}.
\end{cases}
\end{equation}
It follows from \eqref{T0} and \eqref{lambda-lambda-eq} that
\begin{equation}
\label{cut-off-linear-solu1}
u^*(T_0,x;\xi,c)\ge 4 \phi(x;\xi,c,\bar a)\quad \forall\, x\in B_{R_0},\,\, c\in[-2\sqrt a -\delta_0,2\sqrt a-\delta_0],\,\, \xi\in \mathbb{S}^{N-1}.
\end{equation}

For  a given $C^1$ function $q(t,x)$, consider
\begin{equation}
\label{cut-off-linear-eq1}
\begin{cases}
 u_t=\Delta u+c\xi \cdot \nabla u+\nabla q\cdot\nabla u+\bar a u, \quad & x\in B_{R_0}, \,\, t>0,\\
 u(t,x)=0, \quad & x\in \p{B_{R_0}}, \,\,  t>0,\\
 u(0,x)= \phi(x;\xi,c,\bar a), \quad & x\in B_{R_0},
\end{cases}
\end{equation}
where $R_0$ is given in \eqref{L-R-eq}.
Let
$u_q(t,x;\xi,c,\phi)$ be the solution of \eqref{cut-off-linear-eq1}.
We claim that there is $\eps_2>0$ such that for any $C^1$-function $q(t,x)$, if
$$\|\nabla q\|_\infty:=\|\nabla q\|_{C([0,T_0]\times \bar B_{R_0})}<|\chi|\tilde M \eps_2\quad \text{with  $\tilde M$ from Lemma \ref{asymptotic2-lem2}},
$$
then
\begin{equation}
\label{cut-off-linear-solu2}
u_q(T_0,x;\xi,c,\phi)\ge 2\phi(x;\xi,c,\bar a)\quad \forall\, x\in B_{R_0},\,\, c\in [-2\sqrt a+\delta_0,2\sqrt a-\delta_0],\,\, \xi\in \mathbb{S}^{N-1}.
\end{equation}

In fact, recall that $u^*$ is a classical solution of \eqref{new-cut-off-eq1},
$u^*(t,x)>0$ for $t>0$ and $x\in B_{R_0}$, and $u^*(t,x)=0$ for $t>0$ and $x\in\partial B_{R_0}$.  By \cite[Theorem 2]{Friedman-1} and the continuity of  $\frac{\partial u^*(T_0,x,\xi,c)}{\partial\nu}$ in $x\in\partial B_{R_0}$, $|c|\le 2\sqrt a-\delta_0$, and $\xi\in \mathbb{S}^{N-1}$,  there holds 
\begin{equation}
\label{hopf-eq}
\inf_{x\in \partial B_{R_0},|c|\le 2\sqrt a-\delta_0,\xi\in \mathbb{S}^{N-1}}\frac{\partial u^*(T_0,x;\xi,c)}{\partial\nu}<0
\end{equation}
where $\partial/\partial \nu$ denotes the outer   normal derivative.
By \cite[Theorem 3.4.1]{Hen},
\begin{equation}
\label{continuity-eq1}
\lim_{\|\nabla  q\|_\infty\to 0} \|u_q(T_0,\cdot;\xi,c, \phi)-u^*(T_0,\cdot;\xi,c)\|_{C^1(\bar B_{R_0})}=0
\end{equation}
uniformly in $c\in[-2\sqrt a-\delta_0,2\sqrt a+\delta_0]$ and $\xi\in \mathbb{S}^{N-1}$.
Note that
$$
\frac{\partial u^*(T_0,x;\xi,c)}{\partial\nu}=\frac{1}{R_0}\nabla u^*(t,x;\xi,c)\cdot x,\quad \forall\, x\in \partial B_{R_0}.
$$
Thus, by   \eqref{hopf-eq}  and \eqref{continuity-eq1},  there is $r_1>0$ such that
\begin{equation}
\label{continuity-eq2}
\inf_{x\in B_{R_0}\setminus B_{R_0-r_1},|c|\le 2\sqrt a-\delta_0,\xi\in S^{N_1}}\nabla u^*(T_0,x;\xi,c)\cdot x<0.
\end{equation}
While, away from the boundary, we have
$$
\inf_{x\in B_{R_0-r_1},|c|\le 2\sqrt a-\delta_0,\xi\in \mathbb{S}^{N-1}}\phi(x;\xi,c,\bar a)>0.
$$
Then, by  \eqref{cut-off-linear-solu1}, \eqref{continuity-eq1} and \eqref{continuity-eq2}, there is
$\eps_2>0$ such that for any $C^1$ function $q$ satisfying that
 $$\|\nabla q\|_{(C[0,T_0]\times\bar B_{R_0})}<|\chi|\tilde M\eps_2,
$$
 we have for all $|c|\le 2\sqrt a-\delta_0$ and $\xi\in \mathbb{S}^{N-1}$,
\begin{equation}
\label{continuity-eq3}
u_q(T_0,x;\xi,c,\phi)\ge 2 \phi(x;\xi,c,\bar a),\quad\text{ in $B_{R_0-r_1}$},
\end{equation}
and for all  $ x\in B_{R_0}\setminus B_{R_0-r_1}$,
\begin{equation*}
-\nabla u_q(T_0,x;\xi,c,\phi)\cdot x\ge  - \frac{1}{2}\nabla u^*(T_0,x; c,\xi)\cdot x.
\end{equation*}
This,  together with   \eqref{cut-off-linear-solu1}, yields for $x\in B_{R_0}\setminus B_{R_0-r_1},$
\beq\label{continuity-eq4}
\begin{aligned}
u_q(T_0,x;\xi,c,\phi)
&=-\left(\frac{R_0}{|x|}-1\right)\int_0^1 \nabla u_q(T_0,sx+(1-s)R_0x/|x|;\xi,c,\phi)\cdot  x \, ds \\
&\ge -\frac{1}{2} \left(\frac{R_0}{|x|}-1\right)\int_0^1 \nabla u^*(T_0,sx+(1-s)R_0x/|x|;\xi,c)\cdot x\, ds\\
&=\frac{1}{2} u^*(T_0,x;\xi,c)\ge 2 \phi(x;\xi,c,\bar a) \qquad \forall\, |c|\le 2\sqrt a-\delta_0,\,\, \xi\in \mathbb{S}^{N-1}.
\end{aligned}
\eeq
The  claim \eqref{cut-off-linear-solu2} then follows from \eqref{continuity-eq3} and \eqref{continuity-eq4}.

Finally, we let
\begin{equation*}
\eps_1=\min\left\{\frac12,\, \left(\eps_2\right)^2,\, \left (\frac{a-\bar a}{|\chi|\tilde M+b}\right)^4,\,  \left(\frac{2}{R_0}\right)^{\frac{2N}{2+N}}\right\}.
\end{equation*}
Recall $R_{\eps_1}={2}{\eps_1^{-\frac{2+N}{2N}}}$ and so $
R_{\eps_1}\ge R_0.$

\smallskip

Next, we prove that the lemma holds with the above $\eps_1$. By the assumption \eqref{4.8}  and
 Lemma \ref{lem-3},  there is  $\delta_1=\delta_1(\eta)>0$ such that
\begin{equation*}
    \inf_{x\in B_{2R_{\eps_1}}} \tilde u(t,x;\xi, c)\ge \delta_1, \quad \forall\, t_1\le t\le t_1+T_{\eps_1}+1,\,\, |c|\le 2\sqrt a-\delta_0,\,\, \xi\in \mathbb{S}^{N-1}.
\end{equation*}
Thus, the proof is finished if $t_2\le t_1+T_{\eps_1}+1$. 

In the following, we assume that $t_2>t_1+T_{\eps_1}+1$.  
By the assumption and  Lemma \ref{asymptotic2-lem2}, we have
$$
|\nabla\tilde  v(t,x;\xi,c)|\le  \tilde M \eps_1^{\frac{1}{2}}\quad\text{and}\quad |\Delta \tilde v(t,x;\xi,c)|\le \tilde M \eps_1^{\frac{1}{4}}\quad \forall\, t_1+T_{\eps_1}\le t<t_2, \, x\in B_{R_{\eps_1}}.
$$
By the definition of $\eps_1$, for any  $\eta\leq \eps_1$, we have
$$
a-|\chi| \tilde M \eps_1^\frac{1}{4}-b\eta\ge a-\left(|\chi|\tilde M +b\right) \eps_1^\frac{1}{4}\ge \bar a.
$$
This implies that
\begin{align*}
\tilde{u}_t&\geq \Delta \tilde{u}+c\xi \cdot \nabla \tilde{u}- \chi \nabla \cdot(\tilde{u}  \nabla\tilde  v) + a \tilde{u}-b\eta\tilde{u}\\
&\geq \Delta \tilde{u}+c\xi \cdot \nabla \tilde{u}- \chi \nabla \tilde{u} \cdot \nabla\tilde  v + \bar a \tilde{u}, \quad \forall\,  t\le t_1+T_{\eps_1}+1\le t<t_2,\, x\in B_{R_{\eps_1}}.
\end{align*}
Also using that $\tilde u(t_1+T_{\eps_1}+1,x;\xi,c)\ge \delta_1\geq  \delta_1 u_q(0,x;\xi,c,\phi)$ and that $\tilde{u}$ is non-negative in the whole domain,
it follows from the comparison principle that
$$
\tilde u(t_1+T_{\eps_1}+1+t,x;\xi,c)\ge  \delta_1 u_q(t,x;\xi,c,\phi),\quad 0\le t<t_2-t_1-T_{\eps_1}-1,\quad x\in B_{R_0},
$$
where  $q(t,x)=-\chi \tilde v(t+t_1+T_{\eps_1}+1,x;\xi,c)$.

Let $n_0\ge 0$ be  such that
$$t_1+T_{\eps_1}+1+ n_0 T_0<t_2\quad {\rm and}\quad t_1+T_{\eps_1}+1+(n_0+1) T_0\ge t_2.$$
Since
$$
|\nabla q(t,x)|=|\chi||\nabla \tilde v(t,x;\xi,c)|\le |\chi|\tilde M \eps_1^{\frac{1}{2}}\le |\chi|\tilde  M_1 \eps_2,
$$
by \eqref{cut-off-linear-solu2}, we get
\begin{align*}
\tilde u(t_1+T_{\eps_1}+1+k T_0,x;\xi,c)&\ge \delta_1 u_q(kT_0,x;\xi,c,\phi)\\
&\ge 2^{k} \delta_1 \phi(x;\xi,c,\bar a)\quad \forall\, x\in B_{R_0}, \,\, k=1,2,\cdots,n_0.
\end{align*}
Applying Lemma \ref{lem-3} implies that there is $0<\tilde\delta_\eta\leq \delta_1$ such that for any $-2\sqrt{a}+{\delta_0}\leq c\leq 2\sqrt{a}-{\delta_0}$, any $\xi\in \mathbb{S}^{N-1}$,
$$
\inf_{x\in B_{R_{0}}}\tilde u(t, x;\xi, c)\ge \tilde\delta_\eta\qquad \forall \, t_1\le t<t_2.
$$

\end{proof}

Now, we prove  Theorem \ref{speed-lower-bound-thm}(1).

\begin{proof}[Proof of Theorem \ref{speed-lower-bound-thm}(1)]
As it is pointed out in the above, to prove  Theorem \ref{speed-lower-bound-thm}(1), it suffices to prove \eqref{lower-bound-eqq1-1}.

{Let $\eps_0,T_0, R_0, \eps_1,T_{\eps_1}$  be as in the above.   Let
\begin{equation*}
     \delta_*:=\inf\left\{ \tilde u(T_{ \eps_1}+1, x;\xi, c)\,|\, x\in B_{R_0},\,\, \xi\in \mathbb{S}^{N-1},\,\, |c|\le 2\sqrt a-\delta_0\right\}.
\end{equation*}}
Since $u_0(x)\ge 0$ has nonempty support,  by Lemma \ref{asymptotic-lm1}, $\delta_*>0$. Let
$$
k_*:=\inf\{k\in \Z^+\,|\, 2^{k}{\delta_*}\ge   \eps_1\}
\quad {\rm and}\quad
T_*:=T_{\eps_1}+1+k_*   T_0.
$$
We claim that there is a $\delta >0$  independent of $c$ and $\xi$ such that,
\begin{equation}\label{4.18}
    \inf_{x\in B_{R_0}}\tilde u(t,x;\xi,c)\ge \delta\qquad \forall t> \, T_*.
\end{equation}
{\bf Case 1: }  Suppose that for any $t\geq T_{\eps_1}\geq2 $,
\begin{equation}\label{lbound}
  \sup_{x\in B_{2R_{\eps_1}}} \tilde u(t,x;\xi,c)\ge \eps_1.
\end{equation}
In this case,  by applying Lemma \ref{lem-3} repeatedly, we get
\begin{equation*}
 \inf_{x\in B_{2R_{\eps_1}}} \tilde u(t,x;\xi,c)\ge \delta_{\eps_1}\quad \forall\,  t\ge T_{\eps_1}.
\end{equation*}
Hence, \eqref{4.18} holds with $\delta=\delta_{\eps_1}$.

\smallskip

\noindent {\bf Case 2:}
Suppose that there exists $t > T_{\eps_1}$ such that \eqref{lbound} is not true. Then the set $\{t>T_{\eps_1}\,|\, \sup_{x\in B_{2R_{\eps_1}}}\tilde u(t,x;\xi,c)< \eps_1 \}$ is non-empty, and the set is open by continuity.  This means we can write it as union of some disjoint open intervals i.e.,
\[
\Big\{t>T_{\eps_1} : \sup_{x\in B_{2R_{\eps_1}}}\tilde u(t,x;\xi,c)< \eps_1 \Big\} = \bigcup _{i\in I}(t_i, s_i),\quad\text{ for some }t_i, s_i \in [T_{\eps_1},\infty), t_i<s_i<t_{i+1}.
\]


\noindent{\bf Case 2.1:} Suppose that   $t_i> T_{\eps_1}$ for all $i$. Then
$$\sup_{x\in B_{2R_{\eps_1}}}\tilde u(t_i,x;\xi, c)=\eps_1,\, \,\, \sup_{x\in B_{2R_{\eps_1}}} \tilde u(t,x;\xi, c)< \eps_1,\,\, \forall\,\ t_i<t<s_i.$$
Then by Lemma \ref{lem-4}, there exists a $\tilde\delta_{{\eps_1}} >0$ independent of $\xi$ and $c$  such that
$$
\inf_{x\in B_{R_{0}}} \tilde u(t,x;\xi, c)\ge  \tilde\delta_{{\eps_1}}\quad \forall\,\ t_i\le t< s_i.
$$
For any $t>T_*$ and $t\not \in \bigcap_{i\in I}(t_i,s_i)$, we have
$$
\sup_{x\in B_{2R_{\eps_1}}} \tilde u(t,x;\xi,c)\ge \eps_1.
$$
Then Lemma \ref{lem-3} yields
$$
 \inf_{x\in B_{2R_{\eps_1}}} \tilde u(s,x;\xi,c)\ge \delta_{\eps_1}\quad \forall\,  t\le s\le t+ T_{\eps_1} +1.
$$
Hence
\eqref{4.18} holds with $\delta=\min\{\delta_{\eps_1},\tilde  \delta_{{\eps_1}}\}$.

\smallskip

\noindent {\bf Case 2.2:}   There is $i_0$ such that  $t_{i_0} = T_{\eps_1} $.  Note that
\begin{equation}\label{con1}
    \sup_{x\in B_{2R_{\eps_1}}} \tilde u(t,x;\xi, c)<{\eps}_1, \qquad \forall\, \, t_{i_0}<t<s_{i_0}.
\end{equation}
We claim that $s_{i_0}\le T_*$,  and then by the arguments in Case 2.1,
\eqref{4.18} holds with $\delta=\min\{\delta_{\eps_1},\tilde  \delta_{{\eps_1}}\}$.
In fact, assuming  $s_{i_0}>T_*$, then
$$
\inf_{x\in B_{R_0}}\tilde u(T_{\eps_1}+1,x;\xi,c)\ge \delta_*.
$$
By comparing $\tilde{u}(T_{\eps_1}+1+\cdot, \cdot;\xi, c)$ with $\delta_* u_q(\cdot,\cdot;\xi,c,\phi)$ in $[0,k_*T_0]\times B_{R_0}$, with $u_q$ from Lemma \ref{lem-4}, we get
\begin{align*}
\tilde u(T_{\eps_1}+1+k  T_0, x;\xi, c)\geq  2^{k} \delta_*  \phi(x;\xi, c,\bar a) \qquad \forall\,x\in B_{R_0}
\end{align*}
for $k=0,1,2,\cdots, k_*$. In particular, this implies that
\begin{align*}
\sup_{x\in B_{2R_{\eps_1}}}\tilde u(T_*, x;\xi, c)\geq \sup_{x\in B_{R_0}}\tilde u(T_{\eps_1}+1+k T_0, x;\xi, c) \geq  \eps_1,
\end{align*}
which contradicts with \eqref{con1}, and hence $s_{i_0}\le T_*$.

Overall, we conclude that for all $t\geq T_*$,
$$\inf_{\xi\in \mathbb{S}^{N-1},\,|c|\leq 2\sqrt{a}-\delta_0}\left(\inf_{|x|\le R_0} \tilde u(t,x;\xi,c)\right)\geq  \min\{\delta_{{\eps_1}}, \tilde\delta_{\eps_1}\}.$$
Theorem \ref{speed-lower-bound-thm}(1) is thus proved.
\end{proof}

\subsection{Proof of  Theorem  \ref{speed-lower-bound-thm}(2)}

In this subsection, we prove Theorem \ref{speed-lower-bound-thm}(2).

\begin{proof}[Proof of Theorem\ref{speed-lower-bound-thm}(2)]
We first prove \eqref{lower-bound-eqq2}.
Suppose for contradiction that there are  $0< c_1< c_{\rm low}^*(u_0,v_0)$,  $\delta>0$ and  $\{(t_n, x_{n})\} \subset \R^{N+1}$ such that
$t_{n}\rightarrow \infty,\, |x_{n}|\leq c_1t_{n} $,  and
\begin{equation}\label{aux-eq1}
\left|u(t_{n},x_{n})-\frac{a}{b}\right|\geq \delta, \quad \ \ \forall\ n\geq 1.
\end{equation}
Consider
\begin{equation*}
u_{n}(t,x)=u(t+t_{n}, x+x_{n}) \quad \text{and} \quad v_{n}(t, x)=v(t+t_{n}, x+x_{n})\quad \forall\, t\ge -t_{n},\,\, x\in \R^N.
\end{equation*}
By Lemma  \ref{derivative-boundedness-lm}, 
there is a function $(\hat {u},\hat {v})\in C^{2,1}(\R\times \R^N)$ and  a subsequence $\{(u_{n_k},v_{n_k})\}$ of $\{(u_{n},v_{n})\}$  such that
$$
\lim_{k\to\infty} (u_{n_k}(t,x),v_{n_k}(t,x))=(\hat u(t,x),\hat v(t,x))\quad \text{locally uniformly in}\,\, \R\times\R^N.
$$
Moreover, $(\hat u(t,x),\hat v(t,x))$ is an entire solution of \eqref{main-Eq}.

\smallskip

Choose $c'$ such that $c_1<c'<c_{\rm low}^*(u_0,v_0)$.   Then, for every $x\in\R^N$ and $t\in\R$, let us select $k$ such that   $t_{n_k}\geq \frac{|x|-  c't}{c'-c_1}$. This yields
\begin{eqnarray*}
|x+x_{n_k}| \leq    |x|+ c_1t_{n_k}  \leq   {c'}(t_{n_k}+t)
\end{eqnarray*}
which then implies that
$$
\hat {u}(t, x)=\lim_{k\rightarrow \infty}u(t+t_{n_k}, x+ x_{n_k})\geq \liminf_{s\rightarrow \infty}\inf_{|y|\leq {c'}s}u(s,y)\quad \forall\, (t,x)\in\R\times\R^N.
$$
By \eqref{lower-bound-eqq1},
\begin{equation}
\label{aux-new-thm1-eq1}
\inf_{(t,x)\in\R\times \R^N}\hat{u}(t, x)\ge \liminf_{s\rightarrow \infty}\inf_{|y|\leq {c'}s}u(s,y)=:\eps_0>0.
\end{equation}

Similarly to the proof of Proposition \ref{asymptotic-behavior-prop}, since $(\hat u(t,x),\hat v(t,x))$ is an entire solution of \eqref{main-Eq}, we get $\tau \hat v_t\leq \Delta\hat v-\eps_0\hat v$, which implies
$$
    {0\le \hat v(t, x) \le \|\hat v(-T,\cdot)\|_{\infty} e^{-\frac{\eps_0}{\tau}(t+T)}\le \sup_{s\in\R} \|\hat v(s,\cdot)\|_\infty e^{-\frac{\eps_0}{\tau}(t+T)}  \qquad \forall t\ge -T.}
$$ 
Letting $T\to -\infty$ yields
$
\hat v(t,x)\equiv 0.
$
Then, $\hat u(t,x)$ satisfies
\begin{equation*}
\hat u_t=\Delta \hat u+\hat u(a-b\hat u),\quad \forall t\in\R.
\end{equation*}
Moreover, since \eqref{aux-new-thm1-eq1} is for all $t\in\R$, we get 
$\hat u(t,x)\equiv\frac{a}{b}$  (also see the proof for \eqref{u-to-a-over-b-eq}). However, this  contradicts with \eqref{aux-eq1}. Hence,  \eqref{lower-bound-eqq2}  holds.

\smallskip

Next, we prove \eqref{lower-bound-eqq3}. We also prove it by contradiction.
Assume for contradiction that there are  $0< c_1< c_{\rm low}^*(u_0,v_0)$,  $\delta>0$ and  $\{( t_n,  x_{n})\} \subset \R^{N+1}$ such that
$ t_{n}\rightarrow \infty,\, | x_{n}|\leq c_1 t_{n} $,  and
\begin{equation}
\label{aux-eq1-1}
v( t_{n}, x_{n})\geq \delta, \quad \ \ \forall\ n\geq 1.
\end{equation}
Let
\begin{equation*}
 u_{n}(t,x)=u(t+ t_{n}, x+ x_{n}) , \quad \text{and} \quad  v_{n}(t, x)=v(t+t_{n}, x+x_{n})\quad \forall\, t\ge - t_{n},\,\, x\in \R^N.
\end{equation*}
Then, similarly as done in the above,  there is a function $(\tilde {u},\tilde {v})\in C^{2,1}(\R\times \R^N)$ and  a subsequence $\{( u_{n_k}, v_{n_k})\}$ of $\{( u_{n}, v_{n})\}$  such that
$$
\lim_{k\to\infty} (u_{n_k}(t,x),v_{n_k}(t,x))=(\tilde u(t,x),\tilde v(t,x))\quad \text{locally uniformly in}\,\, \R\times\R^N,
$$
and, moreover,
$
\tilde u(t,x)\equiv \frac{a}{b}$ and $\tilde v(t,x)\equiv 0.
$
However, this contradicts with \eqref{aux-eq1-1}, and so we proved \eqref{lower-bound-eqq3}.
\end{proof}

\section{Upper bound of spreading speeds}

In this section, we provide an upper bound for the spreading speed and prove Theorem \ref{speed-upper-bound-thm}.    Throughout the section, we fix $(u_0,v_0)\in X_c^+\times X_1^+$,  and  suppose  that $(u(t,x;u_0,v_0)$, $v(t,x;u_0,v_0))$ is a globally defined bounded solution of \eqref{main-Eq}.

\subsection{Proof of Theorem \ref{speed-upper-bound-thm}(1)}


Let us drop $u_0,v_0$ from the notations of $u(t,x;u_0,v_0)$ and $v(t,x;u_0,v_0)$.
 We first prove that  there is ${c_1}>0$ such that
\begin{equation}
\label{aux-new-thm3-eq2}
\lim_{t\to\infty} \sup_{|x|\ge {c_1}t}u(t,x )=0.
\end{equation}
This will imply that
$
c_{\rm up}^*(u_0,v_0)\leq c_1<\infty.
$

To prove \eqref{aux-new-thm3-eq2}, we first claim that  there are $t_0>0$ and   $M_0>0$ such that
\begin{equation}
\label{exponential-bound-u-eq}
u(t,x)\le M_0 e^{-\sqrt a |x|}\quad \forall\, 0\le t \le t_0,\,\, x\in\R^N.
\end{equation}
In fact, let $\tilde u(t,x)=e^{\sqrt{a(1+|x|^2)}} u(t,x)$ and $\tilde v(t,x)=v(t,x)$.
Writing $h:=e^{\sqrt{a(1+|x|^2)}}$, we have
\begin{align*}
h\nabla u&=\nabla \tilde u-(h^{-1}\nabla h)\tilde u,\\
h \Delta u&=\Delta \tilde u -\left(h^{-1}\Delta  h\right)\tilde u +2|h^{-1}\nabla h|^2\tilde u-2h^{-1}\nabla h\cdot\nabla\tilde u=\Delta \tilde u +\left(h^{-1}\Delta  h\right)\tilde u -2\nabla (\tilde uh^{-1}\nabla h ).    
\end{align*}
Thus, 
$(\tilde u(t,x),\tilde v(t,x))$  is a global classical solution of 
\begin{equation*}
\begin{cases}
\tilde u_{t}= \Delta \tilde u - \chi\nabla\cdot(\tilde u \nabla\tilde v)+ \tilde u(a-
bh^{-1}\tilde u) \\
\qquad\quad + \left(h^{-1}\Delta  h\right)\tilde u -2\nabla (\tilde uh^{-1}\nabla h ) +\chi \tilde u h^{-1}\nabla h\cdot\nabla \tilde v,\quad & \text{in } (0,\infty)\times\R^N,\\
{\tau \tilde v_t}=\Delta \tilde v-h^{-1}\tilde u\tilde v,\quad & \text{in } (0,\infty)\times\R^N,\\
\tilde u(0,x)=h(x) u_0(x),\quad \tilde v(0,x)=v_0(x), & x\in \R^N.
\end{cases}
\end{equation*}
Note that $\tilde u(0,\cdot)$ has a compact support, and $h^{-1}$, $h^{-1}\nabla h$ and $h^{-1}\Delta h$ are uniformly bounded. By the arguments of \cite[Proposition 2.1(1)]{HSZ} about local existence of solutions of \eqref{main-Eq}, there is
$\tilde T_{\max}\in (0,\infty]$ such that the classical solution $ (\tilde u,\tilde v)$ is unique in $(0,\tilde T_{\max})$ and it satisfies for any  $t_0\in (0,\tilde T_{\max})$,
$$
M_0=\sup_{t\in [0,t_0], x\in\R^N} \tilde u(t,x)<\infty.
$$
This implies that
$$
u(t,x)=e^{-\sqrt {a(1+|x|^2)}} \tilde u (t,x)\le M_0 e^{-\sqrt a|x|},\quad \forall\, t\in [0,t_0], \,\, x\in\R^N,
$$
which yields that \eqref{exponential-bound-u-eq} holds.

 Fix a $t_0>0$ such that \eqref{exponential-bound-u-eq} holds. By Lemma \ref{derivative-boundedness-lm}, we have
$$A: =\sup_{t\ge t_0}\left(\|\nabla v(t)\|_{\infty}+\|\Delta v(t)\|_{\infty}\right)<\infty.$$
Then, for some $c_1>0$ to be determined and for each $\xi\in \mathbb{S}^{N-1}$, let
\beq\lb{5.1}
u_\xi (t,x):=M_0 e^{- \sqrt{a}(x\cdot \xi-c_1 t)}.
\eeq
It is direct to see that
\begin{align*}
&\p_t u_\xi -\Delta u_\xi+\chi\nabla\cdot ( u_\xi \nabla v)-au_\xi+bu_\xi^2\\
&\qquad \geq c_1\sqrt{a} u_\xi-a u_\xi-|\chi|A ( \sqrt{a}+1)u_\xi-a u_\xi=(c_1\sqrt{a}-2a-|\chi|A( \sqrt{a}+1))u_\xi.
\end{align*}
Let us pick $c_1:=2\sqrt{a}+|\chi|A( 1+1/\sqrt{a})$, and thus $u_\xi$ is a supersolution to the equation satisfied by $u$. By \eqref{exponential-bound-u-eq},
\[
u(t_0,x)\leq M_0e^{-\sqrt{a} x\cdot\xi}\quad\text{for all $\xi\in \mathbb{S}^{N-1}$}.
\]
It then follows from the comparison principle that $u\leq u_\xi$ in $[t_0,\infty)\times\R^N$ for all $\xi\in\mathbb{S}^{N-1}$. We obtain
\beq\lb{5.3}
u(t,x )\le \min_{\xi\in\mathbb{S}^{N-1}}u_\xi(t,x)= M_1 e ^{-\sqrt{a} (|x|-c_1t)}\quad \forall\, t>t_0,\,\, x\in\R^N.
\eeq
This implies \eqref{aux-new-thm3-eq2} with ${c_1}=2\sqrt{a}+|\chi|A( 1+1/\sqrt{a})$.

\smallskip

Next, we prove \eqref{upper-bound-eqq1}. Fix a $c_2>c_{\rm up}^*(u_0,v_0)\geq c_{\rm low}^*(u_0,v_0)\geq 2\sqrt{a}$. Let $\delta\in (0,1)$ and $\eps\in (0,\delta)$  to be determined. In view of the definition of $c_{\rm up}^*(u_0,v_0)$, there is $T_\eps>0$ such that
\begin{equation*}
u(t,x )\leq \eps,\quad \forall\,\, t\ge T_\eps,\,\, |x|\ge c_2 t.
\end{equation*}
Then, by Lemma \ref{asymptotic-lm2}, there is $C_\delta>0$ such that
\begin{equation}
\label{upper-bound-eqq3}
|\nabla v(t,x )|,\,\, |\Delta v(t,x )|\le \delta +C_\delta u(t,x )^\frac12\leq \delta +C_\delta \eps^\frac12\quad \forall\, t\ge T_\eps,\,\, |x|\ge c_2 t.
\end{equation}

Let $w_\xi$ be defined similarly as in \eqref{5.1},
that is $w_\xi (t,x):=M_2 e^{- \sqrt{a}(x\cdot \xi-c_2 t)}$, but with $M_2$ given by
\beq\lb{5.4}
M_2:=\max \left\{
   M_1\exp({\sqrt{a}T_\eps(c_1-c_2))},\|u\|_\infty \right\}.
\eeq
By  \eqref{upper-bound-eqq3} and direct computations, in the region of $\{t\ge T_\eps, |x|\ge c_2 t\} $, we have
\begin{align*}
&\p_t w_\xi -\Delta w_\xi+\chi\nabla\cdot ( w_\xi \nabla v)-aw_\xi+bw_\xi^2\\
&\qquad \geq \left(c_2\sqrt{a}-2a-|\chi|(\delta+C_\delta\sqrt{\eps})\right)\left(\sqrt{a}+1\right)w_\xi.
\end{align*}
To have the above $\geq 0$ (then $w_\xi$ is a supersolution), we need
\beq\lb{5.2}
c_2\geq 2\sqrt{a}+|\chi|(\delta+C_\delta\sqrt{\eps})(1+1/{\sqrt{a}}).
\eeq
Since $c_2>\sqrt{2a}$, we can now fix $\delta\in(0,1)$ and then $\eps\in (0,\delta)$ to be sufficiently small such that \eqref{5.2} holds.


{To use the comparison principle to conclude with $w_\xi\geq u$ for all $t\ge T_\eps, |x|\ge c_2 t$, it remains to show that $w_\xi(T_\eps,x)\geq u(T_\eps,x)$ and $|x|\geq c_2T_\eps$, and $w_\xi(t,x)\geq u(t,x)$ with $t\geq T_\eps$ and $|x|=c_2t$.} Indeed, it follows from \eqref{5.3} and \eqref{5.4} that on the bottom boundary,
\[
u(T_\eps,x)\leq M_1 e^{\sqrt{a}c_1T_\eps}e^{-\sqrt{a}|x|}\leq M_2e^{\sqrt{a}c_2T_\eps}e^{-\sqrt{a}x\cdot\xi}.
\]
On the lateral boundary of $|x|=c_2t$, we have
\[
u(t,x)\leq \|u\|_\infty\leq M_2\leq w_\xi(t,x).
\]
Overall, we can conclude that for all $\xi\in\mathbb{S}^{N-1}$,
$$
u(t,x)\le M_2e^{-\sqrt{a} (|x|-c_2t)}\quad \forall \, t\ge T_\eps,\,\, |x|\ge c_2 t.
$$
This, together with \eqref{exponential-bound-u-eq} and \eqref{5.3},  finishes the proof of \eqref{upper-bound-eqq1}.

\subsection{Proof of Theorem \ref{speed-upper-bound-thm}(2)}


Since $V$ is the solution to \eqref{1.19}, we have
\[
v(t,x)=V(t,x)
-\frac{1}{\tau}\int_{0}^t \left(\frac{\tau}{4\pi (t-s)}\right)^{N/2}\int_{\R^N} e^{-\frac{\tau |x-y|^2}{4(t-s)}}u(s,y) v(s,y)dy ds.
\]
Because $u,v\geq 0$, it is direct to see that  {$v\le  V$}, and so to prove the conclusion it suffices to estimate the following from above
\[
\int_{0}^t \left(\frac{\tau}{4\pi (t-s)}\right)^{N/2}\int_{\R^N} e^{-\frac{\tau |x-y|^2}{4(t-s)}}u(s,y) v(s,y)dy ds.
\]

 Fix $c''>c_{\rm up}^*(u_0,v_0)$ and
fix $x$ such that $|x|\geq c''t$, and take $c_1:=\frac1{2}{(c''+c_{\rm up}^*(u_0,v_0))}$ (so $c''>c_1>c_{\rm up}^*(u_0,v_0)$). We decompose the  double integral into two terms
\beq\lb{5.5}
\begin{aligned}
&\int_{0}^t \left(\frac{\tau}{4\pi (t-s)}\right)^{N/2}\int_{|y|\geq c_1s} e^{-\frac{\tau |x-y|^2}{4(t-s)}}u(s,y) v(s,y)dy ds\\
&\qquad+\int_{0}^t \left(\frac{\tau}{4\pi (t-s)}\right)^{N/2}\int_{|y|\leq c_1s} e^{-\frac{\tau|x-y|^2}{4(t-s)}}u(s,y) v(s,y)dy ds=:Y^++Y^-.
\end{aligned}
\eeq

First, we estimate $Y^-$. Since $u,v$ are uniformly bounded,
\begin{align*}
Y^-&\leq C\left(\frac{\tau}\pi\right)^{N/2}\int_{0}^{t} (4 (t-s))^{-N/2}\int_{|y|\leq c_1s} e^{-\frac{{\tau}|x-y|^2}{4(t-s)}}dy ds\\
&= C\left(\frac{\tau}\pi\right)^{N/2}\int_{0}^{t} \int_{|z|\leq \frac{c_1s}{\sqrt{4(t-s)}}} e^{-{\tau}\left|\frac{x}{\sqrt{4(t-s)}}-z\right|^2} dzds.
\end{align*}
Using that $|a-z|^2\geq (|a|-|z|)^2$, $|x|\geq c''t$ and $c''>c_1$, we get
\begin{align*}
\tau^{-N/2}Y^-&\leq C\int_{0}^{t} \int_{|z|\leq \frac{c_1s}{\sqrt{4(t-s)}}} e^{-\tau\left(\frac{|x|}{\sqrt{4(t-s)}}-|z|\right)^2} dzds\leq C\int_{0}^{t} \int_{|z|\leq \frac{c_1s}{\sqrt{4(t-s)}}} e^{-{{\tau}}\left(\frac{c''t-c_1s}{\sqrt{4(t-s)}}\right)^2} dzds\\
&\leq C\int_{0}^{t} s^N(t-s)^{-\frac{N}2} e^{-{{\tau}}\frac{\left((c''-c_1)t+c_1(t-s)\right)^2}{{4(t-s)}}} ds\leq C\int_{0}^{t} s^N(t-s)^{-\frac{N}2} e^{-{{\tau}}\frac{(c''-c_1)^2t^2}{4(t-s)}} ds . 
\end{align*}
Note that there exist $C, \gamma>0$ such that  for all $s\in (0,t)$,
\[
\tau^{-\frac{N}2}(t-s)^{-\frac{N}2} e^{-{{\tau}}\frac{(c''-c_1)^2t^2}{4(t-s)}}\leq C e^{-\frac{2\tau\gamma t^2}{t-s}}\quad\text{and}\quad (\tau s)^N\leq Ce^{\gamma
\tau s}\leq Ce^{\frac{\gamma \tau t^2}{t-s}}.
\]
Thus we get
\begin{equation}
\lb{Xl}
Y^- \leq C\int_0^t e^{-\frac{\gamma \tau t^2}{t-s}}ds
\leq C\int_0^t e^{-\gamma\tau t}ds= Cte^{-\gamma\tau t}
\end{equation}
which converges to $0$ as $t\to \infty$.

Next, we estimate $Y^+$. By Theorem \ref{speed-upper-bound-thm}(1), there exist $C\geq 1$ and $\delta(=\sqrt{a})>0$ such that
\[
u(s,y)\leq Ce^{-\delta(|y|-(c_1+c_{\rm up}^*)s/2)}\quad\text{ for }|y|\geq c_1s.
\]
Thus, also using that $v$ is uniformly bounded, we obtain
\begin{align*}
Y^+&=\int_{0}^t \Big(\frac{\tau}{4\pi (t-s)}\Big)^{N/2}\int_{|y|\geq c_1s} e^{-\frac{\tau |x-y|^2}{4(t-s)}}u(s,y) v(s,y)dy ds\\
&\leq C\int_{0}^{t} \Big(\frac{\tau}{4\pi (t-s)}\Big)^{N/2}\int_{|y|\geq c_1s} e^{-\frac{\tau |x-y|^2}{4(t-s)}}e^{-\delta(|y|-(c_1+c_{\rm up}^*)s/2)}dy ds\\
&\leq C\iint_{(s\geq \frac{t}2 \text{ or }|y|\geq \frac{c''t}2 )\bigcap D}  \Big(\frac{\tau}{4\pi (t-s)}\Big)^{N/2} e^{-\frac{\tau |x-y|^2}{4(t-s)}}e^{-\delta(|y|-(c_1+c_{\rm up}^*)s/2)}dy ds\\
&\qquad + C\iint_{(s\leq \frac{t}2\text{ and }|y|\leq \frac{c''t}2 )\bigcap D} \Big(\frac{\tau}{4\pi (t-s)}\Big)^{N/2} e^{-\frac{\tau |x-y|^2}{4(t-s)}}e^{-\delta(|y|-(c_1+c_{\rm up}^*)s/2)}dy ds\\
&=:Y^+_1+Y^+_2,
\end{align*}
where $D:=\{(s,y)\,|\, s\in (0,t),|y|\geq c_1s\}$.
We first show that $Y^+_1$ is small. If $s\geq \frac{t}2 $, since $c_1>c_{\rm up}^*$, then in $D$ we have for some $\gamma>0$,
\[
e^{-\delta(|y|-(c_1+c_{\rm up}^*)s/2)}\leq e^{-\delta(c_1-c_{\rm up}^*)s/2}\leq e^{-\gamma t}.
\]
If $|y|\geq \frac{c''t}2 $, also using that $|y|\geq  c_1s$ and $c_1>c_{\rm up}^*$, there is $\gamma>0$ such that
\[
e^{-\delta(|y|-(c_1+c_{\rm up}^*)s/2)}\leq e^{-\delta\left(|y|-\frac{c_1+c_{\rm up}^*}{2c_1}|y|\right)}\leq e^{-\gamma t}.
\]
Applying these into the definition of $Y^+_1$ yields
\beq\lb{Xg1}
\begin{aligned}
Y^+_1&\leq Ce^{-\gamma t}\iint_{(s\geq \frac{t}2 \text{ or }|y|\geq \frac{c''t}2 )\bigcap D}  \Big(\frac{\tau}{4\pi (t-s)}\Big)^{N/2} e^{-\frac{\tau |x-y|^2}{4(t-s)}}dy ds\\
&\leq Ce^{-\gamma t}\int_{0}^t\int_{\R^N}  \Big(\frac{\tau}{4\pi (t-s)}\Big)^{N/2} e^{-\frac{\tau |x-y|^2}{4(t-s)}}dy ds\leq Cte^{-\gamma t}.
\end{aligned}
\eeq

Now we estimate $Y^+_2$. Recall that $c_1=\frac1{2}{(c''+c_{\rm up}^*)}$, so
\begin{align*}
Y^+_2&= C\int_{0}^{t/2}\int_{c_1s\leq|y|\leq \frac{c''t}{2}} \left(\frac{\tau}{4(t-s)}\right)^{N/2} e^{-\frac{ \tau|x-y|^2}{4(t-s)}}e^{-\delta(|y|-(c_1+c_{\rm up}^*)s/2)}dy ds\\
&\leq C\tau^\frac{N}2\int_{0}^{t/2} \int_{\frac{c_1s}{\sqrt{4(t-s)}}\leq |z|\leq \frac{c''t}{2\sqrt{4(t-s)}}} e^{-{\tau}\left(\frac{|x|}{\sqrt{4(t-s)}}-|z|\right)^2} dzds.
\end{align*}
Using $|x|\geq c''t$ and $|z|\leq \frac{c''t}{2\sqrt{4(t-s)}}$ yields $\frac{|x|}{\sqrt{4(t-s)}}-|z|\geq \frac{c''t}{2\sqrt{4(t-s)}}$. We get that for some $C,\gamma>0$,
\beq\lb{Xg2}
\begin{aligned}
Y^+_2&\leq C\tau^\frac{N}2\int_{0}^{t/2} \int_{\frac{c_1s}{\sqrt{4(t-s)}}\leq |z|\leq \frac{c''t}{2\sqrt{4(t-s)}}} e^{-{ \tau}\left(\frac{c''t}{2\sqrt{4(t-s)}}\right)^2} dzds\\
&\leq C\tau^\frac{N}2\int_{0}^{t/2} \int_{ |z|\leq \frac{c''\sqrt{t}}{2\sqrt{2}}} e^{-{ \tau}\left({c''\sqrt{t}}/{4}\right)^2} dzds\leq C\tau^\frac{N}2t^{1+\frac{N}2}e^{-\gamma\tau t}.
\end{aligned}
\eeq

Overall, plugging \eqref{Xl}, \eqref{Xg1} and \eqref{Xg2} into \eqref{5.5}, it follows that
\[
\sup_{|x|\geq c't}|v(t,x)-V(t,x)|\leq Cte^{-\gamma \tau t}+Cte^{-\gamma t}+C\tau^{\frac{N}2}t^{1+\frac{N}2}e^{-\gamma \tau t}\to 0 \quad\text{ as }t\to \infty.
\]
We conclude the proof.

\section{Existence of spreading speeds}

In this section, we prove the existence of spreading speeds under certain  natural conditions on $v_0$
and prove Theorems \ref{speed-thm} and \ref{speed-thm1}.

\subsection{Proof of Theorem \ref{speed-thm}(1)}

In this subsection,
we study the existence of spreading speeds under the condition that $v_0(x)$ is small for $|x|\gg 1$ in certain sense and prove Theorem \ref{speed-thm}(1). We first prove a lemma.

\begin{lem}
\label{speed-upper-bound-lm1}
Under the assumptions of Theorem \ref{speed-thm}, assume that $v_0\in C_0(\R^N)$ or $v_0\in L^p(\R^N)$ for some $p\ge 1$. Then
\begin{equation}
\label{speed-upper-bound-eq1}
\lim_{t\to \infty} \|v(t,\cdot;u_0,v_0)\|_\infty=0,\quad \lim_{t\to\infty}\|\nabla v(t,\cdot;u_0,v_0)\|_\infty=0,\quad  \lim_{t\to\infty}\|\Delta v(t,\cdot;u_0,v_0)\|_\infty=0.
\end{equation}
\end{lem}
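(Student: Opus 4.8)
The plan is to exploit that $v$ is a nonnegative subsolution of the rescaled heat equation, then upgrade to derivative bounds via Duhamel's formula; throughout I write $v(t,\cdot)$ for $v(t,\cdot;u_0,v_0)$, I let $e^{s\Delta}$ denote convolution with the heat kernel $G_s$, and $C$ denotes a constant that may depend on $\tau,N$ (following the paper's convention). First, since $u,v\ge0$, the equation $\tau v_t=\Delta v-uv$ shows $v$ is a nonnegative subsolution of $\tau w_t=\Delta w$, so by the comparison principle $0\le v(t,x)\le\big(e^{(t/\tau)\Delta}v_0\big)(x)$ for all $t>0$; hence it suffices to prove $\|e^{(t/\tau)\Delta}v_0\|_\infty\to0$. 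When $v_0\in L^p(\R^N)$ this is immediate from $\|e^{s\Delta}v_0\|_\infty\le Cs^{-N/(2p)}\|v_0\|_{L^p}$. When $v_0\in C_0(\R^N)$, I would approximate $v_0$ uniformly by $\phi\in C_c(\R^N)$; since $\|e^{s\Delta}\phi\|_\infty\le Cs^{-N/2}\|\phi\|_{L^1}\to0$ and $e^{s\Delta}$ is an $L^\infty$-contraction, density gives $\|e^{s\Delta}v_0\|_\infty\to0$.

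Next, for the gradient: for $t>1$, Duhamel's formula on $[t-1,t]$ reads
\[
v(t,\cdot)=e^{(1/\tau)\Delta}v(t-1,\cdot)-\frac1\tau\int_{t-1}^t e^{\frac{t-s}{\tau}\Delta}\big(u(s,\cdot)v(s,\cdot)\big)\,ds.
\]
Differentiating and using $\|\nabla e^{s\Delta}f\|_\infty\le Cs^{-1/2}\|f\|_\infty$ together with the uniform bound $\|u\|_\infty<\infty$ from Proposition \ref{global-existence-prop} (recall $(u,v)$ is globally bounded), the factor $(t-s)^{-1/2}$ is integrable on $[t-1,t]$, so
\[
\|\nabla v(t,\cdot)\|_\infty\le C\|v(t-1,\cdot)\|_\infty+C\|u\|_\infty\,\sup_{s\in[t-1,t]}\|v(s,\cdot)\|_\infty,
\]
and the right-hand side tends to $0$ by the first step.

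Finally, the Laplacian is the delicate step, since naively placing two spatial derivatives on the Duhamel kernel produces the non-integrable factor $(t-s)^{-1}$. The remedy is to distribute the derivatives: as $u(s,\cdot)v(s,\cdot)\in C^1$ for $s>0$ and $e^{s\Delta}$ commutes with $\partial_i$, one has $\Delta e^{\frac{t-s}{\tau}\Delta}(uv)=\sum_i\partial_i e^{\frac{t-s}{\tau}\Delta}\big(\partial_i(uv)\big)$, whence $\|\Delta e^{\frac{t-s}{\tau}\Delta}(uv)(s,\cdot)\|_\infty\le C(\tfrac{t-s}{\tau})^{-1/2}\|\nabla(uv)(s,\cdot)\|_\infty$, which is integrable near $s=t$. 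Applying $\Delta$ to the Duhamel formula, bounding the leading term by $C\|v(t-1,\cdot)\|_\infty$ via $\|\Delta e^{s\Delta}f\|_\infty\le Cs^{-1}\|f\|_\infty$, and using $\|\nabla(uv)\|_\infty\le\|\nabla u\|_\infty\|v\|_\infty+\|u\|_\infty\|\nabla v\|_\infty$ with the uniform bounds on $\|u(s,\cdot)\|_\infty$ and $\|\nabla u(s,\cdot)\|_\infty$ for $s\ge1$ (Proposition \ref{global-existence-prop}), one gets $\|\Delta v(t,\cdot)\|_\infty\to0$ from the first two steps. I expect this last estimate to be the main obstacle; a fallback is a compactness argument exploiting the uniform-in-$t$ Hölder bound on $\Delta v(t,\cdot)$ from Proposition \ref{global-existence-prop}, which rules out $|\Delta v(t_n,x_n)|\ge\varepsilon_0$ along $t_n\to\infty$ by testing against a fixed bump and integrating by parts, the resulting boundary term being $O(\|\nabla v(t_n,\cdot)\|_\infty)\to0$.
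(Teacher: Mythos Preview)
Your argument is correct and is genuinely different from the paper's. The key simplification is your first step: from $\tau v_t=\Delta v-uv\le\Delta v$ you get the global comparison $v(t,\cdot)\le e^{(t/\tau)\Delta}v_0$, and then standard heat-semigroup facts give $\|v(t,\cdot)\|_\infty\to0$ directly in both the $C_0$ and the $L^p$ case. The paper also uses this heat comparison, but only in the far field $|x|\gg1$; for the near field in the $C_0$ case it invokes Theorem~\ref{speed-lower-bound-thm}(2) (the interior decay $v\to0$ on $\{|x|\le c't\}$ coming from the spreading lower bound), and for the $L^p$ case it runs an energy/compactness argument based on the monotonicity of $\int v^p\,dx$ and a limiting entire solution. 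Your route avoids both detours and in particular does not rely on Theorem~\ref{speed-lower-bound-thm}. For the derivative limits, the paper simply appeals to parabolic regularity, while you give explicit Duhamel estimates; your handling of $\Delta v$ by writing $\Delta e^{s\Delta}(uv)=\sum_i\partial_i e^{s\Delta}\partial_i(uv)$ to keep the time singularity integrable is the right move and uses only the uniform bounds on $u,\nabla u$ from Proposition~\ref{global-existence-prop}. What the paper's approach buys is that its energy argument in the $L^p$ case would survive in settings where a clean subsolution comparison to the heat equation is unavailable (e.g., if the consumption term had a different sign structure), whereas your argument is tailored to $uv\ge0$; here, however, that is exactly the situation, so your proof is both shorter and more self-contained.
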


\begin{proof}
We divide the proof into two steps.

\smallskip

\noindent {\bf Step 1.} In this step, we prove the lemma for the case $v_0\in C_0(\R^n)$.
\smallskip

First, note that, for any ${\eps}>0$, there is $R(\eps)>0$  such that
$$
\Big(\frac{\tau}{\pi}\Big)^{N/2}\int_{|z|>R(\eps)} e^{-{\tau}|z|^2} \|v_0\|_\infty dz<{\eps},
$$
and
$$
|v_0(y)|<{\eps}\quad {\rm for\,\, all}\quad |y|\ge R(\eps).
$$
 Note  also that
$$
|x+2\sqrt t z|\ge R(\eps) \quad {\rm for\,\, all}\,\,  |x|\ge 2\sqrt t R(\eps) +R(\eps), \,\, |z|\le R(\eps).
$$
 Hence, for any $t>0$ and $|x|>2\sqrt t R(\eps)+R(\eps)$,
\begin{align}
\label{speed-upper-bound-eq2}
v(t,x;u_0,v_0)&\le {\Big(\frac{\tau}{\pi}\Big)^{N/2}}\int_{\R^N} e^{-{\tau}|z|^2} v_0(x+2\sqrt t z)dz\nonumber\\
&\le {\Big(\frac{\tau}{\pi}\Big)^{N/2}}\int_{|z|>R(\eps)} e^{-{\tau}|z|^2}\|v_0\|_\infty dz
+{\Big(\frac{\tau}{\pi}\Big)^{N/2}}\int_{|z|\le R(\eps)} e^{-{\tau}|z|^2} v_0(x+2\sqrt t z)dz\nonumber\\
&\le {\eps}+ {\eps} {\Big(\frac{\tau}{\pi}\Big)^{N/2}}\int_{|z|\le R(\eps)} e^{-{\tau}|z|^2} dz.
\end{align}

Next, by Theorem \ref{speed-lower-bound-thm}(2), we have
 $$
\limsup_{t\to\infty}\sup_{|x|\le ct} v(t,x;u_0,v_0)=0\quad \forall\, 0<c<c^*:=2\sqrt a.
$$
Therefore, for any ${\eps}>0$, there is $T(\eps)>0$ such that
\begin{equation*}
2\sqrt t R(\eps)+R(\eps) <\sqrt a t\quad \forall\, t\ge T(\eps)
\end{equation*}
and
\begin{equation}
\label{speed-upper-bound-eq4}
v(t,x;u_0,v_0)<{\eps}\quad \forall\, t>T(\eps),\,\, |x|\le \sqrt a t.
\end{equation}
By \eqref{speed-upper-bound-eq2}--\eqref{speed-upper-bound-eq4},
$$
v(t,x;u_0,v_0)<\left(1+\Big(\frac{\tau}{\pi}\Big)^{N/2}\int_{\R^N} e^{-{\tau}|z|^2}dz\right){\eps} \quad \forall\, t>T(\eps).
$$
This implies that
\begin{equation}
\label{new-conv-eq3}
\lim_{t\to\infty} \|v(t,\cdot;u_0,v_0)\|_\infty=0.
\end{equation}

Now, we assume for contradiction that there are $\eps_0>0$,  $t_n\to\infty$, and $x_n\in\R^N$ such that 
\beq\lb{3333}
|\nabla v(t_n,x_n;u_0,v_0)|+|\Delta v(t_n,x_n;u_0,v_0)|\ge \eps_0\quad\forall\, n\ge 1.
\eeq
For $t\ge -t_n$ and $x\in\R^N$, consider
$$
u_n(t,x)=u(t+t_n,x+x_n;u_0,v_0) \quad {\rm and}\quad v_n(t,x)=v(t+t_n,x+x_n;u_0,v_0)
.
$$
By the arguments for \eqref{new-conv-eq1} and statement below it,  without loss of generality, we may assume that 
$$
\nabla v_n(t,x)\to 0,\quad \Delta v_n(t,x)\to 0\quad\text{as $n\to \infty$, }
$$
locally uniformly in $(t,x)\in\R\times\R^N$.
In particular,
$$
|\nabla v_n(0,0)|+|\Delta v_n(0,0)|=|\nabla v(t_n,x_n)|+|\Delta v(t_n,x_n)|=0\quad\text{as $n\to \infty$, }
$$
which contradicts with \eqref{3333}. Hence, we proved \eqref{speed-upper-bound-eq1} for the case when $v_0\in C_0(\R^N)$.

\smallskip

\noindent {\bf Step 2.} In this step, we prove the theorem for the case $v_0\in L^p(\R^N)$ for some $p\ge 1$.

\smallskip

If $v_0\equiv 0$, nothing needs to be proved. In the following, we assume that $v_0\not\equiv 0$.
Note that
\begin{equation}
\label{v-lp-eq1}
\frac{{\tau}}{p}{\frac{d}{dt}}\int_{\R^N} v^p(t,x;u_0,v_0)dx=-(p-1)\int_{\R^N}v^{p-2}|\nabla v|^2dx-\int_{\R^N} u v^{p}dx \le 0\quad \forall\, t>0.
\end{equation}
Hence, $\int_{\R^N} v^p(t,x;u_0,v_0)dx$ is non-increasing as $t$ increases. We claim that
$$
\lim_{t\to\infty}\|v(t,x;u_0,v_0)\|_\infty=0.
$$
In fact, assume this is not true and then there are ${\eps}_0>0$, $t_n$ strictly increasing to $\infty$, and
$x_n\in\R^N$ such that
\begin{equation}
\label{assumption-eq1}
v(t_n,x_n;u_0,v_0)\ge {\eps}_0.
\end{equation}
{ 
By the similar arguments of \eqref{new-conv-eq1} again}, we may assume that there are $u^*(t,x)$ and  $v^*(t,x)$ such that
$$
\lim_{t\to\infty} u(t+t_n,x+x_n;u_0,v_0)=u^*(t,x),\quad
\lim_{n\to\infty} v(t+t_n,x;u_0,v_0)=v^*(t,x)
$$
locally uniformly in $t\in\R$ and $x\in\R^N$, and
$v^*(t,x)$ satisfies
$$
\tau v^*_t=\Delta v^*- u^* v^*,\quad t\in\R,\,\, x\in\R^N.
$$

Notice that $v$ is bounded above by the solution to the heat equation with initial data $v_0$. Then, by the dominated convergence theorem and
the monotonicity of $\int_{\R^N}v^p(t,x;u_0,v_0)dx$, we have
\begin{equation}
\label{v-lp-eq2}
\int_{\R^N} (v^*(t,x))^pdx=
\lim_{n\to\infty}\int_{\R^N} v^p(t+t_n,x;u_0,v_0)dx\le \int_{R^N}v_0^p(x)dx\quad \forall\, t\in\R.
\end{equation}
On the other hand, for any $m\ge 1 $, {there is $m'\ge m$ such  that}  $t_m+t_n\le t_{n+m'}$ for all $n=1,2,\cdots$. Then by \eqref{v-lp-eq1} and \eqref{v-lp-eq2},
\begin{align*}\int_{\R^N}(v^*(t_m, x))^p dx&=\lim_{n\to\infty}  \int_{\R^N} v^p(t_m+t_n;x,u_0,v_0)dx\\
&\ge \lim_{n\to\infty} \int_{R^N}v^p(t_{n+m'},x;u_0,v_0)dx=\int_{\R^N} (v^*(0,x))^pdx.
\end{align*}
This implies that
\begin{equation}
\label{v-lp-eq3}
\int_\Omega ( v^*(0,x))^pdx\le \int_\Omega (v^*(t_m,x))^pdx\quad \forall\, m=1,2,\cdots.
\end{equation}
 By \eqref{v-lp-eq1} with $v$ and $u$ being replaced $v^*$ and $u^*$, respectively, we have that
$\int_\Omega (v^*(t,x))^pdx$ is non-increasing.
This, together with \eqref{v-lp-eq3}, yields that
$$\int_\Omega (v^*(t,x))^pdx=\int_\Omega (v^*(0,x))^pdx\quad \forall\, t>0.
$$

By \eqref{v-lp-eq1} with $v$ and $u$ being replaced $v^*$ and $u^*$ again, we find
\begin{equation*}
0=\frac{{\tau}}{p}{\frac{d}{dt}}\int_{\R^N}( v^*(t,x))^pdx=-(p-1)\int_{\R^N}(v^*)^{p-2}|\nabla v^*|^2dx-\int_{\R^N} (u^* (v^*))^{p}dx.
\end{equation*}
Thus, we must have
$v^*(t,x)\equiv {\rm constant}$, and since $\int_{\R^N}(v^*(t,x))^pdx<\infty$, we have
$v^*(t,x)\equiv 0$.
This clearly contradicts with \eqref{assumption-eq1}.
Hence, the claim holds. By the arguments in Step 1, \eqref{speed-upper-bound-eq1} holds for the case when $v_0\in L^p(\R^N)$ for some $p\ge 1$. 
\end{proof}

We now prove Theorem \ref{speed-thm}(1).

\begin{proof}[Proof of Theorem\ref{speed-thm}(1)]
First, by Lemma  \ref{speed-upper-bound-lm1} for any fixed $\eps>0$, there exists a $T_\eps>1$ such that
\begin{equation}
\label{aux-new-thm4-eq1}
|\nabla v(t,x;u_0,v_0)|\le \eps,\,\, |\Delta v(t,x;u_0,v_0)|\le \eps\qquad \forall\, t\ge T_\eps,\,\,
x\in\R^N.
\end{equation}
%

Next, {take $c:=2\sqrt {a}+|\chi|\eps (1 +1/\sqrt{a})$.
The rest of the proof is similar to the one of Theorem \ref{speed-upper-bound-thm}(1).
Indeed, let $c_1=2\sqrt{a}+|\chi|A(1+1/\sqrt{a})$ and $M_1$ from \eqref{5.1}, and for any $\xi\in\mathbb{S}^{N-1}$,
define
\[
u_\xi(t,x):=M_1e^{-\sqrt{a} (x\cdot \xi-c(t-T_\eps)-c_1T_\eps)}
\]
which, by \eqref{aux-new-thm4-eq1}, satisfies for all $t\geq T_\eps$ and $x\in\R^N$,
\begin{align*}
\p_t u_\xi=c\sqrt{a}  u_\xi =(2a+ |\chi|\eps(\sqrt{a}+1))u_\xi \ge \Delta u_\xi-\chi \nabla\cdot( u_\xi \nabla v)+a u_\xi-bu_\xi^2.
\end{align*}

At $t=T_\eps$, using \eqref{5.3}, we have
\begin{align*}
    u(T_\eps, x; u_0, v_0)\le  M_1 e^{-\sqrt{a}(x\cdot\xi-c_1 T_\eps)} = u_\xi(T_\eps,x)\qquad\forall\, x\in\R^N\text{ and }\xi\in\mathbb{S}^{N-1}.
\end{align*}
}
Thus it follows from the comparison principle that
\[
u(t,x;u_0,v_0)\leq u_\xi(t,x)\qquad\forall\, x\in\R^N,\, t\geq T_\eps\text{ and }\xi\in\mathbb{S}^{N-1}.
\]
This implies that
$$
c_{\rm up}^*(u_0,v_0)\le 2\sqrt {a}+|\chi|\eps (1 +1/\sqrt{a})\quad \forall\, \eps>0.
$$
Passing $\eps\to 0$ and applying Theorem \ref{speed-lower-bound-thm}(1) show
$
c_{\rm low}^*(u_0,v_0)=c_{\rm up}^*(u_0,v_0)=2\sqrt a.
$
\end{proof}

\subsection{Proofs of Theorem \ref{speed-thm}(2)  and   Theorem \ref{speed-thm1}}\lb{S.6.2}

In this subsection,  we investigate the spreading speeds when $|\chi|\ll 1$ and when $v_0$ is not small for large $|x|$,
and prove Theorem \ref{speed-thm}(2) and Theorem \ref{speed-thm1}.  We focus on the case that $\tau=1$, and we consider \eqref{main-Eq1} with $\sigma\in (0,1]$.
Throughout this subsection, we assume that $(u,v)$ is a solution to \eqref{main-Eq1} for all time with initial data $(u_0,v_0)$ satisfying the conditions in Theorem \ref{speed-thm}(2).

\medskip

As mentioned in Remark \ref{speed-rk} (4),
to prove Theorem \ref{speed-thm}(2) and Theorem \ref{speed-thm1}, we make the following change of variable,
$
\zeta:=1-v.
$
Then  $(u,\zeta)$ satisfies
\begin{equation*}
\begin{cases}
u_{t}=\Delta u + \chi\nabla \cdot (u \nabla \zeta)+ u(a-bu^\sigma),\quad  &x\in\R^N, \\
{\zeta_t}=\Delta \zeta +u(1-\zeta),\quad &x\in\R^N
\end{cases}
\end{equation*}
with initial data $u_0$ and $\zeta_0:=1-v_0$.
We will estimate $w:={\zeta^p}/{u}$ for some $p>1$. 

\smallskip

To this end, we first  discuss the inf- and sup- convolution technique.
For $T>0$, suppose $\rho_1,\rho_2\in C^\infty((0,T)\times\bbR^N)$ and let $r(t)\in C^\infty((0,T))$ be non-negative. Define
\begin{equation}\label{convl}
\un{\rho}(t,x):=\sup_{y\in B(x,r(t))}
\rho_1(t,y),\qquad
\ba{\rho}(t,x):=\inf_{y\in B(x,r(t))}
\rho_2(t,y).
\end{equation}
Then $\un{\rho} $ and $\ba{\rho} $ are Lipschitz continuous.
Let $y_{1,t}=y_{1,t}(x)\in\ba{B(x,r(t))}$ be such that $\un{\rho} (\cdot,t)=\rho_1(t,y_{1,t}(\cdot))$.
Then the following holds:
\beq\lb{6.1}
(\Delta \un{\rho} )(t,x)\geq (\Delta\rho_1)(t,y_{1,t}(x)),\quad (\nabla \un{\rho} )(t,x)=(\nabla\rho_1)(t,y_{1,t}(x))
\eeq
and
\beq\lb{6.2}
(\partial_t \un{\rho} )(t,x)=(\partial_t \rho_1)(t,y_{1,t}(x))+r'(t)|\nabla \rho_1|(t,y_{1,t}(x)).
\eeq
The first inequality in \eqref{6.1} needs to be understood in the viscosity sense. The proof can be found in Lemmas 5.2, 5.3 \cite{KZ2022} and Lemma 5.4 \cite{kimzhang21} for a more general case.
Similarly, assuming $y_{2,t}=y_{2,t}(x)\in\ba{B(x,r(t))}$ to satisfy that $\ba{\rho} (\cdot,t)=\rho_2(t,y_{2,t}(\cdot))$, we have
\[
(\Delta \ba{\rho})(t,x)\leq (\Delta\rho_2)(t,y_{2,t}(x)),\quad (\nabla \ba{\rho})(t,x)=(\nabla\rho_2)(t,y_{2,t}(x)),
\]
and
\[
(\partial_t \ba{\rho})(t,x)=(\partial_t \rho_2)(t,y_{2,t}(x))-r'(t)|\nabla \rho_2|(t,y_{2,t}(x)).
\]

Let us specify the selection of parameters. For given $r_0>0$, set $\rho_1$ to be the unique solution to
\[
\partial_t\rho_1=\Delta \rho_1,\quad
\rho_1(0,x)=\inf_{y\in  B(x,r_0)}u_0(y),\quad \forall\, x\in\R^N,
\]
and $\rho_2$ the unique solution to
\[
\partial_t\rho_2=\Delta \rho_2,\quad \rho_2(0,x)=\sup_{y\in B(x,r_0)}u_0(y),\quad \forall\, x\in\R^N.
\]
Then, let $\un{\rho}$ and $\ba{\rho}$ be defined in \eqref{convl} with $r(t):=r_0(1- t/(4\beta))$ for some $\beta\in (0,1]$. The construction immediately yields that
\beq\lb{6.6}
\un{\rho}(0,x)= \sup_{y\in B(x,r_0)}\inf_{y'\in B(y,r_0)} u_0(y')\leq u_0(x)\leq  \inf_{y\in B(x,r_0)}\sup_{y'\in B(y,r_0)}u_0(y')=\ba{\rho}(0,x).
\eeq
It follows from \eqref{6.1} and \eqref{6.2} that
\beq\lb{6.3}
\un{\rho}_t\leq \Delta \un{\rho} - (4\beta)^{-1} r_0|\nabla\un{\rho}|\quad {\rm for}\quad (t,x)\in [0,4\beta)\times\R^N.
\eeq
Similarly, we have
\beq\lb{6.4}
\ba{\rho}_t\geq \Delta \ba{\rho} +(4\beta)^{-1} r_0|\nabla\ba{\rho}|\quad {\rm for}\quad (t,x)\in [0,4\beta)\times\R^N.
\eeq
Let us comment that here and below, inequalities involving derivatives of sup- or inf- convolutions are understood in the viscosity sense.
So \eqref{6.6}--\eqref{6.4} and the comparison principle yield that $\un{\rho}\leq \ba{\rho}$ in $ [0,4\beta)\times\R^N$. 

By the assumption, if $r_0>0$ is sufficiently small depending on $u_0$, we have that  $\rho_i(0,x)\not\equiv 0$ with $i=1,2$. We fix one such $r_0\in (0,1)$. We claim that for any $p>1$ there exists $C=C( p)>0$ such that
\beq\lb{6.5}
\ba\rho(t,x)^p\leq C\un\rho(t,x)\quad \forall\, (t,x)\in [\beta,2\beta]\times\R^N.
\eeq
Let $R\geq 1$ be such that $u_0(\cdot)$ is supported inside $B(0,R)$. 
Then $\rho_i(0,\cdot)$ with $i=1,2$ are supported in $B(0,R+1)$. Note that
\[
\rho_i(t,x):=(4\pi t)^{-N/2}\int_{\R^N} e^{-\frac{|x-y|^2}{4t}}\rho_i(0,y)dy,
\]
and  when $|x|\geq LR$ for $L\geq 4$ and $|y|\leq R+1$, we have
\[
e^{-\frac{|x|^2(1+2/L)^2}{4t}}\leq e^{-\frac{|x-y|^2}{4t}}\leq e^{-\frac{|x|^2(1-2/L)^2}{4t}}.
\]
This implies that for $t\in [\beta,2\beta]$, we have
$$
\rho_1 (t, x)  \ge  (4\pi t)^{-\frac{N}2}e^{-\frac{|x|^2(1+2/L)^2}{4t}} \int_{|y|\le R+1}\rho_1(0, y) \, dy
$$
and
\[
 \rho_2 (t, x)^p \le (4\pi t)^{-\frac{Np}2}e^{-\frac{p|x|^2(1-2/L)^2}{4t}} \Big(\int_{|y|\le R+1}\rho_2(0, y) \, dy\Big)^p.
\]
Thus, by picking $L\gg1$ such that $p\geq (\frac{L+2}{L-2})^2$, we get for all $|x|\geq LR$ and $t\in [\beta,2\beta]$ that
\begin{align*}
\rho_2(t,x)^p&\leq  Ce^{-\frac{|x|^2(1+2/L)^2}{4t}} \Big(\int_{|y|\le R+1}\rho_2(0, y) \, dy\Big)^p\\
&\leq Ce^{-\frac{|x|^2(1+2/L)^2}{4t}} \int_{|y|\le R+1}\rho_1(0, y) \, dy\leq C \inf_{y\in B(x,r)}\rho_1(t,x) 
\end{align*}
for some $C>1$ depending on $r_0$, $u_0$ (so, also $\rho_1,\rho_2$), $\beta$ and $p$.

If $|x|\leq LR$, since $\rho_i$ are strictly positive, the same holds with possibly a larger $C$ in the compact set $[\beta,2\beta]\times B(0,LR)$.
Overall, we can find $C>0$ such that for $t\in [\beta,2\beta]$,
\[
\ba\rho(t,x)^{p}=\inf_{y\in B(x,r(t))}
\rho_2(t,y)^{p}\leq  C\sup_{y\in B(x,r(t))}
\rho_1(t,y)= C\un\rho(t,x)
\]
which yields \eqref{6.5}. 

Similarly, since $\zeta(0,\cdot)$ is compactly supported, the same argument yields that
\beq\lb{6.7}
\widehat\zeta(t,x)^{p}\leq C \un\rho(t,x)\quad\text{ for }(t,x)\in [\beta,2\beta]\times \R^N,
\eeq
where $\widehat\zeta$ is the unique solution to the heat equation with initial data $\zeta_0$.

\medskip

Below we use $\un{\rho}$ and $\ba{\rho}$ to show that $\zeta^p\lesssim u$ in a positive finite time interval. In the proof, we need to estimate $u$ from above. So, as a by-product, we also obtain that $u^p\lesssim \zeta$ in the short time. 

Since $u_0\in X_1^+$ and $v_0\in X_1^+\cap C_{\rm unif}^{2+\alpha, b}(\R^N)$, by Lemma  \ref{derivative-boundedness-lm} and Remark \ref{lm2.2-rk}, there exist $\chi_1\in (0,1)$ and $A=A(\|v_0\|_{X_1},\|u_0\|_\infty)>0$ such that as long as $|\chi|\leq \chi_1$, we have
\beq\lb{999}
\sup_{t\ge 0}\Big( \|\nabla \zeta (t,\cdot)\|_\infty
+\|\Delta\zeta(t,\cdot)\|_\infty\Big)=\sup_{t\geq 0} \Big(\|\nabla v(t,\cdot)\|_\infty +\|\Delta v(t,\cdot)\|_\infty\Big)\leq A.
\eeq

\begin{lem}\lb{L.8.1}
Assume $|\chi|\le \chi_1$.  There exists $\beta\in (0,1]$ such that for any $p>1$ we can find $L\geq 4$ such that for all $(t,x)\in [\beta,2\beta]\times \R^N$ we have
\[
\zeta(t,x)^{p}\leq Lu(t,x)\quad\text{and}\quad u(t,x)^{p}\leq L\zeta(t,x).
\]
\end{lem}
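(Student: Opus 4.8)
The plan is to sandwich $u$ between exponential‑in‑time multiples of the inf‑ and sup‑convolution barriers $\underline\rho,\overline\rho$ on the short interval $[\beta,2\beta]$, to sandwich $\zeta$ between the free heat flow $\widehat\zeta$ of $\zeta_0$ from below and $\widehat\zeta$ plus a multiple of $\rho_2$ from above, and then to feed these into the Gaussian comparison \eqref{6.5}, \eqref{6.7} together with its analogue for $\rho_2$. Throughout we use $0\le u$ and $0\le\zeta\le 1$: indeed $v=1-\zeta$ solves $v_t=\Delta v-uv$ with $0\le v_0\le1$, and the constants $0$ and $1$ are respectively a sub‑ and a supersolution of this equation, so $0\le v\le1$ by comparison. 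Since $u_0\in X_1$ and $v_0\in X_2^+$, the solution of \eqref{8.00} satisfies $M_u:=\|u\|_\infty<\infty$, and by the regularity of classical solutions also $A_0:=\sup_{t\in[0,2]}\big(\|\nabla\zeta(t,\cdot)\|_\infty+\|\Delta\zeta(t,\cdot)\|_\infty\big)<\infty$. With $r_0\in(0,1)$ fixed as before, set $\beta:=\min\{\tfrac12,\,r_0/(4A_0)\}\in(0,1]$, so that $[0,4\beta]\subseteq[0,2]$ and the drift magnitude $(4\beta)^{-1}r_0$ in \eqref{6.3}--\eqref{6.4} dominates $|\chi|\,\|\nabla\zeta(t,\cdot)\|_\infty$ for all $t\in[0,4\beta]$ (recall $|\chi|\le1$).

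First I bound $u$ from both sides on $[0,4\beta)$. Expanding $\chi\nabla\cdot(u\nabla\zeta)=\chi\nabla\zeta\cdot\nabla u+\chi u\Delta\zeta$ and using $0\le u\le M_u$, $|\Delta\zeta|\le A_0$, $au\ge0$ and $bu^{1+\sigma}\ge0$, the classical solution satisfies
\[
\Delta u+\chi\nabla\zeta\cdot\nabla u-K_0u\ \le\ u_t\ \le\ \Delta u+\chi\nabla\zeta\cdot\nabla u+K_1u
\]
on $(0,4\beta)\times\R^N$, where $K_0:=|\chi|A_0+bM_u^\sigma$ and $K_1:=|\chi|A_0+a$. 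Hence $e^{K_0t}u$ is a classical supersolution and $e^{-K_1t}u$ a classical subsolution of the drift equation $\psi_t=\Delta\psi+\chi\nabla\zeta\cdot\nabla\psi$. On the other hand, by \eqref{6.3} and the choice of $\beta$, $\underline\rho_t\le\Delta\underline\rho-(4\beta)^{-1}r_0|\nabla\underline\rho|\le\Delta\underline\rho+\chi\nabla\zeta\cdot\nabla\underline\rho$, so $\underline\rho$ is a viscosity subsolution of this drift equation; likewise $\overline\rho$ is a viscosity supersolution by \eqref{6.4}. As $\chi\nabla\zeta$ is bounded and uniformly continuous on $[0,4\beta]\times\R^N$, the drift equation is of the form \eqref{2.2}; comparing initial data by \eqref{6.6} and noting $u\ge0$ with $\underline\rho,\overline\rho\to0$ and $u(t,\cdot)\to0$ as $|x|\to\infty$ (Theorem \ref{speed-upper-bound-thm}(1)), the viscosity comparison principle of Section~2.3 gives
\[
e^{-K_0t}\underline\rho(t,x)\ \le\ u(t,x)\ \le\ e^{K_1t}\overline\rho(t,x)\qquad\text{for }(t,x)\in[0,4\beta)\times\R^N .
\]

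Next I bound $\zeta$. Since $u(1-\zeta)\ge0$, \eqref{8.00} gives $\zeta_t\ge\Delta\zeta$, hence $\zeta\ge\widehat\zeta$. For the upper bound, $0\le u(1-\zeta)\le u\le e^{K_1t}\overline\rho\le e^{K_1t}\rho_2$, so $\Psi(t,x):=\widehat\zeta(t,x)+\int_0^t e^{(t-s)\Delta}\!\big(e^{K_1s}\rho_2(s,\cdot)\big)(x)\,ds$ is a supersolution of $\zeta$'s equation with $\Psi(0,\cdot)=\zeta_0$; using $e^{(t-s)\Delta}\rho_2(s,\cdot)=\rho_2(t,\cdot)$ and parabolic comparison,
\[
\zeta(t,x)\ \le\ \widehat\zeta(t,x)+\tfrac{e^{K_1t}-1}{K_1}\,\rho_2(t,x)\ \le\ \widehat\zeta(t,x)+C_1\rho_2(t,x)\qquad\text{on }[0,2\beta]\times\R^N ,
\]
for some $C_1>0$. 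Now $\rho_1,\rho_2,\widehat\zeta$ are heat flows of bounded, nonnegative, compactly supported, nontrivial data, so the Gaussian estimate proving \eqref{6.5} and \eqref{6.7} applies to each pair: for the given $p>1$ there is $C_2\ge1$ with $\overline\rho^p\le\rho_2^p\le C_2\widehat\zeta$, $\widehat\zeta^p\le C_2\underline\rho$, and $\rho_2^p\le C_2\underline\rho$ on $[\beta,2\beta]\times\R^N$. Combining with the previous bounds, on $[\beta,2\beta]\times\R^N$ we obtain
\[
u^p\ \le\ e^{pK_1t}\overline\rho^p\ \le\ C_2e^{pK_1t}\,\widehat\zeta\ \le\ C_2e^{pK_1t}\,\zeta
\]
and
\[
\zeta^p\ \le\ 2^{p-1}\big(\widehat\zeta^p+C_1^p\rho_2^p\big)\ \le\ 2^{p-1}(1+C_1^p)C_2\,\underline\rho\ \le\ 2^{p-1}(1+C_1^p)C_2\,e^{K_0t}\,u .
\]
As $0\le t\le2\beta\le1$, the exponential prefactors are bounded by constants depending only on $\beta$ and $p$, so taking $L=L(\beta,p)\ge4$ larger than both completes the proof.

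The main obstacle is the two‑sided control of $u$: the $u$‑equation carries a transport term $\chi\nabla\zeta\cdot\nabla u$ and a sign‑indefinite zeroth‑order term $\chi u\Delta\zeta$, while $\underline\rho,\overline\rho$ (and the drift‑modified heat flows) are only Lipschitz, so one must argue with viscosity solutions. The sup/inf‑convolution identities \eqref{6.1}--\eqref{6.2} produce exactly the extra drift $\mp(4\beta)^{-1}r_0|\nabla\cdot|$ in \eqref{6.3}--\eqref{6.4} that absorbs $\chi\nabla\zeta\cdot\nabla u$ once $\beta$ is small, and the factors $e^{\pm K_it}$ absorb $\chi u\Delta\zeta$; the remaining point is to make sure the comparison principle of Section~2.3 may be invoked up to $t=0$, which is where the hypotheses $u_0\in X_1$, $v_0\in X_2^+$ enter (they make $\chi\nabla\zeta$ bounded and uniformly continuous on $[0,4\beta]\times\R^N$).
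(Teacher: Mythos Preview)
Your argument is correct and follows essentially the same route as the paper: sandwich $u$ between $e^{-Kt}\underline\rho$ and $e^{Kt}\overline\rho$ via the sup/inf-convolution barriers \eqref{6.3}--\eqref{6.4}, bound $\zeta$ above by $\widehat\zeta$ plus a heat-flow term, and then invoke the Gaussian comparisons \eqref{6.5}, \eqref{6.7} (and their obvious analogues for the pair $(\rho_2,\widehat\zeta)$). Your lower bound $\zeta\ge\widehat\zeta$, obtained directly from $u(1-\zeta)\ge0$, is a clean simplification of the paper's subsolution $\alpha t\,\underline\rho$; otherwise the two proofs differ only in bookkeeping (you factor through the drift equation $\psi_t=\Delta\psi+\chi\nabla\zeta\cdot\nabla\psi$, whereas the paper compares $e^{\pm Mt}\underline\rho,\,e^{\pm Mt}\overline\rho$ with $u$ directly).
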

\begin{proof}
Take $A$ and $r_0$ as the above, and let 
\beq\lb{5.10}
 \beta=\min\{1,r_0/(4A)\}.
\eeq

Let $\un{\rho} $ and $\ba{\rho}$ be defined as the above in $[0,4\beta)\times\R^N$. Let $\ba\varphi(t,x):=e^{Mt}\ba\rho(t,x)$ for some $M>0$ to be determined. Then, by \eqref{6.4},
\[
\ba\varphi_t-\Delta\ba\varphi\geq M\ba\varphi+(4\beta)^{-1} r_0|\nabla\ba\varphi|.
\]
It follows that
\[
\ba\varphi_{t}-\Delta\ba\varphi - \chi\nabla\cdot (\ba\varphi \nabla\zeta)- \ba\varphi(a-b\ba\varphi^\sigma)\geq M\ba\varphi+((4\beta)^{-1}r_0-A|\chi|)|\nabla\ba\varphi|-A\chi\ba\varphi-a\ba\varphi,
\]
and so, using \eqref{5.10} and taking $M\geq A+a\ge A |\chi|+a $,
$\ba\varphi$ is a supersolution to the equation satisfied by $u$. Also recall \eqref{6.6},  the comparison principle yields $\ba\varphi\geq u$.

On the other hand, let $\un\varphi(t,x):=e^{-Mt}\un\rho(t,x)$ and we view $-(a-bu^\sigma)$ as a function of $(t,x)$ that is bounded from above by  $C>0$. Using \eqref{6.3}, we have
\[
\un\varphi_{t}-\Delta\un\varphi - \chi\nabla\cdot(\un\varphi\nabla\zeta)- \un\varphi(a-bu^\sigma)\leq -M\un\varphi-((4\beta)^{-1}r_0-A|\chi|)|\nabla\un\varphi|+A|\chi|\un\varphi+C\un\varphi\leq 0,
\]
after further assuming $M\geq A+C\ge A|\chi|+C)$.  Then $\un\varphi$ is a subsolution to the above linear equation and the comparison principle yields $\un\varphi\leq u$.
Overall, we obtain for all $t\in [0,2\beta]$,
\beq\lb{8.3}
 e^{-2M}\un\rho(t,x)\leq u(t,x)\leq e^{2M} \ba \rho(t,x).
\eeq

Now we estimate $\zeta$. Let us start with the upper bound. Recall that $\widehat\zeta$ is defined as the solution to the heat equation with initial data $\zeta_0$. We claim that $\zeta\leq \widehat\zeta+te^{2M}\ba\rho=:\ba\zeta$ in $[0,2\beta]\times\R^N$. This is because $\zeta(0,\cdot)=\widehat\zeta(0,\cdot)=\ba\zeta(0,\cdot)$ and, by \eqref{6.4} and \eqref{8.3},
\[
\ba\zeta_t-\Delta\ba\zeta\geq e^{2M}\ba\rho\geq u,
\]
and $\zeta$ satisfies $\zeta_t-\Delta\zeta=u(1-\zeta)\leq u$.

For the lower bound, take
\[
\un\zeta(t,x):=\alpha t \un\rho(t,x)\qquad\text{with $\alpha\in (0,2^{-1}e^{-2M})$}.
\]
Then $\un\zeta(0,\cdot)\equiv 0\leq \zeta_0$. Let us fix $\alpha>0$ to be sufficiently small such that  $\un\zeta(t,\cdot)\leq \frac12$ for $t\in [0,2\beta]$. Within the time interval, by \eqref{6.3}, \eqref{8.3} and $\alpha<e^{-2M}/2$, we get
\[
\un\zeta_t-\Delta\un\zeta\leq \alpha\un\rho\leq 2^{-1}u\leq u(1-\un\zeta).
\]
Therefore, the comparison principle yields
$\zeta(t,\cdot)\geq \un\zeta$ for  $(t,x)\in [0,2\beta]\times\R^N$. Overall, we obtain for $(t,x)\in [0,2\beta]\times\R^N$,
\[
t\alpha\un\rho(t,x)\leq \zeta(t,x)\leq \widehat\zeta(t,x)+te^{2M}\ba\rho(t,x),
\]
which, combining with \eqref{6.5} and \eqref{6.7}, yields 
\[
C^{-1}\un\rho(t,x)^{p}\leq \zeta(t,x)^{p}\leq C\un\rho(t,x)\quad\forall  (t,x)\in [\beta,2\beta]\times\R^N,\,\text{ for some }C>1.
\]

Finally, since \eqref{6.5} and \eqref{8.3} imply
\[
C^{-1}\un\rho(t,x)^{p}\leq u(t,x)^{p}\leq C\un\rho(t,x)\quad\forall  (t,x)\in [\beta,2\beta]\times\R^N,\,\text{ for some }C>1,
\]
the conclusion follows immediately.
\end{proof}

Below we prove that $\zeta^{p}\lesssim u$ for all $t\geq\frac12$. 


\begin{lem}\lb{L.8.3}   
Let $\beta,L=L(\beta,p)$ from Lemma \ref{L.8.1} with some $p>1$, and let $\chi_1,A$ from \eqref{999}. 
If  $|\chi|\leq \min\{\frac{a}{2A},\chi_1\}$ and $-\chi\leq p-1$,
then for all $(t,x)\in [ \beta,\infty)\times\bbR^N$,
\beq\lb{8.10}
\zeta(t,x)^p\leq Mu(t,x)\quad \text{where } M:=\max\left\{L,\,{4p}/{a},\,2(4b/a)^{1/\sigma}\right\}.
\eeq
\end{lem}

\begin{proof}
Let us consider $w:=\frac{\zeta^{p}}{u}$, which is well-defined and $\leq L$ at least for $t\in [\beta,2\beta]$ by Lemma \ref{L.8.1}. 
Since the solutions are smooth for all positive times, it suffices to prove a priori estimate that $w$ stays uniformly bounded for all $t\geq \beta$.

By direct computation,
\begin{align*}
\nabla w&=\nabla\left(\frac{\zeta^{p}}{u}\right)=\frac{u\nabla (\zeta^{p})-\zeta^{p} \nabla u}{u^2},
\\
\Delta w&=\Delta\left(\frac{\zeta^{p}}{u}\right)=\frac{u\Delta (\zeta^{p})-\zeta^{p} \Delta u}{u^2}-\frac{2\nabla u(u\nabla (\zeta^{p})-\zeta^{p}\nabla u)}{u^3}\\
&=\frac{ pu\zeta^{p-1}\Delta \zeta-\zeta^{p}\Delta u}{u^2}+\frac{ p(p-1)w|\nabla \zeta|^2}{\zeta^2}-\frac{2\nabla u\cdot \nabla w}{u},\\
\frac{\zeta^{p}\nabla\cdot (u\nabla \zeta)}{u^2}&= \frac{\zeta^p\Delta\zeta}u+\frac{\zeta^p\nabla u\cdot\nabla \zeta -u \nabla (\zeta^p)\cdot \nabla \zeta }{u^2}+\frac{u \nabla (\zeta^p)\cdot \nabla \zeta }{u^2}\\
&=w\Delta \zeta-\nabla w \cdot \nabla \zeta +\frac{pw|\nabla \zeta| ^2}{\zeta},
\end{align*}
and
\[
\frac{\zeta^p(a-bu^{\sigma})-pu\zeta^{p-1}(1-\zeta)}{u}=aw-bu^\sigma w-p\zeta^{p-1}(1-\zeta).
\]
Also by the equations, we have
\begin{align*}
w_{t}&=\frac{\zeta^{p}_t u-\zeta^{p} u_t}{u^2}=\frac{pu\zeta^{p-1}(\Delta \zeta +u(1-\zeta))-\zeta^{p}(\Delta u+\chi\nabla \cdot(u\nabla \zeta )+u(a-bu^\sigma))}{u^2}\\
&=\frac{pu\zeta^{p-1}\Delta \zeta -\zeta^{p}\Delta u}{u^2}-\frac{\chi\zeta^{p}\nabla \cdot(u\nabla \zeta)}{u^2}-\frac{ \zeta^{p}(a-bu^{\sigma})-p\zeta^{p-1}u(1-\zeta)}{u}.
\end{align*}
Putting these together, we obtain that $w$ satisfies
\begin{align*}
w_{t}&= \Delta w  +\frac{2\nabla u\cdot\nabla w }{u} -\frac{ p(p-1)w|\nabla \zeta|^2}{\zeta^2}-\chi\left(w\Delta \zeta-\nabla \zeta\cdot  \nabla w +\frac{pw |\nabla \zeta |^2}{\zeta}\right)\\
&\qquad\qquad-(a-bu^\sigma) w+p\zeta^{p-1}(1-\zeta).
\end{align*}
Due to $|\chi \Delta \zeta|\leq\frac{a}{2}$ and $-\chi\leq p-1$ by the assumption, and $\zeta\in [0,1]$, we get
\begin{equation}\lb{8.5}
w_{t}\leq  \Delta w  +\frac{2\nabla u\cdot\nabla w }{u} - \chi\nabla \zeta\cdot  \nabla w -\left(\frac{a}2-bu^\sigma\right) w+p.
\end{equation}

Note that if $bu^\sigma\geq \frac{a}4$, we have $u\geq (\frac{a}{4b})^{1/\sigma}=:c_*$ and $w=\frac{\zeta^p}{u}\leq \frac{1}{c_*}$, and otherwise, we have $\left(\frac{a}2-bu^\sigma\right)\geq \frac{a}{4}$.
Therefore,
\[
\left({a}/2-bu^\sigma\right) w\geq {aw}/{4}-bu^\sigma w \psi(w),
\]
where $\psi$ is Lipschitz continuous and satisfies $\psi(w)\in [\mathbbm{1}_{\{w\leq 1/c_*\}},\mathbbm{1}_{\{w\leq 2/c_*\}}]$ with $\mathbbm{1}$ denoting the characteristic function.
We deduce from \eqref{8.5} that
\[
w_{t}\leq  \Delta w  +\frac{2\nabla u\cdot\nabla w }{u} - \chi\nabla \zeta\cdot  \nabla w -\frac{aw}{4}+bu^\sigma w \psi(w)+p.
\]
Recall that $w(\beta,\cdot)\leq L$  by Lemma \ref{L.8.1} and $u$ is uniformly finite. We can compare $w$ with the solution to the following ODE ($z=z(t)$)
\[
\frac{d}{dt}z=-\frac{az}4+ b\|u\|_\infty^\sigma z \psi(z) +p,\quad z(\beta)=L,
\]
to get for all $x\in\R^N$ and $t\geq\beta$,
\[
w(t,x)\leq z(t)\leq \max\left\{L,{4p}/a,2/c_*\right\}=M.
\]
which implies \eqref{8.10}.
\end{proof}

After obtaining the estimate $\zeta^p\leq Mu$, we are able to bound $|\nabla\zeta|$ and $|D^2\zeta|$ in terms of $u$.

\begin{lem}\lb{L.8.2}
Assume $|\chi|\leq \chi_1$. For any $p'>1$,  there exists  $C=C(p')>0$ such that the following holds. For all $(t,x)\in [1,\infty)\times\bbR^N$ we have
\[
|\nabla u(t,x)|\leq C u(t,x)^\frac{1}{p'}.
\]

Under the assumptions of Lemma \ref{L.8.3} and for $p,M$ from the lemma,
we have for all $(t,x)\in [3,\infty)\times\bbR^N$,
\[
|\nabla\zeta(t,x)|,\quad|D^2\zeta(t,x)|\leq CM^\frac1p u(t,x)^\frac{1}{p'p}.
\]
\end{lem}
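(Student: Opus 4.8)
The plan is to obtain all three estimates from interior parabolic regularity on fixed-size space--time cylinders, using as inputs the Harnack-type inequalities of Lemma~\ref{asymptotic-lm1} and the pointwise domination $\zeta^p\le Mu$ furnished by Lemma~\ref{L.8.3}.

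The bound on $\nabla u$ is essentially \eqref{asymptotic-eq2}: applying Lemma~\ref{asymptotic-lm1} with $y=x$ and exponent $p=p'$ gives $|\nabla u(t,x)|\le C\,u(t,x)^{1/p'}$ for all $t\ge1$. The only additional point is that $C$ may be chosen independent of $\chi\in[-1,1]$: since $1-v_0=\zeta_0\in X_c^+$ forces $0\le v_0\le1$, hence $\|v_0\|_\infty\le1$ and $0\le v\le1$ by the maximum principle, the constant produced in the appendix depends on $\chi$ only through $|\chi|\le1$ and through the uniformly bounded quantities $\|v_0\|_\infty$ and $\|u\|_\infty$, and hence can be made uniform in $\chi\in[-1,1]$.

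For the estimates on $\zeta$, fix $(t_0,x_0)$ with $t_0\ge3$ and work on $Q:=(t_0-2,t_0]\times B_2(x_0)$; since $\beta\le1$, Lemma~\ref{L.8.3} gives $\zeta^p\le Mu$ throughout $Q$. The function $\zeta$ solves the inhomogeneous heat equation $\zeta_t=\Delta\zeta+g$ with $g:=u(1-\zeta)$ and $0\le g\le u$. By \eqref{asymptotic-eq1} (with $s_0=R=2$ and Harnack exponent $q\le p'$) every $(s,y)\in Q$ satisfies $u(s,y)\le C\,u(t_0,x_0)^{1/q}$, so $\|g\|_{L^\infty(Q)}\le C\,u(t_0,x_0)^{1/q}$ and $\|\zeta\|_{L^\infty(Q)}\le M^{1/p}\|u\|_{L^\infty(Q)}^{1/p}\le C\,M^{1/p}u(t_0,x_0)^{1/(qp)}$. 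Interior $L^r$-parabolic regularity with $r>N+2$ (giving a $C^{1,\alpha}$ bound) then yields
\[
|\nabla\zeta(t_0,x_0)|\le C\big(\|\zeta\|_{L^\infty(Q)}+\|g\|_{L^\infty(Q)}\big)\le C\,M^{1/p}u(t_0,x_0)^{1/(qp)}+C\,u(t_0,x_0)^{1/q},
\]
and since $u$ is bounded, $M\ge1$, and $q\le p'$, both terms are $\le C\,M^{1/p}u(t_0,x_0)^{1/(p'p)}$, the desired gradient bound for $\zeta$.

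For $|D^2\zeta|$ one replaces the $L^r$ theory by interior parabolic Schauder estimates, which require $g$ to be H\"older in $(t,x)$ on $Q$ with a norm that is itself a power of the tip value $u(t_0,x_0)$; this forces a short bootstrap. From $\nabla g=(1-\zeta)\nabla u-u\nabla\zeta$, the bound of part~(1), the gradient bound for $\zeta$ just proved, and the Harnack bound above, one gets $\|\nabla g\|_{L^\infty(Q)}\le C\,u(t_0,x_0)^{\gamma_1}$ with $\gamma_1>0$; moreover $\partial_t g=u_t(1-\zeta)-u\zeta_t$ is bounded (Proposition~\ref{global-existence-prop} together with $\zeta_t=\Delta\zeta+g$), so interpolation yields $[g]_{C^{\alpha,\alpha/2}(Q)}\le C\,u(t_0,x_0)^{\gamma}$ for a suitable small $\alpha\in(0,1)$ and some $\gamma\ge 1/(p'p)$. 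Interior Schauder estimates then give $|D^2\zeta(t_0,x_0)|\le C\big(\|\zeta\|_{L^\infty(Q)}+[g]_{C^{\alpha,\alpha/2}(Q)}+\|g\|_{L^\infty(Q)}\big)$, which by the same exponent bookkeeping is $\le C\,M^{1/p}u(t_0,x_0)^{1/(p'p)}$. I expect this bootstrap to be the main obstacle: one must certify that $g=u(1-\zeta)$ is H\"older with a \emph{quantitatively small} norm --- no smaller than $u(t_0,x_0)^{1/(p'p)}$ --- which couples part~(1) with the $\zeta$-gradient bound and needs the Schauder H\"older exponent taken small; the remaining bookkeeping (trading the Harnack exponent for the requested $1/(p'p)$, and keeping all constants independent of $\chi\in[-1,1]$) is routine, using only the boundedness of $u,v$ and $M\ge1$.
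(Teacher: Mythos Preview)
Your argument is correct. The treatment of $|\nabla u|$ and $|\nabla\zeta|$ matches the paper's: Harnack (Lemma~\ref{asymptotic-lm1}) together with local $L^r$ parabolic estimates and Sobolev embedding, applied to $\zeta_t=\Delta\zeta+u(1-\zeta)$ with the input $\zeta\le (Mu)^{1/p}$ from Lemma~\ref{L.8.3}.

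For $|D^2\zeta|$ the paper takes a different, somewhat cleaner route. Instead of Schauder on the $\zeta$-equation, it \emph{differentiates}: setting $q:=\nabla\zeta$, one has
\[
q_t-\Delta q+u\,q=(\nabla u)(1-\zeta),
\]
and then applies the \emph{same} $L^{N+3}$ interior estimate and Sobolev embedding a second time. This bounds $|D^2\zeta|=|\nabla q|$ pointwise by $L^{N+3}$-norms of $\nabla u$, $u$, and $u^{1/p}$ on a fixed cylinder, and a single application of Harnack with exponent $p'$ closes the estimate. The advantage is that only $L^\infty$ (in fact $L^{N+3}$) control of the right-hand side is needed---no H\"older regularity of the source has to be verified---so the bootstrap you flag as ``the main obstacle'' is bypassed entirely. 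Your Schauder approach is valid, but it costs the extra interpolation step (controlling $[g]_{C^{\alpha,\alpha/2}}$ via $\|\nabla g\|_\infty$, $\|\partial_t g\|_\infty$, and $\|g\|_\infty$, then trading auxiliary Harnack/H\"older exponents against the target $1/(p'p)$); the paper's differentiation trick avoids this at no loss.
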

\begin{proof}
Since $|\chi|\leq \chi_1$, the first claim follows from Lemma \ref{asymptotic-lm1}.

The estimate for $|\nabla\zeta|$ follows  similarly. 
Indeed, by the equation of $\zeta$, the local $L^q$-parabolic estimates (see for e.g., \cite[Theorem 7.22]{Lieberman}) yields for $t\geq2$ and any $x\in\R^N$,
\[
\begin{aligned}
    \|D^2\zeta\|_{L^{N+3}([t-\frac12,t]\times B(x,\frac12))}&\leq C\left(\|u(1-\zeta)\|_{L^{N+3}([t-1,t]\times B(x,1))}+\|\zeta\|_{L^{N+3}([t-1,t]\times B(x,1))}\right)\\
&
\leq C\left(\|u\|_{L^{N+3}([t-1,t]\times B(x,1))}+\|\zeta\|_{L^{N+3}([t-1,t]\times B(x,1))}\right).
\end{aligned}
\]
The anisotropic Sobolev embedding (\cite[Lemma A3]{Engler}) yields for any $(t,x)\in [2,\infty)\times\R^N$,
\beq\lb{8.4}
|\nabla\zeta(t,x)|
\leq C\left(\|u\|_{L^{N+3}([t-1,t]\times B(x,1))}+\|\zeta\|_{L^{N+3}([t-1,t]\times B(x,1))}\right).
\eeq
Since $t\geq 2$, applying \eqref{8.10} and Lemma \ref{asymptotic-lm1} (with $R=1,s_0\in[0,1]$ and $p=p'$), we get for some $C=C(p')$,
\[
|\nabla\zeta(t,x)|\leq  C\left(\|u\|_{L^{N+3}([t-1,t]\times B(x,1))}+M^\frac1p\|u^\frac1p\|_{L^{N+3}([t-1,t]\times B(x,1))}\right)
\leq CM^\frac1p  u(t,x)^\frac1{p'p}.
\]


For the last claim, note that $q:=\nabla\zeta$ satisfies
\[
q_t-\Delta q+u q=(\nabla u)(1-\zeta).
\]
So Theorem 7.22 in \cite{Lieberman}, \eqref{8.10} and \eqref{8.4} yield for $t\in [3,\infty)$,
\begin{align*}
&\|D^2q\|_{L^{N+3}([t-\frac12,t]\times B(x,\frac12))}\leq C\left(\|\nabla u\|_{L^{N+3}([t-1,t]\times B(x,1))}+\|q \|_{L^{N+3}([t-1,t]\times B(x,1))}\right)\\
&\qquad\qquad\leq C\left(\|\nabla u\|_{L^{N+3}([t-1,t]\times B(x,1))}+ \|u\|_{L^{N+3}([t-2,t]\times B(x,2))}+M^\frac1p \|u^\frac{1}{p}\|_{L^{N+3}([t-2,t]\times B(x,2))}\right).
\end{align*}
Again by the anisotropic Sobolev embedding and Lemma \ref{asymptotic-lm1}, there exists $C=C(p')$ such that for $t\in [3,\infty)$,
\[
|D^2\zeta(t,x)|\leq C\|D^2q\|_{L^{N+3}([t-\frac12,t]\times B(x,\frac12))}\leq CM^\frac{1}{p}  u(t,x)^\frac{1}{p'p}.
\]
\end{proof}

Now, we prove Theorem \ref{speed-thm}(2) and we recall that here $\sigma=1$ and $\chi<0$.

\begin{proof}[Proof of Theorem \ref{speed-thm}(2)]

First,  let
$$
w=u-\frac{\chi}{2}|\nabla \zeta |^2.
$$
Note that
$$
\nabla \zeta \cdot \nabla (\Delta \zeta)=\frac{1}{2}\Delta |\nabla \zeta|^2 -|D^2\zeta|^2.
$$
Then
$w$ satisfies
\begin{equation*}
w_t=\Delta w+\chi u\Delta \zeta +\chi|D^2\zeta|^2+\chi\zeta  \nabla u\cdot\nabla \zeta+\chi u|\nabla \zeta|^2 +u(a-b u).
\end{equation*}
Write $\tilde\chi=-\chi>0$, and by Young's inequality we get
\begin{align*}
w_t&=\Delta w-\tilde \chi u\Delta \zeta-\tilde \chi| D^2\zeta|^2-\tilde  \chi\zeta  \nabla u \cdot \nabla \zeta -\tilde \chi u |\nabla \zeta|^2 +u(a-b u)\\
&\le \Delta w+\frac{N\tilde \chi }{4} u^2 -\tilde  \chi\zeta  \nabla u \cdot \nabla \zeta -\tilde \chi u |\nabla \zeta|^2 +a w-\frac{  a \tilde\chi}{2}|\nabla \zeta|^2 -b u^2\\
&\le  \Delta w+\frac{N\tilde \chi }{4}u^2 +\frac{\tilde\chi}{2a}\zeta^2|\nabla u|^2 -\tilde \chi u |\nabla \zeta|^2 +a w-b u^2.
\end{align*}

By the assumptions in Theorem \ref{speed-thm}(2) and  Lemmas \ref{L.8.3} and \ref{L.8.2},  there are $C>0$  and $\chi_2\in (0,\chi_1)$ such that if $|\chi|\le \chi_2$,
$$
|\nabla u|\le Cu^{\frac{1}{2}}\quad\text{and}\quad \zeta\leq Cu^\frac12\quad \text{ in }[1,\infty)\times\bbR^N.
$$
With these, $w$ satisfies
\[
w_t\leq \Delta w+\frac{N\tilde \chi}{4}u^2 +\frac{C\tilde\chi}{2a} u^{2} +a w-b u^2.
\]
which implies that if 
$\tilde \chi\le \chi_3:=\frac{2ab}{aN+2C}$,  then
$
w_t\le  \Delta w+aw-bw^2/2.
$
This shows that the spreading speed of $u$ is $c^*=2\sqrt a$ when $0<\-\chi\le \chi_0:=\min\{\chi_2,\chi_3\}$.
\end{proof}

Finally, we prove Theorem \ref{speed-thm1}. Here we consider the case of $\sigma\in (0,1)$, while allowing $\chi>0$.

\begin{proof}[Proof of Theorem \ref{speed-thm1}]

Since $\sigma<1$, it follows from the assumptions in  Theorem \ref{speed-thm1} and  Lemma \ref{L.8.2}  that there exist $C>0$ and $\chi_2\in (0,\chi_1)$ such that as long as $|\chi|\le \chi_2$, 
\[
|\nabla u|,\quad |\nabla\zeta|\leq C u^{\frac{1+\sigma}{2}}\quad \text{and}\quad |D^2\zeta|\leq C u^{\sigma} \quad\text{ in } [3,\infty)\times\bbR^N.
\]
Hence, from the equation, we get for $t\geq 3$,
\begin{align*}
u_t&=\Delta u + \chi \nabla u\cdot\nabla \zeta+\chi u\Delta \zeta+ u(a-bu^\sigma)\\
&\leq\Delta u+au-bu^{1+\sigma}+C|\chi|u^{1+\sigma}  \leq \Delta u +au,
\end{align*}
provided that $|\chi|\leq \chi_0:=\min\{\chi_2, \frac{b}{C}\}$.
This implies that the spreading speed of $u$ is  $c^*=2\sqrt{a}$. 
\end{proof}

\section{Numerical Simulation}
\label{simulation}

In this section, we present the numerical results on the spreading speed of \eqref{main-Eq} in the one dimensional setting, i.e., $N=1$, with the initial data having compact support. 
This investigation is accomplished through numerical simulations of the solutions to \eqref{num03}+\eqref{BC}. All the numerical simulations were implemented using the Python programming language.

We start by defining the scheme for \eqref{num03} and \eqref{BC} as follows: We divide the space interval $[-L, L]$  into $M$ subintervals with equal length and divide the time interval $[0, T ]$ into $N$ subintervals with equal length. Then the space step size is $h = \frac{2L}{M}$ and  the time step size
is $\tau^*= \frac{T}{N}$. For simplicity, we denote the approximate value of $\tilde u(t_j , x_i), \tilde v(t_j, x_i)$ by $\tilde u(j, i), \tilde v(j , i)$ respectively, with $t_j = (j - 1)\tau^*, \, 1 \le j \le N + 1$ and $x_i = -L + (i - 1)h, \,
1 \le i \le M + 1$.
Using the central approximation for the spatial derivatives $\tilde v_{xx} (t_j , x_i), \tilde v_{x} (t_j , x_i) $ and $\tilde u_{xx} (t_j , x_i), \tilde u_{x} (t_j , x_i)$:
\begin{align*}
\tilde v_{x} (t_j , x_i ) &\approx \frac{\tilde v(j , i + 1) - \tilde v(j , i - 1)}{2h},\\    \tilde v_{xx} (t_j , x_i ) &\approx \frac{\tilde v(j , i - 1) - 2\tilde v(j , i) + \tilde v(j , i + 1)}{h^2},\\
     \tilde u_{x} (t_j , x_i ) &\approx\frac{\tilde u(j , i + 1) - \tilde u(j , i - 1)}{2h},\\
     \tilde u_{xx} (t_j , x_i ) &\approx\frac{\tilde u(j , i - 1) - 2\tilde u(j , i) + \tilde u(j , i + 1)}{h^2}.
\end{align*}
The forward approximation of the time derivative yields
\begin{align*}
    \tilde v_{t} (t_j , x_i ) \approx\frac{\tilde v(j +1, i) - \tilde v(j , i)}{\tau^*},\qquad \tilde u_{t} (t_j , x_i ) \approx \frac{\tilde u(j +1, i) - \tilde u(j , i)}{\tau^*}.
\end{align*}
By the boundary conditions in \eqref{BC}, we set
\begin{equation}\label{u-BC}
   \tilde u(j, 1) = \tilde u(j, M+1) = 0.
\end{equation}
Using the forward approximation for the Neumann boundary condition at $-L$ and backward approximation at $L$,  we get
$$
    \frac{\partial\tilde v}{\partial x}(t_j , x_1 ) \approx \frac{\tilde v(j, 2) - \tilde v(j , 1)}{\tau^*} = 0, \qquad
     \frac{\partial\tilde v}{\partial x}(t_j , x_{M+1} ) \approx\frac{\tilde v(j, M+1) - \tilde v(j , M)}{\tau^*} = 0,
$$
and hence, we set
\begin{equation}
\label{v-BC}
  v(j, 2) = v(j, 1),\quad
     v(j, M+1) = v(j, M).
\end{equation}
The equation \eqref{num03} can thus be discretized as: for $ 1 \le j \le N, 2\le i\le M$,
\begin{align*}
&    \frac{\tilde u(j +1, i) - \tilde u(j , i)}{\tau^*} =\frac{\tilde u(j , i - 1) - 2\tilde u(j , i) + \tilde u(j , i + 1)}{h^2} \\
    &\qquad\qquad+ \left(c - \chi \frac{\tilde v(j , i + 1) - \tilde v(j , i - 1)}{2h}\right)  \frac{\tilde u(j , i + 1) - \tilde u(j , i - 1)}{2h}\\
    &\qquad\qquad - \chi \tilde u(j, i)\frac{\tilde v(j , i - 1) - 2\tilde v(j , i) + \tilde v(j , i + 1)}{h^2} + \tilde u(j, i)(1-\tilde u(j, i)),
\end{align*}
and
\[
\begin{aligned}
   {\tau}\frac{\tilde v(j +1, i) - \tilde v(j , i)}{\tau^*} &=\frac{\tilde v(j , i - 1)  - 2\tilde v(j , i) + \tilde v(j , i + 1)}{h^2} + c \tau \frac{\tilde v(j , i + 1)- \tilde v(j , i - 1)}{2h}\\
   &\quad- \tilde u(j, i)\tilde v(j, i).
\end{aligned}
\]
Simplifying and reordering of the two equations, we get for $1 \le j \le N, 2\le i\le M$,
\begin{align}
\label{u-scheme}
   \tilde u(j +1, i)  &=\tilde u(j , i)\left[1 + \tau^* - \frac{2\tau^*}{h^2} - \frac{\tau^*\chi}{h^2}(v(j , i - 1) - 2\tilde v(j , i) + \tilde v(j , i + 1)) \right]\nonumber \\
    &\quad +\tilde u(j , i - 1)\left[\frac{\tau^*}{h^2} - \frac{\tau^*}{2h}\left(c - \chi \frac{\tilde v(j , i + 1) - \tilde v(j , i - 1)}{2h}\right) \right] \nonumber\\
    &\quad +  \tilde u(j , i + 1) \left[\frac{\tau^*}{h^2} + \frac{\tau^*}{2h}\left(c - \chi \frac{\tilde v(j , i + 1) - \tilde v(j , i - 1)}{2h}\right) \right] - \tau^*\tilde u(j, i)^2 ,
\end{align}
and
\begin{align}
\label{v-scheme}
&\tilde v(j +1, i)\nonumber\\
&=\tilde v(j , i) (1- \frac{\tau^*}{\tau} u(j, i) - \frac{2\tau^*}{{\tau}h^2}) + \tilde v(j , i - 1)\left( \frac{\tau^*}{{\tau} h^2} -\frac{c\tau^*}{2h}\right)+  \tilde v(j , i + 1)\left( \frac{\tau^*}{{\tau} h^2} +\frac{c\tau^*}{2 h}\right).
\end{align}

We use \eqref{u-BC}, \eqref{v-BC}, \eqref{u-scheme} and \eqref{v-scheme} in implementing the scheme on python, we apply the same space step size $h = 0.1$ and the same time step size $\tau^* = 0.002.$ We carry out the numerical experiments for ${\tau = 1}$ and for different values of $\chi$ and $c$ in subsections \ref{simulation-1}-\ref{simulation-3}, and carry out the numerical experiments for different values of $\tau$, $\chi$, and $c$ in subsection \ref{simulation-4}.    In all the simulations we take $L=20$ and $T=500$.

\subsection{Numerical experiments with positive but not large $\chi$ and observations}
\label{simulation-1}
We  carry out experiments with $\chi=1$, $\chi=1.25$ and $\chi = 1.5$. For each of these $\chi$, we take $c$ to be of the following values: $c=1,1.99,2.01,3$.  Throughout this subsection, $\tau=1$.
Figures \ref{chi-1-c-1-1.99} and \ref{chi-1-c-1.99-3} are  the graphs of the $u$-component of  the numerical solutions of  \eqref{num03}+\eqref{BC}+\eqref{u0-v0}
with $\chi=1$, and $c=1,1.99,2.01, 3$  at some fixed times. Figure \ref{chi-1-c-1-v} are the graphs of the $v$-component of  the numerical solutions of  \eqref{num03}+\eqref{BC}+\eqref{u0-v0}
with $\chi=1$ and $c=1,3$  at some fixed times.

\begin{figure}[H]
    \centering
    \begin{subfigure}[b]{0.4\textwidth}
        \includegraphics[width=\textwidth]{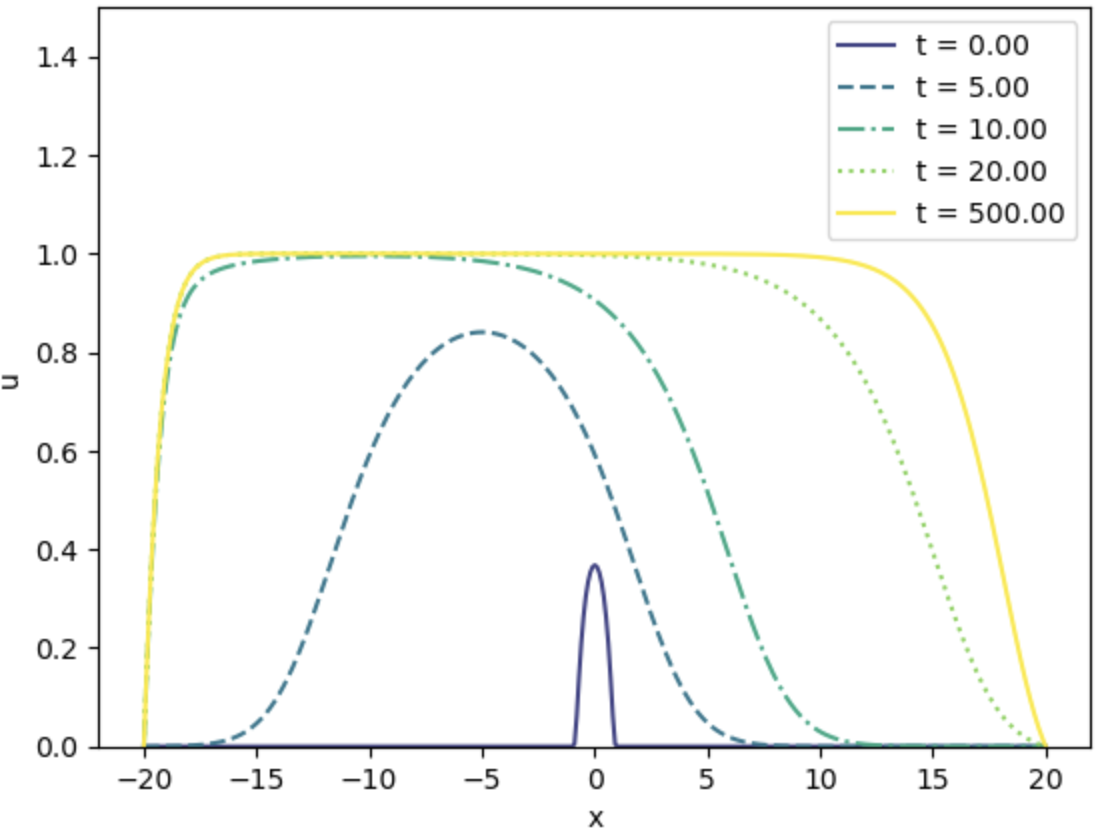}
       \caption{$c = 1$}
    \end{subfigure}
    \hfill
   \begin{subfigure}[b]{0.4\textwidth}
        \includegraphics[width=\textwidth]{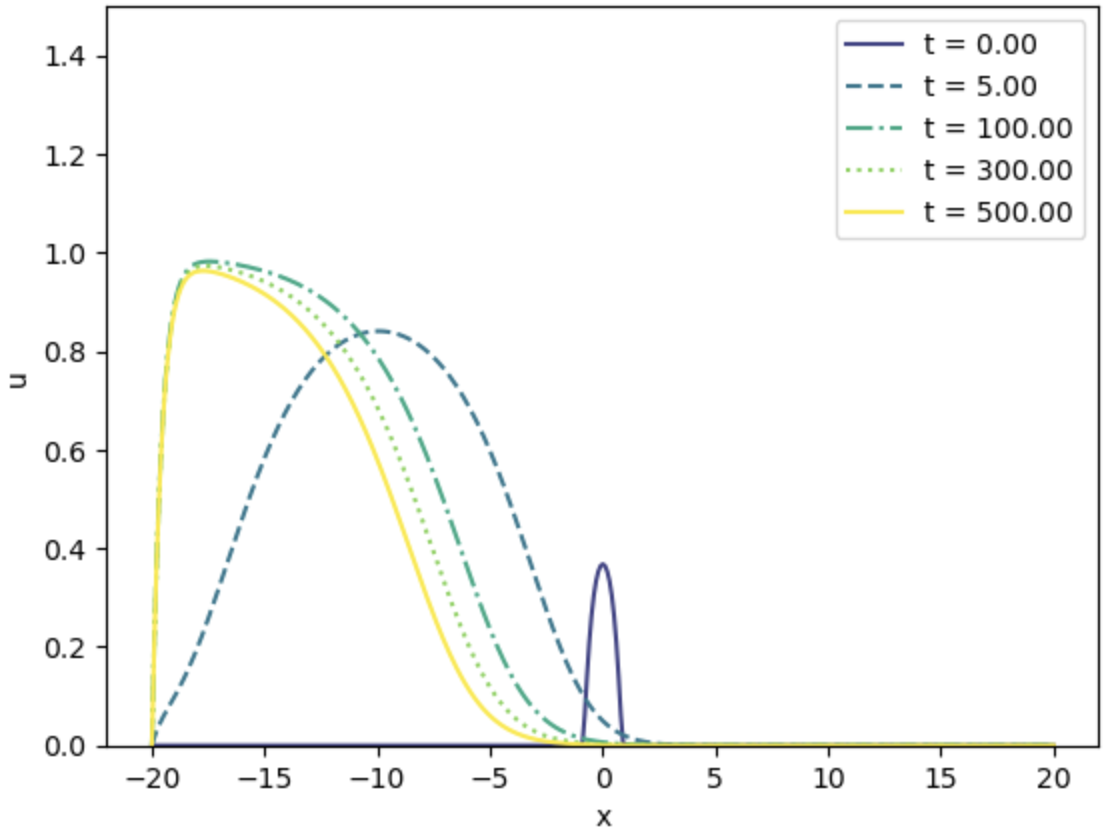}
       \caption{$ c = 1.99$}
    \end{subfigure}
    \caption{$\chi=1, \tau=1, u(t,x)$}
    \label{chi-1-c-1-1.99}
\end{figure}

\begin{figure}[H]
    \centering
    \begin{subfigure}[b]{0.4\textwidth}
      \includegraphics[width=\textwidth]{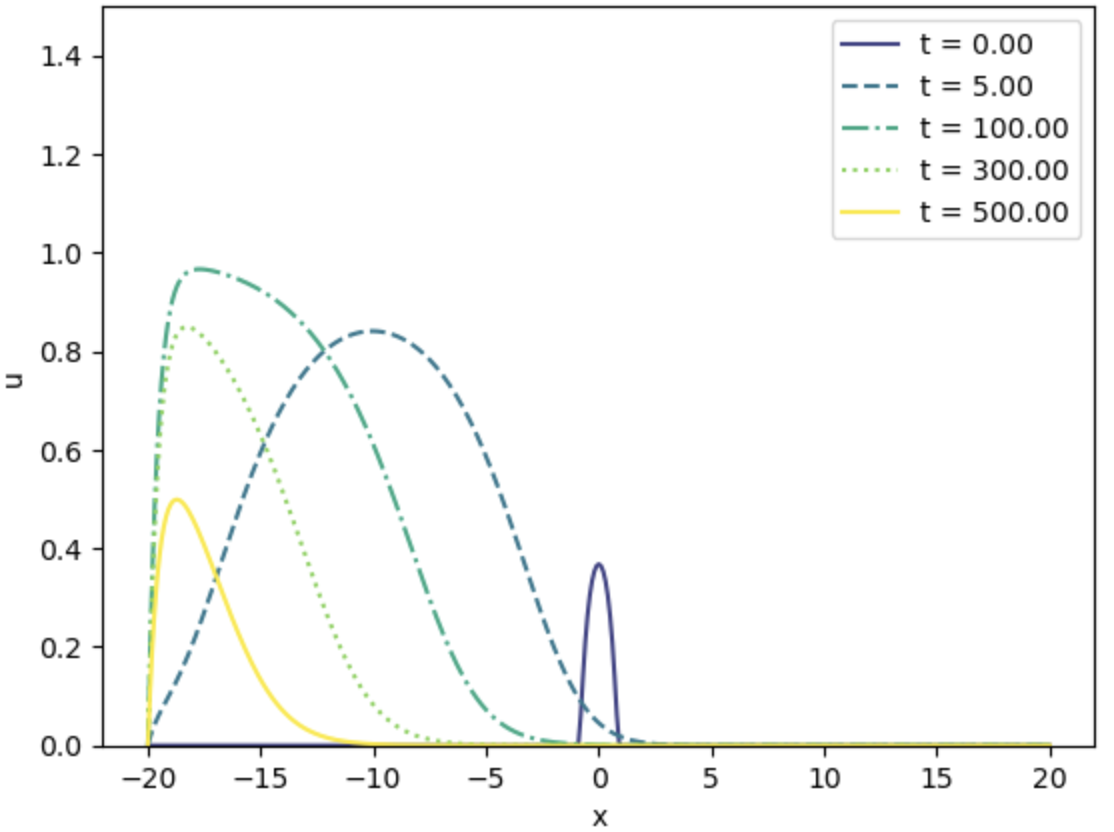}
       \caption{$c = 2.01$}
    \end{subfigure}
    \hfill
   \begin{subfigure}[b]{0.4\textwidth}
        \includegraphics[width=\textwidth]{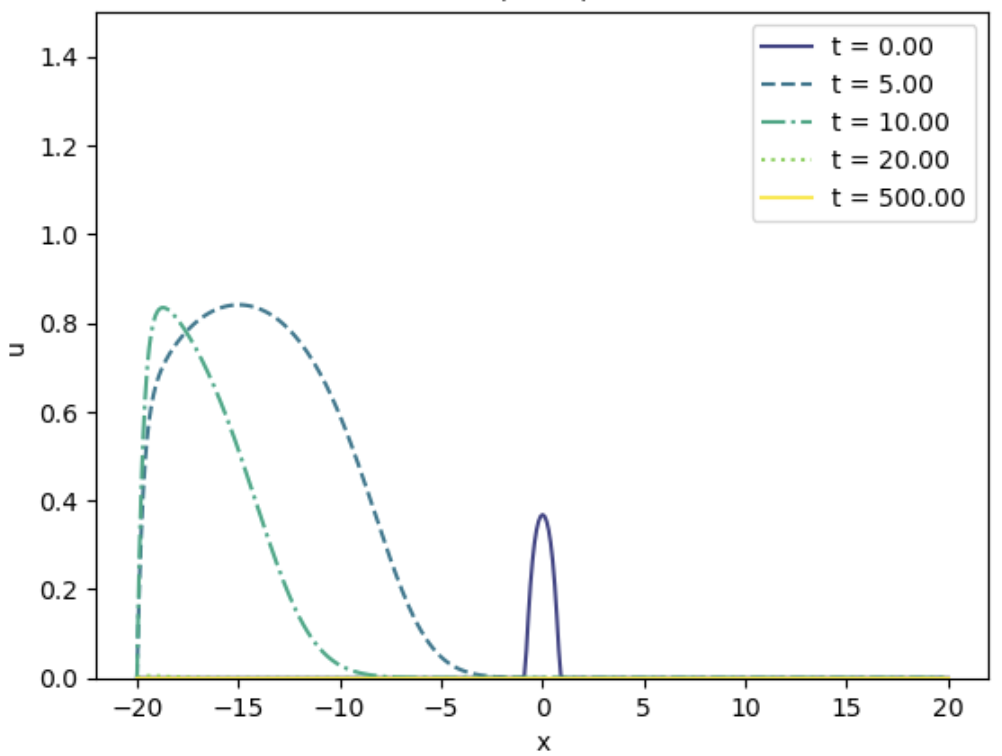}
       \caption{$ c = 3$}
    \end{subfigure}
    \caption{$\chi = 1, \tau=1, u(t,x)$}
    \label{chi-1-c-1.99-3}
\end{figure}

\begin{figure}[H]
    \centering
    \begin{subfigure}[b]{0.4\textwidth}
      \includegraphics[width=\textwidth]{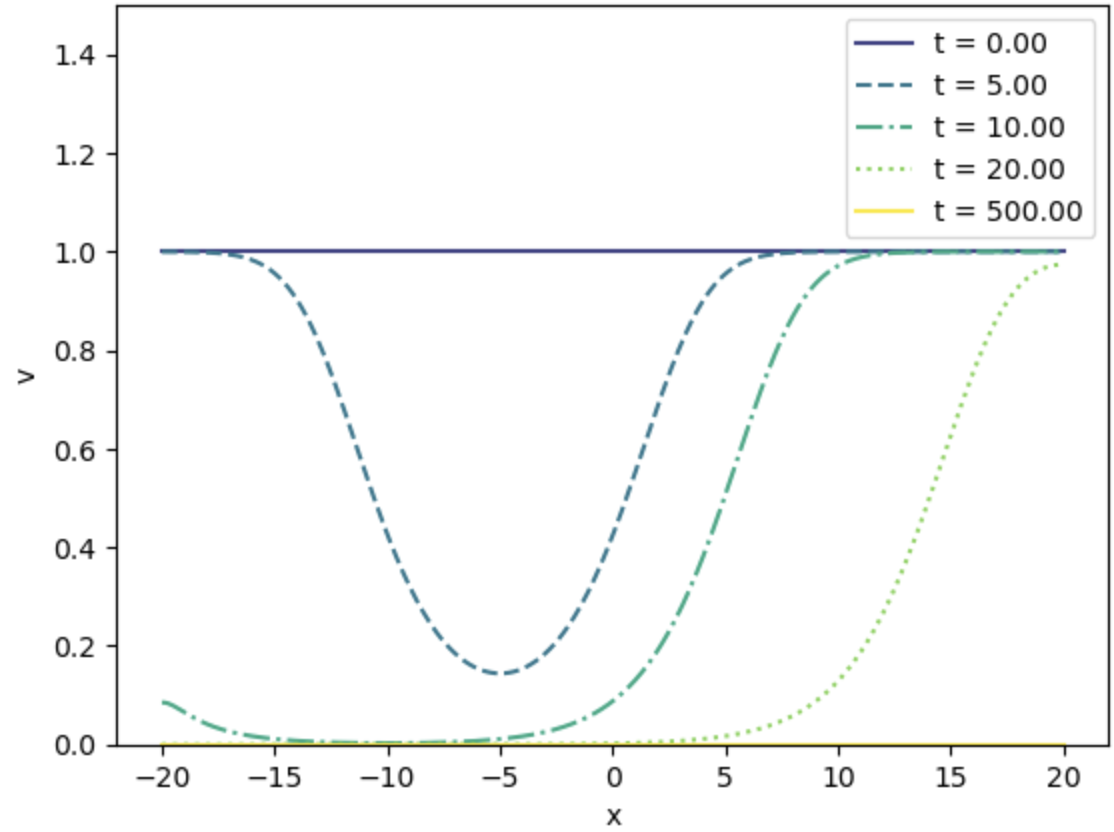}
       \caption{$c = 1$}
    \end{subfigure}
    \hfill
   \begin{subfigure}[b]{0.4\textwidth}
        \includegraphics[width=\textwidth]{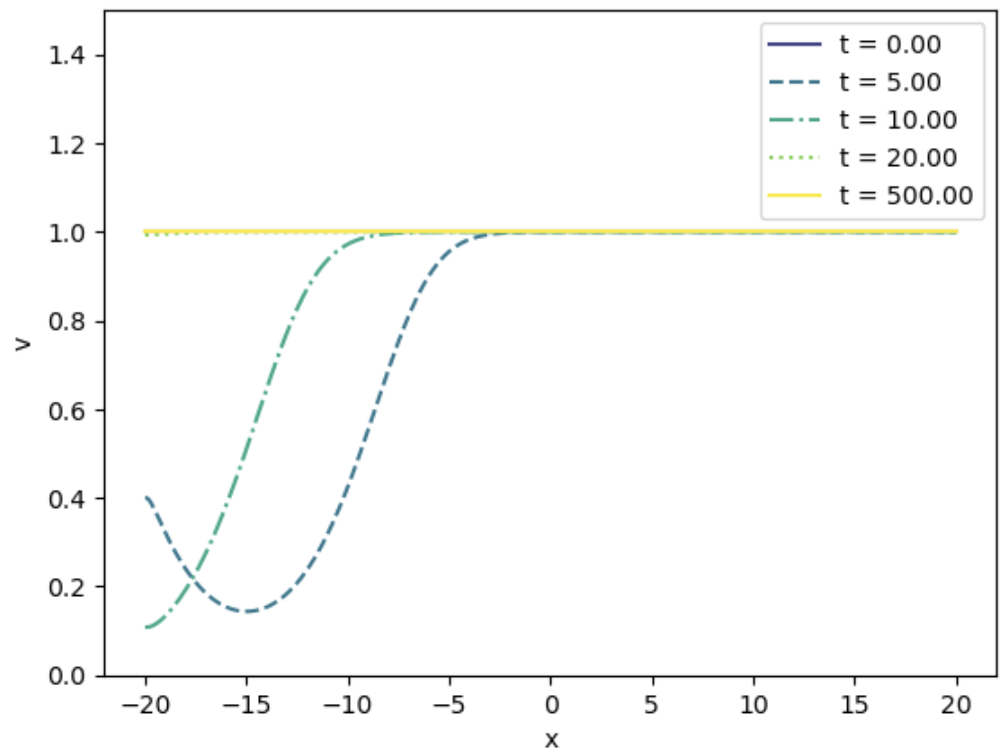}
       \caption{$ c = 3$}
    \end{subfigure}
    \caption{$\chi = 1, \tau=1, v(t,x)$}
     \label{chi-1-c-1-v}
\end{figure}

We observe the following scenarios via the numerical simulations: The results of the numerical simulations with $\chi=1.25$ and $\chi=1.5$ are similar to those with $\chi=1$. The results show that when $\tilde u(t,x)$ stays positive, $\tilde v(t,x)\to 0$ as $t\to\infty$, and when $\tilde u(t,x)\to 0$, $\tilde v(t,x)$ stays positive as $t\to\infty$ (we do not include all the graphs of the $v$-component since we can infer the behavior of $v$ from $u$). For $0<c<2$, $u(t,x)$ stays positive, and for $c>2$, $u(t,x)$ goes to $0$ as $t\to\infty$, which indicates that chemotaxis with a positive but not large sensitivity coefficient $\chi$ does not speed up the spreading.

\subsection{Numerical experiments with positive and large $\chi$ and observations}
\label{simulation-2}

We carry out some numerical experiments with $\chi=2.5, 5$ and $\chi=10$.
For each of these $\chi$, we do simulations for the following values of $c$;
 $c=1, 1.99, 2.01, 3$. Figures \ref{chi-10-c-1} and \ref{chi-10-c-3} are the graphs of the $u$-component of the numerical solutions of  \eqref{num03}+\eqref{BC}+\eqref{u0-v0}
with $\chi=10$ and $c=1,1.99,2.01, 3$.

We
observe that the results of numerical simulations with $\chi=2.5$ and $\chi = 5$ are similar to those with $\chi=10$.
We also observe that $\tilde u(t,x)$ stays positive  and $\tilde v(t,x)\to 0$ for $c<2$ as well as for $c=2.01$ as $t\to\infty$, which indicates that chemotaxis with positive large sensitivity coefficient $\chi$ speeds up the spreading.

\begin{figure}[H]
    \centering
    \begin{subfigure}[b]{0.4\textwidth}
        \includegraphics[width=\textwidth]{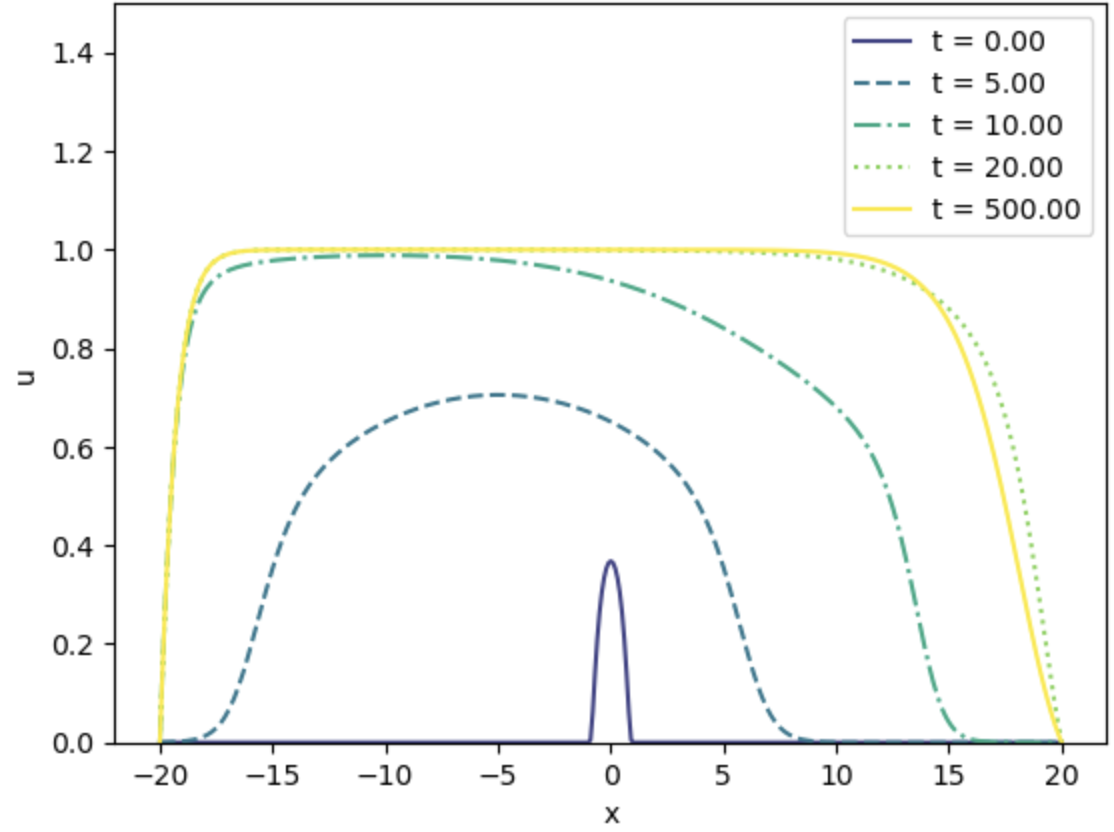}
       \caption{$c = 1$}
    \end{subfigure}
    \hfill
   \begin{subfigure}[b]{0.4\textwidth}
        \includegraphics[width=\textwidth]{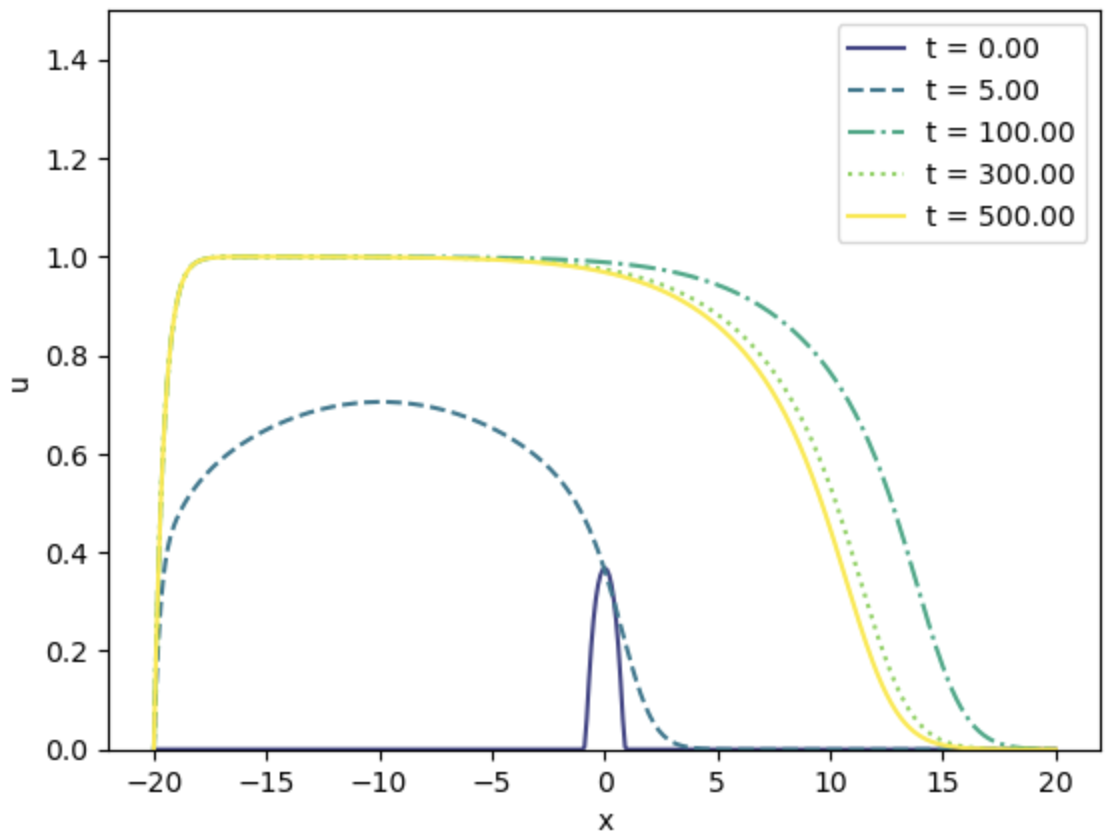}
       \caption{$ c = 1.99$}
    \end{subfigure}
    \caption{$\chi = 10, \tau=1, u(t,x)$}
    \label{chi-10-c-1}
\end{figure}

\begin{figure}[H]
    \centering
    \begin{subfigure}[b]{0.4\textwidth}
        \includegraphics[width=\textwidth]{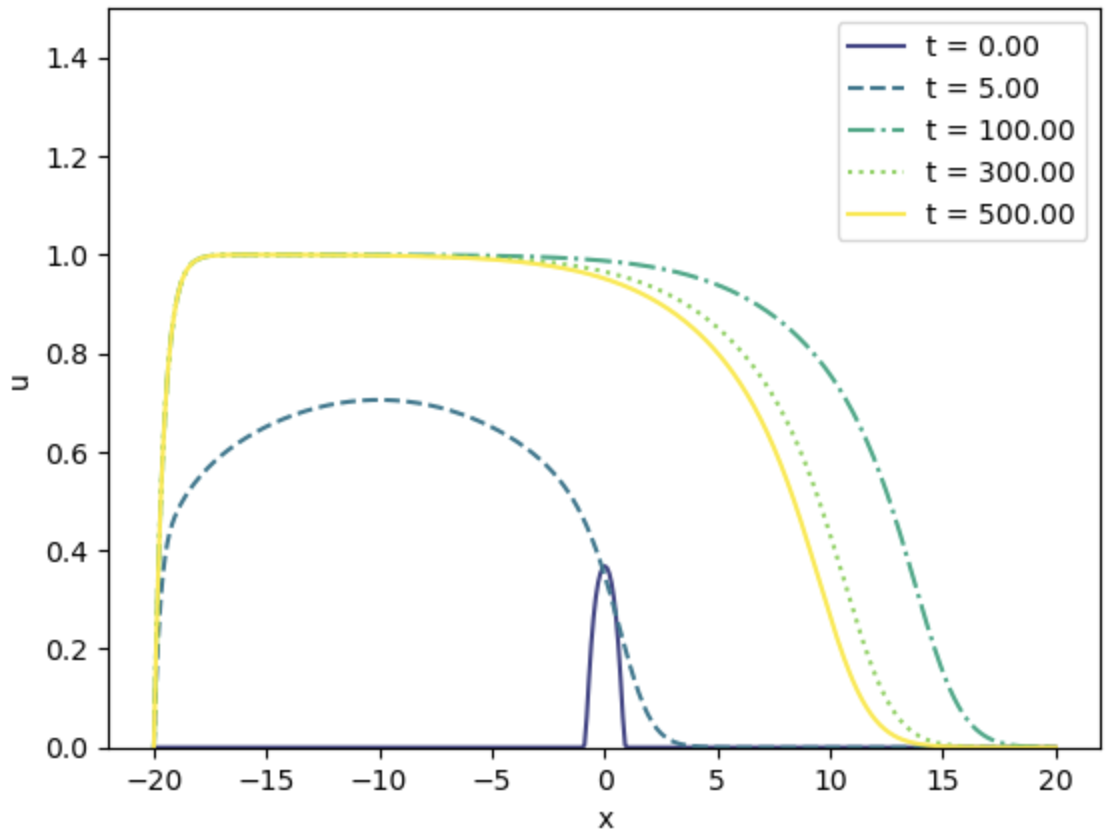}
       \caption{$c = 2.01$}
    \end{subfigure}
    \hfill
   \begin{subfigure}[b]{0.4\textwidth}
        \includegraphics[width=\textwidth]{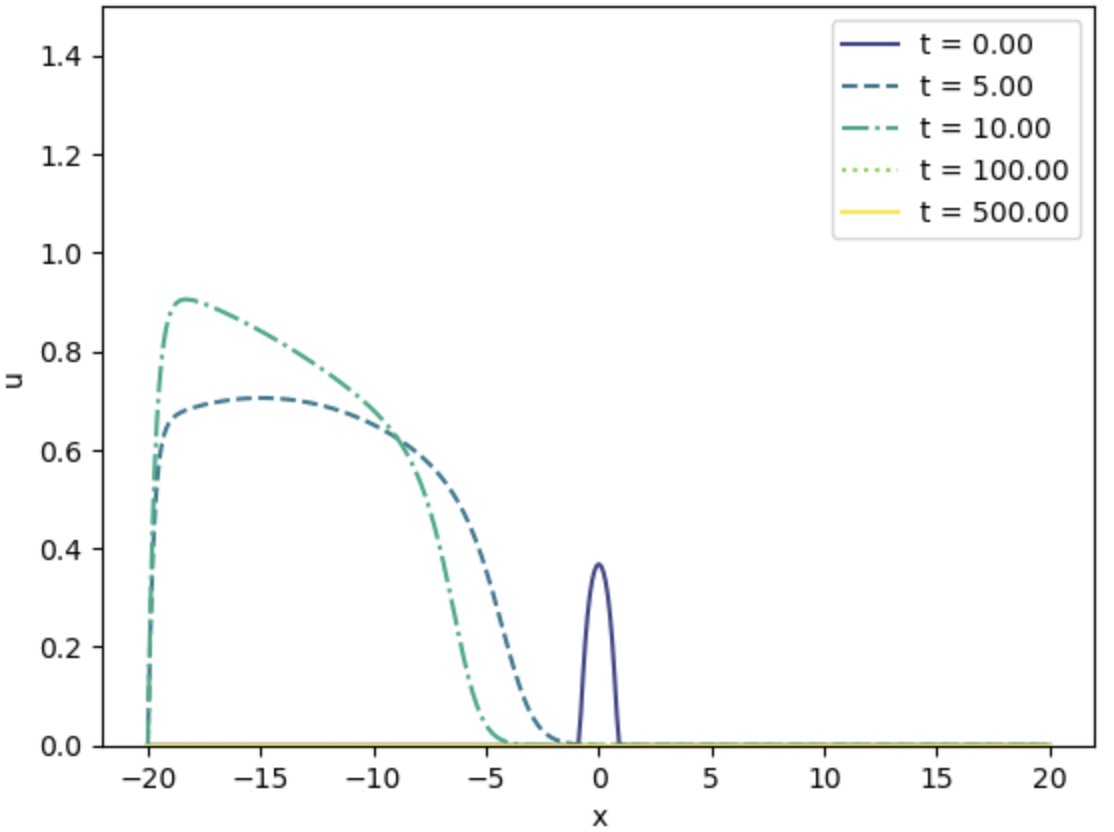}
       \caption{$c = 3$}
    \end{subfigure}
    \caption{$\chi = 10, \tau=1, u(t,x)$}
    \label{chi-10-c-3}
\end{figure}

\subsection{Numerical experiments with negative $\chi$ and observations}
\label{simulation-3}

We carry out numerical simulations with $\chi=-1$ and $\chi=-10$. For each of these $\chi$, we do simulations
for the following values of $c$, $c=1, 1.99, 2.01, 3$. Figures \ref{chi--1-c-1.99} and \ref{chi--1-c-3} are the graphs of the $u$-component of the  numerical solutions of  \eqref{num03}+\eqref{BC}+\eqref{u0-v0} for $\chi = -1$, and Figures \ref{chi--10-c-1.99} and \ref{chi--10-c-3} are   the numerical solutions of \eqref{num03}+\eqref{BC}+\eqref{u0-v0} for $\chi=-10$.

We observe that $\tilde u(t,x)$ stays positive and $\tilde v(t,x)\to 0$ as $t\to\infty$ for $0<c<2$
and $\tilde u(t,x)\to 0$ and $\tilde v(t,x)$ stays positive as $t\to\infty$, which indicates that chemotaxis with negative sensitivity coefficient or chemo-repeller does not speed up the spreading.

\smallskip

We give the following remarks on all experiments discussed in sections \ref{simulation-1}--\ref{simulation-3}.
\begin{rk}

\begin{enumerate}
    \item The simulations indicate a critical point for $\chi$, i.e., there exists $\chi^*$ {between 1.5 and 1.9} such that for $\chi<\chi^*$, the chemotaxis does not speed up the rate of spread, while for $\chi>\chi^*$, The chemotaxis speeds up the spreading.
    \item For $c = 3$, $u(t,x)\to 0$ for all the values of $\chi$. This also supports the fact that $c^*_{\rm up}<\infty$.
    \item In the above numerical experiments, we used the same space step size $h =0.1$ and the same time step size $\tau^* = 0.002$ as they satisfy the numerical stability condition $\tau^*/h^2 < 0.5$. We do not give an accuracy analysis of the simulations in this paper. To assess the reliability of the numerical results, we used different values of $h$ and $\tau^*$ to simulate the spreading speed of the chemotaxis system in $\R$. We repeat the above experiments for $h= 0.1$ and $\tau^* = 0.002, 0.004$ and for $\tau^* = 0.002$ and $h = 0.2$. The observed  outcomes are close for the different values of $h$ and $\tau^*$.
\end{enumerate}
\end{rk}

\begin{figure}[H]
    \centering
    \begin{subfigure}[b]{0.4\textwidth}
        \includegraphics[width=\textwidth]{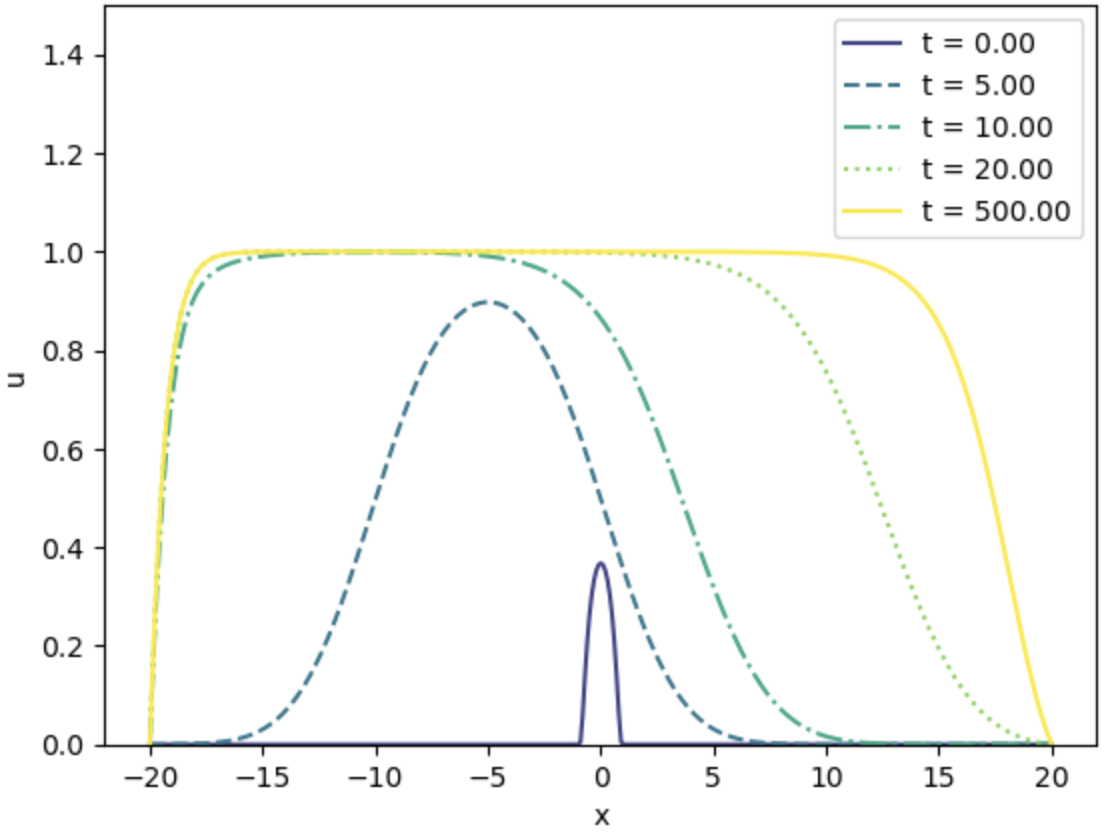}
        \caption{$c=1$}
    \end{subfigure}
    \hfill
    \begin{subfigure}[b]{0.4\textwidth}
        \includegraphics[width=\textwidth]{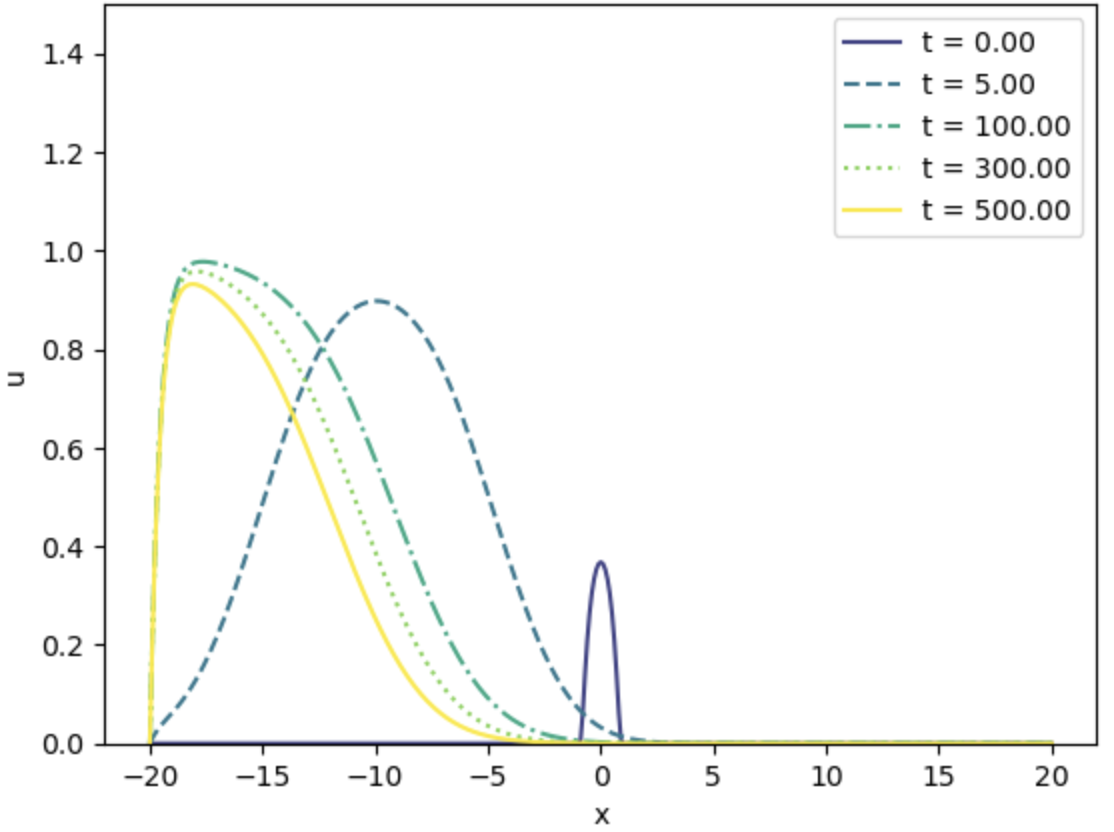}
        \caption{$c=1.99$}
    \end{subfigure}
   \caption{$\chi = -1, \tau=1, u(t,x)$}
    \label{chi--1-c-1.99}
\end{figure}

\begin{figure}[H]
    \centering
    \begin{subfigure}[b]{0.4\textwidth}
        \includegraphics[width=\textwidth]{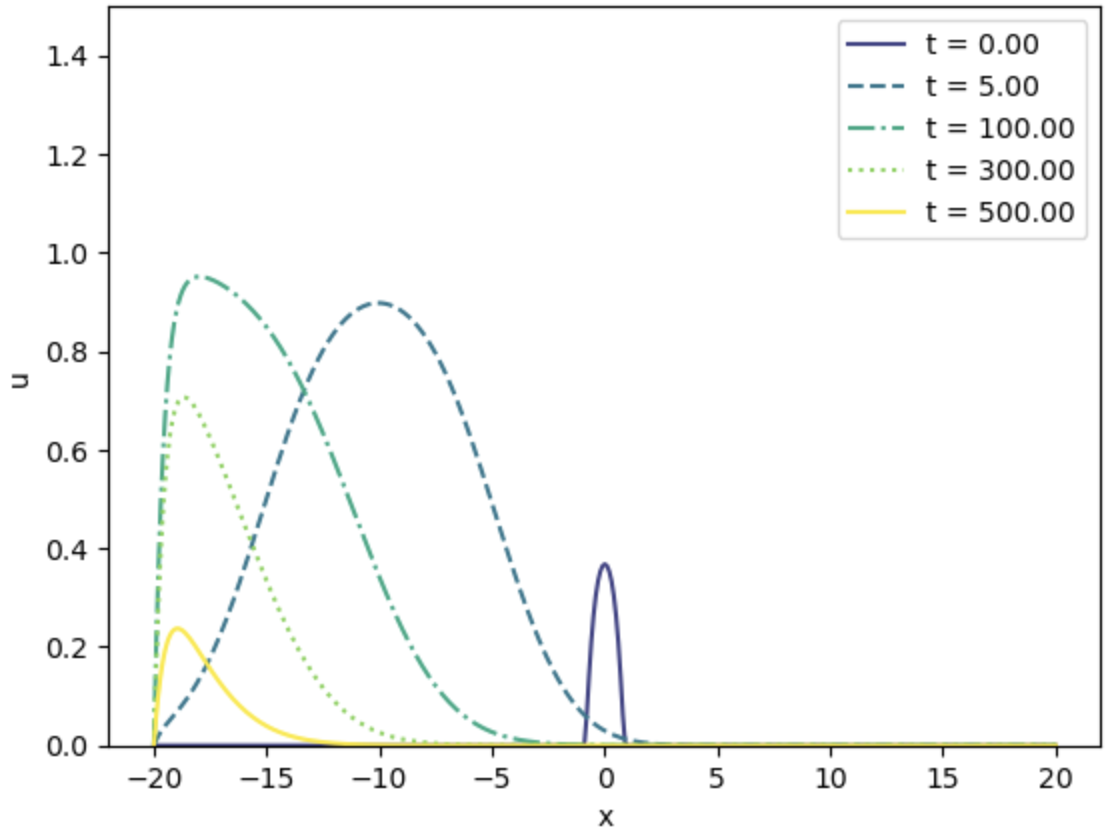}
       \caption{$c = 2.01$}
    \end{subfigure}
    \hfill
   \begin{subfigure}[b]{0.4\textwidth}
        \includegraphics[width=\textwidth]{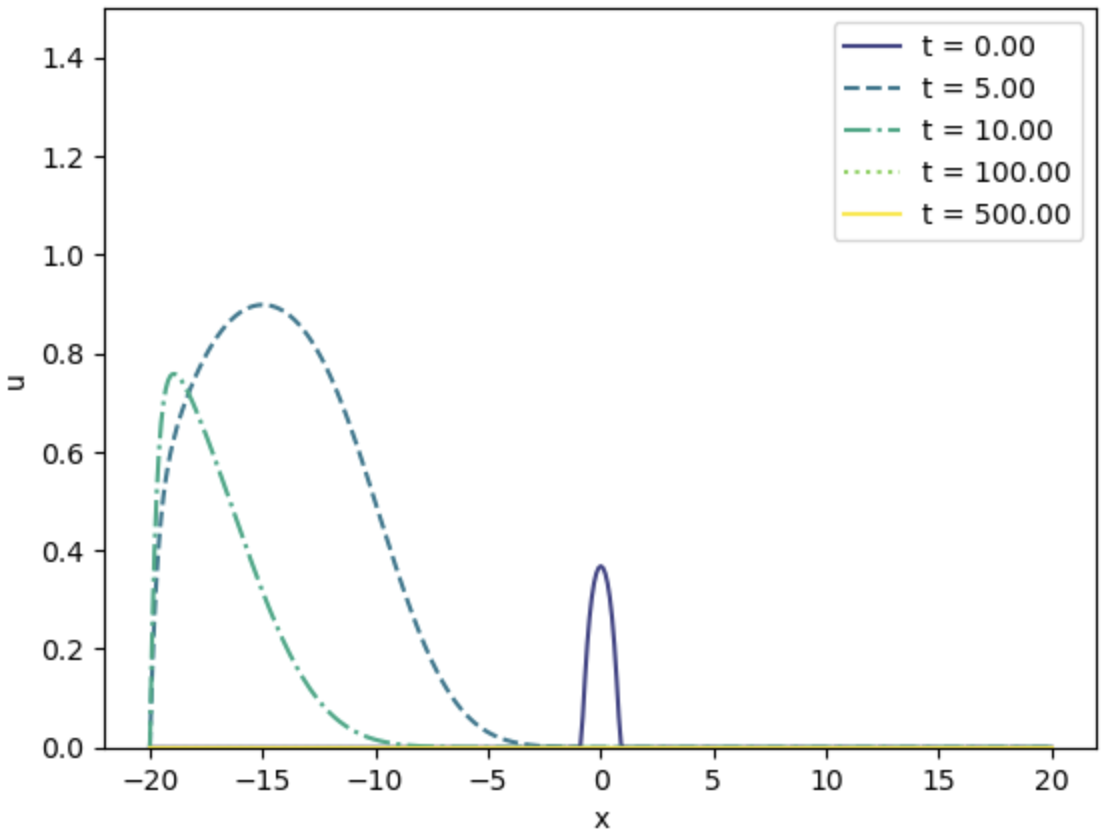}
       \caption{$c = 3$}
    \end{subfigure}
    \caption{$\chi = -1, \tau=1, u(t,x)$}
    \label{chi--1-c-3}
\end{figure}

\begin{figure}[H]
    \centering
    \begin{subfigure}[b]{0.4\textwidth}
        \includegraphics[width=\textwidth]{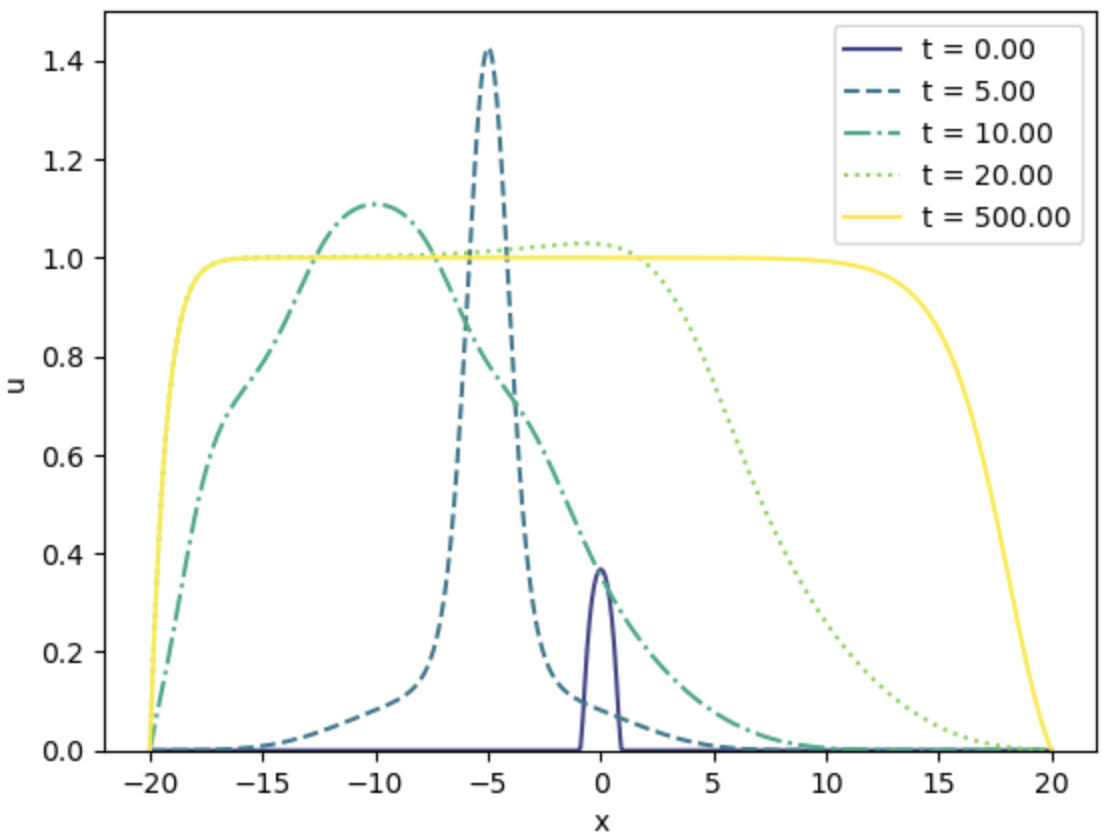}
       \caption{$ c = 1$}
    \end{subfigure}
    \hfill
   \begin{subfigure}[b]{0.4\textwidth}
        \includegraphics[width=\textwidth]{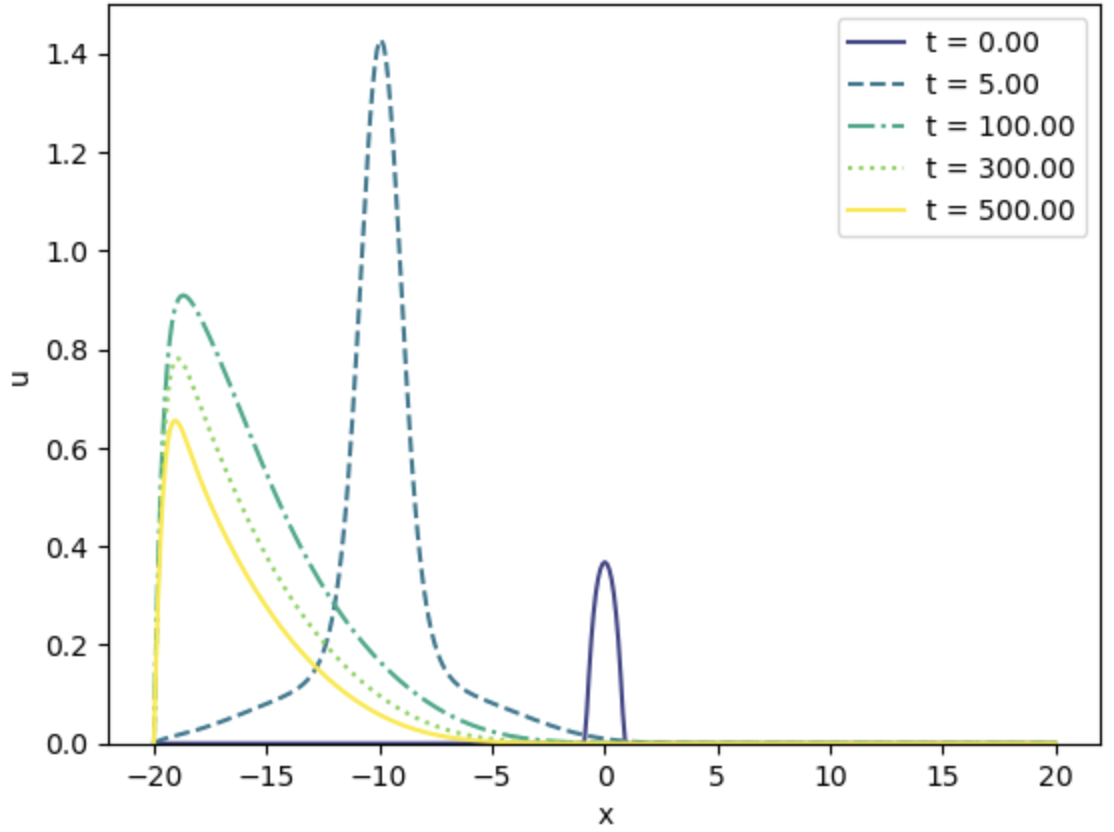}
       \caption{$c = 1.99$}
    \end{subfigure}
    \caption{$\chi = -10, \tau=1, u(t,x)$}
    \label{chi--10-c-1.99}
\end{figure}

\begin{figure}[H]
    \centering
    \begin{subfigure}[b]{0.4\textwidth}
        \includegraphics[width=\textwidth]{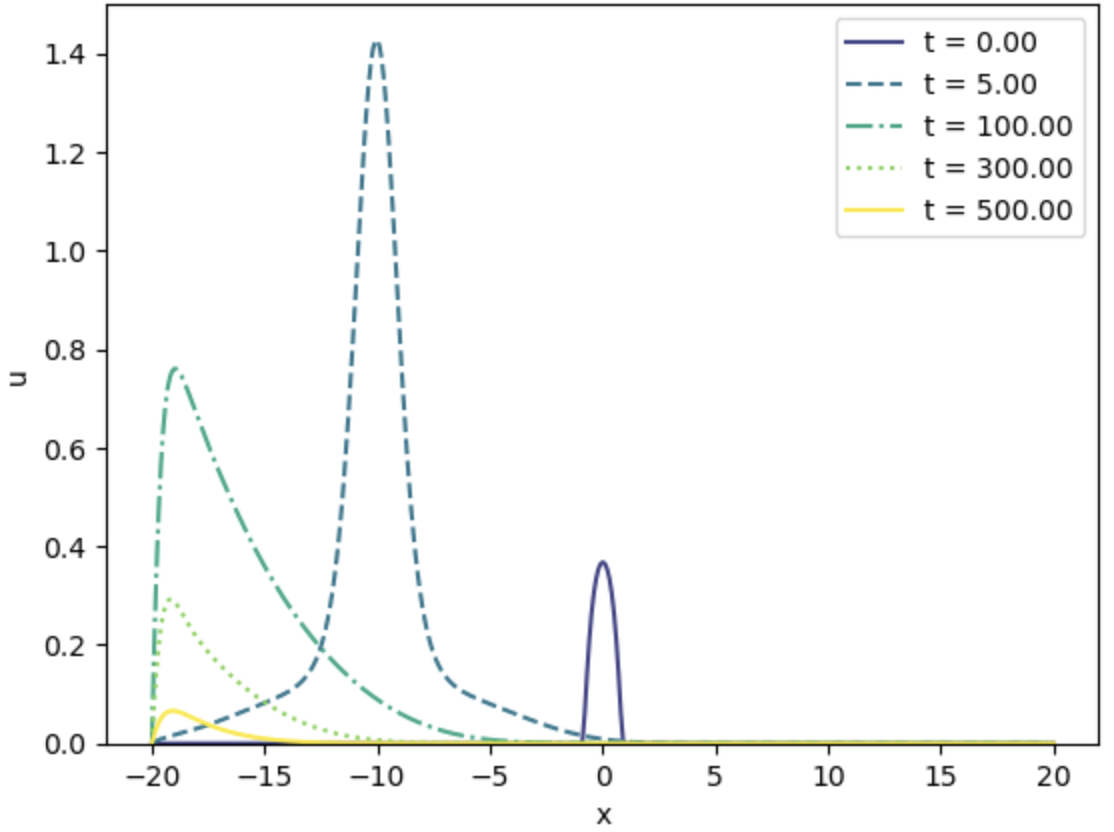}
       \caption{$ c = 2.01$}
    \end{subfigure}
    \hfill
   \begin{subfigure}[b]{0.4\textwidth}
        \includegraphics[width=\textwidth]{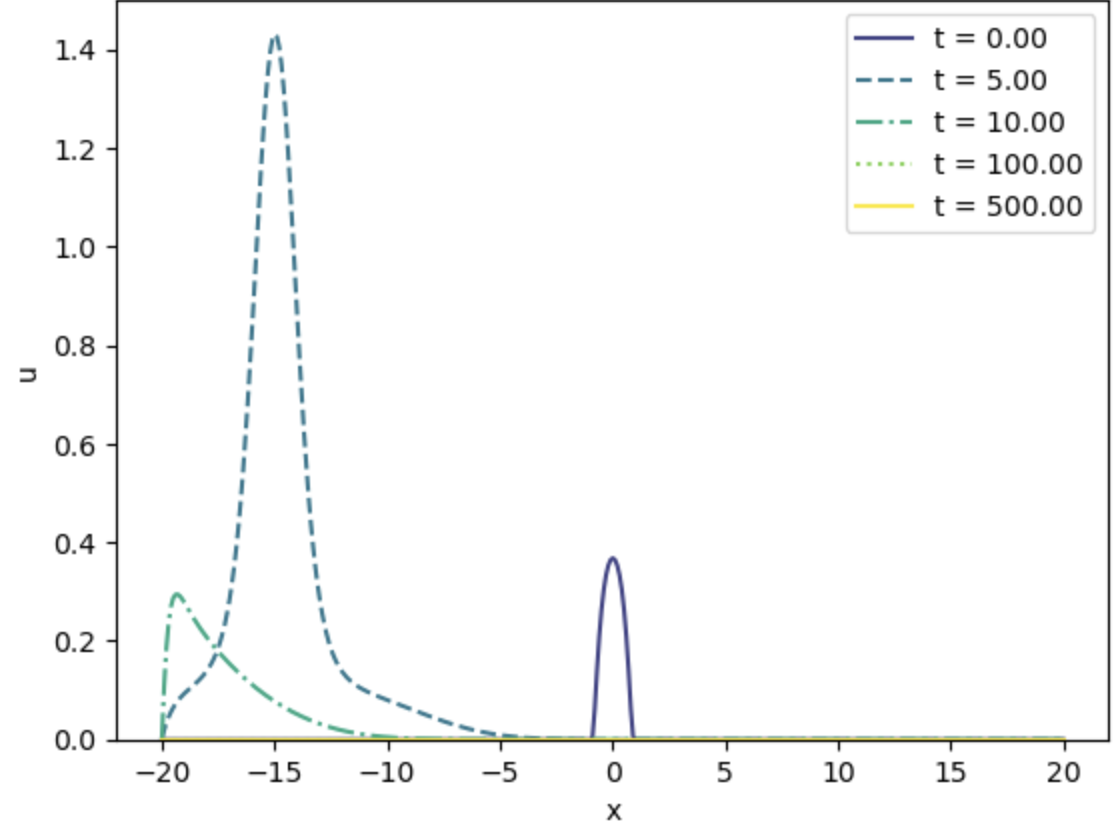}
       \caption{$ c = 3$}
    \end{subfigure}
    \caption{$\chi = -10, \tau=1,  u(t,x)$}
    \label{chi--10-c-3}
\end{figure}

\subsection{Numerical Simulations for different $\tau$}\label{simulation-4}
We carry out some numerical experiments, to see the effect of $\tau$ on the spreading speed. We choose $\tau = 0.5,1, 4$, and $c= 1, 2.01, 3$. The results of the 2D simulations of the $u$-component  are shown in subsections 6.4.1--6.4.3 for different time steps. In each of the images, (a) is the result for $\tau = 0.5,$ (b) is the result for $\tau = 1,$ and $(c)$ is for $\tau = 4.$ All the images are for time $t = 0, 5, 10, 20, 500$ except for $c=2.01, \chi =1$, which is for time $0, 5, 20, 500, 1000.$

For $c = 2.01,$ we gave the result of the simulation with $\chi = 1.9, 5, 10$. While for $c = 1$ and $c=3,$ we show the result of the simulation for $\chi = 1.9, 5, 10$

\subsubsection{Results of simulation for $c=1$}
For $c=1$, the solution all stays positive for all $\tau.$ This is expected from Theorem \ref{speed-lower-bound-thm}. The results of the simulations for $\chi = 1.9, 5, 10$ are shown in Figures \ref{f10}, \ref{f11} and \ref{f12}.

\begin{figure}[H]
    \centering
    \begin{subfigure}[b]{0.3\textwidth}
        \includegraphics[width=\textwidth]{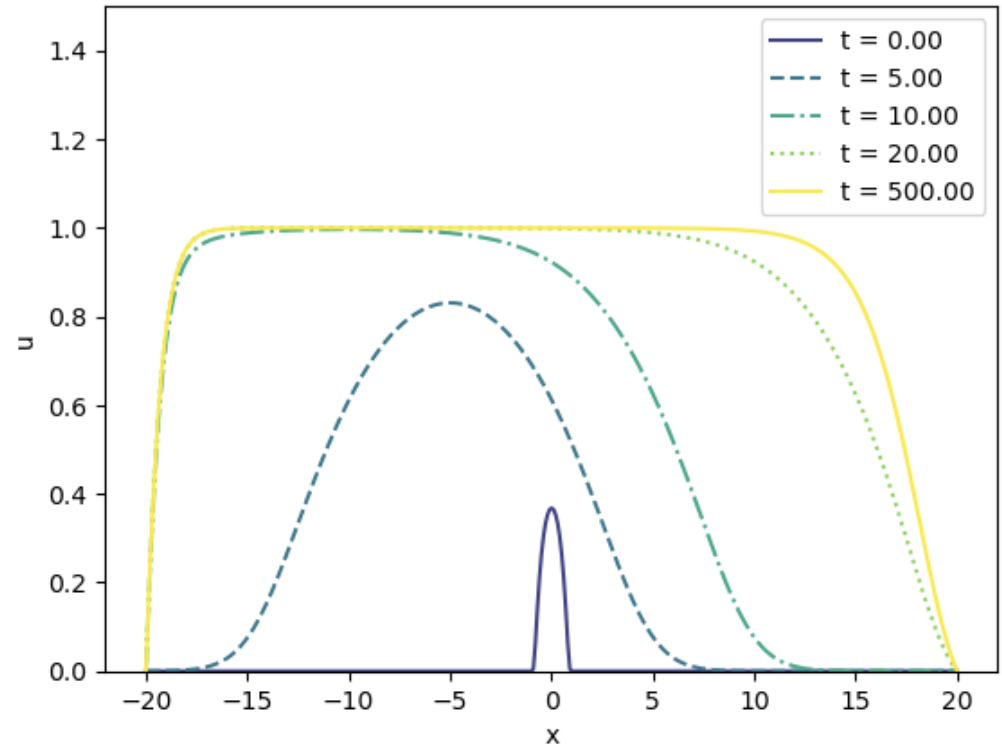}
        \caption{$\tau=0.5$}
    \end{subfigure}
    \begin{subfigure}[b]{0.3\textwidth}
        \includegraphics[width=\textwidth]{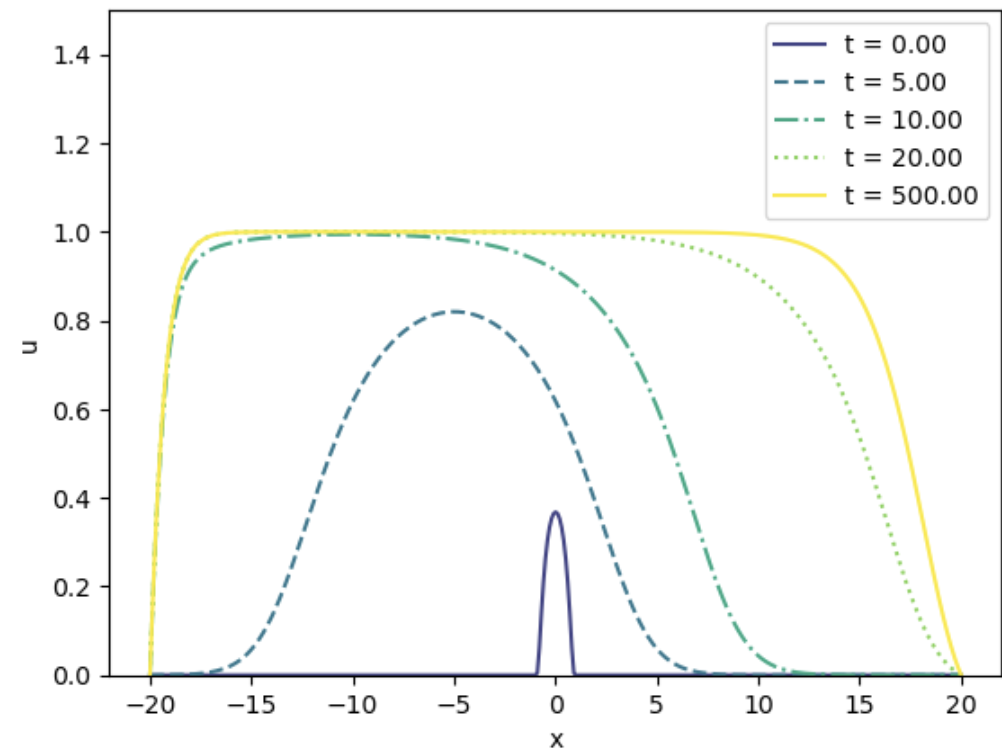}
        \caption{$\tau=1$}
    \end{subfigure}
    \begin{subfigure}[b]{0.3\textwidth}
        \includegraphics[width=\textwidth]{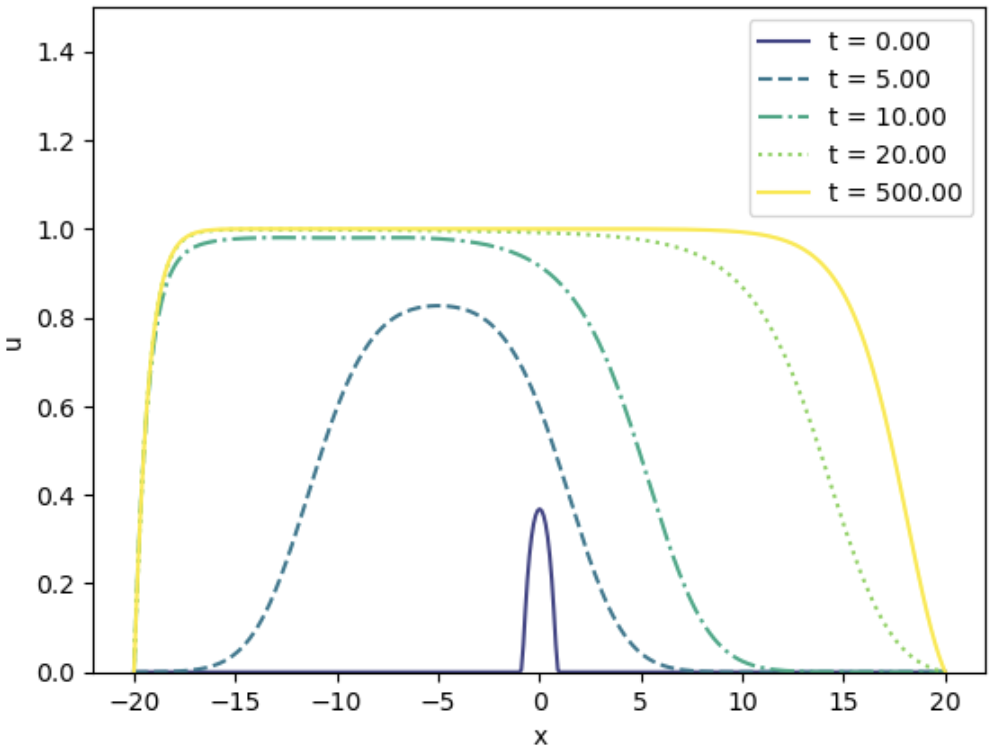}
        \caption{$\tau=4$}
    \end{subfigure}
    \caption{$\chi = 1.9,  c = 1$}
    \label{f10}
\end{figure}

\begin{figure}[H]
    \centering
    \begin{subfigure}[b]{0.3\textwidth}
        \includegraphics[width=\textwidth]{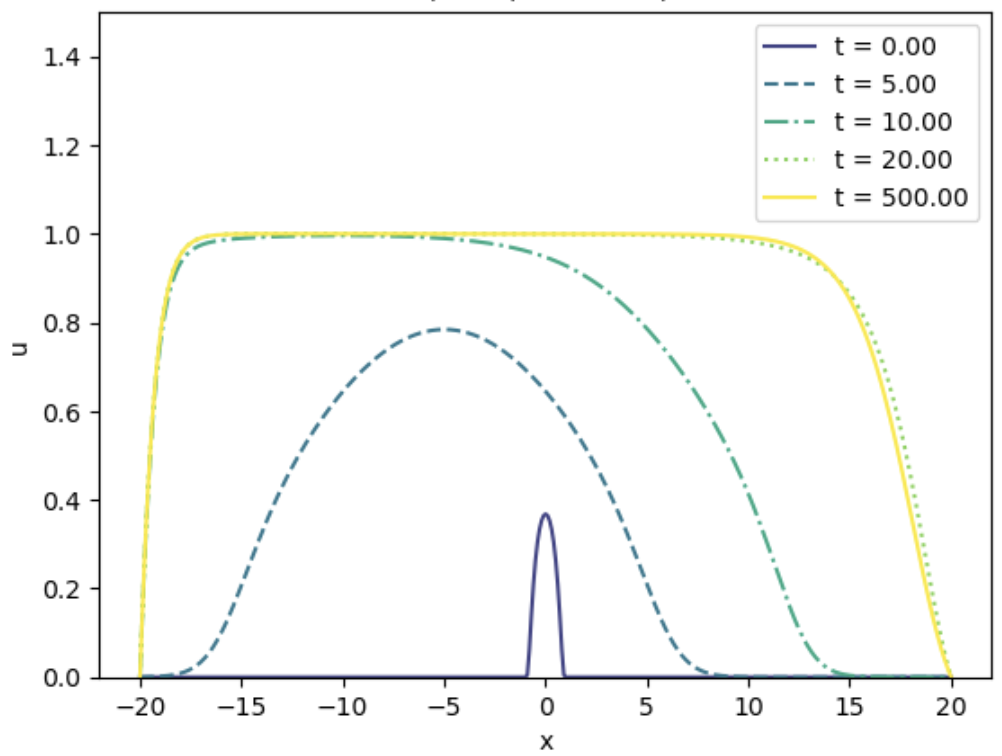}
        \caption{$\tau=0.5$}
    \end{subfigure}
    \begin{subfigure}[b]{0.3\textwidth}
        \includegraphics[width=\textwidth]{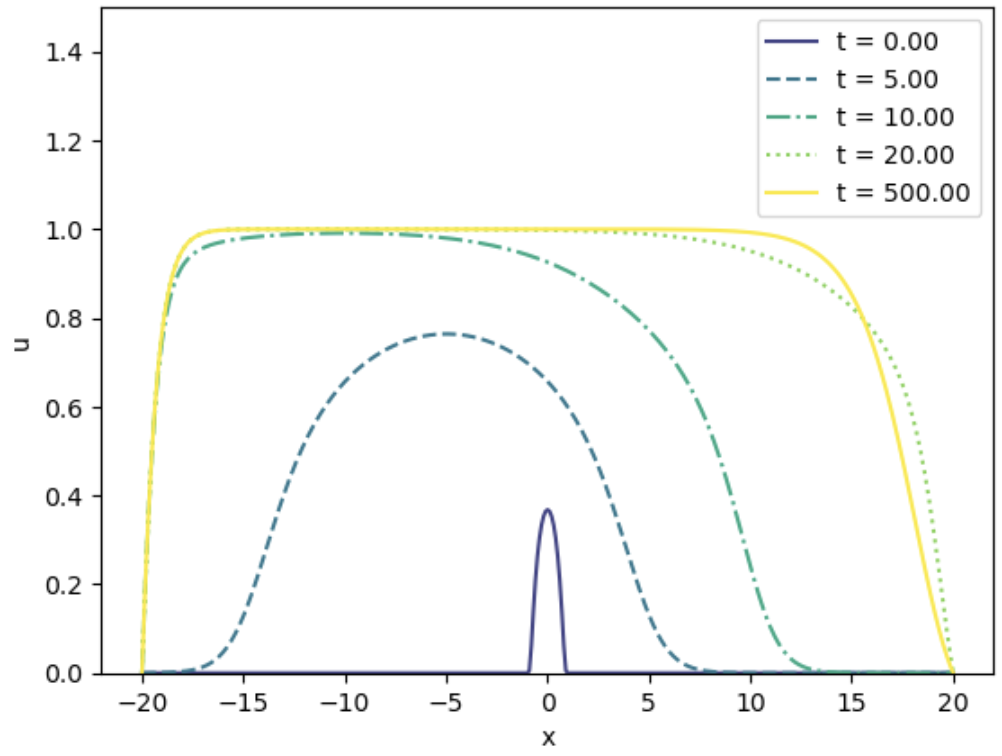}
        \caption{$\tau=1$}
    \end{subfigure}
    \begin{subfigure}[b]{0.3\textwidth}
        \includegraphics[width=\textwidth]{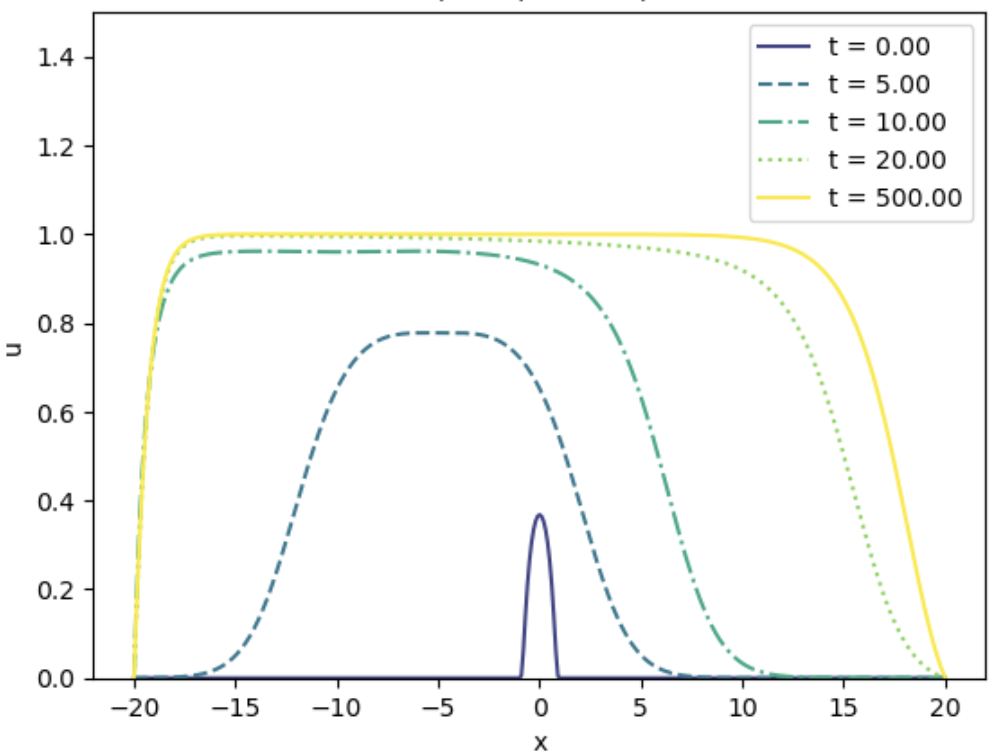}
        \caption{$\tau=4$}
    \end{subfigure}
    \caption{$\chi = 5, c = 1$}
    \lb{f11}
\end{figure}

\begin{figure}[H]
    \centering
    \begin{subfigure}[b]{0.3\textwidth}
        \includegraphics[width=\textwidth]{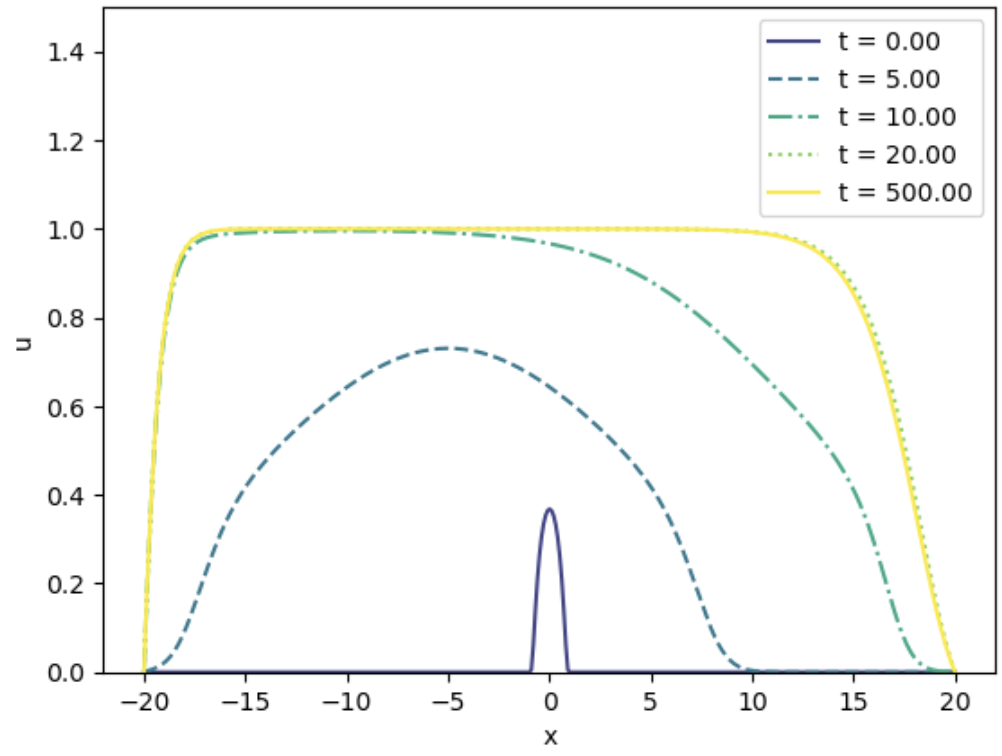}
        \caption{$\tau=0.5$}
    \end{subfigure}
    \begin{subfigure}[b]{0.3\textwidth}
        \includegraphics[width=\textwidth]{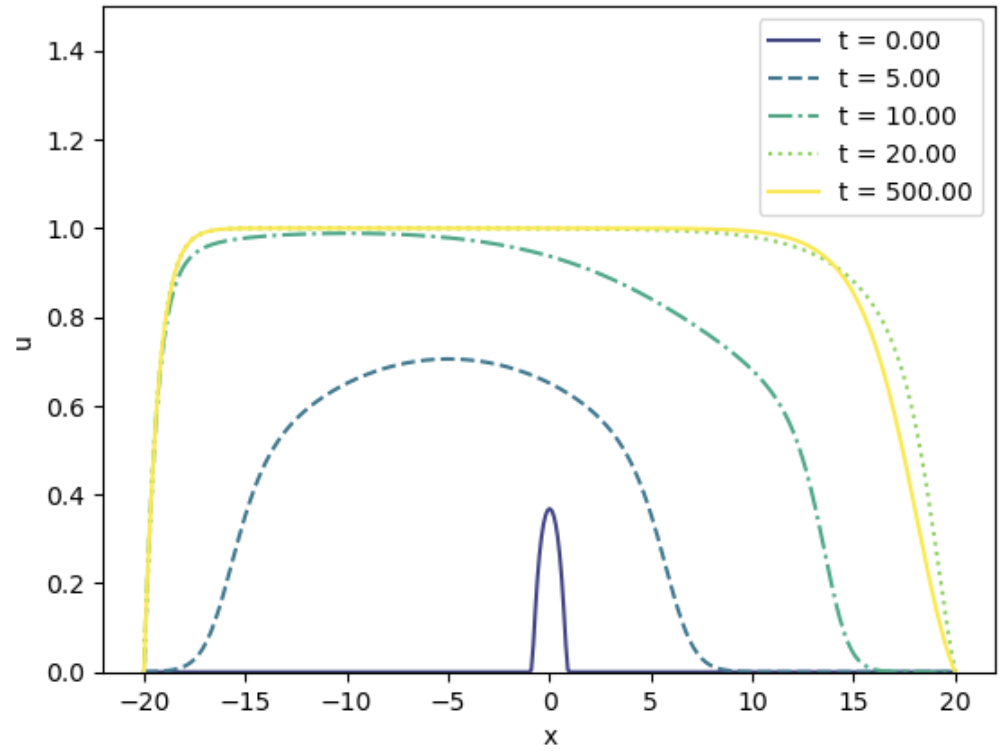}
        \caption{$\tau=1$}
    \end{subfigure}
    \begin{subfigure}[b]{0.3\textwidth}
        \includegraphics[width=\textwidth]{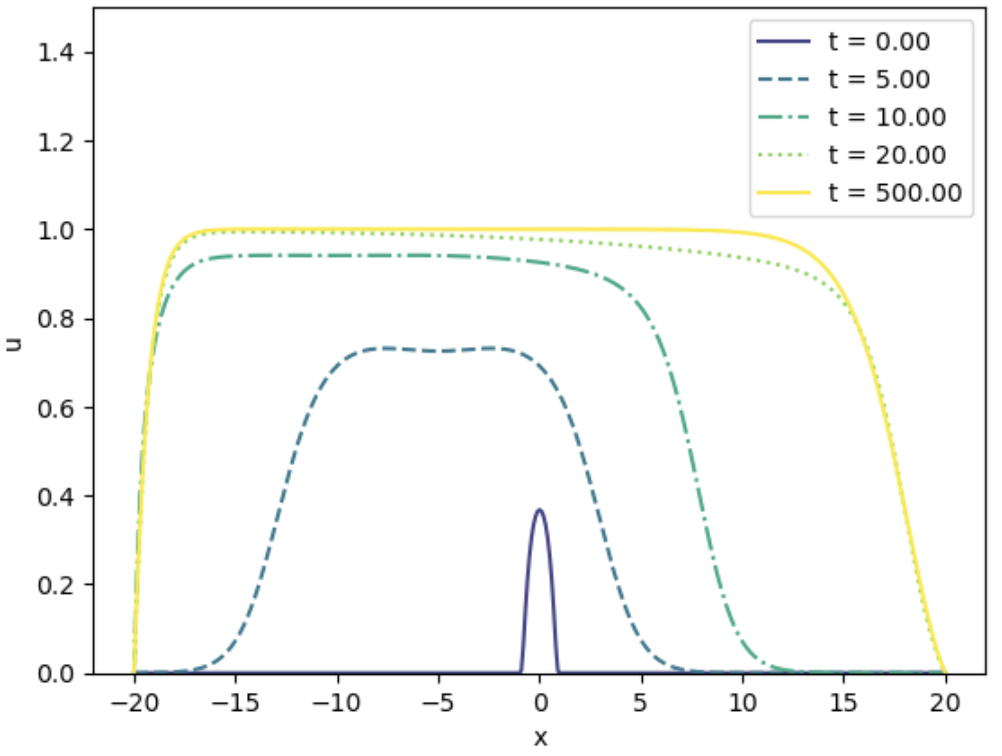}
        \caption{$\tau=4$}
    \end{subfigure}
    \caption{$\chi = 10, c = 1$}
    \lb{f12}
\end{figure}

\subsubsection{Results of simulation for $c=2.01$}
The result of the simulations shows that the diffusion rate of $v$ has an effect on the spreading speed. We see that as $\tau$ gets bigger, then $u$ goes to zero faster, see Figures \ref{f13}--\ref{f17}. We also see that for large $\chi$ there might be speed up for all the $\tau$, as seen for $\chi=5$ and $\chi =10$ in Figures \ref{c2.01chi5} and \ref{c2.01chi10}.

\begin{figure}[H]
    \centering
    \begin{subfigure}[b]{0.3\textwidth}
        \includegraphics[width=\textwidth]{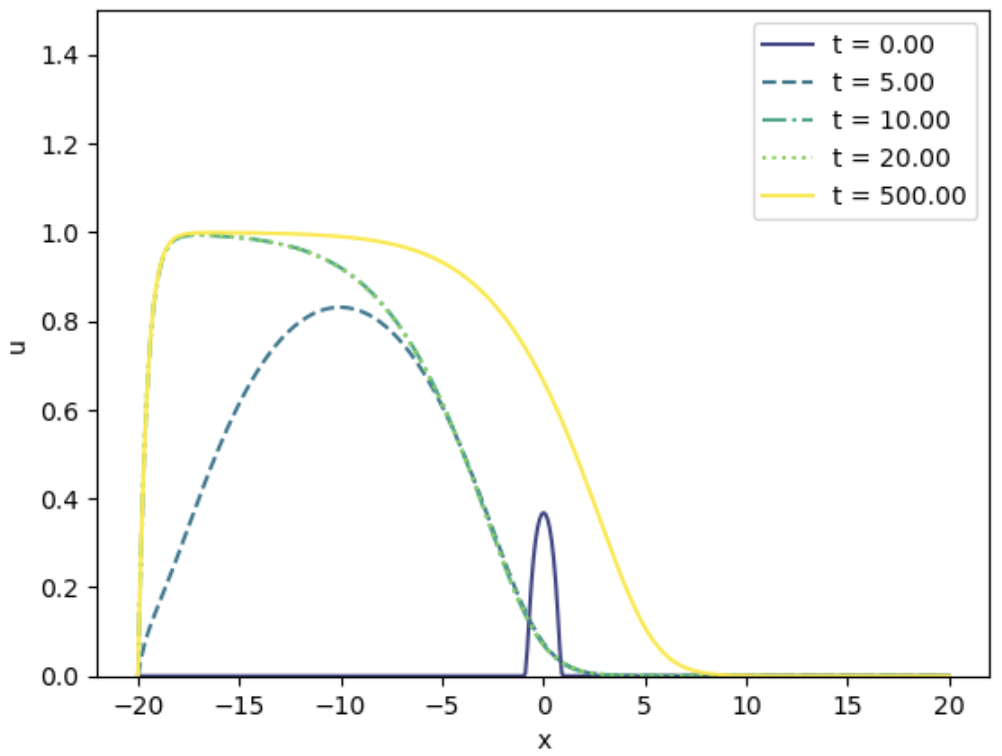}
        \caption{$\tau=0.5$}
    \end{subfigure}
    \begin{subfigure}[b]{0.3\textwidth}
        \includegraphics[width=\textwidth]{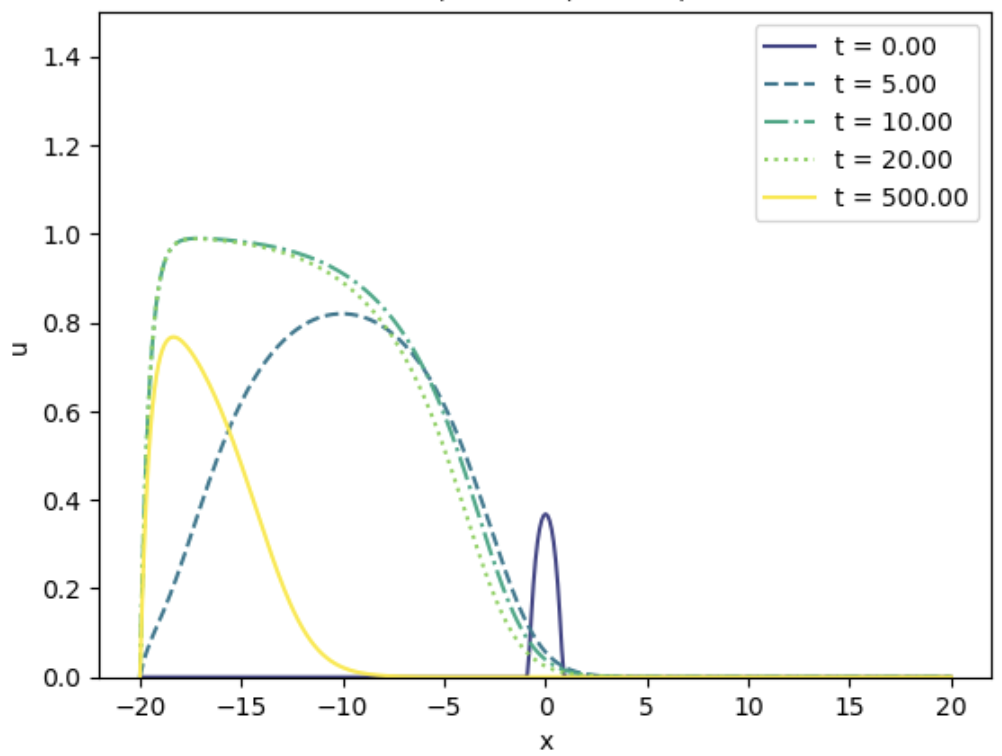}
        \caption{$\tau=1$}
    \end{subfigure}
    \begin{subfigure}[b]{0.3\textwidth}
        \includegraphics[width=\textwidth]{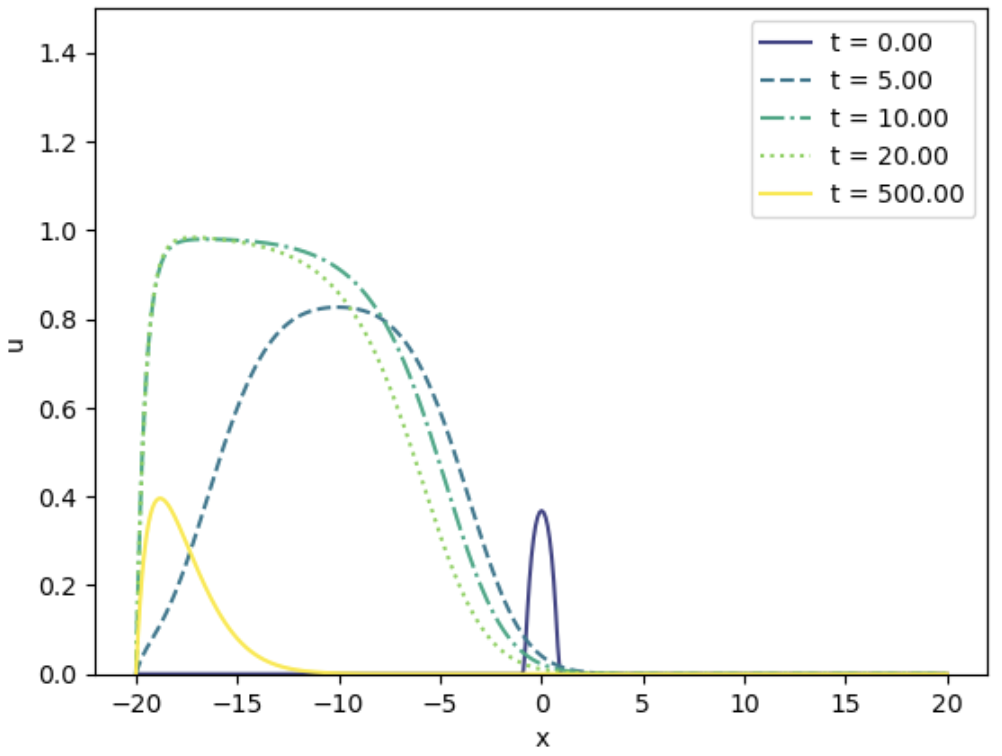}
        \caption{$\tau=4$}
    \end{subfigure}
    \caption{$\chi = 1.9, c = 2.01$}
    \lb{f13}
\end{figure}


\begin{figure}[H]
    \centering
    \begin{subfigure}[b]{0.3\textwidth}
        \includegraphics[width=\textwidth]{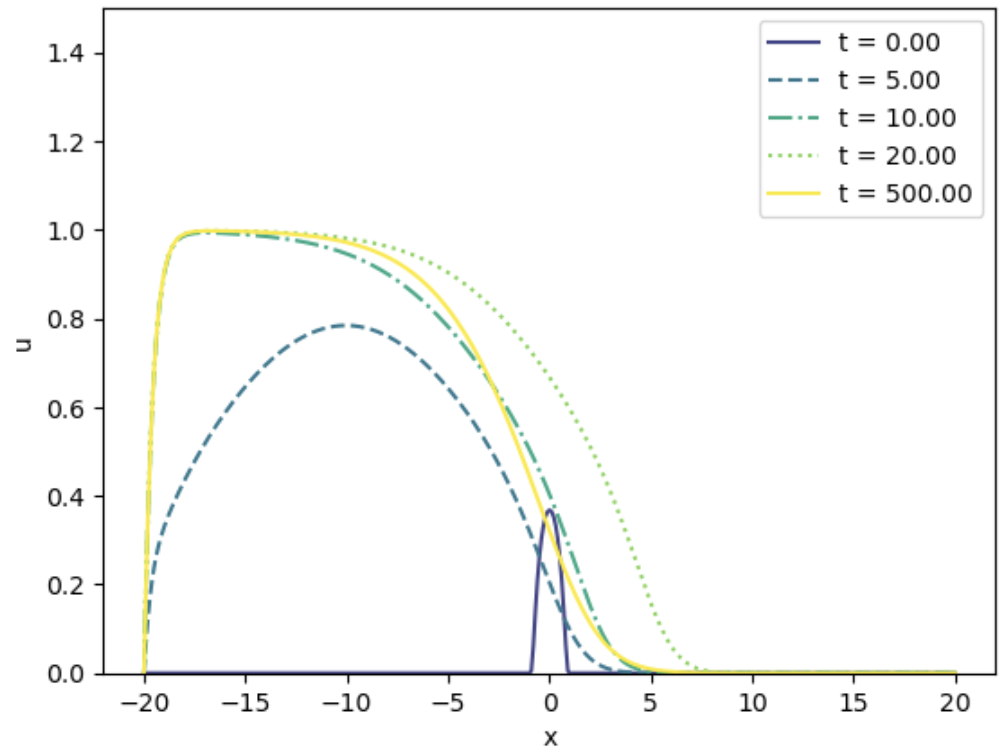}
        \caption{$\tau=0.5$}
    \end{subfigure}
    \begin{subfigure}[b]{0.3\textwidth}
        \includegraphics[width=\textwidth]{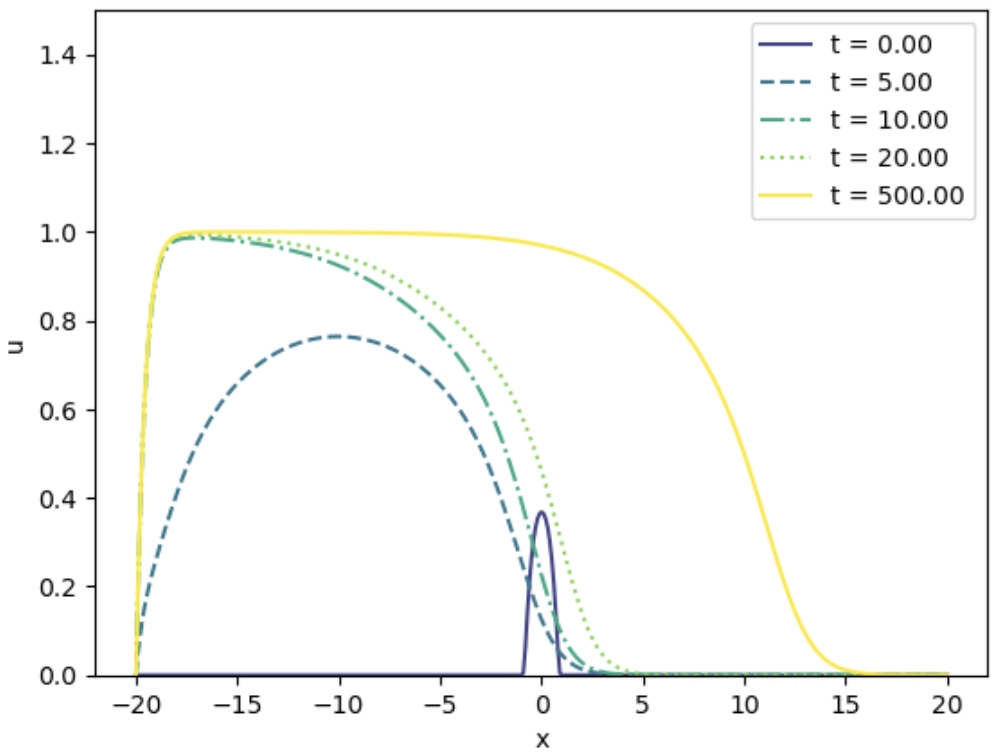}
        \caption{$\tau=1$}
    \end{subfigure}
    \begin{subfigure}[b]{0.3\textwidth}
        \includegraphics[width=\textwidth]{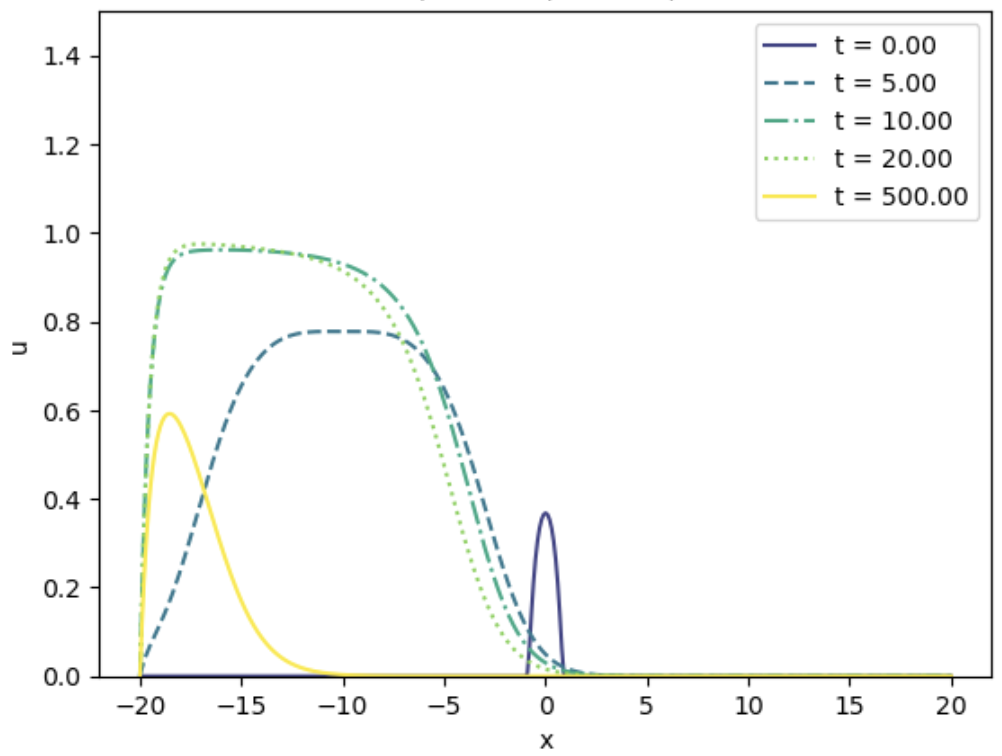}
        \caption{$\tau=4$}
    \end{subfigure}
    \caption{$\chi = 5, c = 2.01$}
    \label{c2.01chi5}
\end{figure}

\begin{figure}[H]
    \centering
    \begin{subfigure}[b]{0.3\textwidth}
        \includegraphics[width=\textwidth]{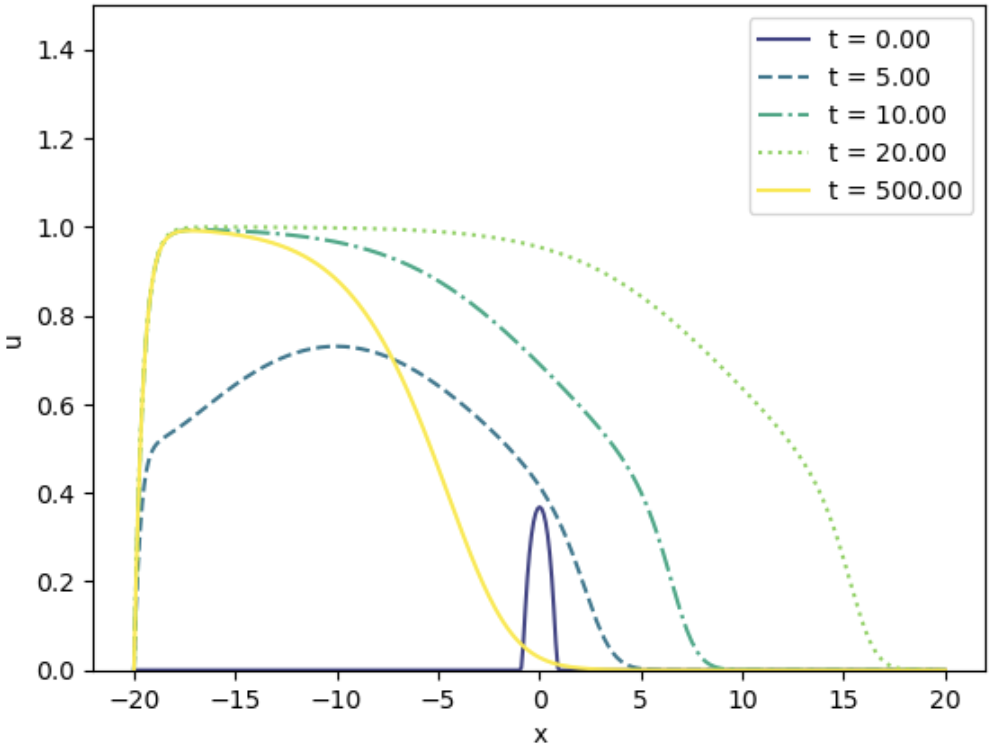}
        \caption{$\tau=0.5$}
    \end{subfigure}
    \begin{subfigure}[b]{0.3\textwidth}
        \includegraphics[width=\textwidth]{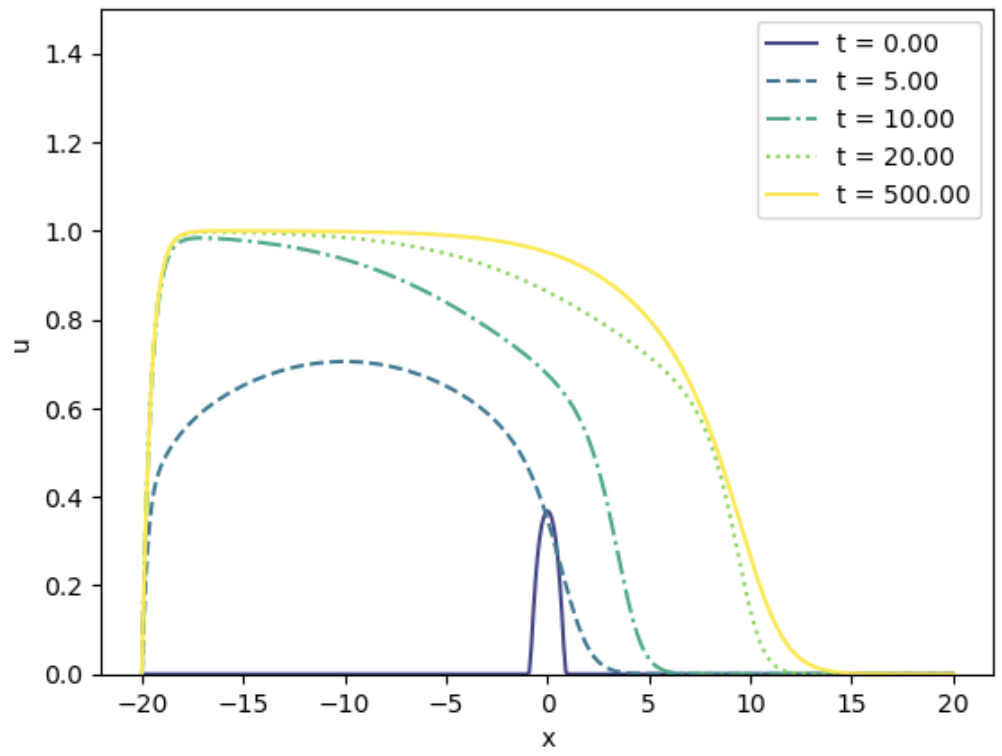}
        \caption{$\tau=1$}
    \end{subfigure}
    \begin{subfigure}[b]{0.3\textwidth}
        \includegraphics[width=\textwidth]{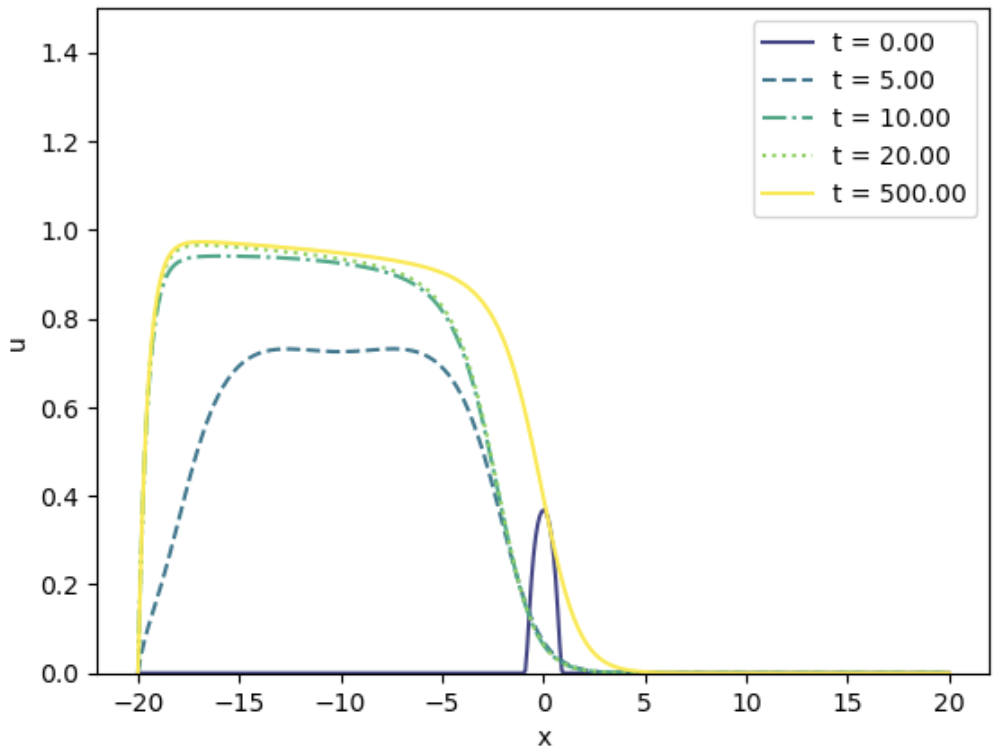}
        \caption{$\tau=4$}
    \end{subfigure}
    \caption{$\chi = 10, c = 2.01$}
    \label{c2.01chi10}
    \lb{f17}
\end{figure}
\subsubsection{Results of simulation for $c=3$}
We carry out the simulation for $c=3$ and $\tau = 0.5, 1, 4$. The results show that the solution goes to zero for large time and for all the $\tau$, see Figures \ref{f18}--\ref{f20}.
\begin{figure}[H]
    \centering
    \begin{subfigure}[b]{0.3\textwidth}
        \includegraphics[width=\textwidth]{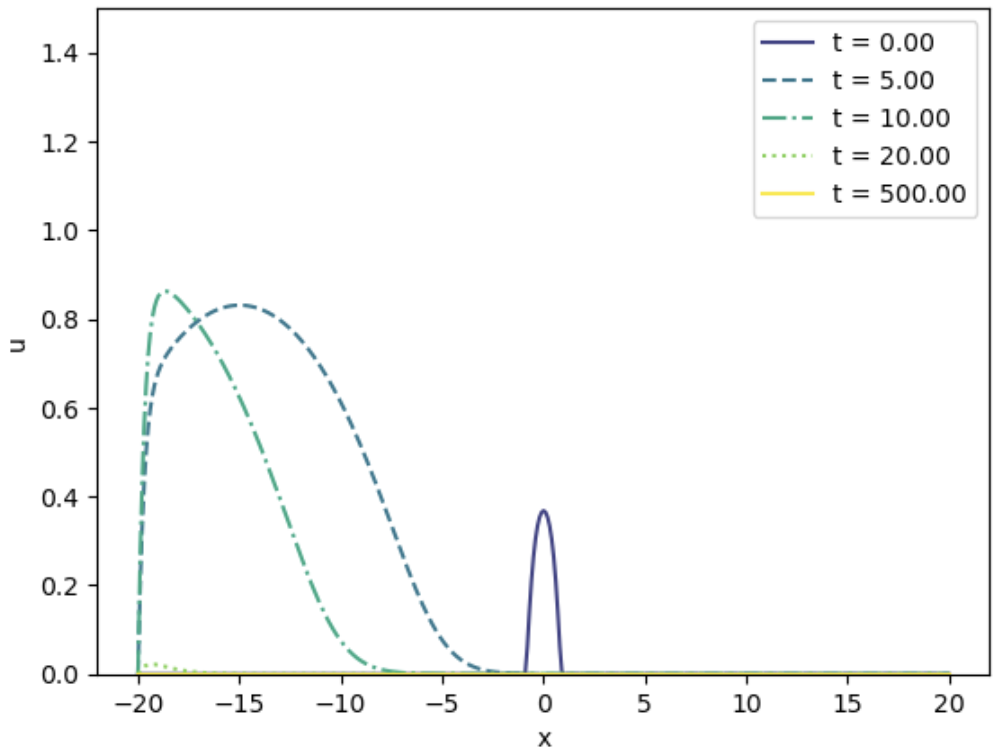}
        \caption{$\tau=0.5$}
    \end{subfigure}
    \begin{subfigure}[b]{0.3\textwidth}
        \includegraphics[width=\textwidth]{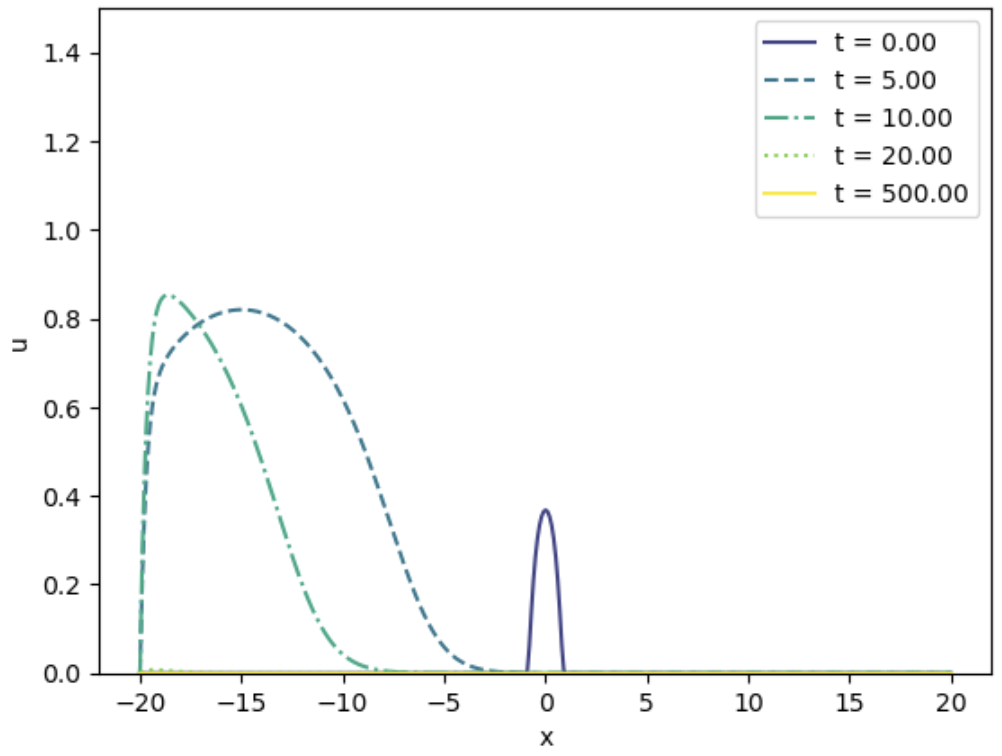}
        \caption{$\tau=1$}
    \end{subfigure}
    \begin{subfigure}[b]{0.3\textwidth}
        \includegraphics[width=\textwidth]{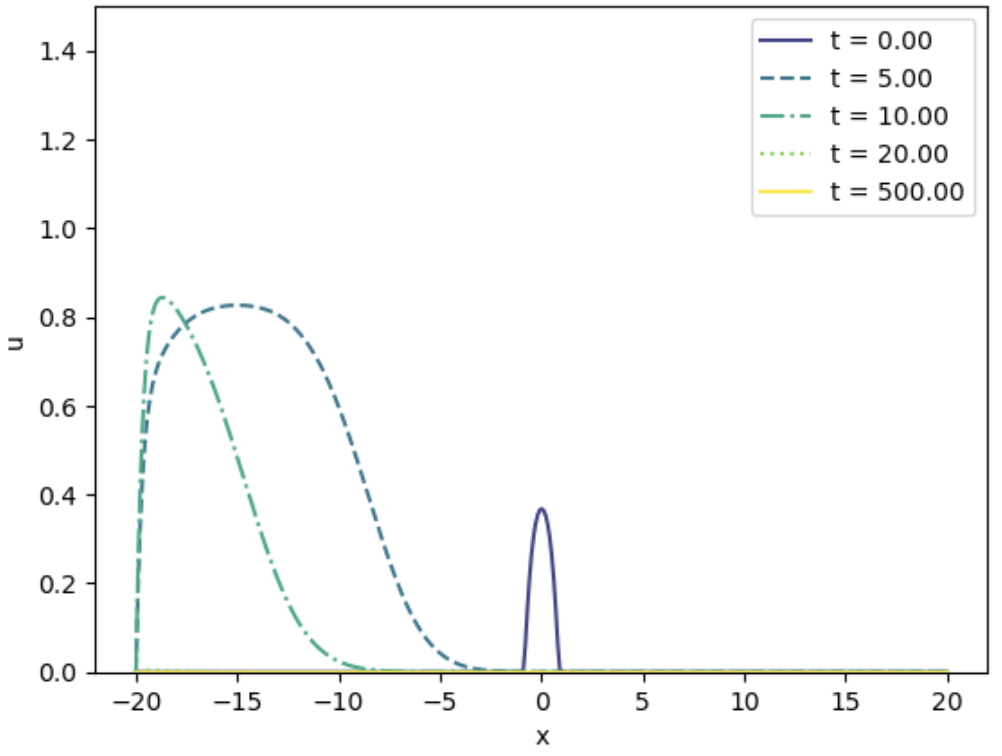}
        \caption{$\tau=4$}
    \end{subfigure}
    \caption{$\chi = 1.9, c = 3$}
    \lb{f18}
\end{figure}

\begin{figure}[H]
    \centering
    \begin{subfigure}[b]{0.3\textwidth}
        \includegraphics[width=\textwidth]{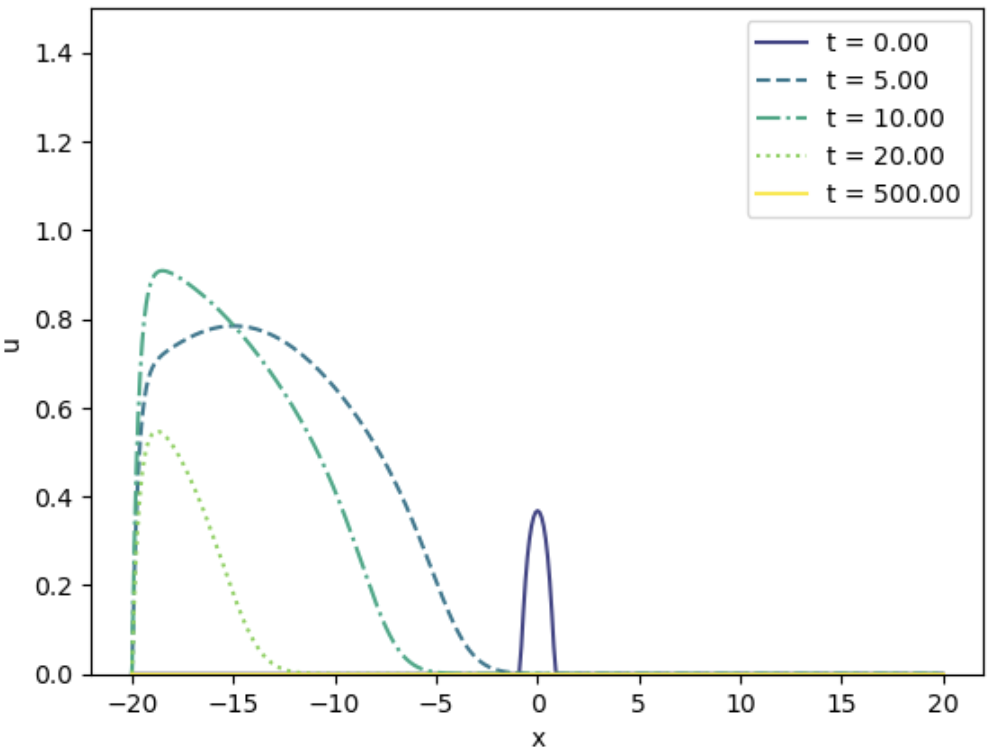}
        \caption{$\tau=0.5$}
    \end{subfigure}
    \begin{subfigure}[b]{0.3\textwidth}
        \includegraphics[width=\textwidth]{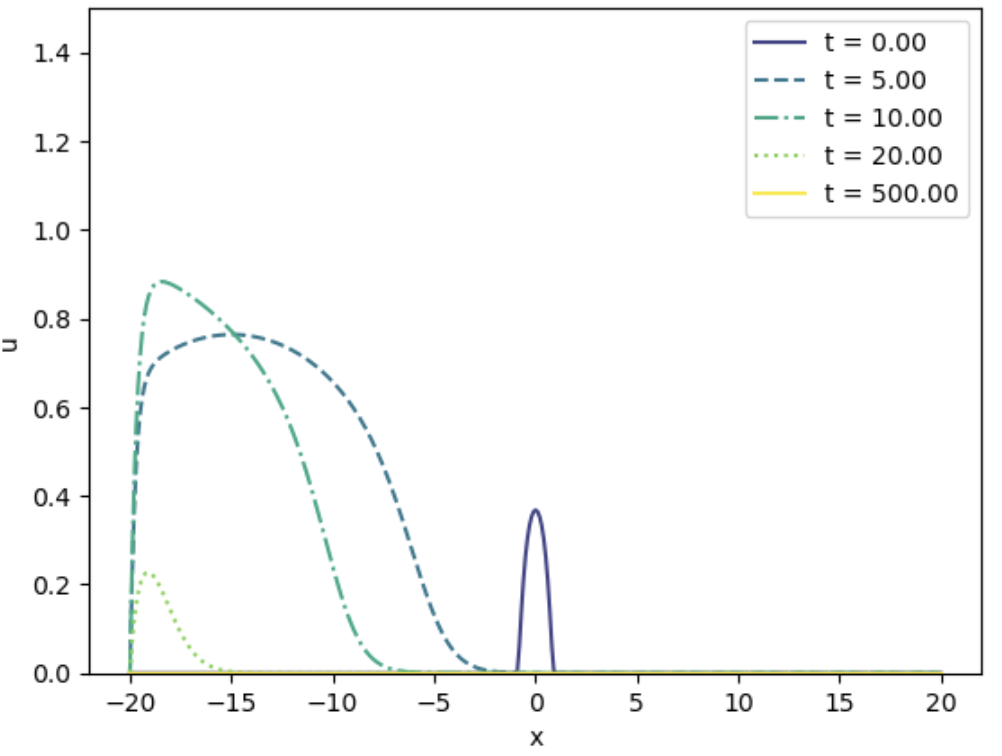}
        \caption{$\tau=1$}
    \end{subfigure}
    \begin{subfigure}[b]{0.3\textwidth}
        \includegraphics[width=\textwidth]{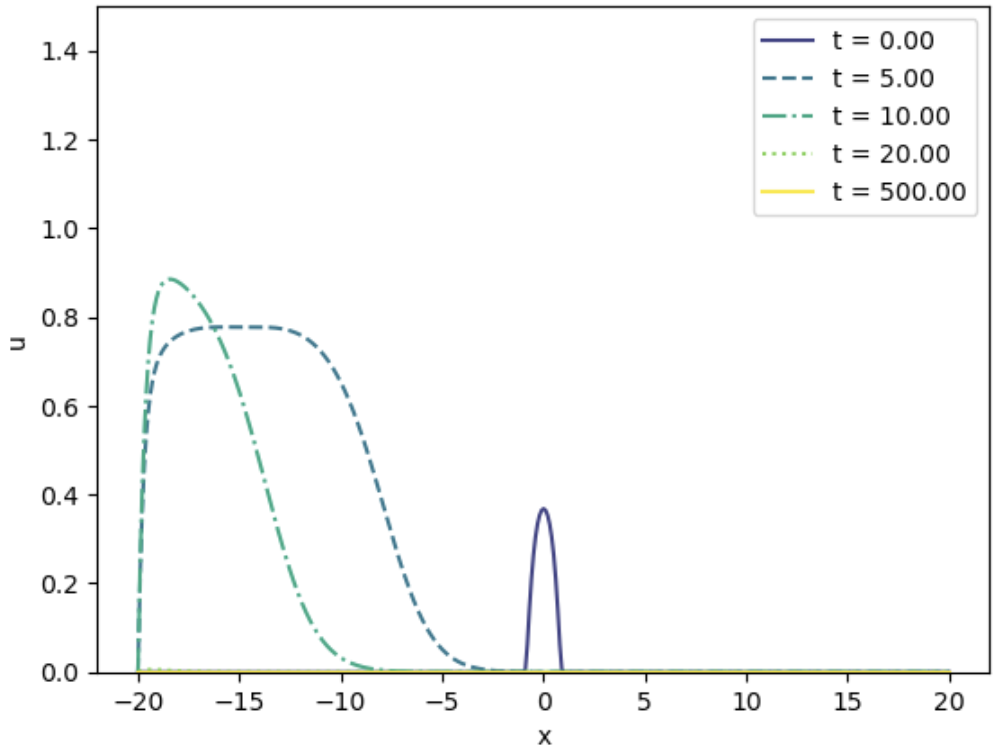}
        \caption{$\tau=4$}
    \end{subfigure}
    \caption{$\chi = 5, c = 3$}
\end{figure}

\begin{figure}[H]
    \centering
    \begin{subfigure}[b]{0.3\textwidth}
        \includegraphics[width=\textwidth]{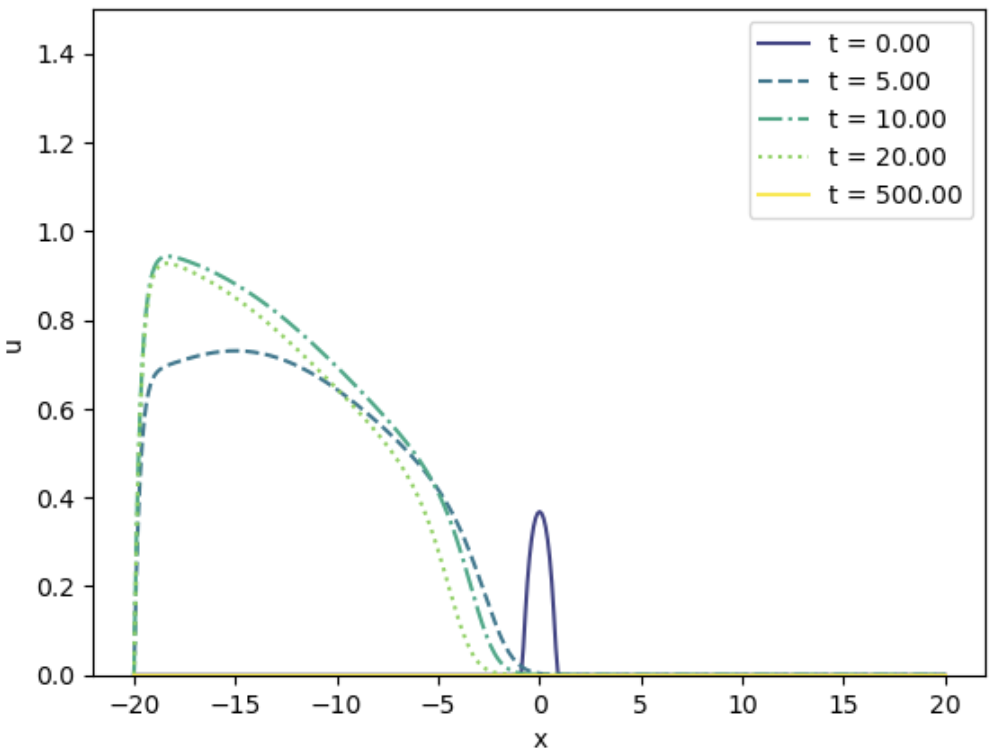}
        \caption{$\tau=0.5$}
    \end{subfigure}
    \begin{subfigure}[b]{0.3\textwidth}
        \includegraphics[width=\textwidth]{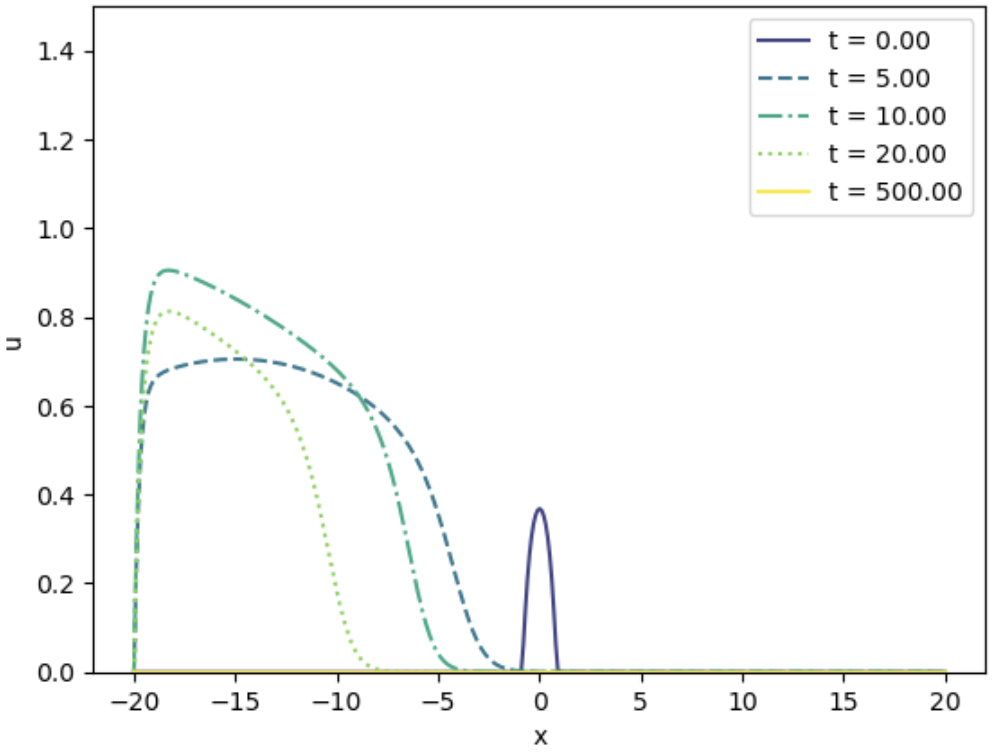}
        \caption{$\tau=1$}
    \end{subfigure}
    \begin{subfigure}[b]{0.3\textwidth}
        \includegraphics[width=\textwidth]{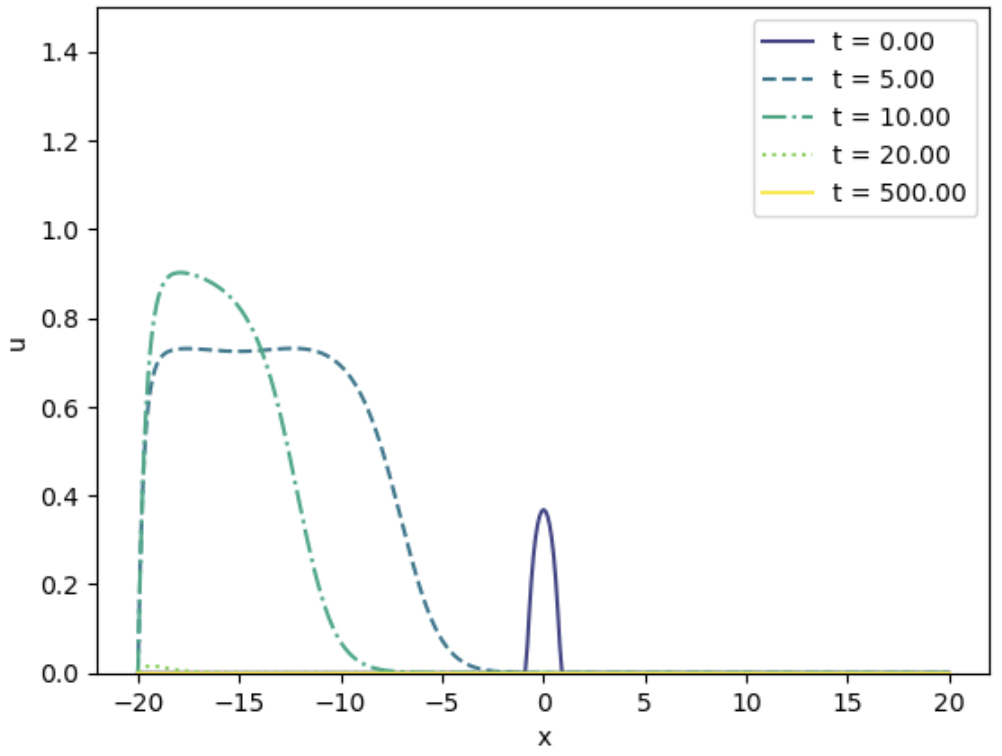}
        \caption{$\tau=4$}
    \end{subfigure}
    \caption{$\chi = 10, c = 3$}
    \lb{f20}
\end{figure}
\appendix

\section{Appendix: Proofs of Lemmas \ref{derivative-boundedness-lm} and \ref{asymptotic-lm1}}

In this appendix, we give  proofs of Lemmas \ref{derivative-boundedness-lm} and \ref{asymptotic-lm1}.

\begin{proof}[Proof of Lemma \ref{derivative-boundedness-lm}]
First of all, for simplicity in notation, we put
$$
u(t)=u(t,\cdot;u_0,v_0),\quad v(t)=v(t,\cdot;u_0,v_0).
$$
Let $T(t)$ be the analytic semigroup generated by $-A$ on $X=C_{\rm unif}^b(\R^N)$,
where $A=I-\Delta$.
Then we have
\begin{equation}
\label{v-formula-eq}
v(t)=T(t)v_0+\int_0^t T(t-s) (v(s)-u(s)v(s))ds
\end{equation}
and
\begin{equation}
\label{u-formula-eq}
u(t)=\underbrace{T(t)u_0}_{I_0(t)}-\chi \underbrace{\int_0^t T(t-s)\nabla \cdot (u(s)\nabla v(s))ds}_{I_1(t)}+\underbrace{\int_0^t T(t-s) u(s)\big(1+a-bu(s)\big)ds}_{I_2(t)}
\end{equation}
for all $t>0$. 
For $\beta\in [0,\infty)$, let $X^\beta = {\rm Dom}(A^{\beta})$ be the fractional power spaces associated with $A $  on $X = C^b_{\rm unif}(\R^N)$. We denote $\|\cdot\|_{\beta} = \|\cdot\|_{X^\beta}$.  Then  for $0<\delta<1$ and $\beta\ge 0$, there is constant $C_{\delta,\beta}$, 
 \begin{equation}
    \label{L-infinity-estimate-1}
     \|A^\beta T(t)u \|_{C_{\rm unif}^b(\mathbb{R}^{N})}\leq  C_{\delta, \beta}  t^{-\beta }e^{-(1-\delta)t}
    \|u\|_{C_{\rm unif}^b (\mathbb{R}^{N})}
     \end{equation}
     for every $u\in C_{\rm unif}^b(\R^N)$ and  $t>0$ (see \cite[Theorem 1.4.3]{Hen}).
  For every $t>0$, the operator $T(t)\nabla \cdot$ has a unique bounded extension on
    $(C_{\rm unif}^b(\R^N))^N$ satisfying
    \begin{equation}
    \label{L-infinity-estimate-2}
    \|T(t)\nabla \cdot p\|_{C_{\rm unif}^b(\R^{N})}\leq \frac{N}{\sqrt\pi}t^{-\frac{1}{2}}e^{-t}\|p\|_{C_{\rm unif}^b (\R^{N})} \ \ \ \forall \ p\in\big( C_{\rm unif}^b (\R^{N})\big)^N, \,\, \ \forall \ t>0
    \end{equation}
(see \cite[Lemma 3.2]{SaSh}).

Next, suppose that $u$ is bounded by a constant $M>0$. Since $\|v\|_\infty\leq \|v_0\|_\infty$,
by \eqref{v-formula-eq},  there is $C>0$ such that
\begin{align*}
    \|\nabla v(t)\|_{\infty} &\leq  \|\nabla T(t)v_0\|_{\infty} +  \int_0^t \|\nabla T(t-s)(1-u(s))v(s)\|_{\infty} ds \nonumber\\
    &\leq  e^{-t}\|\nabla v_0\|_{\infty} + C\int_0^t (t-s)^{-\frac{1}{2}}e^{-(t-s)}\|v(s)(1-u(s))\|_{\infty}ds \nonumber\\
    &\leq  \|\nabla v_0\|_{\infty}  +  C (1+M)\|v_0\|_{\infty}:= M_1,
\end{align*}
where $C$ depends only on the dimension.
By \eqref{u-formula-eq},  \eqref{L-infinity-estimate-1}, and \eqref{L-infinity-estimate-2},   for $\beta \in (0,\frac12)$ and any $\delta\in (0,1)$, there is $C=C(\beta,\delta)>0$ such that
\begin{align*}
\|u(t)\|_{\beta}  &\le \|T(t)u_0\|_{\beta} +|\chi| \int_0^t \left\|T(t-s)\nabla \cdot (u(s)\nabla v(s))\right\|_{\beta}ds + \int_0^t\left\| T(t-s)u(s)(1+a-bu(s))\right\|_{\beta}ds\\
  &\le  C t^{-\beta}\|u_{0}\|_{\infty} +| \chi| C\int_0^t \left(\frac{t-s}{2}\right)^{-\beta}e^{-(1-\delta)(t-s)}\left\|T\left(\frac{t-s}{2}\right)\nabla \cdot (u(s)\nabla v(s))\right\|_{\infty}ds \\
  & \qquad +C\int_0^t (t-s)^{-\beta}e^{-(1-\delta)(t-s)} \|u(s)(1+a-bu(s))\|_{\infty}ds\\
  &\le  C t^{-\beta}\|u_{0}\|_{\infty} + |\chi| C\int_0^t \left(\frac{t-s}{2}\right)^{-\beta-\frac{1}{2}}e^{-(1-\delta)(t-s)}\| (u(s)\nabla v(s))ds\|_{\infty} \\
  & \qquad +C\int_0^t (t-s)^{-\beta}e^{-(1-\delta)(t-s)} \|u(s)(1+a-bu(s))\|_{\infty}ds\\
&\le C_{ t_0} \|u_{0}\|_{\infty}+|\chi| C MM_1 + C M(1+a+bM),
\end{align*}
where the last inequality holds for $t\geq t_0>0$. So for these $t$, $u(t)$ is uniformly bounded in $X^{\beta}$. Then, the uniform H\"{o}lder continuity of $u(t,\cdot)$ for each $t\geq t_0$ follows from the following continuous embeddings (see  \cite[Exercise 9]{Hen})
\begin{eqnarray}\label{Fractional power Imbedding-0}
X^{\beta} \subset C_{\rm unif}^{\lfloor \alpha \rfloor, \alpha -\lfloor \alpha \rfloor,b }(\R^N) \quad \text{if} \quad 0\leq \alpha < 2\beta.
\end{eqnarray}

Now, for any fixed $0<\beta<\frac{1}{2}$,  choose $\tilde\beta\in (\beta,\frac{1}{2})$. Recall $I_i$ with $i=0,1,2$ from \eqref{u-formula-eq}.
Then, by \cite[Theorem 1.4.3]{Hen},  \eqref{L-infinity-estimate-1} and \eqref{L-infinity-estimate-2},    there exist $C=C(\beta,\delta)$ such that for any $t\ge t_0$ and $h>0$, we have
\begin{align*}
\|I_0(t+h)-I_0(t)\|_\beta=\|\big(T(h)-I\big) T(t)u_0\|_\beta\le C h^{\tilde \beta-\beta}\|T(t) u_0\|_{\tilde \beta}\le C h^{\tilde\beta-\beta} t^{-\tilde\beta}\|u_0\|_\infty, 
\end{align*}
\begin{align*}
&\quad \,   \|I_1(t+h)-I_1(t)\|_\beta\nonumber\\
& \le  \int_0^t \|\big(T(h)-I\big) T(t-s)\nabla \cdot (u(s)\nabla v(s))\|_\beta ds 
+ \int_t^{t+h} \|T(t+h-s)\nabla \cdot (u(s)\nabla v(s))\|_\beta ds\nonumber\\
&\le C h^{\tilde \beta-\beta}\int_0^t \|T(t-s)\nabla \cdot (u(s)\nabla v(s))\|_{\tilde \beta} ds+ C\int_t ^{t+h} (t+h-s)^{-\beta-\frac{1}{2}}\|u(s)\|_\infty\|\nabla v(s)\|_\infty ds\nonumber\\
&\le C h^{\tilde\beta-\beta}\int_0^ t (t-s)^{-\tilde \beta-\frac{1}{2}} e^{-(1-\delta)(t-s)}\|u(s)\|_\infty\|\nabla v(s)\|_\infty ds+  C   M M_1 h^{\frac{1}{2}-\beta}\nonumber\\
&\le C  M M_1  h^{\tilde \beta-\beta}+ C  M M_1 h^{\frac{1}{2}-\beta},
\end{align*}
and
\begin{align*}
\|I_2(t+h)-I_2(t)\|_\beta & \le Ch^{\beta}\int_{0}^{t}e^{-(1-\delta)(t-s)}(t-s)^{-\beta}\|u(s)\|_{\infty}\left[1+ a + \|u(s)\|_{\infty}\right]ds \nonumber\\
&  \quad+C\int_{t}^{t+h}(t+h-s)^{-\beta}\|u(s)\|_{\infty}\left[1+a+ \|u(s)\|_{\infty}\right]ds\nonumber\\
&\le  C M(1+a+M)(h^{\beta}+h^{1-\beta}).
\end{align*}
Hence, the mapping  $[t_0, \infty) \ni t \mapsto u(t) \in X^{\beta}$ is  uniformly H\"{o}lder continuous.
This, together with \eqref{Fractional power Imbedding-0}, implies that $u$ is H\"older continous in both space and time. Furthermore, the H\"older norm is uniformly finite for all $t\geq t_0$. 

Now, since $u$ is H\"{o}lder continuous with uniform bound for $t\geq t_0$, we can apply the classical parabolic estimates (see Theorems 5 and 10 in Chapter 3 of \cite{Friedman}) and use the equation of $v$ to get that \eqref{bound-on-holder-norm-eq} holds
for  $w= v(t,x )$, $\partial_{x_i}v(t,x )$,
$\partial_t v(t,x)$ or $\partial^2_{x_ix_j}v(t,x )$ for $1\le i,j\le N$.
 Then applying the classical parabolic estimates  and using the equation of $u$, we have  that \eqref{bound-on-holder-norm-eq} holds for $w=u(t,x )$, $\partial_{x_i}u(t,x )$, $\partial_t u(t,x )$, $\partial^2_{x_i x_j}u(t,x )$.

Finally,   assume that $v_0$ and $u_0$ are, respectively, uniformly bounded in $C_{\rm unif}^{2+\alpha, b}$ and $X_1$. Then $\|T(t)u_0\|_{\beta}\leq C_\beta$ uniformly for all $t\geq 0$ when $\beta<\frac12$. Then the above argument yields that $u(t,x)$ is H\"older continuous uniformly for all $t\in [0,\infty)$ and $x\in\R^N$. The boundary Schauder estimates (see e.g., \cite{LaSoUr}  and \cite[Theorem 3.2]{Lihe}) then yield that \eqref{bound-on-holder-norm-eq} holds
for  $w= v(t,x )$, $\partial_{x_i}v(t,x )$,
$\partial_t v(t,x)$ or $\partial^2_{x_ix_j}v(t,x )$ for $1\le i,j\le N$.
\end{proof}

\begin{proof}[Proof of Lemma \ref{asymptotic-lm1}]
{ By   Lemma  \ref{derivative-boundedness-lm}},  we have
$$
A=A(\chi,\|v_0\|_{X_1},\|u\|_\infty): =\sup_{t>1}\left(\|\nabla v(t,\cdot)\|_{\infty}+\|\Delta v(t,\cdot)\|_{\infty}\right)<\infty.
$$
We first  prove \eqref{asymptotic-eq1}.  To this end, for given $t_0\ge 1$, let $t_1:=t_0-\frac12$ and let   $\bar u(t, x)$ be the solution of the equation
\[\begin{cases}
	\bar u_{t} =  \Delta \bar u - \chi\nabla v(t,x)\cdot \nabla \bar u, \qquad & (t,x)\in  (t_1,\infty) \times \R^N,\\
	\bar u(t_1,\cdot) = u(t_1,\cdot), & x\in \R^N.
\end{cases}\]
It follows from the comparison principle for parabolic equations  that \
$$
0\le\bar {u}(t,x) \le  \|u(t_1,\cdot)\|_\infty\le \|u\|_\infty:=\sup_{s\ge 0}\|u(s,\cdot)\|_\infty \quad \forall\, (t,x)\in (t_1,\infty)\times\R^N.
$$

Let $\psi(t,x)$ and $\phi(t,x)$  be defined by
\begin{equation*}
\psi(t,x)=\min\big\{1,({a}/{b})^{1/\sigma}/\|u\|_{\infty}\big\}\,e^{-|\chi| A(t-t_1)}\,\bar u(t,x)
\end{equation*}
and
\begin{equation*}
\phi(t,x)=e^{(a+A|\chi|)(t-t_1)}\,\bar u(t,x),
\end{equation*}
respectively.
 Then, we have for $(t,x)\in (t_1,\infty)\times\R^N$,
\[
\begin{aligned}
    \psi_{t} = -|\chi| A \psi + \Delta \psi - \chi\nabla v\cdot\nabla\psi\le  \Delta \psi - \chi\nabla\cdot(\psi\nabla v) \le \Delta \psi - \chi\nabla\cdot(\psi\nabla v) + \psi(a-b\psi^\sigma),
\end{aligned}
\]
and
\[
\begin{aligned}
    \phi_{t} &= (a+|\chi| A) \phi + \Delta \phi - \chi\nabla v\cdot\nabla\phi\ge  \Delta \phi - \chi\nabla\cdot(\phi\nabla v)+ a\phi\\
    &\ge \Delta \phi - \chi\nabla\cdot(\phi\nabla v) + \phi(a-b\phi^\sigma).
\end{aligned}
\]
Since
$$
\psi(t_1,\cdot)\leq \phi(t_1,\cdot)=\bar u(t_1,\cdot)=u(t_1,\cdot),
$$
the comparison principle yields
$$
\psi(t,\cdot)\le u(t,\cdot)\le \phi(t,\cdot)\quad \forall\, t\ge t_1.
$$

Note    that
\begin{equation*}
\bar u(t, x) = \int_{\R^N} \Gamma(t, t_1, x, z)u(t_1, z)dz,
\end{equation*}
where $\Gamma(t,t_1,x,z)$ is the fundamental solution of the equation
\begin{equation*}
\begin{cases}
 \Gamma_t = \Delta_x \Gamma - \chi\nabla_x v\cdot\nabla_x\Gamma, \qquad \qquad \text{ for }(t,x)\in (s,\infty)\times \R^N,\\
\Gamma(t=t_1,t_1,x,z) = \delta_z(x).
	\end{cases}
\end{equation*}
Hence
\beq\label{new-added-eq1}
\begin{aligned}
&\min\left\{1,({a}/{b})^{1/\sigma}/\|u\|_{\infty}\right\}\,e^{-|\chi| A(t-t_1)} \int_{\R^N} \Gamma(t, t_1, x, z)u(t_1, z)dz\le u(s',x)\\
&\qquad\qquad\le  e^{(a+A|\chi|)(t-t_1)} \int_{\R^N} \Gamma(t, t_1, x, z)u(t_1, z)dx\quad \forall\, t\ge t_1,\,\, x\in\R^N.
\end{aligned}
\eeq

Let  $p>1$, $q$ be the conjugate exponent to $p$ and let $\alpha:=(1+p)/(2p)$. From the proof of \cite[(4.25)]{HaHe}, for any fixed $s_0,R>0$,  there exists a constant $C_0>0$ depending only on  $s_0$, $R$, $p$, $A$, $N$ and an upper bound of $|\chi|$ such that
\begin{equation}\label{comparable_kernel}
	\Gamma(t_0,t_1, x, z)^{\alpha p} \leq C_0\,\Gamma(t,t_1, y, z)\,\,\forall\, t\in[t_0,t_0+s_0],\,\, |x-y|\le R,\,\, z\in\R^N.
\end{equation}
By \cite[Theorem 10]{Aronson}, there exists $C_1\ge1$, depending only $A$, $N$ and an upper bound of $|\chi|$,  such that for any $x,y \in \R^N$ and  $0 \leq s < t$ with  $t - s \leq \frac12$,
\begin{equation*}
	({C_1(t-s)^{N/2}})^{-1} e^{-C_1\frac{|x-y|^2}{t-s}}
		\leq \Gamma(t,s, x,y)
		\leq {C_1}{(t-s)^{-N/2}} e^{-\frac{|x-y|^2}{C_1(t-s)}}.
\end{equation*}
This and $t_0=t_1+\frac12$ imply that  there is  $C_2(q,\chi,N)$ such that
\begin{equation}\label{gamma-lq}
    \|\Gamma(t_0,t_1,x,\cdot)^{1-\alpha}\|_q \le C_2\quad\forall\, x\in\R^N.
\end{equation}

By \eqref{new-added-eq1} and H\"older's inequality, we have
\begin{align*}
   u(t_0,x)&\le e^{(a+A|\chi|)/2} \int_{\R^N} \Gamma(t_0,t_1,x,z)\,u(t_1,z)\,dz\\
    &\le e^{(a+A|\chi|)/2}\,\|u\|_\infty^{1/q} \int_{\R^N} \left(u(t_1,z)\,\Gamma(t_0,t_1,x,z)^{\alpha p}\right)^{1/p}  \Gamma(t_0,t_1,x,z)^{(1-\alpha) }dz\\
    &\le  e^{(a+A|\chi|)/2}\,\|u\|_\infty^{1/q}\,\|\Gamma(t_0,t_1,x,\cdot)^{1-\alpha}\|_q \left(\int_{\R^N} u(t_1,z) \Gamma(t_0,t_1,x,z)^{\alpha p} dz\right)^{1/p}.
\end{align*}
Using \eqref{comparable_kernel} and \eqref{gamma-lq} yields for $t\in [t_0,t_0+s_0]$, 
\begin{align*}
   u(t_0,x)&\le  C_2 C_0^{1/p} e^{(a+A|\chi|)/2}\,\|u\|_\infty^{1/q}\, \left(\int_{\R^N} u(t_1,z) \Gamma(t,t_1,y,z) dz\right)^{1/p}\\
    &\le  C_2 C_0^{1/p} e^{(a+A|\chi|)/2 + |\chi| A(\frac12+s_0)/p}\max\left\{\|u\|_\infty^{1/q}, \|u\|_\infty\left({b}/{a}\right)^{1/(\sigma p)}\right\}u(t, y)^{1/p},
\end{align*}
where in the last inequality, we applied \eqref{new-added-eq1}. We can conclude with \eqref{asymptotic-eq1}.

\smallskip

 Next, we prove  \eqref{asymptotic-eq2}.  By a priori estimate for parabolic equations (see \cite[Theorem 7.22]{Lieberman}) and
the anisotropic Sobolev embedding for $q = N+3$  (see \cite[Lemma A3]{Engler}),  there is  $C_3= C_3(A, \chi, R, N)>0$ such that for any $(t,x) \in (1,\infty)\times \R^n$,
\[\begin{split}
	|\nabla u(t,x)|
		&\leq C_3\,\left(\|u\|_{L^q([t-\frac12,t]\times B_R(x))} + \|u(a-bu)\|_{L^q([t-\frac12,t]\times B_R(x))} \right)\\
		&\leq C_3\,\|u\|_{L^\infty([t-\frac12,t]\times B_R(x))}\,(1+a + b\|u\|_\infty).
\end{split}\]
By  \eqref{asymptotic-eq1},  for any $p\in (1,\infty)$, there exists a constant $C>0$ that depends only on {$a$, $b$,} $R$, $p$, $A$, $N$,  $\|u\|_\infty$, and an upper bound of $|\chi|$  such that
\[
|\nabla u(t,x)|
		\leq { C \,u(t,y)^{1/p}}
\]
for all $|x-y|\le R$, which proves \eqref{asymptotic-eq2}.
\end{proof}

\medskip

\noindent{\bf Acknowledgments.}
The authors would like to thank the referee for  his/her careful reading of the original manuscript and for valuable comments and suggestions which improved the presentation of this paper considerably.

\medskip

\noindent {\bf Conflict of Interest} On behalf of all authors, the corresponding author states that there is no conflict of interest.

\medskip

\noindent {\bf Ethics Approval}
This research did not involve human participants, animals, or any other ethical considerations that would require formal ethics approval.

\medskip

\noindent {\bf Funding} The work of Y. P. Zhang is partially supported by Simons Foundation, Travel Support for Mathematicians MPS-TSM-00007305, and a start-up grant at Auburn University.

\medskip

\noindent {\bf Data Availability} Data sharing is not applicable to this article as no datasets were generated or analysed during
the current study.

\end{document}